\begin{document}











\usetikzlibrary{calc,shapes.geometric}
\tikzset{
baseon/.style={baseline={($(#1)+(0,-0.58ex)$)}},
baseon/.default=current bounding box.center,
every picture/.style=baseon,
lst/.style={},
dst/.style={circle,inner sep=1pt,outer sep=0pt,fill,draw,dst2},
dst2/.style={fill=white},
ddst/.style={diamond,draw,inner sep=1pt},
eest/.style={ellipse,draw,inner sep=1pt,minimum size=2ex},
}


\font \eightrm=cmr8
\font \sevenrm=cmr7
\font \fiverm=cmr5
\newcommand{\nc}{\newcommand}
\nc\smsc{0.8}
\def\tthreeone{\XX[scale=\smsc]{\xxr{-5}5}}
\def\tthreetwo{\XX[scale=\smsc]{\xxl55}}
\def\tfourone{\XX[scale=\smsc]{\xxr{-4}4\xxr{-7.5}{7.5}}}
\def\tfourtwo{\XX[scale=\smsc]{\xxr{-5}5\xxl{-2}8}}
\def\tfourthree{\XX[scale=\smsc]{\xxl{4}4\xxl{7.5}{7.5}}}
\def\tfourfour{\XX[scale=\smsc]{\xxl{5}5\xxr{2}8}}
\def\tfourfive{\XX[scale=\smsc]{\xxr{-6}6\xxl66}}
\def\oprec{\!\!\joinrel{\ocircle\hskip -12.5pt \prec}\,}
\def\soprec{\,\joinrel{\ocircle\hskip -6.7pt \prec}\,}
\def \endsquare{$\sqcup \!\!\!\! \sqcap$ \\}
\def\diagramme #1{\vskip 4mm \centerline {#1} \vskip 4mm}
\def\att#1{\textcolor{red}{!!!}\textcolor{blue}{#1}}
\nc{\ooverline}[1]{\overline{\overline #1}}
\nc{\uunderline}[1]{\underline{\underline #1}}
\def \restr#1{\mathstrut_{ | \, \scriptstyle{ #1}}}
\def \srestr#1{\mathstrut_{\scriptstyle |}\hbox to
-1.5pt{}\raise-4pt\hbox{$\hskip 1pt\scriptscriptstyle #1$}}
\nc{\mop}[1]{\mathop{\hbox {\rm #1} }\nolimits}
\nc{\gmop}[1]{\mathop{\hbox {\bf #1} }\nolimits}
\def\starz{{\displaystyle\mathop{\star}\limits}_z}
\def\astT{{\displaystyle\mathop{\ast}\limits}_T}
\nc{\smop}[1]{\mathop{\hbox {\sevenrm #1} }\nolimits}
\nc{\ssmop}[1]{\mathop{\hbox {\fiverm #1} }\nolimits}
\nc{\mopl}[1]{\mathop{\hbox {\rm #1} }\limits}
\def\dbar{d\hskip-3pt \raise 4pt\hbox{-}}
\nc{\smopl}[1]{\mathop{\hbox {\sevenrm #1} }\limits}
\nc{\ssmopl}[1]{\mathop{\hbox {\fiverm #1} }\limits}
\renewcommand\baselinestretch{1}

\newcommand{\delete}[1]{}

\nc{\mlabel}[1]{\label{#1}}  
\nc{\mcite}[1]{\cite{#1}}  
\nc{\mref}[1]{\ref{#1}}  
\nc{\mbibitem}[1]{\bibitem{#1}} 

\delete{
\nc{\mlabel}[1]{\label{#1}  
{\hfill \hspace{1cm}{\small\tt{{\ }\hfill(#1)}}}}
\nc{\mcite}[1]{\cite{#1}{\small{\tt{{\ }(#1)}}}}  
\nc{\mref}[1]{\ref{#1}{{\tt{{\ }(#1)}}}}  
\nc{\mbibitem}[1]{\bibitem[\bf #1]{#1}} 
}

\newtheorem{theorem}{Theorem}[section]

\newtheorem{notation}[theorem]{Notation}
\newtheorem{corollary}[theorem]{Corollary}
\newtheorem{proposition}[theorem]{Proposition}
\newtheorem{lemma}[theorem]{Lemma}

{\theoremstyle{definition}
\newtheorem{remark}[theorem]{Remark}
\newtheorem{example}[theorem]{Example}
\newtheorem{conjecture}[theorem]{Conjecture}
\numberwithin{equation}{section}

\newtheorem{problem}[theorem]{Problem}
\newtheorem{definition}[theorem]{Definition}
\newtheorem{definitions}[theorem]{Definitions}
}

\renewcommand{\labelenumi}{{\rm \alph{enumi}}}
\renewcommand{\theenumi}{\alph{enumi}}

\renewcommand{\labelenumii}{{\rm \roman{enumii}}}
\renewcommand{\theenumii}{\roman{enumii}}
\newcommand\alphlist{a,b,c,d,e,f,g,h,i,j,k,l,m,n,o,p,q,r,s,t,u,v,w,x,y,z}
\newcommand\Alphlist{A,B,C,D,E,F,G,H,I,J,K,L,M,N,O,P,Q,R,S,T,U,V,W,X,Y,Z}
\newcommand\getcmds[3]{\expandafter\newcommand\csname #2#1\endcsname{#3{#1}}}
\makeatletter
\@for\x:=\alphlist\do{\expandafter\getcmds\expandafter{\x}{frak}{\mathfrak}}
\@for\x:=\Alphlist\do{\expandafter\getcmds\expandafter{\x}{frak}{\mathfrak}}
\makeatother

\nc{\bfk}{{\bf k}}
%
\nc{\sha}{\shuffle}

\nc{\id}{\mathrm{id}}
\nc{\Id}{\mathrm{Id}}
\nc{\lbar}[1]{\overline{#1}}
\nc{\ot}{\otimes}
\nc{\dep}{\mathrm{dep}}
\nc{\ver}{\mathrm{ver}}

\nc{\tred}[1]{\textcolor{red}{#1}} \nc{\tgreen}[1]{\textcolor{green}{#1}}
\nc{\tblue}[1]{\textcolor{blue}{#1}} \nc{\tpurple}[1]{\textcolor{purple}{#1}}
\nc{\tcyan}[1]{\textcolor{cyan}{#1}} 
\nc{\tblk}[1]{\textcolor{black}{#1}}

\nc{\li}[1]{\tpurple{\underline{Li:}#1 }}
\nc{\liadd}[1]{\tpurple{#1}}
\nc{\xing}[1]{\tblue{\underline{Xing:}#1 }}
\nc{\yuan}[1]{\tred{\underline{Yuan:}#1 }}
\nc{\markus}[1]{\tred{\underline{Markus:} #1}}
\nc{\dominique}[1]{\tpurple{\underline{Dominique: }#1 }}
\long\def\ignore#1{}

\newcommand\stset[2]{\tikzset{#1/.style={#2}}}
\newcommand\stadd[2]{\tikzset{#1/.append style={#2}}}
\newcommand\dadd[1]{\stset{dst2}{#1}}
\newcommand\treeo[2][]{\tikz[x=0.7cm,y=0.7cm,line width=0.15ex,
every node/.style={font=\scriptsize,inner sep=1pt,label distance=1pt},#1]{%
\coordinate (o) at (0,0);#2}}%
\newcommand\treeoo[2][]{\treeo[#1]{\path (o) node[dst,name=o]{};#2}}%
\newcommand\cddf[3]{%
\coordinate (#2) at ($(#1)+(#3)$);
\draw[lst] (#1)--(#2);
\node[dst] at (#1) {};}
\newcommand\cdx[4][1]{\cddf{#2}{#3}{#4:#1*0.5cm}}
\newcommand\cdl[2][1]{\cdx[#1]{#2}{#2l}{135}}
\newcommand\cdr[2][1]{\cdx[#1]{#2}{#2r}{45}}
\newcommand\cdlr[2][1]{%
\foreach \i in {#2} {\cdl[#1]{\i}\cdr[#1]{\i}}}
\newcommand\cdlrx[3][1]{%
\cdx[#1]{#2}{#2l}{180-#3}\cdx[#1]{#2}{#2r}{#3}}
\newcommand\cda[2][1]{\cdx[#1]{#2}{#2a}{90}}
\newcommand\cdb[2][1]{\cdx[#1]{#2}{#2b}{-90}}
\def\zzz#1`#2...#3`#4...#5`#6@{%
--++(#1)
node[dst,label={#5:$#6$},name=#2]{}
node[midway,auto,#3]{$#4$}
}
\def\ddd#1`#2`#3@{+(#1)node[ddst,name=#2]{$#3$}}
\def\eee#1`#2`#3@{+(#1)node[eest,name=#2]{$#3$}}
\def\xxx#1`#2@{node[midway,auto,inner sep=1pt,#1]{$#2$}}
\def\pp#1`#2`#3@{node[dst,label={#2:$#3$},pos=#1]{}}
\def\oo#1`#2`#3@{\path (o) node[dst,label={#2:$#3$},name=o,#1]{};}
\def\eoo#1`#2@{\node[eest,name=o,#1] at (o) {$#2$};}

\newif\ifshowjdq
\showjdqtrue
\newcommand\setXXclip[3]{%
\def\XXheight{#1}\def\XXdepth{#2}\def\XXwidth{#3}}
\setXXclip{1}{-0.5}{1.1}
\newcommand\scopeclip[1]{\begin{scope}
\clip(-\XXwidth,\XXdepth)rectangle(\XXwidth,\XXheight);#1
\end{scope}}
\newcommand\XX[2][]{%
\tikz[x=0.5cm,y=0.5cm,baseline,inner sep=1.5pt,line width=0.15ex,
every node/.style={font=\scriptsize},#1]{
\scopeclip{\draw (2,2)--(0,0)--(-2,2) (0,-2)--(0,0);
\ifshowjdq\node[dst]at(0,0){};\fi}#2}}
\newcommand\xx[3]{%
\scopeclip{\draw(#1/10,#2/10)--+(#3*45:3);
\ifshowjdq\node[dst]at(#1/10,#2/10){};\fi}}
\newcommand\xxl[2]{\xx{#1}{#2}3}
\newcommand\xxr[2]{\xx{#1}{#2}1}
\newcommand\xxlr[2]{\xxl{#1}{#2}\xxr{#1}{#2}}
\newcommand\xxh[6]{
\draw(#1/10,#2/10)+(0.5*#3*45+0.5*#4*45:#6) node[above] {$#5$};}
\newcommand\xxhu[4][0.15]{\xxh{#2}{#3}13{#4}{#1}}
\newcommand\nodea[3][]{\node[above=1pt,#1] at (#2) {$#3$};}
\newcommand\nax[2][]{\foreach \i in {#2} {\nodea[#1]{\i,1}{x}}}
\newcommand\naxx[3][]{\foreach \i in {#2} {\nodea[#1]{\i,1}{#3}}}
\newcommand\naxxx[2][]{\foreach \i/\j in {#2} {\nodea[#1]{\i,1}{\j}}}

\makeatletter
\newcommand\simra{\mathrel{\mathpalette\@verra\sim}}
\def\@verra#1#2{\lower.5\p@\vbox{\lineskiplimit\maxdimen \lineskip-.5\p@
\ialign{$\m@th#1\hfil##\hfil$\crcr#2\crcr\rightarrow\crcr}}}
\makeatother

\nc{\dnx}{\Delta_n A} \nc{\dx}{\Delta A} \nc{\dgp}{{\rm deg_{P}}}
\nc{\dgt}{{\rm deg_{T}}} \nc{\dg}{{\rm deg}} \nc{\ida}{ID($A$)} \nc{\tu}{\tilde{u}} \nc{\tv}{\tilde{v}}
\nc{\nr}{\calr_n} \nc{\nz}{\calz_n} \nc{\fun}{\cala_{n,d}}
 \nc{\fbase}{\calb} \nc{\LF}{\mathrm{RF}} \nc{\FFA}{\mathrm{LF}} \nc{\irr}{\mathrm{Irr}}
 \nc{\result}{\bfk\mathrm{Irr}(S_n)}  \nc{\I}{I_{\mathrm{ID},n}^0}
 \nc{\nrs}{\calr_n^\star} \nc{\ii}{\mathrm{I}} \nc{\iii}{\mathrm{II}}
\nc{\intl}{{\rm int}}\nc{\ws}[1]{{#1}}\nc{\deleted}[1]{\delete{#1}}\nc{\plas}{placements\xspace}

\nc{\bim}[1]{#1}  \nc{\shaop}{\sha_{\Omega}^{+}}  \nc{\shao}{\sha_{\Omega}}
\nc{\bbim}[2]{#1 #2} \nc{\bbbim}[2]{#1,\, #2} \nc{\RBF}{{\rm RBF}}
\nc{\frb}{F_{\RB}} \nc{\shaf}{\ssha_{\tiny{\Omega}}} \nc{\sham}{\diamond_{\tiny{\Omega}}}
\nc{\lf}{\lfloor} \nc{\rf}{\rfloor} \nc{\shan}{\ssha_{\lambda}}
\nc{\rlex}{{\rm {lex}}} \nc{\bb}{\Box} \nc{\ra}{\rightarrow}
\nc{\DDF}{\mathrm{DD}(X,\,\Omega)}\nc{\DTF}{\mathrm{DT}(X,\,\Omega)} \nc{\DT}{\mathrm{DT}'(\Omega,\,V)}
\nc{\bra}{\mathrm{bra}} \nc{\bre}{\mathrm{bre}}
\nc{\dec}{\mathrm{dec}} \nc{\diamondw}{\diamond_{w}}
\nc{\type}{\mathrm{type}}


\nc\caF[1]{\cal{F}_{#1}(X,\,\Omega)}
\nc\calt{\cal{T}(X,\,\Omega)} \nc\caltn{\cal{T}_n(X,\,\Omega)}
\nc\calta{\cal{T}_0(X,\,\Omega)}
\nc\caltb{\cal{T}_1(X,\,\Omega)}
\nc\caltc{\cal{T}_2(X,\,\Omega)}
\nc\caltd{\cal{T}_3(X,\,\Omega)}
\nc\caltm{\cal{T}_m(X,\,\Omega)}
\nc\caltx{\cal{T}(X)}
\nc\calf{\cal{F}(X,\,\Omega)}
\nc\fram{\frak{M}(\Omega,\, X)}
\nc\shaw{\sha^{NC}_w(\Omega,\, X)}
\nc\dw{\diamond_w} \nc\dl{\diamond_\ell}
\nc\shal{\sha^{NC}_\ell(X,\, \Omega)} \nc\shav{\sha^{NC}_w(\Omega,\, V)} \nc\shat{\sha^{NC,1}_w(\Omega,\, T^{+}(V))}
\nc{\cfo}{\cal{F}(X,\,\Omega)}
\nc{\sh}{\rm{Sh}}
\nc{\lar}{\varinjlim}
\nc\XO{(X,\,\Omega)}
\def\cxo#1#2;{\cal{#1}#2\XO}
\nc\lrf[2]{B_{#2}^+(#1)}
\nc{\fd}{\mathrm{\text{typed angularly decorated planar rooted trees}}}
\nc{\rb}{\mathrm{RBFWs}} \nc{\dfw}{\mathrm{DFW{(X)}}} \nc{\tfw}{\mathrm{TFW{(X)}}}
\nc{\tfv}{\mathrm{TFW{(V)}}}

\def\Ve#1,#2,#3;{\vee_{#1,\,(#2,\,#3)}}
\def\bigv#1;#2;#3;{\bigvee\nolimits_{#1}^{#2;\,#3}}
\nc\rjt[2]{\mathrel{\mathop{\longrightarrow}\limits^{#1\hfill}_{\hfill#2}}}
\nc{\pl}{\cal{PLF}}
\nc{\tr}{\cal{RTF}}
\nc{\im}{\mathrm{Im} \, }
\nc{\ff}{\cal{F}_\Omega}
\nc{\tm}{T_\Omega}
\nc{\calp}{\cal{P}}
\makeatletter
\nc\dd{\@ifnextchar'{\ddA}{\ddB}}
\def\ddA'#1;{\rhd'_{#1\,}}
\def\ddB#1;{\rhd_{#1\,}}
\nc{\pbt}{\mathrm{PBT}}
\nc{\ad}{\mathrm{ad}}

\nc{\hRR}{\hat {\mathbb R}}
\nc{\RR}{{\mathbb R}}
\nc{\NN}{{\mathbb N}}
\nc{\ol}[1]{\overline{#1}}
\nc{\set}{{\rm \bf set}}
\nc{\setx}{{\rm \bf set^{\times}}}
\nc{\setn}{{\rm \bf set_{{\mathbb N}}}}
\nc{\Sb}{{\mathbb S}}
\nc{\cp}{\Vdash}
\nc{\te}{\otimes}
\nc{\sus}{\subseteq}
\nc{\BF}{{\rm BF}\,}
\nc{\Pio}{\Pi^\circ}
\nc{\SM}{{\rm SM}\,}
\nc{\SMat}{{\rm SMat}\,}
\nc{\MMat}{{\rm MMat}\,}
\nc{\ti}{\times}
\nc{\ben}{{\bf 1}}
\nc{\bigsum}{{\mathlarger{\sum}}}
\nc{\nil}{{\mathbf 0}}
\nc{\Hom}{\text{Hom}}
\nc{\cT}{{\mathcal T}}
\newcommand{\btl}{\blacktriangleleft}
\newcommand{\wtl}{\lhd}
\newcommand{\pil}{\rightarrow}
\newcommand{\hN}{\hat{\mathbb N}}
\newcommand{\note}{\noindent {\bf Note.}}
\newcommand{\ppr}{{\prime \prime}}
\newcommand{\barA}{\overline{A}}
\newcommand{\barB}{\overline{B}}
\newcommand{\App}{A^{\prime \prime}}
\newcommand{\cC}{{\mathcal C}}
\newcommand{\cB}{{\mathcal B}}
\newcommand{\PPI}{{PP(I)}}
\newcommand{\DB}{{\Delta_{\cB}}}

\newcommand{\mto}[1]{\stackrel{#1}\longrightarrow}
\newcommand{\ZZ}{\mathbb{Z}}
\newcommand{\QQ}{\mathbb{Q}}
\newcommand{\kk}{{\Bbbk}}
\newcommand{\bH}{{\mathbf H}}


\definecolor{tol1}{HTML}{332288}
\definecolor{tol2}{HTML}{AA4499}
\definecolor{tol3}{HTML}{DDCC77}
\definecolor{tol4}{HTML}{44AA99}
\definecolor{tol5}{HTML}{88CCEE}
\definecolor{tol6}{HTML}{CC6677}
\definecolor{tol7}{HTML}{117733}
\definecolor{tol8}{HTML}{882255}


\newcommand{\Pre}{{\mathrm {Pre}}}
\renewcommand{\Top}{{\mathrm {Top}}}
\newcommand{\cU}{{\mathcal U}}
\renewcommand{\cT}{{\mathcal T}}
\newcommand{\cS}{{\mathcal S}}
\newcommand{\low}{{\mathrm {low}}}
\newcommand{\bfx}{{\mathbf x}}
\newcommand{\bfu}{{\mathbf u}}


\newcommand{\rleftarrows}{\mathrel{\raise.75ex\hbox{\oalign{%
  $\hfil\scriptstyle\relbar$\cr
  \vrule width0pt height.5ex$\scriptstyle\smash\leftarrow$\cr}}}}
\newcommand{\rightlarrows}{\mathrel{\raise.75ex\hbox{\oalign{%
  $\scriptstyle\rightarrow$\hfil\cr
  $\scriptstyle\vrule width0pt height.5ex\relbar$\cr}}}}
\newcommand{\Rrelbar}{\mathrel{\raise.75ex\hbox{\oalign{%
  $\scriptstyle\relbar$\cr
  \vrule width0pt height.5ex$\scriptstyle\relbar$}}}}
\newcommand{\longrightleftarrows}{\rleftarrows\joinrel\Rrelbar\joinrel\rightlarrows}
\newcommand{\bihom}[2]{\, \overset{#1}{\underset{#2}{\longrightleftarrows}}\, }
\newcommand{\pow}{{\mathcal P}}
\newcommand{\ppow}{{\mathcal {PP}}}
\newcommand{\egp}{{\Pi}}
\newcommand{\pre}{{\mathrm {pre}}}
\renewcommand{\top}{{\mathrm {top}}}
\newcommand{\preo}{{\mathrm {preo}}}
\newcommand{\topl}{{\mathrm {topl}}}
\newcommand{\cpre}{\mathrm{cpre}}
\newcommand{\ctop}{\mathrm{ctop}}

\newcommand{\con}{{\mathrm {con}}}
\newcommand{\op}{{\mathrm {op}}}
\newcommand{\ua}[1]{\uparrow \!\! #1}
\newcommand{\da}[1]{\downarrow \! #1}

\newcommand{\uai}[2]{\uparrow_{\! #1} \! #2}
\newcommand{\dai}[2]{\downarrow_{\! #1} \! #2}

\newcommand{\Pcon}{\Pre}
\newcommand{\alin}{\mathrm{Alin}}
\newcommand{\alinb}{\mathrm{AlinBu}}

\newcommand{\vect}{\text{\bf vec}}
\newcommand{\minPre}{\text{minPre}}
\newcommand{\EGP}{{\rm{EGP}}}
\newcommand{\PRE}{{\rm{PRE}}}
\newcommand{\MOD}{{\rm{MOD}}}
\newcommand{\oU}{\overline{U}}

\newcommand{\fone}{{\mathbf 1}}
\newcommand{\one}{1}



\makeatother



\author{Gunnar Fl\o ystad \, \orcidlink{0000-0001-9796-7530} }
\address{Matematisk institutt,
Universitetet i Bergen, Realfagbygget,
All\'egaten 41
Bergen, Norway}
\email{Gunnar.Floystad@uib.no}
%
\author{Dominique Manchon \, \orcidlink{0000-0001-6865-7162}}
\address{Laboratoire de Math\'ematiques Blaise Pascal,
CNRS--Universit\'e Clermont-Auvergne,
3 place Vasar\'ely, CS 60026,
F63178 Aubi\`ere, France}
\email{Dominique.Manchon@uca.fr}

\title[Submodular functions, permutahedra, preorders, and cointeractions]
{Submodular functions,
  generalized permutahedra, conforming preorders, and cointeracting
  bialgebras}

\tikzset{stdNode/.style={rounded corners, draw, align=right},
greenRed/.style={stdNode, top color=green, bottom color=red},
blueRed/.style={stdNode, top color=blue, bottom color=red}
		}
	

\begin{abstract}Submodular functions $z$ defined on the power set of a finite set
are in bijection with generalized permutahedra $\egp(z)$.
To any such $z$ we define a class of preorders,
{\it conforming} preorders. We show the faces of $\egp(z)$
and the conforming preorders are
in bijection.
We investigate in detail this interplay between submodular functions
and generalized permutahedra on one side, and conforming preorders on
the other side, with many examples.
In particular, the face poset structure of
$\egp(z)$ correspond to two order relations
$\lhd$ and $\btl$ on preorders,
and we investigate their properties.

Ardila and Aguiar \cite{AA2017}
introduced a Hopf monoid of submodular functions/generalized permutahedra.
We show there is a bimonoid of modular functions cointeracting in a
non-standard way. By recent
theory of L.Foissy \cite{Fo2022}, on double bialgebras we get a canonical
polynomial associated to any submodular function.
\end{abstract}
         
\keywords{Finite topological spaces, Preorders, Braid fan, Conforming
          preorders, Submodular functions,
          Generalized permutahedra, Bimonoids, Bialgebras, Double bialgebras, Ehrhart polynomials }
        \subjclass[2020]{16T30, 52B05, 16T30, 06A11}
\maketitle

\setcounter{tocdepth}{1}

\tableofcontents
	



\vspace*{14pt}
\section{Introduction}

Generalized permutahedra is a central class of polyhedra with rich
properties and surprisingly many connections to various
areas of mathematics. They are equivalent to submodular functions, an
important class in optimization and economics. 

A. Postnikov's article \cite{Pos2009} with simple and systematic
presentation, example classes, and enumerative focus,
has had great impact. But there is a long history before that.
Edmonds \cite{Edm70} introduces polymatroids which is a wide class
of generalized permutahedra, generalizing matroids and matroid polytopes.
S. Fujishige's book \cite{Fuj2005}, whose first edition appeared in 1990,
develops in Section 3 all
main results on the interplay of submodular functions and generalized
permutahedra.  Its aim is the application in optimization.
In recent years F. Ardila and M. Aguiar \cite{AA2017} endow generalized
permutahedra or equivalently submodular functions with the structure of a
Hopf algebra and develop cancellation free formulas for the antipode
of classes of Hopf algebras. The braid fan of permutahedra
has been studied in \cite{PRW2009} and \cite{MW2009}.
Its maximal cones are in fact the Weyl
chambers associated to the general linear group. 

\medskip
This article has four central actors:
\begin{itemize}
\item Finite preorders/topologies,
\item Submodular functions,
\item (Extended) generalized permutahedra,
\item Hopf- and bialgebras.
\end{itemize}
We undertake a systematic study of their interplay.
We recall and discuss central insights from the literature and develop
further the understanding of these. In particular we focus on the connection
between finite preorders/topologies on one side, and submodular functions and
extended generalized permutahedra on the other side.
We use this to construct new bialgebras and
comodule bialgebras of submodular and modular functions. By recent
theory of L.Foissy \cite{Fo2022}, to such a cointeraction
of bialgebras is associated a unique morphism
to the polynomial double bialgebra ${\mathbb Q}[x]$.
So to each submodular function we get an associated polynomial, which we describe together with its enumerative properties.

\medskip
By the Alexandrov correspondence, finite preorders and finite topologies
are equivalent objects. (The review article \cite{FlSur}
shows their ubiquity.)
The interplay between generalized permutahedra and preorders
has been developed in various sources.
In \cite{Fuj2005} this interplay is given in terms
of distributive lattices rather than preorders. The braid fan is
studied in \cite{PRW2009} and
\cite{MW2009}. The normal fan of a generalized permutahedron
is a coarsening of this fan. {\it Braid cones} of the braid fan in
${\mathbb R}^I$ corresponds precisely to preorders on the set $I$.

Our starting point here is submodular functions. Let $I$ be a finite set and
$\pow(I)$ the power set of all subsets of $I$. Further denote $\hRR =
\RR \cup \{\infty \}$. A function
\[ z : \pow(I) \pil \hRR, \quad z(\emptyset) = 0, \, z(I) < \infty \]
is {\it submodular} if
\begin{equation} \label{eq:intro-submod}
  z(A \cup B) + z(A \cap B) \leq z(A) + z(B), \quad \text{ for all }
  A,B \sus I.
  \end{equation}
To a submodular function we algebraically associate a class of
{\it conforming} preorders/topologies.
The {\it generalized permutahedron} $\egp(z)$ associated to $z$ is defined by the
inequalities in $\RR^I$:
\[ \sum_{a \in A}x_a \leq z(A) \quad \text{ for each } A \sus I, \]
{together with the equality $\sum_{a\in I} x_a=z(I)$}.
We give a natural Galois correspondence, Section \ref{sec:bij}, between
families of subsets of $I$ and faces of $\egp(z)$ which induces
a bijection
\[ \text{ finite topologies conforming to } z \, \, 
  \overset{1-1}{\leftrightarrow} \, \, \text{ faces of } \egp(z). \]
This is a full generalization to any extended generalized permutahedron
of the notion of ${\mathcal B}$-forests for nestohedra.

The face poset structure of $\egp(z)$ can then be understood in
terms of an order relation on conforming preorders.
This order relation is well-known, but presented in various distinct
ways in the literature. Its earliest instance is likely in \cite{Sta1986},
concerning the face structure of the order polytope.
In \cite[Section 3]{PRW2009} it occurs as {\it contraction} of preorders, and
in \cite{FFM2017} as {\it admissible} topologies. In Section \ref{sec:poset} we
systematically study this order relation by a natural Galois correspondence
and unify and develop the various descriptions in the literature. In
fact there are two equivalent order relations on preorders, which
we denote by $\lhd$ and $\btl$, where $\lhd$ is found in \cite{Sta1986,
FFM2017}
and $\btl$ in \cite{PRW2009}.

To a submodular function $z$ and conforming preorder $P$, we associate
in Sections \ref{sec:bij} and \ref{sec:mod} two new submodular functions:
\begin{itemize}
\item $z_P$: This is the submodular function associated to the
  face $F$ of $\egp(z)$ corresponding to $P$. 
\item $z^P$: The submodular function of the cone of
  $\egp(z)$
  at the face $F$ (the cone is of full dimension equal to that of $\egp(z)$),
  with the affine space of $F$ as the lineality space of the cone.
\end{itemize}
A submodular function is {\it modular} if the inequalities
\eqref{eq:intro-submod} are all equalities.
By \cite{AA2017} the submodular functions have a Hopf algebra structure.
By using $z_P$ and $z^P$
we construct a bialgebra of modular functions, and a comodule structure
on the submodular Hopf algebra over the modular bialgebra, Section
\ref{sec:bim}. We also give a morphism of comodules from the submodular
functions to the modular functions. Using the theory of L. Foissy
\cite{Fo2022} we then associate to any submodular function
a distinguished polynomial in
${\mathbb Q}[x]$. It naturally drops out that this polynomial
counts the number of lattice points in
the {\it interior} of the maximal cones of the normal fan of
the extended generalized permutahedron $\egp(z)$.

\begin{example}
  For the $(n-1)$-dimensional permutahedron whose vertices correspond
  to permutations of $n$ elements, this polynomial is
  \[ n! \cdot \binom{k}{n} = k(k-1)(k-2) \cdots (k-n+1). \]
  For the submodular function associated to a total
  order on $n$ elements, this polynomial is $\binom{k}{n}$.
 \end{example}
 When $z$ is a {\it finite} submodular function,
 this polynomial occurs in \cite[Section 17]{AA2017} as the {\it basic invariant
   polynomial}. For matroids this becomes the Billera-Jia-Reiner polynomial.

 \medskip
 
 There are {\it three milestone sections} of this article:
 
 \begin{itemize}
 \item Section \ref{sec:poset} on the subdivision and contraction
   relations for preorders,
 \item Section \ref{sec:bij} on the bijection of faces of
   the extended generalized permutahedron $\egp(z)$ and conforming
   preorders for $z$, with examples in Section \ref{sec:exa},
 \item Section \ref{sec:bim} on the bialgebras of submodular and
   modular functions and their cointeractions.
 \end{itemize}

 \noindent{\bf Matroids:} For those especially interested in this, matroids
 are considered in Subsections \ref{subsec:pre-mat}, \ref{subsec:exa-mat},
and \ref{subsec:bimon-mat}.

 Otherwise the organization and focus is as follows:

\medskip
 {\noindent \bf Sumodular functions:} Sections \ref{sec:pre}, \ref{sec:conform},
 \ref{sec:mod}, \ref{sec:ext}.
 Section \ref{sec:pre}
 introduces submodular functions. 
 Section \ref{sec:conform}
 defines compatible and conforming preorders of a submodular function.
 Section \ref{sec:mod} introduces modular functions $z$ and describe their
 generalized permutahedra $\egp(z)$.
 Section \ref{sec:ext} shows a surprising result on uniqueness of
 extensions of conforming preorders of a submodular function.

 \medskip
 {\noindent \bf Preorders:} Sections \ref{sec:pre}, \ref{sec:braid},
 \ref{sec:poset}, \ref{sec:conform}, \ref{sec:ext}.
 Section \ref{sec:pre} defines preorders and finite topologies. It
 gives a first look at how submodular functions naturally induce preorders.
 Section \ref{sec:braid} defines the braid fan and shows how braid
 cones correspond to preorders.
 Section \ref{sec:poset} introduces the order relations $\lhd$ and $\btl$
 on preorders and develops their properties.
 Section \ref{sec:conform}
 defines compatible and conforming preorders of submodular functions.

 \medskip
 {\noindent \bf Generalized permutahedra:} Sections \ref{sec:egp},
 \ref{sec:egpclass}, \ref{sec:bij}, \ref{sec:exa}.
 Section \ref{sec:egp} defines extended generalized permutahedra and
 give basic results on dimensions and decompositions.
 Section \ref{sec:egpclass} gives the example classes of Minkowski sums
 and nestohedra. Section \ref{sec:bij} gives the fundamental bijection between
 faces of extended generalized permutahedra and conforming preorders.
 Section \ref{sec:exa} informs on the conforming preorders for example
 classes of generalized permutahedra.

 \medskip
 {\noindent \bf Hopf- and bialgebras:} Sections \ref{sec:bim}, \ref{sec:pol}.
 Section \ref{sec:bim} defines the bialgebras of modular functions and
 the comodule bialgebra of submodular functions. We show how
 these cointeract. It also defines a map
 from submodular functions to sums of modular functions.
 Section \ref{sec:pol} recalls from \cite{Fo2022}
 the definitions of double bialgebras
 $(B,\cdot, \Delta, \delta)$, and the unique morphism $B \pil
 {\mathbb Q}[x]$ associated to it. This gives a distinguished polynomial
 associated to each submodular function, which we describe together with its
 enumerative properties.

 \medskip
 {\noindent \it Acknokwledgement}: The authors thank the Trond Mohn Foundation
   funded project {\it Pure Mathematics in Norway} for travel support for
   research cooperation.


\section{Preorders, topologies and submodular functions}
\label{sec:pre}

We introduce two of the main characters of this article:
(finite) preorders/topologies, and submodular functions.
Finite preorders and finite topologies are equivalent objects by
the Alexandroff correspondence (see \cite{FlSur} for this and a historical
discussion).
Submodular functions occur in
combinatorics, optimization, and economics, where many phenomena
have descriptions by them. Submodular
functions which are allowed to take the value $\infty$
give especially an interplay with preorders which we take a first
look at in this section.
(And is developed much further in Section \ref{sec:conform}.)
The recent book \cite{May2016} shows finite topologies as a rich subject.

\subsection{Preorders and topologies}
A preorder $P = (X, \leq)$ on a set $X$ is a relation which is:
\begin{itemize}
\item[] {\it Reflexive:} $x \leq x$ for $x \in X$,
\item[] {\it Transitive:} $x \leq y$ and $y \leq z$ implies $x \leq z$.
\end{itemize}
The preorder is a {\it partial order} if it is also:
\begin{itemize}
\item[] {\it Anti-symmetric:} $x \leq y$ and $y \leq x$ implies $x = y$.
\end{itemize}

Given a preorder $P = (X, \leq)$, we have an {\it opposite preorder}
$P^{\op} = (X,\leq^\op)$ given by $x \leq^\op y$ if $y \leq x$. 
A subset $D \sus X$ is a {\it down-set} for $P$ if whenever $y \in D$ and
$x \leq y$, then $x \in D$. If $x \in X$ we write $\da{x}$
for the down-set $\{ y \in X \, | \, y \leq x \}$.
We have corresponding notions of {\it up-set} and $\ua{x}$.
We write $\leq_P$ for $\leq$ when we need to be explicit about
which preorder we refer to.
The set of preorders on $X$ is denoted $\Pre(X)$ and is itself partially
ordered by $P \preceq Q$ if $x \leq_P y$ implies $x \leq_Q y$ for
every $x,y \in X$.

Given $P = (X, \leq)$, two $x,y \in X$ are {\it comparable} for $P$
if $x \leq y$ or $y \leq x$. If only one of them holds, say $x \leq y$,
then $y$ is {\it strictly} bigger than $x$.
A preorder is {\it total} if any two elements are comparable. It is
{\it linear} if for any two elements one of them
is strictly bigger than the other.

\medskip

Recall that a topology on the set $X$ is a family $\cT$ of subsets of
$X$ such that:

\begin{itemize}
\item $\emptyset$ and $X$ are both in $\cT$,
\item If $\cU \sus \cT$, then $\cup_{A \in \cU} A$ is an element of $\cT$,
\item If $A,B \in \cT$ then $A \cap B$ is in $\cT$.
\end{itemize}
The elements of $\cT$ are the {\it open} subsets of $X$.
Denote the set of topologies on $X$ by $\Top(X)$. This set is
partially ordered by inclusion. It has a minimal element,
the {\it coarse} topology with only $\emptyset$ and $X$ as open sets.

As is well-known, and explained below,
for a {\it finite} set $X$ there are inverse bijections of partially
ordered sets,
the Alexandroff correspondence:
\[ \Pre(X) \bihom{\topl}{\preo} \Top(X)^{\op}. \]
For a preorder $P$, the set of all down-sets of $P$ form a topology
$\topl(P)$. Conversely, given a topology $\cT$,
define the preorder $\preo(\cT)$ by  $x \leq y$
if for every open subset $U$, if $y \in U$, then $x \in U$.

We shall interchangeably use the terminologies of preorder/down-set and
topology/open set on
$X$. In some situations the preorder notion is more natural, and
in other situations the topology notion is more natural.

\medskip

\begin{definition} \label{def:pre-bc}
A preorder $P$ gives rise to two equivalence relations.
First let $x \sim^b_P y$ if $x \leq_P y$ and $y \leq_Px$.
Its equivalence
classes are the {\it bubbles} of $P$. We denote the set of
bubbles of $P$ by $\mathbf b(P)$.

Secondly let $x \sim^c_P y$ if $x$ and $y$ are in the same connected
component of $P$. So there is a sequence
\[ x = x_0 \leq_P x_1 \geq_P x_2 \leq_P \cdots \geq_P x_{2k} = y.\]
We denote by $\mathbf c(P)$ the set of connected components of $P$.

If ${\mathbf c}(P) = {\mathbf b}(P)$,
this is a {\it totally disconnected} preorder. This is the same
as an equivalence relation. 
The corresponding topology is called a totally disconnected topology.
It is a disconnected union of coarse topologies.
\end{definition}

\begin{definition} \label{def:pre-lin}
  Let $P$ be a preorder and $L$ a total preorder (i.e. every pair of elements
  is comparable). Then $L$ is
  a {\it linear extension} of $P$ if:
  \begin{itemize}
  \item $P \preceq L$,
  \item The sets of bubbles $\mathbf b(P)$ and $\mathbf b(L)$ coincide.
  \end{itemize}
\end{definition}


\subsection{Submodular functions}
\label{subsec:pre-bf}

Denote $\hRR = \RR \cup \{\infty \}$. For $I$ a finite set, let
$\pow(I)$ be the power set, i.e. the set of all subsets of $I$. An
{\it extended Boolean function}
on $I$ is an arbitrary function $z : \pow(I) \to \hRR$ such
that $z(\emptyset) = 0$ and $z(I)< \infty$. Write $\BF[I]$ for the set of extended Boolean functions
on $I$. For $S \sus I$ with $z(S) < \infty$,
the {\it restriction} of $z$ to $S$ is
\[ z\restr S : \pow(S) \to \hRR, \, \text{  given by } z\restr S(U) = z(U)
, \quad U \sus S.\]
The {\it corestriction} of $z$ is
\[z_{/S} : \pow(I \backslash S) \to \hRR, \, \text{ given by }
  z_{/S}(U) = z(S \cup U) - z(S), \quad U \sus I \backslash S. \]
If $S \sus T \sus I$ let $z_{T/S}$ be $(z\restr T)_{/S}$.

\medskip
Let $u \in \BF[S]$ and $v \in \BF[T]$ and $I = S \sqcup T$ the disjoint union.
Their product $u \cdot v \in \BF[I]$ is the function given by
\[ u \cdot v(E) = u(E \cap S) + v(E \cap T), \quad E \sus I. \]
An {extended Boolean function} $z : \pow(I) \pil \hRR$
is {\it decomposable} if it can be written as the product
of two Boolean functions on non-empty subsets.

\begin{lemma} \label{lem:pre-zprod}
  Any extended Boolean function $z \in \BF[I]$ is uniquely a product of indecomposable extended
  Boolean functions.
\end{lemma}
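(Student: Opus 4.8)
The plan is to reduce the statement to a fact about the lattice of set partitions of $I$. First I would note that a two--term decomposition $z = u\cdot v$ with $u\in\BF[S]$, $v\in\BF[T]$, $I = S\sqcup T$, forces $u = z\restr S$ and $v = z\restr T$: for $F\sus S$ we have $z(F) = u(F\cap S) + v(F\cap T) = u(F) + v(\emptyset) = u(F)$, and symmetrically for $v$. By associativity of the product (it merely adds the pieces), the same reasoning shows that writing $z = z\restr{I_1}\cdots z\restr{I_k}$ is exactly the same datum as a set partition $\pi = \{I_1,\dots,I_k\}$ of $I$ with the \emph{splitting identity}
\[ z(E) = \sum_{B\in\pi} z(E\cap B)\qquad\text{for all } E\sus I; \]
call such a $\pi$ a splitting partition. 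The side conditions $z\restr{I_j}(\emptyset)=0$ and $z\restr{I_j}(I_j)<\infty$ are then automatic, the latter since the splitting identity at $E = I$ gives $z(I) = \sum_j z(I_j)<\infty$, forcing each summand to be finite.

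The key step is that the set of splitting partitions of $I$ is closed under the meet $\wedge$ of the partition lattice (common refinement) and contains the one--block partition $\{I\}$. For closure I would check two splitting partitions $\pi_1,\pi_2$ directly: for $B_1\in\pi_1$ and any $E$, the $\pi_2$--splitting identity applied to $F = E\cap B_1$ gives $z(E\cap B_1) = \sum_{B_2\in\pi_2} z(E\cap B_1\cap B_2)$; summing over $B_1\in\pi_1$ and invoking the $\pi_1$--splitting identity yields $z(E) = \sum_{B_1\in\pi_1,\,B_2\in\pi_2} z(E\cap B_1\cap B_2)$, which is precisely the splitting identity for $\pi_1\wedge\pi_2$. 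Since the partition lattice of the finite set $I$ is finite, there is a unique finest splitting partition $\pi_{\min} = \{I_1,\dots,I_k\}$, namely the meet of all of them.

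This gives both parts. Existence: $z = z\restr{I_1}\cdots z\restr{I_k}$, and each $z\restr{I_j}$ is indecomposable, for if $z\restr{I_j} = a\cdot b$ with $a,b$ on nonempty $A,B$ and $I_j = A\sqcup B$, then replacing the block $I_j$ by $A$ and $B$ refines $\pi_{\min}$ to a strictly finer splitting partition (the new splitting identity follows from $z(E\cap I_j) = z\restr{I_j}(E\cap I_j) = a(E\cap A)+b(E\cap B) = z(E\cap A)+z(E\cap B)$), contradicting minimality. Uniqueness: given any factorization $z = w_1\cdots w_m$ into indecomposables $w_i\in\BF[J_i]$, the partition $\{J_1,\dots,J_m\}$ is splitting with $w_i = z\restr{J_i}$ (first paragraph), hence coarsens $\pi_{\min}$, so each $J_i$ is a union of blocks $I_l$; were it a union of at least two, $z\restr{J_i}$ would be a nontrivial product of the corresponding $z\restr{I_l}$ and thus decomposable, contradicting indecomposability of $w_i$. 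So each $J_i$ is a single $I_l$ and the two factorizations agree. I do not expect a genuine obstacle; the only point to keep an eye on is that the arithmetic happens in $\hRR$ with $\infty$ absorbing, but the argument only ever adds and compares values and never cancels, so it goes through verbatim.
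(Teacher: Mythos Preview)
Your argument is correct, and the handling of $\hRR$ is fine since you never subtract. The approach, however, is genuinely different from the paper's. The paper takes existence for granted (it is immediate by induction on $|I|$) and proves uniqueness directly: given two indecomposable factorizations with underlying partitions $\{I_a\}$ and $\{I_b\}$, it restricts $z$ to a single block $I_a$, obtains $z_a(S) = \sum_b z_{ab}(S_b)$ with $z_{ab} = z\restr{I_a\cap I_b}$, and invokes indecomposability of $z_a$ to force all but one $I_a\cap I_b$ to be empty, so $I_a\subseteq I_b$; symmetry then gives equality of the partitions. Your route instead recasts factorizations as \emph{splitting partitions}, observes that these form a sub-meet-semilattice of the partition lattice (a nice structural fact the paper does not isolate), and reads both existence and uniqueness off the unique minimum $\pi_{\min}$. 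The payoff of your approach is that it yields existence and uniqueness simultaneously and makes the canonical finest decomposition explicit; the paper's argument is shorter once existence is taken as obvious, and avoids introducing the auxiliary lattice language.
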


\begin{proof}
  Let $\{I_a\}_{a \in A}$ and $\{I_b \}_{b \in B}$ be two partitions of
  $I$ into non-empty sets.
  Let $z_a = z\restr{I_a}$ and similarly define $z_b$. Suppose
$z$ can be written as products  \[ z = \prod_{a \in A} z_a = \prod_{b \in B} z_b\]
  of indecomposables. We claim that each $I_a \sus I_b$
  for some $b$. So fix $a$. Let $z_{ab} = z\restr{ I_a \cap I_b}$.
  For $S \sus I$ let $S_b = S \cap I_b$. Now:
  \[ z(S) = \sum_{b \in B} z(S_b) = \sum_{b \in B} z_b(S_b). \]
  Then for any $S \sus I_a$, we have $S_b \sus I_b \cap I_a$, and we get:
  \[ z_a(S) = \sum_{b \in B} z_{ab}(S_b). \]
  But since $z_a$ is indecomposable, the sum can involve at most
  one non-trivial $z_{ab}$.
  and so at most one $I_a \cap I_b$ is non-empty.
  Thus $I_a \sus I_b$ for some $b$.
  Conversely, each $I_b \sus I_a$ for some $a$, and so the partitions are equal.
\end{proof}

\noindent The function $z \in \BF[I]$ is {\it submodular} if for $S,T \sus I$:
\begin{equation} \label{eq:notions-submod}
  z (S) + z(T) \geq z(S \cup T) + z(S \cap T).
  \end{equation}
  Write $\SM[I]$ for the set of submodular functions on $I$.
  An equivalent description is that a Boolean function is submodular if and
  only if 
  given $I \supseteq B \supseteq A$ and varying
  $S \sus I \backslash B$, then:
  \[ z(B \cup S) - z(A \cup S) \]
  is a weakly decreasing function as $S$ increases for the inclusion relation
  (with the convention that $\infty - \infty$ is any value).
The following will be useful.

\begin{lemma}\label{lem:prod-sum}
  Let $z : \pow(I) \pil \hRR$ be a submodular function, and 
$I \supseteq T \supseteq S$.  Let 
 $C = T \backslash S$, and $C = C_1 \sqcup C_2$
  be a disjoint union of sets. Suppose $z(S), z(T) < \infty$. Then
we have a decomposition as a product:
  \begin{eqnarray} \label{eq:prod-zc}
    z^\prime = z_{T/S} = z^\prime\restr{C_1} \cdot z^\prime\restr{C_2}
    \end{eqnarray}
if and only if
\begin{eqnarray} \label{eq:prod-sum}
  z(T) + z(S) 
  = z(S \cup C_1 \cup C_2) + z(S) = z(S \cup C_1) + z(S \cup C_2).
\end{eqnarray}

\end{lemma}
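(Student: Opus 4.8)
The plan is to test the decomposition \eqref{eq:prod-zc} against every subset of $C$ and distill from it a single numerical identity. Write $z^\prime = z_{T/S}$, so that $z^\prime(E) = z(S\cup E) - z(S)$ for $E\sus C$, and for such an $E$ set $E_1 = E\cap C_1$ and $E_2 = E\cap C_2$, so $E = E_1\sqcup E_2$ and $S\cup E = S\cup E_1\cup E_2$. By the definition of the product of Boolean functions,
\[ (z^\prime\restr{C_1}\cdot z^\prime\restr{C_2})(E) = z^\prime(E_1) + z^\prime(E_2) = z(S\cup E_1) + z(S\cup E_2) - 2z(S). \]
Setting $h(E_1,E_2) := z(S\cup E_1) + z(S\cup E_2) - z(S\cup E_1\cup E_2) - z(S)$, which is $\ge 0$ by submodularity \eqref{eq:notions-submod} applied to the pair $S\cup E_1$, $S\cup E_2$ (whose union is $S\cup E_1\cup E_2$ and whose intersection is $S$, using $E_1\cap E_2 = \emptyset = E_i\cap S$), we see that \eqref{eq:prod-zc} holds exactly when $h(E_1,E_2) = 0$ for all $E_1\sus C_1$ and $E_2\sus C_2$. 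Moreover $S\cup C_1\cup C_2 = S\cup C = T$, so the first equality in \eqref{eq:prod-sum} is automatic and the remaining content of \eqref{eq:prod-sum} is precisely the single equation $h(C_1,C_2) = 0$.

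The implication \eqref{eq:prod-zc}$\,\Rightarrow\,$\eqref{eq:prod-sum} is now immediate: specialize $h(E_1,E_2) = 0$ to $(E_1,E_2) = (C_1,C_2)$. For the converse the idea is that $h$ is a nonnegative function on $\pow(C_1)\times\pow(C_2)$ which is weakly increasing in each argument separately, so that its vanishing at the maximal pair $(C_1,C_2)$ forces it to vanish everywhere: $0\le h(E_1,E_2)\le h(C_1,E_2)\le h(C_1,C_2) = 0$.

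It remains to establish the monotonicity, which is where the actual work lies. Fixing $E_2$ and taking $E_1\sus E_1'\sus C_1$, one computes
\[ h(E_1',E_2) - h(E_1,E_2) = \big(z(S\cup E_1') - z(S\cup E_1)\big) - \big(z(S\cup E_1'\cup E_2) - z(S\cup E_1\cup E_2)\big). \]
By the ``decreasing marginals'' reformulation of submodularity recalled just after \eqref{eq:notions-submod}, with $A = S\cup E_1 \subseteq B = S\cup E_1'$ the quantity $z(B\cup X) - z(A\cup X)$ is weakly decreasing in $X\sus I\backslash B$; comparing $X = \emptyset$ with $X = E_2$ — legitimate since $E_2\sus C_2$ is disjoint from $S$ and from $E_1'\sus C_1$, hence from $B$ — shows the displayed difference is $\ge 0$. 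The symmetric computation gives monotonicity in $E_2$. The only genuinely fiddly point is keeping track of the disjointness conditions that make \eqref{eq:notions-submod} and its reformulation applicable; apart from that, and the routine observation that \eqref{eq:prod-sum} forces $z(S\cup C_1), z(S\cup C_2) < \infty$ (so that $z^\prime\restr{C_1}$ and $z^\prime\restr{C_2}$ are genuine Boolean functions and the $\infty - \infty$ convention governs only degenerate cases), the argument is entirely formal.
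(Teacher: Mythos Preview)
Your proof is correct and follows essentially the same route as the paper: both use submodularity to show the defect $h(E_1,E_2)$ is nonnegative and monotone in each variable, then sandwich $0\le h(E_1,E_2)\le h(C_1,C_2)=0$ (the paper phrases this as the equivalent identities (2.7)--(2.9) rather than introducing $h$ explicitly). The paper is a bit more careful about the case where some $z(S\cup E_i)=\infty$, which you correctly flag as a degenerate side issue but leave to the reader.
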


\begin{proof}
If \eqref{eq:prod-zc} holds, then \eqref{eq:prod-sum} holds.
Suppose conversely \eqref{eq:prod-sum} holds. Note then in
particular that all values in that equation are finite.
Let $S_1 \sus C_1$ and $S_2 \sus C_2$. We must show that
\[ z^\prime(S_1 \cup S_2) = z^\prime(S_1) + z^\prime(S_2), \]
equivalently
\[ z(S \cup S_1 \cup S_2) - z(S) = z(S \cup S_1) - z(S) + z(S \cup S_2) - z(S)
  \]
  which is again equivalent to
  \begin{equation} \label{eq:notions-zss}
    z(S \cup S_1 \cup S_2) + z(S) = z(S \cup S_1) + z(S \cup S_2).
  \end{equation}
  By submodularity
  \begin{align*}
    z(S \cup C_1 \cup S_2) + z(S \cup S_1) & \leq z(S \cup S_1 \cup S_2) +
                                             z(S \cup C_1) \\
     z(S \cup S_1 \cup C_2) + z(S \cup S_2) & \leq z(S \cup S_1 \cup S_2) +
                                              z(S \cup C_2)
 \end{align*}
 These imply that if $z(S \cup S_1)$ or $z(S \cup S_2)$ is $\infty$,
 then $z(S \cup S_1 \cup S_2) = \infty$, and so settles \eqref{eq:notions-zss}
 in this case.

 So assume $z(S \cup S_1)$ and $z(S \cup S_2)$ are finite.
 Then \eqref{eq:notions-zss} is equivalent to:
  \begin{eqnarray} \label{eq:prod-zas-one}
    z(S \cup S_1 \cup S_2) - z(S \cup S_1) =  z(S \cup S_2) - z(S).
  \end{eqnarray}
  Fix $S_2$. The above \eqref{eq:prod-zas-one} is an equality when $S_1 =
  \emptyset$. By submodularity
  \[ z(S \cup S_1 \cup S_2) - z(S \cup S_1) \]
  is weakly decreasing with increasing $S_1$.
  If therefore
  \begin{eqnarray} \label{eq:prod-zcs}
    z(S \cup C_1 \cup S_2) - z(S \cup C_1) = z(S \cup S_2) - z(S),
    \end{eqnarray}
    we have equality in \eqref{eq:prod-zas-one} for every $S_1 \sus C_1$.
    Now \eqref{eq:prod-zcs} can be written
\begin{eqnarray} \label{eq:prod-zas}
      z(S \cup C_1 \cup S_2) - z(S \cup S_2) = z(S \cup C_1) - z(S).
\end{eqnarray}
This equality holds if $S_2 = \emptyset$. By assumption it also holds
if $S_2 = C_2$. Hence by submodularity it holds for every $S_2 \sus C_2$
with $z(S \cup S_2)$ finite. This implies that \eqref{eq:prod-zcs}
and so \eqref{eq:prod-zas-one} holds.
\end{proof}

\subsection{From submodular functions to
  topologies and preorders}  \label{subsec:prod-subpretop}
The following is a simple and significant observation.
Let $I$ be
finite and given a submodular function $z$. Let
\[\top(z) = \{ S \sus I \, |\, z(S) < \infty \}. \]
This is a topology on $I$. This is because it is closed under unions
and intersection due to the submodular inequality \eqref{eq:notions-submod}.
The topology $\top(z)$ corresponds to a preorder $\pre(z)$ on $I$, 
and $z(S)$ is finite if and only if $S$ is a down-set for this preorder.
So we have maps:
\[ \SM[I]\, \,  \substack{ \top \\ \pil \\ \pil \\ \pre} \, \, 
  \begin{cases} \Top(I) \\ \Pre(I) \end{cases} \]
which via the identification of $\Top(I)^{\op}$ and $\Pre(I)$ is
the same map.  This map has a left adjoint
sending a preorder $P$ to the submodular function
\[ \low_P : \pow(I) \pil \hat{\RR}, \quad \text{where } \low_P(S) =
  \begin{cases} 0, & S \text{ a down-set of } P \\
    \infty, & S \text{ not down-set of } P
  \end{cases}. \]
We see that submodular functions $z : \pow(I) \pil \{ 0, \infty\}$ are
in bijection
with preorders on $I$, \cite[Section 13]{AA2017}.

\medskip
We can decompose $z$ into indecomposable submodular functions
\begin{equation} \label{eq:prod-dec} z = z_1 \cdot z_2 \cdots z_r
  \end{equation}
where $z_i$ is defined on subsets $I_i \sus I$.
This gives a partition $I = I_1 \sqcup I_2 \sqcup \cdots \sqcup I_r$.
Taking the disconnected union of the coarse topologies on each $I_i$,
we get a totally disconnected topology $\ctop(z)$.
Denote the corresponding preorder as $\cpre(z)$.
It is the equivalence relation with the $I_i$ as equivalence classes
(or bubbles).
Note that
\[ \pre(z) \preceq \cpre(z), \quad \ctop(z) \sus \top(z). \]

\subsection{Matroids and polymatroids} \label{subsec:pre-mat}
Let $\NN_0 = \{0, 1, 2, \ldots \}$.
A submodular function $z : \pow(I) \pil \NN_0$ is a {\it polymatroid}
if it is weakly increasing, i.e. $S \sus T$ implies $z(S) \leq z(T)$.
It is a {\it matroid} if also $z(S) < |S|$, where the latter denotes
the cardinality of $S$. 
This is one of several equivalent ways of defining a matroid,
\cite[Thm.1.3.2]{Ox2006}.
We thus see that submodular functions give a common framework for
two central objects in combinatorics: matroids and partial orders.

To preorders there is associated two cointeracting bialgebras,
\cite{FFM2017}. For matroids there is also associated a Hopf algebra, by
restriction and contraction, \cite{DFM2018}.
But there has not yet appeared a corresponding
cointeracting bialgebra. In Section
\ref{sec:bim}, we give in full generality
two cointeracting bialgebras,
one of submodular functions and the other of modular functions,
and this specializes both to the cointeraction
for preorders and to a cointeraction including matroids.


\section{Extended generalized permutahedra}
\label{sec:egp}

To submodular functions is associated beautiful geometry.
By linear inequalities they define convex polyhedra, and if
$z$ only takes finite values, these are polytopes: the generalized
permutahedra. We recall their definition and basic properties, in particular
with respect to dimension, and we describe how the inequalities cut out faces.
Our take on this is related to preorders
and topologies, and we therefore develop arguments and results
from this perspective.

\subsection{Definition and examples}
Again $I$ is a finite set. Let $\RR I$ be the vector space with
basis $I$. We shall normally write $e_i$ for the basis element $i$.
The elements of $\RR I$ are thus sums $\sum a_i e_i$ where
$a_i \in \RR$. We may also write $(a_i)_{i \in I}$.  
The \textsl{extended generalized permutahedron (EGP)} associated to a submodular
function $z:\pow(I)\to \hRR$ 
is given by:
\begin{equation} \label{eq:egp-def}
  \egp(z):=\left\{x=(x_i)_{i\in I}\in\RR I \, | \,
    \sum_{i\in I}x_i=z(I) \hbox{ and } \sum_{i\in A}x_i\le z(A) \hbox{ for any }
    A\sus I \hbox{ such that } z(A)< \infty\right\}.
  \end{equation}

This is a convex polyhedron. It is a polytope, a
{\it generalized permutahedron (GP)},
if and only if $z:\pow(I)\to\mathbb R$, i.e. $z$ only takes finite values.
When $z$ is submodular each of the inequalities in \eqref{eq:egp-def}
is optimal: making any of them an equality cuts out a non-empty face
of the EGP, by Lemma \ref{lem:egp-lin} below.
When $x \in \RR I$ and $A \sus I$,
we for short write $x_A = \sum_{i \in A} x_i e_i$. 

\begin{example}
  Given a strictly decreasing sequence
  $\ell_1 > \ell_2 > \cdots  > \ell_n$, for $S \sus I$ let 
  $z(S) = \sum_{i \leq |S|} \ell_i$. This is a submodular function,
  and the associated GP is the permutahedron, the convex hull of
  the vertices $(\ell_{\pi(1)}, \ldots, \ell_{\pi(n)})$ where $\pi$ runs
  through all permutations of $\{1,2, \ldots, n\}$. 
\end{example}

\begin{figure}
\begin{center}
\begin{tikzpicture}[inner sep=1pt, scale=0.7]

\coordinate (a) at (0,0) ; 
\coordinate (b) at (3, 1.71);  
\coordinate (c) at (3, 5.15);  
\coordinate (d) at (0, 6.86);  
\coordinate (e) at (-3, 5.15);  
\coordinate (f) at (-3, 1.71);  

            \draw[black, thick, fill=gray, fill opacity = 0.1] 
            (a) -- (b)  -- (c) -- (d) -- (e) -- (f) -- (a);
            \draw[black, fill] (a)     circle (2pt);
            \draw[black, fill] (b)     circle (2pt);
            \draw[black, fill] (c)     circle (2pt);
            \draw[black, fill] (d)     circle (2pt);
            \draw[black, fill] (e)     circle (2pt);
            \draw[black, fill] (f)     circle (2pt);

\node[] at (a) [anchor = north east] {$(1,2,3)$};
\node[] at (b) [anchor = north west] {$(2,1,3)$};
\node[] at (c) [anchor = south west] {$(3,1,2)$};
\node[] at (d) [anchor = south west] {$(3,2,1)$};
\node[] at (e) [anchor = south east] {$(2,3,1)$};
\node[] at (f) [anchor = north east] {$(1,3,2)$};
      
          \end{tikzpicture}
          \end{center}
        \caption{Two-dimensional permutahedron}
        \label{fig:hexagon}
      \end{figure}
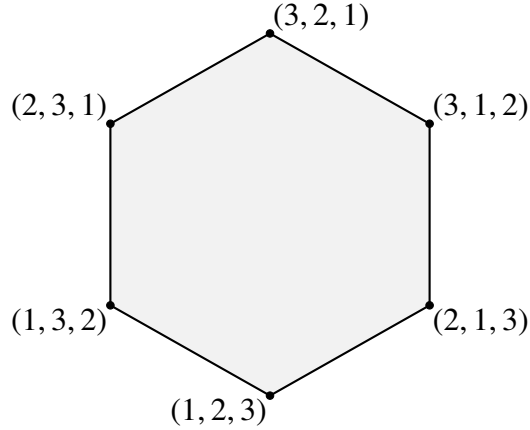

\begin{lemma} \label{lem:egp-prodsum}
  Let $I = S \sqcup T$.
  If $z \in \SM[I]$ decomposes as a product $z = u \cdot v$,
  where $u \in \SM[S]$ and
  $v \in \SM[T]$, then
  $\egp(z)$ on $\RR I$ is the product
  $\egp(u) \times \egp(v) \sus \RR S \times \RR T$.
  \end{lemma}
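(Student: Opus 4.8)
The plan is to prove the two polyhedra coincide by a direct double inclusion, exploiting the explicit description of the product $u\cdot v$. Recall $(u\cdot v)(E) = u(E\cap S) + v(E\cap T)$ for $E\sus I$, and that $(u\cdot v)(E)<\infty$ exactly when both $u(E\cap S)<\infty$ and $v(E\cap T)<\infty$. First I would observe that $\RR I = \RR S \times \RR T$ canonically, so a point $x$ decomposes as $(x^S, x^T)$ with $x^S = x_S$ and $x^T = x_T$ in the notation of the excerpt. I would then show that $x\in\egp(z)$ if and only if $x^S\in\egp(u)$ and $x^T\in\egp(v)$.

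For the easier inclusion $\egp(u)\times\egp(v)\subseteq\egp(z)$: given $x^S\in\egp(u)$ and $x^T\in\egp(v)$, the equality constraint $\sum_{i\in I} x_i = \sum_{i\in S} x^S_i + \sum_{i\in T} x^T_i = u(S)+v(T) = (u\cdot v)(I) = z(I)$ holds. For an inequality constraint indexed by $A\sus I$ with $z(A)<\infty$, write $A = A_S \sqcup A_T$ with $A_S = A\cap S$, $A_T = A\cap T$; then $z(A)<\infty$ forces $u(A_S)<\infty$ and $v(A_T)<\infty$, so $\sum_{i\in A} x_i = \sum_{i\in A_S} x^S_i + \sum_{i\in A_T} x^T_i \le u(A_S) + v(A_T) = z(A)$. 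For the reverse inclusion, given $x\in\egp(z)$, I would first check $x^S\in\egp(u)$. The inequalities $\sum_{i\in A_S} x^S_i \le u(A_S)$ for $A_S\sus S$ with $u(A_S)<\infty$ follow by taking $A = A_S$ in the constraints for $z$ (noting $z(A_S) = u(A_S) + v(\emptyset) = u(A_S)<\infty$), and symmetrically for $x^T$. The only remaining point is the equality constraints for $u$ and $v$ separately: we know $\sum_{i\in S} x^S_i + \sum_{i\in T} x^T_i = z(I) = u(S)+v(T)$, but we need $\sum_{i\in S} x^S_i = u(S)$ and $\sum_{i\in T} x^T_i = v(T)$ individually. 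This follows because taking $A = S$ gives $\sum_{i\in S} x^S_i \le z(S) = u(S)$, and taking $A = T$ gives $\sum_{i\in T} x^T_i \le z(T) = v(T)$; adding these two inequalities and comparing with the equality forces both to be equalities.

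The step I expect to require the most care is exactly this last one — promoting the single global equality to the two separate equalities — but as just sketched it is immediate from applying the $A=S$ and $A=T$ inequalities (which are available since $u(S), v(T)<\infty$ by the hypothesis $u\in\SM[S]$, $v\in\SM[T]$, both of which are genuine submodular functions with finite value on their top sets). Thus there is no real obstacle; the proof is a short unwinding of definitions. I would present it in two short paragraphs, one per inclusion, inserting the equality-splitting argument where needed.
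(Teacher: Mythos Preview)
Your proof is correct and follows essentially the same approach as the paper: both directions are handled by direct unwinding of the defining constraints, and the key step of promoting the single equality $x_I=z(I)$ to the two separate equalities $x_S=u(S)$, $x_T=v(T)$ is done exactly as in the paper, by adding the $A=S$ and $A=T$ inequalities and comparing with the global equality.
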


  \begin{proof}
    If a point is in the product, clearly it is in $\egp(z)$.
    Conversely if $x \in \egp(z)$, write $x = (x^1, x^2)$ where $x^1 \in \RR S$
    and $x^2 \in \RR T$. For $A_1 \sus S$ we then have
    \[ \sum_{i \in A_1} x^1_i = \sum_{i \in A_1} x_i \leq z(A_1) = u(A_1), \]
    and similarly $\sum_{i \in A_2} x^2_i \leq v(A_2)$. 
    Also, letting $A_1 = S$ and $A_2 = T$:
    \[ \sum_{i \in S} x^1_i \leq u(S), \quad \sum_{i \in T} x^2_i \leq v(T). \]
    Summing these we get an equality, so each of them must be an
    equality.
  \end{proof}

  \begin{definition}\label{def:alin}
  Given a preorder $P$ and submodular function $z$,
  the associated affine linear space is:
  \[\alin(P, z) = \{x \, | \, x_A = z(A), \text{ for every }
    \text{ down-set } A \text{ of } P \}. \]
If $z$ is understood we simply write $\alin(P)$. 
\end{definition}

\begin{lemma} \label{lem:egp-lin}
  Let $L$ be a linear extension of $\pre(z)$
  (recall Definition \ref{def:pre-lin}).
  Then $\alin(L) \sus \egp(z)$. In particular:
  \begin{itemize}
   \item[i.] $\alin(L)$ is non-empty and the same holds for $\egp(z)$.
   \item[ii.] For each $S$ with $z(S) < \infty$, equality $x_S = z(S)$
     is attained for some $x \in \egp(z)$.
   \end{itemize}
   \end{lemma}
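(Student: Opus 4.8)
The plan is to produce an explicit point of $\alin(L)$ and verify it lies in $\egp(z)$. Since $L$ is a linear extension of $P = \pre(z)$, its bubbles $\mathbf b(L) = \mathbf b(P)$ are the equivalence classes of the relation $\sim^b_P$, and the down-sets of $L$ form a complete flag of down-sets of $P$:
\[
\emptyset = D_0 \subsetneq D_1 \subsetneq \cdots \subsetneq D_m = I,
\]
where each $D_k \setminus D_{k-1}$ is a single bubble $B_k$ of $P$. Each $D_k$ is a down-set of $P$, hence $z(D_k) < \infty$. The idea is to spread the ``increment'' $z(D_k) - z(D_{k-1})$ uniformly over the coordinates in $B_k$: set $x_i = \bigl(z(D_k) - z(D_{k-1})\bigr)/|B_k|$ for $i \in B_k$. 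Then for each $k$ one has $x_{D_k} = z(D_k)$ by a telescoping sum, so $x \in \alin(L)$; in particular $x_I = z(I)$, giving the required equality constraint.

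The main work is checking $x_A \le z(A)$ for every $A \subseteq I$ with $z(A) < \infty$. First I would reduce to the case where $A$ is a down-set of $P$: if $A$ is not a down-set, replacing it by its down-closure $\widehat A$ only enlarges $x_A$ is false in general, so instead I would argue via the down-set $\widehat A$ contained in... — more cleanly, the standard approach is to induct on $m$ and peel off the top bubble, or to use the ``greedy'' argument: order the elements of $A$ compatibly with $L$ and write $x_A$ as a sum of increments, comparing term by term with the submodular inequality. Concretely, for a down-set $A$ of $P$, intersecting the flag with $A$ gives $A \cap D_0 \subseteq A\cap D_1 \subseteq \cdots$, and submodularity of $z$ (in the ``weakly decreasing increments'' form stated after \eqref{eq:notions-submod}) yields $z(A \cap D_k) - z(A \cap D_{k-1}) \le z(D_k) - z(D_{k-1})$ when $B_k \subseteq A$, while $x$ contributes $0$ to coordinates of $B_k \setminus A$; summing the inequalities telescopes to $x_A \le z(A)$. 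For general $A$ with $z(A) < \infty$, I would use that $x_i \ge 0$ need not hold, so one must genuinely handle the down-closure: since $z(A) < \infty$, the set $A$ lies inside some down-set, and the sharpest bound comes from the inclusion–exclusion/submodularity comparison $x_A \le z(A)$ obtained by the same greedy term-by-term estimate applied directly to a linear order refining $L$ and listing $A$ first.

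The hard part will be the bookkeeping in the greedy estimate — making sure the telescoping of $z$-increments along the flag dominates the corresponding partial sums of the $x_i$, uniformly over arbitrary (not necessarily down-closed) subsets $A$, and correctly invoking the ``weakly decreasing increments'' characterization of submodularity with the $\infty-\infty$ convention. Once $\alin(L) \subseteq \egp(z)$ is established, parts (i) and (ii) are immediate: $\alin(L)$ is a non-empty affine space (it is cut out by the $m$ independent equations $x_{D_k} = z(D_k)$, or simply contains the explicit point above), so $\egp(z)$ is non-empty; and for (ii), given $S$ with $z(S) < \infty$, the set $S$ is a down-set of $\pre(z)$, so one may choose the linear extension $L$ to have $S$ among its down-sets $D_k$, whence the point $x \in \alin(L) \subseteq \egp(z)$ satisfies $x_S = z(S)$.
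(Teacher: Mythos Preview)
Your approach is essentially the same greedy/telescoping argument the paper uses (the paper phrases it as induction on the top $L$-bubble of $A$ and applies the union--intersection submodular inequality, while you telescope along the full flag using the increments form), but your write-up has two issues worth fixing.

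First, and most importantly, there is no ``general $A$'' case left over: by definition of $\pre(z)$, the condition $z(A)<\infty$ is \emph{equivalent} to $A$ being a down-set of $P=\pre(z)$. So the paragraph worrying about down-closure and refining $L$ to list $A$ first is unnecessary. Once $A$ is a down-set of $P$, each bubble $B_k$ satisfies $B_k\subseteq A$ or $B_k\cap A=\emptyset$, and your telescoping argument goes through verbatim.

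Second, you stated the key inequality in the wrong direction. Submodularity (in the decreasing-increments form) gives, for $B_k\subseteq A$ and $A\cap D_{k-1}\subseteq D_{k-1}$,
\[
z(A\cap D_k)-z(A\cap D_{k-1}) \;\ge\; z(D_k)-z(D_{k-1}),
\]
not $\le$; this is the direction you need, since summing over $k$ then yields $z(A)\ge x_A$. Finally, note that your greedy estimate uses only the bubble-sums $x_{B_k}=z(D_k)-z(D_{k-1})$, which are forced for \emph{every} point of $\alin(L)$; so the argument proves the full inclusion $\alin(L)\subseteq\egp(z)$, not just membership of your explicit point.
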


   \begin{proof}
     Firstly, $\alin(L)$ is defined by $\leq |I|$ equations of triangular form,
     and so it is non-empty. Secondly $L$ may be chosen such that $S$ is
     a down-set of $L$. 

     Let $x \in \alin(L)$, so $x_B = z(B)$ for every down-set
  $B$ of $L$. Let $z(A) < \infty$, equivalently $A$ is a down-set of
  $\pre(z)$.  Let $C$ be the bubble in $A$, maximal
  in $L$. We show by induction on $C$ in $L$ that $x_A \leq z(A)$.
  Let $B = (\dai{L}{}C) \backslash C$. 
  Since $B \cup A = \dai{L}{C}$ and $B$ are both down-sets of $L$,
  we have $x_{ B \cup A} = z(B \cup A)$ and $x_B = z(B)$.
  Since $B$ is a down-set for $L$ it is also a down-set for $\pre(z)$.
 Whence $B \cap A$ is a down-set for $\pre(z)$.
  Then by induction $x_{B \cap A} \leq z(B \cap A)$.
  Thus
  \[ x_B + x_A = x_{ B \cup A} + x_{B \cap A} \leq
    z(B \cup A) + z(B \cap A) \leq z(B) + z(A). \]
  Since $x_B = z(B)$ we get $x_A \leq z(A)$.
\end{proof}

The following proposition and corollary is essentially found
in \cite{Fuj2005}. In particular the argument below is based
on \cite[Theorem 3.36]{Fuj2005}.

\begin{proposition} \label{pro:egp-top}
  Given a submodular function $z$, the set of 
  $A \sus I $ such that $x_A = z(A)$ for every $x \in \egp(z)$,
  forms a topology on $I$. This is the totally disconnected topology $\ctop(z)$.
  In particular, if $z$ is indecomposable each  $x_A = z(A)$ for
  $\emptyset \neq A \neq I$ cuts out a proper face of $\egp(z)$. 
\end{proposition}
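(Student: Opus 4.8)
The plan is to show the family $\mathcal{E}(z) := \{A \sus I \mid x_A = z(A) \text{ for all } x \in \egp(z)\}$ coincides with $\ctop(z)$, by a two-way inclusion, and then deduce the statement about proper faces.

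First I would verify that $\mathcal{E}(z)$ is a topology. It contains $\emptyset$ and $I$ trivially since $x_\emptyset = 0 = z(\emptyset)$ and $x_I = z(I)$ hold by the defining equation of $\egp(z)$. For closure under union and intersection, suppose $A, B \in \mathcal{E}(z)$. Since $z$ is submodular and $z(A), z(B) < \infty$ (being attained values), $z(A\cup B)$ and $z(A\cap B)$ are finite too, and for every $x \in \egp(z)$ we have $x_{A\cup B} \leq z(A\cup B)$ and $x_{A\cap B} \leq z(A\cap B)$; adding these and using $x_{A\cup B} + x_{A\cap B} = x_A + x_B = z(A) + z(B) \geq z(A\cup B) + z(A\cap B)$ forces both inequalities to be equalities. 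Hence $A\cup B, A\cap B \in \mathcal{E}(z)$, so $\mathcal{E}(z) \in \Top(I)$.

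Next I would identify $\mathcal{E}(z)$ with $\ctop(z)$. For the inclusion $\ctop(z) \sus \mathcal{E}(z)$: write $z = z_1 \cdots z_r$ as a product of indecomposables on a partition $I = I_1 \sqcup \cdots \sqcup I_r$; by Lemma \ref{lem:egp-prodsum}, $\egp(z) = \egp(z_1) \times \cdots \times \egp(z_r)$, so for any $x \in \egp(z)$ and any index $j$ we have $x_{I_j} = z_j(I_j) = z(I_j)$, and hence $x_A = z(A)$ for any $A$ that is a union of blocks $I_j$ — i.e.\ for any open set of $\ctop(z)$. For the reverse inclusion $\mathcal{E}(z) \sus \ctop(z)$: suppose $A \in \mathcal{E}(z)$; I must show $A$ is a union of blocks of the partition, equivalently that each indecomposable factor $z_j$ "splits" along $A$, i.e. $z_j\restr{A \cap I_j}$ together with $z_j\restr{I_j \backslash A}$ decomposes $z_j$; indecomposability then forces $A \cap I_j \in \{\emptyset, I_j\}$. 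The engine here is Lemma \ref{lem:prod-sum}: working inside a single block (so assuming $z$ indecomposable and $A \in \mathcal{E}(z)$ with $\emptyset \neq A$), since $x_A = z(A)$ and $x_I = z(I)$ hold identically on $\egp(z)$ while $x_{I\backslash A} = z(I) - z(A)$, I need to derive the modular-type equality $z(I) + z(\emptyset) = z(A) + z(I\backslash A)$, which is exactly the hypothesis \eqref{eq:prod-sum} of Lemma \ref{lem:prod-sum} with $S = \emptyset$, $T = I$, $C_1 = A$, $C_2 = I \backslash A$; that lemma then yields $z = z\restr{A} \cdot z\restr{I\backslash A}$, contradicting indecomposability unless $A = I$. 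The one point needing care — and the main obstacle — is establishing that equality $z(A) + z(I \backslash A) = z(I)$ from the hypothesis $A \in \mathcal{E}(z)$: one has $\leq$ always by submodularity, and for $\geq$ one exhibits a single point $x \in \egp(z)$ (e.g.\ via a linear extension $L$ of $\pre(z)$ having $A$ as a down-set, using Lemma \ref{lem:egp-lin}) at which $x_A = z(A)$, whence $z(I) = x_I = x_A + x_{I\backslash A} = z(A) + x_{I\backslash A} \leq z(A) + z(I\backslash A)$; this is where the argument of \cite[Theorem 3.36]{Fuj2005} is used.

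Finally, the "in particular" clause: if $z$ is indecomposable then $\ctop(z)$ is the coarse topology $\{\emptyset, I\}$, so by the equality $\mathcal{E}(z) = \ctop(z)$ just proved, for every $A$ with $\emptyset \neq A \neq I$ there exists some $x \in \egp(z)$ with $x_A < z(A)$; since $z$ is submodular, the inequality $x_A \leq z(A)$ is valid on all of $\egp(z)$ and is tight somewhere by Lemma \ref{lem:egp-lin}(ii), so $\{x \in \egp(z) \mid x_A = z(A)\}$ is a nonempty proper subset, hence a proper face. This completes the proof.
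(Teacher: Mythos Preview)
Your topology argument and the easy inclusion $\ctop(z)\sus\mathcal E(z)$ are fine, and the overall strategy of reducing to an indecomposable factor and invoking Lemma~\ref{lem:prod-sum} is exactly right. The gap is in the step where you try to establish the modular equality
\[
z(A)+z(I\backslash A)=z(I)
\]
for $A\in\mathcal E(z)$ with $z$ indecomposable. You write ``one has $\le$ always by submodularity''; this is backwards. Submodularity gives
\[
z(A)+z(I\backslash A)\ \ge\ z(A\cup(I\backslash A))+z(A\cap(I\backslash A))=z(I),
\]
i.e.\ the \emph{opposite} inequality. Your second argument (exhibiting $x\in\egp(z)$ and computing $z(I)=x_A+x_{I\backslash A}\le z(A)+z(I\backslash A)$) also yields only $\ge$. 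So you have proved the same direction twice and never the direction $z(A)+z(I\backslash A)\le z(I)$; without it Lemma~\ref{lem:prod-sum} cannot be applied. Note too that a priori you do not even know $z(I\backslash A)<\infty$.

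What is missing is precisely the content of the Fujishige argument the paper adapts: one must show that $I\backslash A$ itself lies in $\mathcal E(z)$. The paper does this by a perturbation. Writing $B=I\backslash A$, for each $b\in B$ choose $T_b\ni b$ minimal in $\mathcal E(z)$. If some $a\in T_b\cap A$, then every $S$ with $b\in S$, $a\notin S$ defines a proper face (otherwise $S\cap T_b$ would contradict minimality of $T_b$), so there is $x\in\egp(z)$ with $x_S<z(S)$ for all such $S$; taking $\mu=\min_S(z(S)-x_S)>0$ and $x'=x+\mu e_b-\mu e_a$ keeps $x'\in\egp(z)$ but gives $x'_A=z(A)-\mu$, contradicting $A\in\mathcal E(z)$. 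Hence each $T_b\sus B$, so $B=\bigcup_b T_b\in\mathcal E(z)$, whence $z(B)=x_B=z(I)-z(A)$ and Lemma~\ref{lem:prod-sum} now applies. This perturbation step is the key idea your sketch lacks.
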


\begin{proof} First note the $A = \emptyset$ and $A = I$ fulfill
  the condition.
  Suppose $x_A = z(A)$ and $x_B = z(B)$.
  Then
  \[ x_A + x_B = z(A) + z(B) \geq z(A \cup B) + z(A \cap B) \geq
    x_{A \cup B} + x_{A \cap B} = x_A + x_B. \]
  This forces everything to be equalities and so $x_{A \cup B} = z(A \cup B)$
  and $x_{A \cap B} = z(A \cap B)$. Thus the sets for which we have
  equality form a topology.

  \medskip
  Now assume $z$ is indecomposable.
Suppose there is an $\emptyset \neq A \neq I$ with $x_A = z(A)$ for every
$x \in \egp(z)$. Let $B = I \backslash A$ be the complement.
Recall that $x_I = z(I)$ for every $x \in \egp(z)$. 
For $b \in B$ let $T_b \sus I$ be a minimal set containing $b$
and with $x_{T_b} = z(T_b)$
  for every $x \in \egp(z)$. 
  We claim $T_b \sus B$. Otherwise let $a \in T_b \cap A$. For each $S$
  containing $b$ and not $a$, those $x$ with
  $x_S = z(S)$ cut out a proper face of $\egp(z)$. Thus there will
  be an $x \in \egp(z)$ with $x_S < z(S)$ for every such $S$.
  Set  $\mu$ to be the minimal of such $z(S) - x_S$,
  and $x^\prime = x + \mu e_b - \mu e_a$.
  Then $x^\prime_J \leq z(J)$ for every $J$ with $z(J) < \infty$,
  so $x^\prime \in \egp(z)$. But this contradicts $x^\prime_A = z(A)$.

  Thus $T_b \sus B$. Since such sets form a topology, also for 
  $\cup_{b \in B} T_b = B$ have $x_B = z(B)$ for every $x \in \egp(z)$.
  But:
  \begin{equation} \label{eq:egp-AB}
    x_A + x_B  = x_I = z(I) \leq z(A) + z(B) = x_A + x_B,
  \end{equation}
  This forces equality above and $I$ would be decomposable by Lemma
  \ref{lem:prod-sum}.

  \medskip
  Lastly, if $z$ is any submodular function with a decomposition
  as in Equation \ref{eq:prod-dec}, then if $x_A = z(A)$, then
  for each $A_i = A \cap I_i$ we would have $x_{A_i} = z(A_i)$, and
  so each $A_i$ is either $\emptyset$ or $I_i$. 
\end{proof}

\begin{corollary} \label{cor:egp-dim}
 If $z$ is indecomposable, there is $x \in \egp(z)$ such
  that $x_A < z(A)$ for every $\emptyset \neq A \neq I$ with
  $z(A) < \infty$. So the dimension of $\egp(z)$ is $|I|-1$.
  More generally, for any submodular $z$ the dimension of $\egp$
  is $|I| - |\ctop(z)|$, where $|\ctop(z)|$ is the number of components of the
  topology $\ctop(z)$.
\end{corollary}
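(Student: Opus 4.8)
The plan is to establish the indecomposable case first and then reduce the general case to it via the product decomposition of Lemma \ref{lem:pre-zprod}.

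For indecomposable $z$, note first that $\egp(z)$ is contained in the affine hyperplane $\{x : \sum_{i\in I}x_i = z(I)\}$, so $\dim\egp(z)\le |I|-1$. For the reverse inequality I would produce a single point $x^\ast\in\egp(z)$ satisfying $x^\ast_A < z(A)$ for every $\emptyset\ne A\ne I$ with $z(A)<\infty$; this also proves the first sentence of the corollary. The input is Proposition \ref{pro:egp-top}: for each such $A$, the face $\{x\in\egp(z):x_A=z(A)\}$ is proper, hence there is $x^A\in\egp(z)$ with $x^A_A < z(A)$. Since $\top(z)$ is finite there are only finitely many such $A$, and I would take $x^\ast$ to be any strict convex combination $\sum_A \lambda_A x^A$ (with all $\lambda_A>0$), or any point of $\egp(z)$ --- nonempty by Lemma \ref{lem:egp-lin} --- if there are no such $A$. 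Convexity gives $x^\ast\in\egp(z)$, and for each proper nonempty $B$ with $z(B)<\infty$ the $B$-summand contributes $\lambda_B x^B_B < \lambda_B z(B)$ while every other summand contributes at most $\lambda_A z(B)$, so $x^\ast_B < z(B)$.

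To conclude $\dim\egp(z)=|I|-1$ I would check that a neighbourhood of $x^\ast$ inside the hyperplane $\{\sum_i x_i=z(I)\}$ lies in $\egp(z)$: for $y$ in that hyperplane the equality defining $\egp(z)$ holds automatically, the constraints $y_A\le z(A)$ with $z(A)=\infty$ are vacuous, and each of the finitely many constraints with $\emptyset\ne A\ne I$ is strict at $x^\ast$, hence survives in a small ball by continuity. Thus $\egp(z)$ contains an open subset of the $(|I|-1)$-dimensional affine space $\{\sum_i x_i=z(I)\}$.

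For general $z$, write $z=z_1\cdots z_r$ with each $z_i\in\SM[I_i]$ indecomposable and $I=I_1\sqcup\cdots\sqcup I_r$ (Lemma \ref{lem:pre-zprod}). Iterating Lemma \ref{lem:egp-prodsum} gives $\egp(z)=\egp(z_1)\times\cdots\times\egp(z_r)$, so by the indecomposable case $\dim\egp(z)=\sum_{i=1}^r(|I_i|-1)=|I|-r$; and $\ctop(z)$ is by definition the disjoint union of the coarse topologies on the blocks $I_i$, so its number of components is $r=|\ctop(z)|$. The only genuinely delicate step is extracting the single strictly interior point $x^\ast$ from the face-by-face statement of Proposition \ref{pro:egp-top}; once the averaging trick is in place, the passage to dimension is the routine remark that only finitely many linear inequalities intervene and the strict ones stay strict near $x^\ast$.
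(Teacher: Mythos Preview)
Your proof is correct and follows essentially the same approach as the paper's: invoke Proposition \ref{pro:egp-top} to see that each $x_A = z(A)$ cuts out a proper face, produce a single point avoiding all these faces, perturb within the hyperplane, and then handle general $z$ via the product decomposition. The paper is simply terser, writing ``Hence there will be some $x$\ldots'' and ``The only restriction on perturbing $x$\ldots'' where you spell out the convex-combination and open-ball arguments explicitly.
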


\begin{proof} When $z$ is indecomposable, 
  for each such $A$ the equation $x_A = z(A)$ cuts out a proper face
  of $\egp(z)$. Hence there will be some $x \in \egp(z)$ such that
  $x_A < z(A)$ for every $A \neq \emptyset, I$.
  The only restriction on perturbing $x$ is the equation
  $\sum_{i \in I} x_i = z(I)$. 
  Thus the dimension of $\egp(z)$ is
  $|I|-1$.
  For general $z$ with the decomposition of Equation \ref{eq:prod-dec},
  by Lemma \ref{lem:egp-prodsum} the dimension of $\egp$
  is $\sum_{i = 1}^r (|I_i|-1)$.
  \end{proof}


\section{Classes of extended generalized permutahedra}
\label{sec:egpclass}

We recall two constructions of classes of generalized permutahedra.
These are by using Minkowski sums and the nestohedra from building sets.
Building sets and nestohedra are discussed comprehensively in
\cite{AA2017, PRW2009, Pos2009}. They are also discussed in the more
general context of geometric lattices in \cite{FeSt2004}, while
their origin stem from \cite{DC-Proc}.

\subsection{Construction by Minkowski sums}
A special class of EGP's arise as follows.
For $J \sus I$ let the simplex $\Delta_J \sus \RR J \sus \RR I$ be given
by:
\[\Delta_J =\left \{ \sum_{j \in J} \alpha_j e_j \, | \, \alpha_j \geq 0,
  \sum_{j \in J} \alpha_j = 1 \right\}. \]
Given a Boolean
function to the non-negative real numbers $y : \pow(I) \pil \RR_{\geq 0}$,
the Minkowski sum
\begin{equation} \label{eq:EGP-Mink} \sum_{J \sus I} y_j \Delta_J
  \end{equation}
  is a generalized permutahedron.
As such it corresponds to a submodular function as we now
explain. For $i \in I$ let $\one_i$ be the dual basis element of $e_i$.
For $A \sus I$ consider the linear function $\fone_A = \sum_{a \in A}
\one_a$ on
$\RR I$, so $\fone_A(\sum_{i\in I} \alpha_i e_i) = \sum_{i \in A}\alpha_i$. 
The value of this on
a point $p \in y_J \Delta_J$ is $0$ if $A \sus J^c$, it is $y_J$ if
$J \sus A$ and otherwise
$\fone_A(p)$ may take any value in $[0, y_J]$ as $p$ varies.
In particular if $A \cap J \neq \emptyset$, then $\fone_A$
will take the maximum value $y_J$ on some point in $\Delta_J$.
On the Minkowski sum \eqref{eq:EGP-Mink}, the maximum value of $\fone_A$ is
$\sum_{J \cap A \neq \emptyset} y_J$. This is then the value
$z(A)$ of the associated submodular function \cite[Proposition 6.3]{Pos2009} (see also \cite[Proposition 19.1]{AA2017}).

\subsection{Building sets and nestohedra}
\label{subsec:egp-building}
A family of subsets $\cB$ of $I$ is
a {\it building set} on $I$ if it satisfies the following conditions:
\begin{itemize}
\item If $J,K \in \cB$ with $J \cap K \neq \emptyset$, then $J \cup K
  \in \cB$,
\item For every $i \in  I$, we have $\{i \} \in \cB$.
\end{itemize}
The maximal elements of $\cB$ form a partition of $I$, and they
are the {\it connected components} of $\cB$.

\begin{example}
A typical example of a building set arises from a simple graph $G = (I,E)$,
with vertices $I$. A set $J \sus I$ is in the building set if the
induced graph on $J$ is connected.
\end{example}


The generalized permutahedron $\Delta_{\cB}$ associated to
the building set $\cB$ is the Minkowski sum
\[ \DB = \sum_{J \in \cB} \Delta_J\]
and is called a {\it nestohedron}.
The faces of a nestohedron $\DB$ are transparently described via
the notion of 
{\it nested sets} or equivalently
$\cB$-{\it forests}. We recall this notion, but first define
the following.

\begin{definition}
  A preorder $P$ is a {\it forest}, if for each pair $p,q$ of
  incomparable elements in $P$,
  there is no element $r$ in $P$ with $r \geq p$ and
  $r \geq q$.
  A minimal element of $P$ is a {\it root element}. The bubble of such
  a $p$ is a {\it root bubble}.
\end{definition}

Given a building set $\cB$ on $I$, a $\cB$-{\it forest}  $N$
is a forest on $I$ such that:

\begin{itemize}
\item For any element $p \in I$, the upset $\uai N p$ is in $\cB$.
\item If $p_1, \ldots, p_k$ are pairwise incomparable elements and
  $k \geq 2$,
  the union $\cup_{i = 1}^k \uai N {p_i}$ is {\it not} in $\cB$,
\item The {connected} components of $\cB$ are in bijection with the
  root bubbles of $N$. Any such component is 
  $\uai N r$ for a root element $r$.
 
\end{itemize}

\noindent {\bf Note.}
The family of upsets of a $\cB$-forest form what is called a {\it nested set}
for the building set.

\medskip
By \cite{Pos2009}, \cite{FeSt2004}, the faces of the given nestohedron
$\DB$ are in bijection with the $\cB$-forests. The vertices
of $\DB$ correspond to the $\cB$-forests which are posets (i.e. every
bubble has cardinality one). This is what is usually called  a rooted
forest, and each component is then a rooted tree.
Section \ref{sec:bij} extends fully this description
of faces:
For {\it any} extended generalized permutahedron in $\RR I$ we give a natural
bijection between its faces and classes of preorders in $\Pre(I)$.


\newcommand{\bfan}{{\cB}}

\section{The braid fan}
\label{sec:braid}

The braid fan in $\RR^I$ has maximal cones in bijection with total
orders on the set $I$. The somewhat amazing thing is that
cones of the braid fan (defined below) are {\it precisely in bijection}
with preorders on the set $I$. The braid fan is a standard object,
see \cite[Chapter 10]{AM2010} for a detailed consideration.
It relates also to representation theory
of Lie groups and Coxeter groups, where its maximal face cones are the
Weyl chambers for type A representations.
Our interest in the braid fan is because normal fans of generalized
permutahedra are coarsenings of the braid fan.

\medskip
Recall the following.
The vector space $\RR I$ has basis $I$, and we write $e_i$ for $i$.
Elements in $\RR I$ are then $\sum_{i \in I} a_i e_i$ where $a_i \in \RR$.
Denote by $\RR^I = (\RR I)^*$ the dual space of linear maps $\RR I \pil \RR$.
The elements $y$ of $\RR^I$ are called {\it directions} for $\RR I$ and
identify as set maps $y : I \pil \RR$. 
We write $\{ \one_i \, | \, i \in I \}$ for the dual basis of
$\{e_i \, | \, i \in I \}$. Points in $\RR^I$ may be written
$b = \sum_{i \in I} b_i \one_i$ where $b_i \in \RR$.
We write $\fone = \sum_{i \in I} \one_i$.
The coordinate function $x_i$ on $\RR^I$ is given by $x_i(b) = b_i$.
Since the dual space $(\RR^I)^*$ naturally identifies as $\RR I$, the coordinate
$x_i$ naturally identifies as $e_i$.

\medskip
For a good, concise and precise outline of the definitions of
cones, polytopes, fans, normal fans of polytopes and more
(in less than two pages),
we refer to \cite[Section 2]{PRW2009}.
The {\it braid arrangement} on $\RR^I$ is defined by the hyperplanes $x_i
= x_j$, for $i, j$ distinct in $I$. It induces a fan on $\RR^I$, the
{\it braid fan} $\bfan_I$. The maximal face cones of this fan,
called {\it chambers}, correspond to total orders on $I$. If
$i_1 < i_2 < \cdots < i_n$ is such a total order,
the corresponding chamber is the set
of points $y$ in $\RR^I$ such that
\[ y_{i_1} \geq y_{i_2} \geq \cdots \geq y_{i_n}. \]
In general the face cones of the braid fan are given by {\it total preorders}
on $I$. A total preorder $L$ on $I$ gives a face cone $k(L)$ consisting
of points in $\RR^I$ such that $y_i \geq y_j$ if $i \leq j$ for $L$.
In particular, for each bubble $C = \{\ell_1, \ell_2, \cdots, \ell_r\}$
  of $L$ we have
\[y_{\ell_1} = y_{\ell_2} = \cdots = y_{\ell_r}. \]
(In the literature a total preorder on $I$ is often called a
set composition of $I$:
  It is an ordered sequence $(C_1, C_2, \ldots, C_r)$ of disjoint
  non-empty subsets of $I$, whose union is $I$, \cite[Sec.10.1.2]{AM2010}.)

  \medskip
  Each hyperplane $x_i = x_j$ determines two half spaces $x_i \geq x_j$
  and $x_j \geq x_i$. Following \cite[Sec.10.2.4]{AM2010} and
  \cite[Sec.3.4]{PRW2009} a {\it cone} of the braid arrangement is an
  intersection of such half spaces.
  A preorder $P$ on $I$ induces such a cone $k(P)$ in $\RR^I$
  given by $x_i \geq x_j$ if $i \leq j$.
  Proposition 3.5 of \cite{PRW2009} comprehensively describes cones
  in the braid arrangement and their bijection to preorders.
  We state an essential part of it below, and also add an equivalent
  description of braid cones.

  \begin{proposition} \hskip 1mm {} \label{pro:braid-cone}

    \begin{itemize}
\item[a.] The correspondence
    \[ \Pre(I) \longrightarrow \text{cones of braid arrangement in } \RR^I,
        \quad P \mapsto k(P) \]
      is a bijection.
\item[b.] The maximal face cones
  of $k(P)$ are the $k(L)$ as $L$ ranges over the linear extensions
  of $P$.
\item [c.] Every cone in $\RR^I$ which is a union of face cones of the braid
      fan $\bfan_I$, is  $k(P)$ for a preorder $P$.
    \end{itemize}
\end{proposition}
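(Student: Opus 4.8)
The plan is to prove the three parts essentially by organizing around the combinatorial data attached to a cone. The key observation is that a cone $K$ of the braid arrangement in $\RR^I$—i.e. an intersection of half-spaces $x_i \geq x_j$—is completely determined by the set of \emph{weak inequalities} $x_i \geq x_j$ that hold on all of $K$. First I would define, for a cone $K$, the relation $P_K$ on $I$ by $i \leq_{P_K} j$ iff $x_i \geq x_j$ holds throughout $K$ (note the order reversal, matching the convention $k(P)$ above). The first step is to check $P_K$ is a preorder: reflexivity is immediate, and transitivity holds because if $x_i \geq x_j$ and $x_j \geq x_k$ on $K$ then $x_i \geq x_k$ on $K$. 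This gives a map from cones to $\Pre(I)$; one then checks it is a two-sided inverse to $P \mapsto k(P)$. That $K \supseteq k(P_K)$ is clear since $k(P_K)$ is cut out by a (possibly larger) family of the same inequalities that hold on $K$; that $K \subseteq k(P_K)$ is clear since every defining inequality of $K$ is one of the inequalities holding on $K$, hence recorded in $P_K$. The only real content is that $P_{k(P)} = P$, i.e.\ that the inequalities $x_i \geq x_j$ holding on all of $k(P)$ are exactly those with $i \leq_P j$. One direction is definitional; for the converse I would exhibit, for each pair $(i,j)$ with $i \not\leq_P j$, an explicit point of $k(P)$ violating $x_i \geq x_j$—e.g.\ perturb the "generic" point coming from a linear extension of $P$ (which exists and lies in $k(P)$, cf.\ Lemma~\ref{lem:egp-lin}-style triangular construction on the braid side) by a small amount separating the bubbles of $i$ and $j$ in the appropriate direction. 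This proves part (a).

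For part (b), the faces of $k(P)$ are obtained by turning some of the defining inequalities $x_i \geq x_j$ into equalities; the face is nonempty (a genuine face of the braid fan, not a lower-dimensional slice) exactly when the resulting relation—the transitive closure of $P$ together with the imposed symmetries—is still a preorder whose associated cone has that face as a chamber. The maximal proper-dimensional such faces are the chambers $k(L)$ contained in $k(P)$; and $k(L) \subseteq k(P)$ iff every inequality of $P$ holds on $k(L)$ iff $P \preceq L$, while $L$ is a chamber iff $L$ is total. Requiring additionally that $k(L)$ has the same dimension-counting behaviour as a maximal face of $k(P)$ forces $\mathbf b(P) = \mathbf b(L)$: any bubble of $P$ must remain a single equivalence block in $L$, since collapsing within a bubble is forced by $P$ and cannot be "undone," and merging two distinct bubbles of $P$ would drop the dimension. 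Hence the maximal face cones of $k(P)$ are precisely the $k(L)$ with $L$ a linear extension of $P$ in the sense of Definition~\ref{def:pre-lin}. I would phrase this cleanly using the Galois-type correspondence: linear extensions of $P$ $\leftrightarrow$ total preorders $L$ with $P \preceq L$ and $\mathbf b(P) = \mathbf b(L)$ $\leftrightarrow$ maximal chambers inside $k(P)$.

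For part (c), suppose $C \subseteq \RR^I$ is a union of face cones of $\bfan_I$ and is convex (a "cone" in the polyhedral sense—I would make this convexity hypothesis explicit, as the statement intends). Since $C$ is a convex union of closed braid-fan cells, it is itself closed, and being a union of cells it is invariant under "coarsening": if $y \in C$ lies in the relative interior of some braid cell $k(L)$, then all of $k(L) \subseteq C$. Define $P$ on $I$ by $i \leq_P j$ iff $x_i \geq x_j$ for all $y \in C$; as in part (a), $P$ is a preorder and $C \subseteq k(P)$. For the reverse inclusion, I would argue that any chamber $k(L)$ with $P \preceq L$ must be contained in $C$: pick the generic point of such an $L$; if it failed to lie in $C$, then since $C$ is a union of cells and convex, some inequality $x_i \geq x_j$ strict on that chamber's interior would have to hold throughout $C$, forcing $i \leq_P j$, hence $i \leq_L j$, contradicting strictness. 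Thus $C$ contains every maximal chamber of $k(P)$, and since $k(P)$ is the union of those chambers (part (b)), $k(P) \subseteq C$, giving $C = k(P)$. The main obstacle I anticipate is pinning down the dimension bookkeeping in part (b)—precisely why "no merging of $P$-bubbles" is equivalent to "maximal among faces"—and stating part (c) with the correct hypotheses (convexity), since a naïve reading could be false; once convexity is assumed, the argument is a routine consequence of parts (a) and (b) plus closedness under coarsening.
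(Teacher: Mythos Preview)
Your treatment of part (a) matches the paper's approach closely. For part (b) your dimension-counting framing is correct in spirit, but the paper proceeds more directly: given any $x \in k(P)$, it defines the total preorder $T_x$ by $i \leq j$ iff $x_i \leq x_j$, observes $P \preceq T_x$, and then refines within each $T_x$-bubble to produce a linear extension $L$ of $P$ with $x \in k(L)$. This shows $k(P) = \bigcup_L k(L)$ without having to analyze faces or dimensions, and avoids the bookkeeping you flag as an obstacle.

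The substantive issue is in part (c). Your proposed step ``if the generic point of $k(L)$ failed to lie in $C$, then \ldots\ some inequality $x_i \geq x_j$ strict on that chamber's interior would have to hold throughout $C$'' is not justified. Convex separation gives you \emph{some} linear functional separating the point from $C$, but there is no reason a priori that this functional can be taken to be a braid form $x_i - x_j$; the fact that $C$ is a union of braid cells does not by itself force the separating hyperplane to be a braid hyperplane. This is a genuine gap, not a routine detail.

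The paper closes this gap by a different mechanism: a \emph{switching} argument. First it builds a generic point $y \in C$ (as a general linear combination of points $y^{CD} \in C$ witnessing $y^{CD}_C \neq y^{CD}_D$ for each pair of distinct $P$-bubbles) lying in the interior of some chamber $k(L) \subseteq C$ with $\mathbf b(L) = \mathbf b(P)$. Then, whenever $L$ orders two $P$-incomparable bubbles $C_i > C_{i+1}$ adjacently, it takes a boundary point $\bar y \in k(L)$ with $\bar y_{C_i} = \bar y_{C_{i+1}}$ and a witness $x \in C$ with $x_{C_{i+1}} > x_{C_i}$; the convex combination $\epsilon x + \bar y$ lies in $C$ (by convexity) and in the interior of the switched chamber $k(L')$, so $k(L') \subseteq C$ since $C$ is a union of cells. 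Iterating over all such adjacent transpositions reaches every linear extension of $P$, and then part (b) gives $k(P) \subseteq C$. This switching idea is the missing ingredient in your proposal.
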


Parts a. and b. are found in \cite[Cor.13.8]{AM2010}.
A bit earlier the proof of part a is found in \cite[Prop.3.5]{PRW2009}.
Part b is also stated
after that proposition but the argument not given quite explicitly.
We show here part c.

\begin{proof}
  \ignore{
\noindent{a.} For a preorder $P$, it is clear that $k(P)$ is a cone of the braid
  arrangement. Conversely, given a cone $K$ of the braid arrangement which
  is the intersection of half planes $x_i \geq x_j$ where
  $(i,j)$ ranges over a subset of $I \times I$,
  let $P$ be the transitive closure of this relation.
  Then $k(P) = K$.

  \medskip
\noindent{b.} Note that if $P \preceq Q$ then $k(Q) \sus k(P)$. The inclusion $k(L) \sus k(P)$ therefore holds for each linear extension
  $L$ of $P$. In fact $k(P)$ is the union of all cones $k(L)$ where $L$ runs over the linear extensions of $P$:
  Indeed, given a point $x \in k(P)$, the total preorder $T_x$ on $I$
 given by $i \leq j$ if $x_i \leq x_j$, fulfills $P \preceq T_x$.
 Each bubble of $T_x$ is then a union of bubbles of ${\mathbf b}(P)$. So
 we get induced a partition of ${\mathbf b}(P)$.
 For a part $B$ of this partition, let $P_B$ be $\cup_{{\mathbf b}(P) \in B}
 {\mathbf b}(P)$, a subset of $P$. Put a linear order on the bubbles of
 $B$ giving a linear extension of $P_B$. Putting these total preorders
 together, we get a total preorder $L$ such that $P \preceq L \preceq T_x$.
 So $x \in k(T_x) \sus k(L) \sus k(P)$. 
  Thus $k(P)$ is the union of $k(L)$ over the linear
  extensions $L$ of $P$.
  }
 \noindent{c.}
  Given a cone $K$ as in c., let us show that $K = k(P)$ for a preorder $P$.
  Let $P$ be the preorder associated to $K$ as in the argument for
  part a, so $K \sus k(P)$.
  Define an equivalence
  relation on $I$ by $i \sim j$ if $y_i = y_j$ for every $y \in K$.
  The equivalence classes are the bubbles of $P$.
  For every pair $C,D$ of distinct bubbles there is some point $y^{CD} \in K$
  with $y^{CD}_{C} \neq y^{CD}_{D}$.
  Let $y = \displaystyle \sum_{C,D \in \mathbf b(P)} \alpha_{CD} y^{CD}$ be a general linear combination,
  where $\alpha^{CD} \in \RR$. Then $y_{C} \neq y_{D}$ for each distinct
  pair of bubbles. By assumption of item c., $y$ is in a cone $k(L) \sus K$.
  Then $\mathbf b(L) =\mathbf  b(P)$, and the dimensions of $k(L)$, $K$, and $P$ all coincide with the cardinal $\vert\mathbf b(P)\vert$. The total preorder $L$ yields a total preorder on the bubbles,
  say
  \[ C_1 > C_2 > \cdots > C_{i-1} > C_i > C_{i+1} > C_{i+2} > \cdots > C_d. \]
  Suppose $C_i$ and $C_{i+1}$ are not comparable in $P$. Switching
  $C_i$ and $C_{i+1}$, there is then
  a linear extension $L^\prime$ of $P$ given by
  \[ C_1 > C_2 > \cdots > C_{i-1} > C_{i+1} > C_{i} > C_{i+2} > \cdots > C_d. \]
  We claim that $k(L^\prime) \sus K$. Since $k(L)$ is a closed cone, there
  is a point $\overline{y} \in k(L)$ with
  \[ \overline{y}_{C_1} >  \cdots > \overline{y}_{C_{i-1}} >
    \overline{y}_{C_i} =   \overline{y}_{C_{i+1}} >  \overline{y}_{C_{i+2}}
    > \cdots >  \overline{y}_{C_d}. \]
  There is also a point $x \in K$ with $x_{C_{i+1}} > x_{C_i}$.
  A point $y^\prime = \epsilon x + \overline{y}$ is then in $K$ and for
  $\epsilon$ sufficiently small
  \[ y^\prime_{C_1} >  \cdots > y^\prime_{C_{i-1}} >
    y^\prime_{C_{i+1}} >   y^\prime_{C_{i}} >  y^\prime_{C_{i+2}}
    > \cdots >  y^\prime_{C_d}. \]
  Then $K$ intersects the interior of $k(L^\prime)$ and so $k(L^\prime)$ is
  contained in $K$. We may in this way continue and switch bubbles, such
  that for every linear extension $\tilde{L}$ of $P$ we have $k(\tilde{L})
  \sus K$. Hence the union of all such cones, over the linear extensions,
  are in $K$ and since this union is $k(P)$, we get $K = k(P)$. 
\end{proof}

All face cones of the braid fan contain the line $\RR \fone$.
It is therefore equivalent and convenient to consider the {\it reduced} braid
fan in the quotient space $\RR^I /\RR \fone$. All face cones of the
reduced braid fan then become {\it pointed}, i.e. their minimal face is the
origin.

\medskip
Given any polyhedron $\Pi$ in $\RR I$,
the points of $\Pi$ for which a direction in $\RR^I$ reaches maximal value
form a face of $\Pi$, the maximal face for the direction.
We consider two directions
equivalent if they have the same maximal face $F$ of $\Pi$. Such
an equivalence class of directions
form an open cone.
The closure of this is the cone $k(F)$ associated to the face $F$.
These closed cones form a fan, the {\it normal fan} 
of the polyhedron. Note that smaller faces $F$ correspond to larger cones
$k(F)$.


\begin{figure}
\begin{center}
\begin{tikzpicture}[scale=1, vertices/.style={draw, fill=black, circle,
inner sep=1.5pt}]

\draw [help lines, white] (-2,-1) grid (3,2);
        \draw[black, ->] (-2,0) -- (3.5,0) node [anchor=west] {$x_a$};
        \draw[black, ->] (0,-2) -- (0,3.5) node [anchor=west] {$x_b$};

\coordinate (a) at (2,1);
\coordinate (b) at (1,2);

\node[vertices] at (a) {}; 
\node[vertices] at (b) {};
\node[anchor=north] at (a) {$(2,1)$}; 
\node[anchor=south] at (b) {$(1,2)$};

\draw[black, very thick] (a)--(b);

\end{tikzpicture}
\end{center}

 \caption{One-dimensional permutahedron} 
        \label{fig:perm-1}
\end{figure}
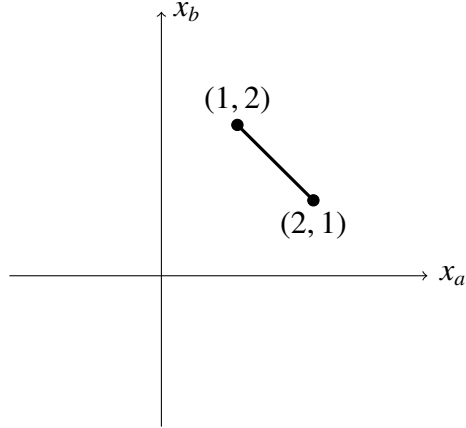


\begin{figure}
\begin{center}
\begin{tikzpicture}[scale=0.7, vertices/.style={draw, fill=black, circle,
inner sep=1.5pt}]

\draw [help lines, white] (-2,-1) grid (3,2);
        \draw[black, ->] (-3.5,0) -- (3.5,0) node [anchor=west] {$y_a$};
        \draw[black, ->] (0,-3.5) -- (0,3.5) node [anchor=west] {$y_b$};

\coordinate (a) at (-3,-3);
\coordinate (b) at (3,3);
\coordinate (c) at (-5,-1);
\coordinate (d) at (1,5);
\coordinate (e) at (-1,-5);
\coordinate (f) at (5,1);

\draw[black, very thick] (a)--(b);

\draw[white, fill=gray, fill opacity = 0.1] (c)--(d)--(f)--(e) ;


\coordinate (p) at (-1.4 + 6.8,0);
\node at (p) {$\rightsquigarrow$}; 

\coordinate (x) at (-1.4 + 8,1.4);
\coordinate (y) at (1.4+8,-1.4);
\coordinate (z) at (0+8,0);

\node[vertices] at (z) {}; 
\draw[black, very thick] (x)--(y);
\end{tikzpicture}

\end{center}
 \caption{Fan of one-dimensional permutahedron $\rightsquigarrow$ reduced fan }
        \label{fig:fanperm-1}
\end{figure}
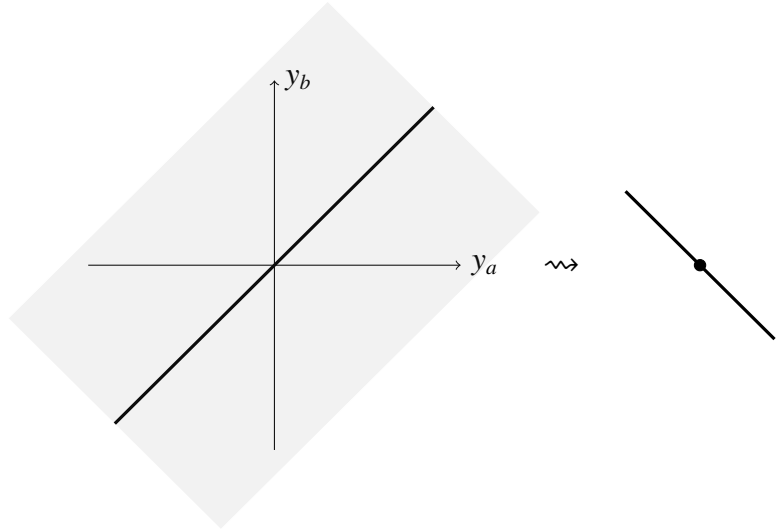

Given fans $U$ and $V$ of $\RR^I$, the fan $U$ is a {\it coarsening} of $V$, if
\begin{itemize}
  \item Each cone of $V$ is contained in a cone of $U$
 \item  Each cone of $U$ is a union of cones of $V$.
 \end{itemize}

 Given an extended generalized permutahedron defined by a submodular
 function $z$, it is in the hyperplane $\sum_{i \in I} x_i = z(I)$.
 Each of its face cones $k(F)$ in its normal fan
 then contains the line $\RR \fone$.
 We may therefore consider its normal fan in $\RR^I/\RR \fone$.

 The generalized permutahedra are precisely the polytopes whose
 normal fan is a coarsening of the braid fan, \cite[Thm.12.5]{AA2017}.
 Polyhedra whose normal fan is a coarsening of a {\it subfan} of the braid
 fan coincide with the class of extended generalized polyhedra,
\cite[Thm.12.7]{AA2017}.
 (But note that a coarsening of the braid fan, may not
 be the normal fan of generalized permutahedron \cite[Section 3]{MW2009}.) 

 \begin{remark} \label{rem:braid-coarse}
   In \cite{MW2009} they define a convex rank test to 
be a coarsening of the braid fan. When the coarsening is associated
to a submodular function, they call this a submodular rank test. Not
all coarsenings of the braid fan are submodular rank tests. 
They show
coarsenings are equivalent to semi-graphoids on $I$, which is an abstraction
of Conditional Independence Statements for random variables. 
In \cite{PRW2009} such a coarsening corresponds to
a {\it complete fan of posets}, since
each braid cone of full dimension $|I|$ corresponds to a poset
(see also Remark \ref{rem:bij-linext}).
\end{remark}

 \begin{example}
   Figure \ref{fig:perm-1} gives the one-dimensional permutahedron.
   It comes from the submodular function $z : \pow\{a,b\} \pil \RR$
   where $z(\{a\}) = 2 = z(\{b\})$ and $z(\{a,b\}) = 3$.
   Figure \ref{fig:fanperm-1} gives its normal fan, which has two
   maximal face cones.
 \end{example}

 \begin{example}
   Figure \ref{fig:low-1} gives the extended generalized permutahedron
   of $\low_P: \pow\{a,b\} \pil \RR$ where $P$ is the poset given.
   This EGP is defined by the equations $x_a + x_b = 0$ and $x_a \leq 0$.
   Figure \ref{fig:fanlow-1} gives its normal fan, which har two cones,
   with dimensions two and one.
   \end{example}


\begin{figure}
\begin{center}
\begin{tikzpicture}[scale=1.2, vertices/.style={draw, fill=black, circle,
inner sep=1.5pt}]

\draw [help lines, white] (-2,-1) grid (3,2);
        \draw[black, ->] (-2,0) -- (3.5,0) node [anchor=west] {$x_a$};
        \draw[black, ->] (0,-2) -- (0,3.5) node [anchor=west] {$x_b$};

\coordinate (a) at (0,0);
\coordinate (b) at (1,2);
\coordinate (i) at (-4,4);

\node[vertices] at (a) {}; 

\draw[black, very thick] (a)--(i);

\coordinate (x) at (-5, -0.5);
\coordinate (y) at (-5, 0.5);
\coordinate (p) at (-5.4, 0);

\node[vertices] at (x) {};
\node[vertices] at (y) {};

\node[anchor = east] at (x) {$a$};
\node[anchor = east] at (y) {$b$};
\node[anchor = east] at (p) {$P$};

\draw[black, thick] (x)--(y);

\end{tikzpicture}
\end{center}
 \caption{The extended generalized permutahedron $\egp(\low_P)$}
        \label{fig:low-1}
\end{figure}
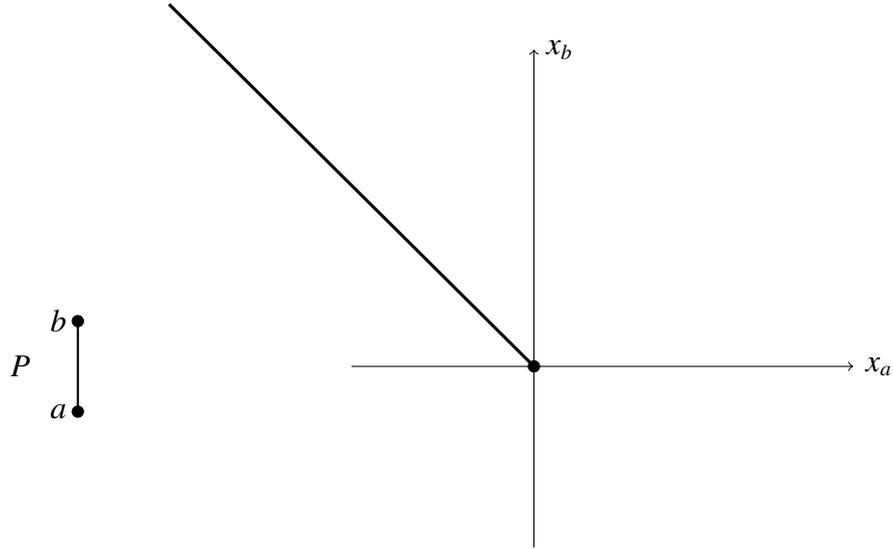


\begin{figure}
\begin{center}
\begin{tikzpicture}[scale=0.9, vertices/.style={draw, fill=black, circle,
inner sep=1.5pt}]

\draw [help lines, white] (-2,-1) grid (3,2);
        \draw[black, ->] (-3.5,0) -- (3.5,0) node [anchor=west] {$y_a$};
        \draw[black, ->] (0,-3.5) -- (0,3.5) node [anchor=west] {$y_b$};

\coordinate (a) at (-3,-3);
\coordinate (b) at (3,3);
\coordinate (c) at (-5,-1);
\coordinate (d) at (1,5);
\coordinate (e) at (-1,-5);
\coordinate (f) at (5,1);

\draw[black, very thick] (a)--(b);

\draw[white, fill=gray, fill opacity = 0.1] (a)--(e)--(f)--(b) ;

\coordinate (x) at (-4, -0.5);
\coordinate (y) at (-4, 0.5);
\coordinate (p) at (-4.4, 0);

\node[vertices] at (x) {};
\node[vertices] at (y) {};

\node[anchor = east] at (x) {$a$};
\node[anchor = east] at (y) {$b$};
\node[anchor = east] at (p) {$P$};

\draw[black, thick] (x)--(y);

\coordinate (p) at (-1.4 + 7.0,0);
\node at (p) {$\rightsquigarrow$}; 

\coordinate (x) at (-1.4 + 7.7,1.4);
\coordinate (y) at (1.4+7.7,-1.4);
\coordinate (z) at (0+7.7,0);

\node[vertices] at (z) {}; 
\draw[black, very thick] (z)--(y);

\end{tikzpicture}
\end{center}
 \caption{Fan and reduced fan of $\egp(\low_P)$}
        \label{fig:fanlow-1}
      \end{figure}


\section{Subdivisions and contractions of preorders}
\label{sec:poset}

We consider two significant order relations $\lhd$ and $\btl$ between
preorders. They occur a number of places in the literature
and under different terminologies, see Subsection \ref{subsec:poset-prop}.
They are fundamental order
relations for preorders, and we introduce them in a natural way from
a Galois correspondence
on the poset $\Pre(X)$. The order relation $\btl$ is of
significance for us here, as it corresponds to the inclusion of faces of
EGP's, Section \ref{sec:bij}.

Recall that for any set $X$, the set $\Pre(X)$  is partially ordered
by refinement of preorders. In fact, it is a lattice with meet
$\wedge$ and join $\vee$ given by:

\begin{itemize}
\item $x \,(\le_1\wedge\le_2) \,y$ if and only if $x\le_1 y$ and $x\le_2 y$,
\item $x \,(\le_1\vee \le_2) \,y$ if and only if there exist
$x_1,\ldots,x_{2k-1}\in X$ such that 
\[ x=x_0\le_1x_1\le_2x_2\le_1\cdots \le_1 x_{2k-1}\le_2 x_{2k}= y. \]
\end{itemize}
The unique maximal element in $\Pre(X)$
is the coarse preorder for which $x\le y$ for any $x,y\in X$, and the unique minimal element is the trivial (or discrete) preorder for which $x\le y$ implies $x=y$.





\subsection{A Galois correspondence on $\Pre(X)$}
Fix a preorder $P$ on $X$, and let $\da P$
be the set of all preorders $R$ on $X$ such that $R\preceq P$,
and $\ua P$ the set of preorders $Q$ on $X$ such that $P \preceq Q$.
Let us consider the two order preserving
maps
\begin{equation} \label{eq:pos-FG}
  \da P \,  \bihom{F_P}{G_P} \, \ua P \, ,
\end{equation}
given by
\[ F_P(R):=P\vee R^{\smop{op}}, \quad G_P(Q):=P\wedge Q^{\smop{op}}. \]

\begin{lemma}\label{adjunction}
For any three preorders $P,Q,R$ on $X$ such that $R\preceq P\preceq Q$, the following equivalence holds:
\begin{equation}
R\preceq P\wedge Q^{\smop{op}}\Longleftrightarrow P\vee R^{\smop{op}}\preceq Q.
\end{equation}
In other words,
\begin{equation}\label{adj-two}
R\preceq G_P(Q)\Longleftrightarrow F_P(R)\preceq Q.
\end{equation}
So \eqref{eq:pos-FG} is a {\it Galois correspondence} \cite[Paragraph 7.24]{DaPri2002}
between posets.
\end{lemma}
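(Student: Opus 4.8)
The plan is to prove the equivalence $R \preceq P \wedge Q^{\op} \Longleftrightarrow P \vee R^{\op} \preceq Q$ directly from the lattice formulas for $\wedge$ and $\vee$, exploiting the hypothesis $R \preceq P \preceq Q$ throughout. I would first unwind both sides into statements about the underlying relations.

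\medskip
\noindent\emph{Forward direction.} Suppose $R \preceq P \wedge Q^{\op}$, i.e.\ for all $x,y$, if $x \leq_R y$ then $x \leq_P y$ and $x \leq_{Q^{\op}} y$ (equivalently $y \leq_Q x$). The first conjunct just says $R \preceq P$, which we already have. The useful content is: $x \leq_R y \Rightarrow y \leq_Q x$, i.e.\ $R^{\op} \preceq Q$. Now I must show $P \vee R^{\op} \preceq Q$. By the join formula, a pair $x \leq_{P \vee R^{\op}} y$ comes from a zig-zag chain $x = x_0 \leq_P x_1 \leq_{R^{\op}} x_2 \leq_P x_3 \leq_{R^{\op}} \cdots \leq_{R^{\op}} x_{2k} = y$. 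Using $P \preceq Q$ on the $\leq_P$-steps and $R^{\op} \preceq Q$ on the $\leq_{R^{\op}}$-steps, every step gives $x_i \leq_Q x_{i+1}$, and transitivity of $Q$ yields $x \leq_Q y$. Hence $P \vee R^{\op} \preceq Q$.

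\medskip
\noindent\emph{Reverse direction.} Suppose $P \vee R^{\op} \preceq Q$. Since $P \preceq P \vee R^{\op}$ and $R^{\op} \preceq P \vee R^{\op}$ (both are below the join), composing with the hypothesis gives in particular $R^{\op} \preceq Q$, i.e.\ $x \leq_R y \Rightarrow y \leq_Q x$, equivalently $x \leq_R y \Rightarrow x \leq_{Q^{\op}} y$. Combined with the standing hypothesis $R \preceq P$, we get that $x \leq_R y$ implies both $x \leq_P y$ and $x \leq_{Q^{\op}} y$, which by the meet formula is exactly $R \preceq P \wedge Q^{\op}$. This completes the equivalence, and the Galois correspondence assertion \eqref{adj-two} is just a restatement.

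\medskip
\noindent I do not expect any serious obstacle here: both directions are formal manipulations with the explicit meet/join descriptions, and the only subtlety is keeping straight the passage between a preorder and its opposite (so that $x \leq_{Q^{\op}} y \iff y \leq_Q x$) and remembering to invoke transitivity of $Q$ when collapsing the zig-zag chain in the join. One small point worth stating cleanly is that $F_P$ indeed lands in $\ua P$ and $G_P$ in $\da P$ — that is, $P \preceq P \vee R^{\op}$ and $P \wedge Q^{\op} \preceq P$ — which is immediate from the lattice structure and makes the maps in \eqref{eq:pos-FG} well defined; I would note this before proving the adjunction equivalence.
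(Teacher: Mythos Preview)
Your proof is correct and follows essentially the same approach as the paper: both arguments reduce the equivalence to the observation that, under the standing hypotheses $R\preceq P\preceq Q$, each side is equivalent to $R^{\op}\preceq Q$. Your version is in fact slightly more direct---you invoke the universal properties of $\wedge$ and $\vee$ (unpacked via the zig-zag description) immediately, whereas the paper routes the forward implication through the intermediate expression $P\vee(P^{\op}\wedge Q)$ and the reverse through $P\wedge(P^{\op}\vee R)$ before concluding.
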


\begin{proof}
From $R\preceq P\wedge Q^{\smop{op}}$ we get $R^{\smop{op}}\preceq P^{\smop{op}}\wedge Q$. Considering the join with $P$ of both sides we get
$$P\vee R^{\smop{op}}\preceq P\vee(P^{\smop{op}}\wedge Q).$$
Since both factors on the right side are $\preceq Q$ we obtain
$$P\vee R^{\smop{op}} \preceq Q.$$
The reverse implication is proven similarly: from $P\vee R^{\smop{op}}\preceq Q$ we get $P^{\smop{op}} \vee R\leq Q^{\smop{op}}$. Considering the meet with $P$ of both sides we get
$$P\wedge(P^{\smop{op}}\vee R)\preceq P\wedge Q^{\smop{op}}.$$
Since both factors to the left are $\succeq R$ we obtain
$$R\preceq P\wedge Q^{\smop{op}}.$$
\end{proof}


\noindent {\bf Notations:}
For $R$ in $\im G_P \sus \, \da P $
we write $R \lhd P$ and say $R$ is a {\it subdivision} of $P$. By the
properties of a Galois correspondence \cite[7.27]{DaPri2002} we have $R \lhd P$ iff
\[ R=G_PF_P(R)=P\wedge(P^{\smop{op}}\vee R). \]

For $Q$ in $\im F_P \sus \, \ua P$ we write $P \btl Q$ and say
$Q$ is a {\it contraction} of $P$. Again by the
properties of a Galois correspondence we have $P \btl Q$ iff
\[ Q=F_PG_P(Q)=P\vee(P^{\smop{op}}\wedge Q). \]

We denote
$$\underline P= \im  G_P = \{R\in\mop{Pre}(X),\, R\lhd P\},\hskip 12mm
\overline P= \im F_P = \{Q\in\mop{Pre}(X),\, P\blacktriangleleft Q\}.$$

\begin{corollary} \label{cor:pos-bij}
  The maps $F_P$ and $G_P$ restrict to inverse order preserving bijections
\begin{equation} \label{eq:pos-resFG}
  \underline P \,  \bihom{F_P}{G_P} \, \overline P,
\end{equation}
where $R \lhd P$ and $P \btl Q$ correspond if both the following hold:
\[ Q = F_P(R) = P \vee R^\op , \quad R = G_P(Q) = P \wedge Q^{\op}. \]
\end{corollary}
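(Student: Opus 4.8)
The plan is to derive Corollary \ref{cor:pos-bij} as a formal consequence of the Galois correspondence established in Lemma \ref{adjunction}, using only general order-theoretic facts about adjunctions. Recall that for any Galois correspondence (adjunction) $F_P \dashv G_P$ between posets, the composite $G_P F_P$ is a closure-like operator on $\da P$ and $F_P G_P$ on $\ua P$, and $F_P, G_P$ restrict to mutually inverse order isomorphisms between the fixed points of these composites. By definition, $\underline P = \im G_P$ is exactly the set of fixed points of $G_P F_P$ (an element is in the image of $G_P$ iff it is fixed by $G_P F_P$), and likewise $\overline P = \im F_P$ is the set of fixed points of $F_P G_P$; these identifications were already recorded in the Notations paragraph via \cite[7.27]{DaPri2002}. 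So the entire statement is an instance of \cite[Paragraph 7.27 or 7.28]{DaPri2002}.

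Concretely, I would proceed in the following steps. First I would observe that $F_P$ maps $\underline P$ into $\overline P$: if $R \in \underline P = \im G_P$, then $F_P(R) = F_P G_P F_P(R') $ for the relevant $R'$, but more directly $F_P(R) \in \im F_P = \overline P$ by definition. Symmetrically $G_P$ maps $\overline P$ into $\underline P$. Second, I would check that these restricted maps are mutually inverse: for $R \in \underline P$ we have $R = G_P F_P(R)$ (this is precisely the fixed-point characterization of $\underline P$ recalled just above the corollary), so $G_P \circ F_P = \id$ on $\underline P$; dually $F_P \circ G_P = \id$ on $\overline P$ since $\overline P$ consists of the fixed points of $F_P G_P$. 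Third, order-preservation is inherited from the already-noted fact that $F_P$ and $G_P$ are order preserving on the full domains $\da P$ and $\ua P$ (a subset of an ordered set with the induced order, and the restriction of an order-preserving map, is order-preserving). Finally, the explicit formulas $F_P(R) = P \vee R^{\op}$ and $G_P(Q) = P \wedge Q^{\op}$ are just the definitions of $F_P$ and $G_P$ from \eqref{eq:pos-FG}, so the stated correspondence $R \leftrightarrow Q$ with $Q = P \vee R^{\op}$, $R = P \wedge Q^{\op}$ is immediate from the two inversion identities.

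There is essentially no obstacle here: the only thing to be careful about is the bookkeeping of which composite has which fixed-point set and matching it to the definitions $\underline P = \im G_P$, $\overline P = \im F_P$. Since the text has already spelled out, in the Notations, that $R \lhd P$ iff $R = G_P F_P(R)$ and $P \btl Q$ iff $Q = F_P G_P(Q)$, the fixed-point descriptions are available without further argument, and the corollary follows by quoting the standard structure theorem for Galois connections (e.g. \cite[7.27]{DaPri2002}) together with these definitions. I would therefore keep the proof to two or three sentences, citing \cite{DaPri2002} for the general principle and pointing to the Notations paragraph for the concrete fixed-point identities.
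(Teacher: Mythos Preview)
Your proposal is correct and takes essentially the same approach as the paper: both invoke the standard fact for Galois correspondences that $F_P$ and $G_P$ restrict to inverse bijections between $\im G_P$ and $\im F_P$, citing \cite{DaPri2002}. Your write-up is somewhat more detailed than the paper's one-sentence proof, but the content is identical.
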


\begin{proof}
  It is a standard fact for Galois correspondences between posets, that
  the maps $F_P$ and $G_P$ give inverse bijections when we restrict
  each of them to the image of the other.
\end{proof}

The unique minimal element
in $\underline P$ is $P \wedge P^\op$ which is the bubble equivalence
relation $\mathbf b(P)$.
The unique maximal element of  $\overline P$ is $P \vee P^\op$
which is the component equivalence
relation $\mathbf c(P)$.

\begin{remark}
  By Corollary \ref{cor:poset-pc} below it could have seemed natural to
  call $R$ a partition of $P$ when $R \lhd P$. But if $P$ is a set,
  considered as a poset with the discrete preorder, the only $R$  with
  $R \lhd P$ is $R = P$, since each part of $R$ has to be connected.
 We therefore use ``subdivision'' to distinguish it from ``partition''.
 The terminology ``contraction'' for $P \btl Q$ is taken from
 \cite[Section 3]{PRW2009}.
\end{remark}


\begin{figure}
\begin{center}
\begin{tikzpicture}[inner sep=1pt, scale=1.3]
\draw [help lines, white] (-2,-1) grid (3,2);


\node (rr) at (-1.9 , 0.5) {$R:$};
\node (aa2) at (-1.3,0.8) {};
\node (cc2) at (-1.6, 0.3) {};
\node (bb2) at (-1,0.3) {};
\node (dd2) at (-1.3, -0.2) {};

\fill (aa2) circle (0.06);
\fill (cc2) circle (0.06);
\fill (bb2) circle (0.06);
\fill (dd2) circle (0.06);

\draw (aa2)--(bb2);
\draw (cc2)--(dd2);


\node (pp) at (-1.9+1.8, 0.5) {$P:$};
\node (aa) at (-1.3+1.8,0.8) {};
\node (cc) at (-1.6+1.8, 0.3) {};
\node (bb) at (-1+1.8,0.3) {};
\node (dd) at (-1.3+1.8, -0.2) {};

\fill (aa) circle (0.06);
\fill (cc) circle (0.06);
\fill (bb) circle (0.06);
\fill (dd) circle (0.06);

\draw (aa)--(cc); 
\draw (aa)--(bb);
\draw (bb)--(dd);
\draw (cc)--(dd);


\node (qq) at (-1.9 +3.6, 0.5) {$Q:$};
\node (aa3) at (-1.25+3.6,0.75) {};
\node (cc3) at (-1.55+3.6, 0.05) {};
\node (bb3) at (-1.05+3.6,0.55) {};
\node (dd3) at (-1.35+3.6, -0.15) {};

\filldraw[gray,thin] (aa3) circle (0.05);
\filldraw[gray,thin] (cc3) circle (0.05);
\filldraw[gray,thin] (bb3) circle (0.05);
\filldraw[gray,thin] (dd3) circle (0.05);

\node (ss3) at (-1.15+3.6,0.65) {};
\draw (ss3) circle (0.24);
\node (tt3) at (-1.45+3.6,-0.05) {};
\draw (tt3) circle (0.24);


\draw (-1.22+3.6,0.41)--(-1.39+3.6, 0.19);

\end{tikzpicture}


\caption{$R \lhd P$
  and corresponding $P \btl Q$}
\label{fig:RPQ}
\end{center}
\end{figure}


\begin{figure}
\begin{center}
\begin{tikzpicture}[inner sep=1pt, scale=1.3]
\draw [help lines, white] (-2,-1) grid (3,2);


\node (pp) at (-1.9, 0.5) {$P:$};
\node (aa) at (-1.3,0.8) {};
\node (cc) at (-1.6, 0.3) {};
\node (bb) at (-1,0.3) {};
\node (dd) at (-1.3, -0.2) {};

\fill (aa) circle (0.06);
\fill (cc) circle (0.06);
\fill (bb) circle (0.06);
\fill (dd) circle (0.06);

\draw (aa)--(cc); 
\draw (aa)--(bb);
\draw (bb)--(dd);
\draw (cc)--(dd);


\node (qq) at (-1.9 +1.8, 0.5) {$Q:$};
\node (aa3) at (-1.3+1.8,0.75) {};
\node (cc3) at (-1.4+1.8, 0.3) {};
\node (bb3) at (-1.2+1.8,0.3) {};
\node (dd3) at (-1.3+1.8, -0.15) {};

\fill (aa3) circle (0.05);
\filldraw[gray,thin] (cc3) circle (0.05);
\filldraw[gray,thin] (bb3) circle (0.05);
\fill (dd3) circle (0.05);

\node (ss3) at (-1.3+1.8,0.3) {};
\draw (ss3) circle (0.18);


\draw (aa3)--(-1.3+1.8, 0.5);
\draw (-1.3+1.8,0.1)--(dd3);

\end{tikzpicture}


\caption{$P \preceq Q$ but not 
  $P \btl Q$}
\label{Fig-TT}
\end{center}
\end{figure}

\subsection{Properties of the relations $\lhd$ and $\btl$}
\label{subsec:poset-prop}

The relations $\lhd$ and $\btl$ occur a number of places in the
  literature.
R. Stanley in \cite{Sta1986} introduces the order polytope of a
  poset $P$. The faces of this polytope are in bijection
  with posets $R \lhd P$. Such $R$ are
  called {\it partitions} of $P$ which are {\it connected and compatible}.
  Stanley
  attributes the description of the face structure to Geissinger
  \cite{Geiss1981}.

  Postnikov, Reiner and Williams \cite[Sec.3]{PRW2009} when $P \btl Q$ call
  $Q$ a {\it contraction} of $P$. They define $Q$ the same way as we do, those
  that can be written as $P \wedge R^\op$ for a sub-preorder
  of $P$.

  Ardila and Aguiar in \cite[Sec.15]{AA2017} call $R \lhd P$ a
  {\it positive subposet}
  (they only consider posets) a terminology borrowed from oriented matroids,
  see Definition \ref{def:pos-pos} below.

Fauvet, Foissy, and Manchon \cite{FFM2017} when $R \lhd P$ call $R$
{\it admissible} (they work with topologies rather than preorders) and
write $R \oprec P$, see Definition \ref{def:pos-adm}. The
corresponding $Q$ with $P \btl Q$ is called a {\it quotient} of $P$
and written $P/R$.

\begin{definition} \label{def:pos-adm}
By \cite[Definition 2.2]{FFM2017} for $R \preceq P$ the relation
$R\oprec P$ is defined by:
\begin{itemize}
\item The set of bubbles of $P \vee R^\op$ and $R \vee R^\op$ are the
  same. (Note that $R \vee R^\op$ is a totally disconnected preorder whose
  bubbles or equivalence classes are the connected components of $R$.)
\item The restrictions of $R$ and $P$ to the connected components of $R$ coincide.
\end{itemize}
\end{definition}

\begin{definition} \label{def:pos-pos}
  From \cite[Sec.15.1]{AA2017} {and \cite[Prop. 9.1.2]{BLSWZ1993}},
an $R \preceq P$
  (there done only for posets) is {\it a positive subpreorder} if
 for any loop
\begin{equation}\label{eq:poset-loop}
x=x_0\le_R x_1\ge_P x_2\le_R\cdots\le_R x_{2i-1}\ge_P x_{2i}\le_R\cdots \le_R x_{2k-1}\ge_P x_{2k}=x,
\end{equation}
the descending steps $\ge_P$ are actually $\ge_R$.
\end{definition}

\begin{proposition}\label{pro:poset-OTL}
  Given preorders $R\preceq P$ on $X$. The following are equivalent:
  \begin{itemize}
  \item[a.]$R$ is a positive sub-preorder of $P$,
\item[b.]  $R\oprec P$,
\item[c.] $R \lhd P$
 \end{itemize}
\end{proposition}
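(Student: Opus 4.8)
The plan is to prove the cycle of equivalences $a \Rightarrow c \Rightarrow b \Rightarrow a$ (or some convenient ordering), all under the standing hypothesis $R \preceq P$. The key technical objects are the equivalence relation $R \vee R^{\op} = \mathbf{c}(R)$ (whose classes are the connected components of $R$) and the contraction $F_P(R) = P \vee R^{\op}$, together with the Galois-correspondence characterization $R \lhd P \iff R = G_P F_P(R) = P \wedge (P^{\op} \vee R)$ recorded just before the statement.

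First I would unwind $R \lhd P$. Since $R \preceq P$ always gives $R \preceq P \wedge(P^{\op}\vee R)$, the condition $R \lhd P$ is precisely the reverse inequality: whenever $x \le_P y$ and $x \,(P^{\op}\vee R)\, y$ (i.e. $x,y$ lie in a common connected component of $R$, using that $P^{\op} \vee R$ is generated by alternating $R$- and $P^{\op}$-steps — one should first check $P^{\op}\vee R = R\vee R^{\op}$ as relations, which holds because $R\preceq P$ lets every $\le_P$-step be absorbed), then $x \le_R y$. I expect this reformulation — ``$R$ and $P$ have the same restriction to each connected component of $R$'' — to be exactly condition b (Definition \ref{def:pos-adm}), once one also notes that the first bullet of that definition, equality of bubbles of $P\vee R^{\op}$ and $R\vee R^{\op}$, is automatic here: $R \vee R^{\op} \preceq P \vee R^{\op}$, and conversely a $P$-step inside a connected component of $R$ cannot merge two components, so no new bubbles appear. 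So the equivalence $b \iff c$ should come out almost formally from these two observations plus the Galois-correspondence identity.

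For $a \iff c$: condition a says every descending $\ge_P$ step in a loop $x_0 \le_R x_1 \ge_P x_2 \le_R \cdots \ge_P x_{2k} = x_0$ is in fact a $\ge_R$ step. I would argue $a \Rightarrow c$ by taking $x \le_P y$ with $x,y$ in the same connected component of $R$; the component gives an $R$-path between them which, combined with the single step $y \ge_P x$, forms a loop, so by a that descending step is a $\ge_R$ step, i.e. $x \le_R y$ — which is c in the reformulated ``same restriction'' form. Conversely $c \Rightarrow a$: in any loop as in \eqref{eq:poset-loop}, each descending pair $x_{2i-1} \ge_P x_{2i}$ has its two endpoints joined by the remaining $R$-steps of the loop (going the other way around), hence in the same connected component of $R$; then c upgrades $\ge_P$ to $\ge_R$. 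The one subtlety to handle carefully is that ``same connected component of $R$'' must be read via $R \vee R^{\op}$ and one should make sure the reformulation of c as ``$R|_{\mathbf c} = P|_{\mathbf c}$ on each component $\mathbf c$ of $R$'' is literally equivalent to ``$x\le_P y$ and $x\sim_{\mathbf c(R)} y$ imply $x \le_R y$.''

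The main obstacle I anticipate is bookkeeping with the join $P \vee R^{\op}$: showing cleanly that, under $R \preceq P$, it equals $R \vee R^{\op}$ and that its bubbles are exactly the connected components of $R$ (no extra merging from $P$-steps). Once that lemma is in hand, all three conditions become paraphrases of ``$R$ and $P$ agree on each connected component of $R$'' and the equivalences are short. I would therefore isolate that claim as the first step of the proof and then run the three implications as above; none of the remaining steps should require more than chasing alternating paths.
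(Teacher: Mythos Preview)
Your proposal has a genuine gap: the key lemma you plan to isolate is false. You claim that under $R\preceq P$ one has $P^{\op}\vee R = R\vee R^{\op}$, equivalently that $x\le_{P^{\op}\vee R} y$ iff $x,y$ lie in the same connected component of $R$. Take $X=\{a,b\}$, $R$ the discrete preorder, and $P$ the total order $a<b$. Then $P^{\op}\vee R=P^{\op}$ has $b<a$, while $R\vee R^{\op}$ is discrete; these are not equal. Your justification ``$R\preceq P$ lets every $\le_P$-step be absorbed'' is off: in $P^{\op}\vee R$ the $P$-steps are $\le_{P^{\op}}$-steps, i.e.\ $\ge_P$-steps, and $R\preceq P$ says nothing about absorbing those.

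This error propagates everywhere. Your ``reformulation'' of $c$ as ``$R|_C=P|_C$ on each connected component $C$ of $R$'' is exactly the \emph{second} bullet of Definition~\ref{def:pos-adm}, not condition $c$: with $X=\{a,b\}$, $R$ discrete, $P$ coarse, each component of $R$ is a singleton so the restriction condition holds trivially, yet $P\wedge(P^{\op}\vee R)=P\neq R$, so $R\lhd P$ fails. The same example shows the first bullet of $b$ (equality of bubbles of $P\vee R^{\op}$ and $R\vee R^{\op}$) is \emph{not} automatic under $R\preceq P$. And in your $c\Rightarrow a$ argument, going around the loop ``the other way'' uses both $\le_R$-steps and $\ge_P$-steps, so you only get $x_{2i}\le_{P^{\op}\vee R}x_{2i-1}$, not that $x_{2i-1},x_{2i}$ are $R$-connected.

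The fix is to work with $P^{\op}\vee R$ directly rather than trying to collapse it to $\mathbf c(R)$. The correct unwinding of $c$ is: $x\le_P y$ and $x\le_{P^{\op}\vee R} y$ imply $x\le_R y$. For $c\Rightarrow a$, the loop gives both $x_{2i}\le_P x_{2i-1}$ and (going around) $x_{2i}\le_{P^{\op}\vee R} x_{2i-1}$, hence $x_{2i}\le_R x_{2i-1}$. For $a\Rightarrow c$, a witness $x=x_0\le_R x_1\ge_P\cdots\ge_P x_{2k}=y$ of $x\le_{P^{\op}\vee R} y$ together with $y\ge_P x$ closes a loop, and positivity upgrades the last step to $y\ge_R x$. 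This is the paper's route; the equivalence $a\Leftrightarrow b$ then needs a separate (short) argument.
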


\begin{proof}  Assume a. 
So 
the descending steps $\ge_P$ are actually $\ge_R$ in any
loop \eqref{eq:poset-loop}. We have $x\sim^b_{P\vee R^{\ssmop{op}}} y$ if and only
if $y$ is part of such a loop, which therefore amounts to
$x\sim^b_{R\vee R^{\ssmop{op}}} y$. Now if $x\le_P y$ are in the same $R$-connected
component we have a loop
$$x\le_R x_1\ge_Rx_2\le_R\cdots \le_R y\ge_P x$$
in which the last descending step is actually $\ge_R$, hence $x\le_R y$.
So part a implies part b.
Conversely assume b.
So the bubbles of $P \vee R^\op$ and of $P^\op \vee R$ are the same.
For any loop of the form \eqref{eq:poset-loop} (note that
all order relations are in $P^\op \vee R$)
and for any $\ell=0,\ldots,2k$, there exists another loop
\begin{equation}\label{loop-bis}
x=x_0\le_R x'_1\ge_R x'_2\le_R\cdots\le_R x'_{2j-1}\ge_R x'_{2j}=x_\ell\le_R\cdots \le_R x'_{2r-1}\ge_R x'_{2r}=x,
\end{equation}
in which all descending steps are $\ge_R$. Hence $x_\ell$ belongs to the $R$-connected component of $X$. The loop \eqref{eq:poset-loop} is therefore entirely contained in the $R$-connected component of $x$. The second condition then implies that the downsteps $\ge_P$ in \eqref{eq:poset-loop} are actually $\ge_R$.

\medskip
\noindent Equivalence of a and c:
Suppose $R\lhd P$. Given any loop in the form \eqref{eq:poset-loop} and any $i=1,\ldots, k$ we have both $x_{2i-1}\geq_{P}x_{2i}$ and
(by going around the full loop) $x_{2i}\le_{P^{\ssmop{op}}\vee R} x_{2i-1}$.
Hence $x_{2i-1}\ge_{P\wedge(P^{\ssmop{op}}\vee R)}x_{2i}$. But then since
$R=P\wedge(P^{\smop{op}}\vee R)$ we get $x_{2i-1}\ge_R x_{2i}$.

\medskip

Conversely, assume $R$ is a positive sub-preorder.
Let $x\le_{P\wedge(P^{\ssmop{op}}\vee R)} y$, so $x\le_P y$ and $x\le_{P^{\ssmop{op}}\vee R} y$. So there exists a loop:
$$x\le_R x_1\ge_P x_2\le_R\cdots \ge_P x_{2k}=y\ge_P x.$$
Since $R$ is positive, each downstep is $\ge_R$.
In particular the last one is, from which we get $x\le_R y$, and
so $R = P \wedge (P^\op \vee R)$. 
\end{proof}

\begin{corollary} \label{cor:poset-pc}
    Let $R \lhd P$ correspond to $P \btl Q$. Then for each bubble
    $C$ of $Q$, the restriction $P\restr{C} = R\restr{C}$, and this
    restriction is connected. Also,  
    $R$ is the disconnected union $\sqcup_{C \in b(Q)} R\restr{C}$.
  \end{corollary}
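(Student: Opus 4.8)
The plan is to reduce the statement to the two defining conditions of the relation $\oprec$ of Fauvet--Foissy--Manchon, using the dictionary already established in this section. By Proposition \ref{pro:poset-OTL} the hypothesis $R \lhd P$ is equivalent to $R \oprec P$, and by Corollary \ref{cor:pos-bij} the corresponding contraction is $Q = F_P(R) = P \vee R^{\op}$. So I would simply unwind Definition \ref{def:pos-adm} for this pair $R \preceq P$.

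The first condition of $R \oprec P$ asserts that the bubbles of $P \vee R^{\op} = Q$ coincide with the bubbles of $R \vee R^{\op}$; and, as recorded in Definition \ref{def:pos-adm}, the latter are precisely the connected components of $R$. Hence the bubbles $C$ of $Q$ are exactly the connected components of $R$. Two consequences follow at once. First, each $R\restr{C}$ is a connected component of $R$, hence connected. Second, since the $C \in b(Q)$ form a partition of $X$ with no $R$-relations running between distinct parts (connected components are the $\sim^c_R$-classes, and $x \le_R y$ forces $x \sim^c_R y$), one gets that $R$ is the disconnected union $\sqcup_{C \in b(Q)} R\restr{C}$.

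The second condition of $R \oprec P$ asserts that $R$ and $P$ have the same restriction to each connected component of $R$. Since those connected components are exactly the bubbles $C$ of $Q$ by the previous paragraph, this yields $P\restr{C} = R\restr{C}$ for every such $C$, and in particular $P\restr{C}$ is connected as well. I do not expect any real obstacle: the whole argument is a translation through Proposition \ref{pro:poset-OTL} followed by a reading of Definition \ref{def:pos-adm}, the only mild point being that bubbles and connected components are both genuine set partitions, so that the stated equality of their block-families is exactly what licenses the disconnected-union claim.
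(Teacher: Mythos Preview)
Your proof is correct and follows exactly the approach implicit in the paper: the corollary is placed immediately after Proposition \ref{pro:poset-OTL} with no separate argument, so it is meant to be read off from the equivalence $R \lhd P \Leftrightarrow R \oprec P$ together with Definition \ref{def:pos-adm}, precisely as you do. Your identification $Q = P \vee R^{\op}$ via Corollary \ref{cor:pos-bij} and the observation that the bubbles of $R \vee R^{\op}$ are the connected components of $R$ are the two small links needed, and you handle both cleanly.
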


  Here is another characterization of the relations $\lhd$ and $\btl$,
  which often simplifies arguments.

\begin{proposition} \label{pro:poset-cc}
  Let $R \preceq P$ and $P \preceq Q$.
  \begin{itemize}
  \item[a.] $P \btl Q$ iff for every connected convex subset
    $K \sus Q$, the restriction $P_{|K}$ is connected convex in $P$.
  \item[b.] $R \lhd P$ iff for every connected convex subset $K \sus R$,
    the restriction $P_{|K}$ is connected convex in $P$.
  \end{itemize}
  \end{proposition}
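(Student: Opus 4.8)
The plan is to reduce everything to the closure identities coming from the Galois correspondence of Lemma~\ref{adjunction}: recall that $P\btl Q$ is equivalent to $Q=F_PG_P(Q)=P\vee(P^{\op}\wedge Q)$, and that $R\lhd P$ is equivalent to $R=G_PF_P(R)=P\wedge(P^{\op}\vee R)$; in each case one inclusion of preorders holds automatically, so only the opposite one carries content. A first useful observation is that a convex set for a preorder $S$ is always a union of $S$-bubbles. Since $R\preceq P\preceq Q$, a $Q$-convex set is automatically $P$-convex; thus in part (a) the convexity of $P_{|K}$ is free and only connectedness must be proved, while in part (b) both properties are substantive.

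For the ``only if'' direction of (a), assume $P\btl Q$ and take $K$ connected and convex for $Q$. Given $x,y\in K$ with $x\leq_Q y$, the identity $Q=P\vee(P^{\op}\wedge Q)$ produces a chain $x=x_0,x_1,\dots,x_{2k}=y$ each step of which is either a $\leq_P$-step or a step that is simultaneously $\geq_P$ and $\leq_Q$; hence the chain is weakly increasing for $Q$, so every $x_j$ lies in the $Q$-interval $[x,y]_Q\subseteq K$, and consecutive $x_j$'s are $P$-comparable. This is a $P$-path inside $K$; using $Q$-connectedness of $K$ to pass from comparable pairs to arbitrary pairs, $P_{|K}$ is $P$-connected. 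For the ``only if'' direction of (b) I would instead invoke Corollary~\ref{cor:poset-pc}: if $R\lhd P$ corresponds to $P\btl Q$, the $R$-components are exactly the $Q$-bubbles $C$, with $R_{|C}=P_{|C}$ connected. An $R$-connected set $K$ then lies inside one such $C$ and is convex for $R_{|C}=P_{|C}$, so $P_{|K}$ is connected; and if $x\leq_P w\leq_P z$ with $x,z\in K\subseteq C$ then $w\in C$ (as $C$ is $Q$-convex and $x\sim_Q z$), whence $x\leq_R w\leq_R z$ and $w\in K$, giving $P$-convexity of $P_{|K}$.

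For the ``if'' direction of (a), I would feed the hypothesis the $Q$-connected-convex sets $K=C$ (a single $Q$-bubble) and $K=C\cup C'$ (with $C\lessdot C'$ a covering pair of $\mathbf b(Q)$): this yields that each $P_{|C}$ is connected and that every covering pair of $\mathbf b(Q)$ is realized by a $P$-edge $u\leq_P v$ with $u\in C$, $v\in C'$ (the reverse orientation is excluded by antisymmetry of the quotient order). Since inside a single $Q$-bubble a $\geq_P$-step is automatically a $\leq_Q$-step, any $P$-path within a bubble is a legitimate $P\vee(P^{\op}\wedge Q)$-chain; splicing such paths with the covering edges and inducting on the length of a maximal chain in $\mathbf b(Q)$ from the bubble of $x$ to that of $y$ shows $x\leq_{P\vee(P^{\op}\wedge Q)} y$ whenever $x\leq_Q y$. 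Hence $Q=F_PG_P(Q)$ and $P\btl Q$.

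The ``if'' direction of (b) is where I expect the real work. From the hypothesis I would first extract, by choosing the $R$-connected-convex set $K$ suitably: that $\mathbf b(R)=\mathbf b(P)$ (take $K$ an $R$-bubble: it lies in one $P$-bubble, and being $P$-convex it is all of it); the key property $(\star)$ that $a\leq_R b$ together with $a\leq_P w\leq_P b$ forces $a\leq_R w\leq_R b$ (take $K=[a,b]_R$); and that $R$-components are $P$-convex (take $K$ an $R$-component). With these in hand, I would show $R$ is a positive sub-preorder of $P$ in the sense of Definition~\ref{def:pos-pos}: given a loop $x_0\leq_R x_1\geq_P x_2\leq_R\cdots\geq_P x_{2k}=x_0$, a loop-chasing argument in the spirit of the proof of Proposition~\ref{pro:poset-OTL} — using $(\star)$, $\mathbf b(R)=\mathbf b(P)$, and $P$-convexity of components to confine the loop to a single $R$-component and then ``straighten'' its descending steps — forces each $\geq_P$ step to be a $\geq_R$ step; by Proposition~\ref{pro:poset-OTL} this gives $R\lhd P$. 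The main obstacle is precisely this straightening: a priori the $\geq_P$ steps of the loop may leave the $R$-component of $x_0$, and one has to use the hypothesis (through $(\star)$ and the $P$-convexity of components) to keep the whole loop inside one component before $\mathbf b(R)=\mathbf b(P)$ can be applied step by step; everything else is bookkeeping around the Galois correspondence.
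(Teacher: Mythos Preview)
Your handling of part~(a) and of the forward direction of~(b) is correct and close in spirit to the paper's argument. The genuine gap is in the converse of~(b), exactly where you located it: the confinement of the loop to a single $R$-component cannot be carried out, because the ``if'' direction of~(b) is false as written. Take $I=\{a,b,c,d\}$, let $P$ have cover relations $a<b$, $a<d$, $c<b$, $c<d$, and let $R\preceq P$ have only $a<b$ and $c<d$. The $R$-connected $R$-convex subsets are the singletons together with $\{a,b\}$ and $\{c,d\}$; each is $P$-convex (for instance $[a,b]_P=\{a,b\}$ since $d\not\le_P b$ and $a\not\le_P c$), so the hypothesis of~(b) holds. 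Yet the loop
\[
a\ \le_R\ b\ \ge_P\ c\ \le_R\ d\ \ge_P\ a
\]
has a $\ge_P$ step that is not a $\ge_R$ step, so $R$ is not a positive sub-preorder of $P$ and $R\not\lhd P$ by Proposition~\ref{pro:poset-OTL}. Thus no loop-chasing argument based on your three extracted properties can succeed; the ``bookkeeping'' you defer is not bookkeeping at all.

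For comparison, the paper's own proof of this direction breaks at the same point: it asserts that the $R$-component $K_{i+1}$ lies in the $P$-downset $D_i$ generated by $K_i$, but in the example above $K_1=\{c,d\}\not\subseteq D_0=\{a,b,c\}$. So you are not missing a trick that the paper supplies. A correct version of~(b) needs an extra hypothesis---for instance that the connected components of $R$ coincide with the bubbles of $P\vee R^{\op}$, which is exactly the first condition in Definition~\ref{def:pos-adm} and is what fails here.
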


  \begin{proof} Note that in part a, $K$ is automatically convex in $P$,
    and in part b, $K$ is automatically connected in $P$. 

    \noindent    a. Assume $P \btl Q$. Then $Q  = P \vee R^\op$ for some $R$.
    Let $x \leq_Q y$ both be in $K$.
    Then
    \[ x = x_0 \leq_P x_1 \geq_R x_2 \leq_P \cdots \geq_R x_{2k} = y.\]
    Since $K$ is convex in $Q$ all $x_i \in K$. Thus $P_{|K}$ is connected.

    Assume the condition on convex connected subsets.
    Let $R^\op := P^\op \wedge Q$. We have $Q \succeq P \vee
    R^\op$ and will show equality. Let  $x \leq_Q y$ and let
    the convex set $K := [x,y]_Q = \{z \in Q \, | \, x \leq z \leq y \}$.
We show that $x \leq y$ for $\leq$ the preorder $P \vee (P^\op \wedge Q)$.
    Assume $K$ is a single $Q$-bubble.
    Since $P_{|K}$ is convex connected we have a sequence:
    \[ x = x_0 \leq_P x_1 \geq_P x_2 \leq_P \cdots \geq_P x_{2k} = y.\]
Assume we are all the time in the same $Q$-bubble. Then 
    $x_{2i-1} \leq x_{2i}$ for $\leq$ the preorder $P^\op \wedge Q$.
    This shows that $x \leq y$ for $P \vee (P^\op \wedge Q)$.

    Now assume not all $x_j$ are in the same $Q$-bubble.
    We argue by induction on $k$.
    Let $x_j$ be maximal not in the same bubble as $y$. If $j$ is odd
    i)  $x_j = x_{2i-1} \geq_P x_{2i}$ and ii) $x_{2i-1} \leq_Q y$ since
    $x_{2i-1} \in K$. Since $x_{2i}$ and $y$ are in the same bubble,
    we get $x_j$ in this bubble, a contradiction.
    Thus $j$ is even and $x_{j} = x_{2i} <_Q x_{2i+1}$ is a strict order
    relation. In the following $\leq$ is $P \vee (P^\op \wedge Q)$. 
    Since $x_{2i} \leq_P x_{2i+1}$ and $x_{2i+1} \leq y$ (since they
    are in the same $Q$-bubble, a case considered above),
    we have $x_{2i} \leq y$.
    Also  $x \leq_Q x_{2i}$ and by induction
    $x \leq x_{2i}$ for $\leq$ the preorder
    $P \vee (P^\op \wedge Q)$ and so $x \leq y$ for this preorder.
    This shows $Q = P \vee (P^\op \wedge Q)$.

    \medskip
    \noindent  b. Assume $R \lhd P$. Since $K$ is connected, it is
    contained in a connected component of $C$ of $R$. Let now
    $x \leq_P y$ in $K$. They are both in $C$ and as $R_{|C}= P_{|C}$, we have
    $x \leq_R y$. Suppose now $x \leq_P z \leq_P y$. We must show $z \in K$. 

    Consider the cycle
   \[ z \leq_R z \geq_P x \leq_R y \geq_P z.\]
    By Proposition \ref{pro:poset-OTL} $x \leq_R z$ and
    $z \leq_R y$, and so $z \in K$.

Assume the condition on connected convex subsets. Consider a cycle:
    \begin{equation} \label{eq:poset-xxx}
      x = x_0 \leq_R x_1 \geq_P x_2 \leq_R \cdots \geq_P x_{2k} = x.
      \end{equation}
    For each $i$ both $x_{2i}$ and $x_{2i+1}$ are in the same connected component
    $K_i$ of $R$. This is a convex subset of $P$ and let $D_i$ be
    the downset of $P$ generated by $K_i$, and $D_i^\prime = D_i \backslash K_i$,
    also a downset of $P$. Since $x_{2i+1} \geq_P x_{2i+2}$, each
    $K_{i+1} \sus D_i$. If $K_{i+1} \neq K_i$ then $K_{i+1} \sus D_i^\prime$
    and $D_{i+1} \sus D_i^\prime$. Since we start with the connected
    component of $x$ and end with it, the upshot is that all $K_i$ must
    be equal.

    So assume $x,y$ in the same connected component of $R$ and
    $x \leq_P y$. We show $x \leq_R y$.
    This will show each $x_{2i-1} \geq_P x_{2i}$ in \eqref{eq:poset-xxx}
    is also $x_{2i-1} \geq_R x_{2i}$ and so $R \lhd P$. 

    We will have a sequence:
    \begin{equation} 
      x = x_0 \leq_R x_1 \geq_R x_2 \leq_R \cdots \geq_R x_{2k} = y.
    \end{equation}
    We use induction on $k$. If $k = 0$ it is OK. Suppose $k \geq 1$.
    If $x_{2k-1} \leq_R y$, by induction  $x \leq_R x_{2k-2}$ and so
    $x \leq_R y$. So assume we do not have $x_{2k-1} \leq_R y$. 
    Let $D$ be the downset of $R$ generated by $x_1, x_3, \ldots, x_{2k-1}$.
    It contains $y$.
    Let $\downarrow y$ be the downset of $R$ generated by $y$, and
    $K = D \backslash \downarrow y$, a connected convex subset of $R$.
    If not $x \leq_R y$, then $x \in K$ and $P_{|K}$ is convex and contains
    $x$ and $x_{2k-1}$. Since $x \leq_P y \leq_P x_{2k-1}$ we get $y \in K$,
    a contradiction. Thus we have $x \leq_R y$.
  \end{proof}

The following will be useful later:
\begin{corollary}  \label{cor:pos-PbtrQ}
  $P \btl Q$ if and only if $P \preceq Q$ and: 
  \begin{itemize}
  \item[1.] For every bubble $B$ in $Q$, the restriction $P\restr{B}$
    is a connected preorder.
  \item[2.] If $B_1 <_Q B_2$ is a cover relation of bubbles in $Q$, there exists
    $p_1 \in B_1$ and $p_2 \in B_2$ such that $p_1 <_P p_2$.
  \end{itemize}
\end{corollary}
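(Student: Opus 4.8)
The strategy is to derive Corollary~\ref{cor:pos-PbtrQ} from Proposition~\ref{pro:poset-cc}(a), which characterizes $P\btl Q$ by the condition that every connected convex $K\sus Q$ restricts to a connected (convex) subset $P\restr K$ of $P$. So I will show that for $P\preceq Q$, this "connected-convex-lifting" condition is equivalent to the conjunction of the two explicit conditions (1) and (2). One direction (the explicit conditions follow from $P\btl Q$) is nearly immediate: condition~(1) applies Proposition~\ref{pro:poset-cc}(a) to $K=B$, a single bubble (which is certainly connected convex in $Q$); for condition~(2), apply it to $K=B_1\cup B_2$, which is connected and convex in $Q$ when $B_1<_Q B_2$ is a cover relation, so $P\restr{B_1\cup B_2}$ is connected. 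Since $P\restr{B_1}$ is already connected by~(1), connectedness of $P\restr{B_1\cup B_2}$ forces at least one $P$-edge crossing between $B_1$ and $B_2$; because $P\preceq Q$ and $B_1<_Q B_2$, no $P$-relation can go from $B_2$ to $B_1$, so the crossing edge must be some $p_1<_P p_2$ with $p_1\in B_1$, $p_2\in B_2$. (One should be slightly careful here: a connecting zig-zag could pass through other bubbles, but any bubble it passes through lies strictly between $B_1$ and $B_2$ in $Q$, contradicting that the relation is a cover; hence the connecting path stays inside $B_1\cup B_2$ and produces the desired crossing edge directly.)

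For the converse I assume $P\preceq Q$ together with (1) and (2), and must verify the condition of Proposition~\ref{pro:poset-cc}(a): given a connected convex $K\sus Q$, show $P\restr K$ is connected. Take $x,y\in K$. Since $K$ is connected in $Q$, join $x$ to $y$ by a $Q$-zig-zag; since $K$ is convex in $Q$, all intermediate vertices lie in $K$, and one reduces to the case of a single $Q$-relation $u\le_Q v$ (or $v\le_Q u$) with $u,v\in K$. Now decompose $u\le_Q v$ into a chain of cover relations of bubbles $B_{u}=B^{(0)}<_Q B^{(1)}<_Q\cdots<_Q B^{(m)}=B_v$ in $Q$; convexity of $K$ in $Q$ shows each $B^{(j)}\sus K$. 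By condition~(2), for each consecutive pair there are elements $p_j\in B^{(j)}$, $q_{j+1}\in B^{(j+1)}$ with $p_j<_P q_{j+1}$, and by condition~(1) each $P\restr{B^{(j)}}$ is connected, so within each bubble we can connect $q_j$ to $p_j$ by a $P$-zig-zag inside $B^{(j)}\sus K$. Concatenating these pieces yields a $P\restr K$-path from $u$ (inside $B^{(0)}$, connected to $p_0$) to $v$ (inside $B^{(m)}$, connected to $q_m$), proving $P\restr K$ connected.

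The main obstacle I anticipate is bookkeeping the bubble structure cleanly: passing between "$Q$-relations" and "chains of $Q$-cover relations of bubbles," and making sure that every vertex used in the constructed $P$-path genuinely lies in $K$ (which is where convexity of $K$ in $Q$ is used, together with the observation that all bubbles $B^{(j)}$ sit $Q$-between $B_u$ and $B_v$, hence in $K$). A secondary subtlety is the one flagged above: in deriving condition~(2) from $P\btl Q$, one must argue that the connecting $P$-zig-zag inside $P\restr{B_1\cup B_2}$ actually yields a \emph{single} crossing relation $p_1<_P p_2$ and not merely "connectivity"; this is clean because $P\preceq Q$ and $B_1<_Q B_2$ is a cover, so $P$ has no relations in the reverse direction and no third bubble available as an intermediary. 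Everything else is a routine but careful zig-zag manipulation, and I would present it in precisely the two-direction format above, invoking Proposition~\ref{pro:poset-cc}(a) as the pivot.
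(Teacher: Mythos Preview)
Your proposal is correct and follows essentially the same approach as the paper: both reduce the corollary to Proposition~\ref{pro:poset-cc}(a), checking the connected-convex-lifting criterion. The paper's proof is extremely terse (``Clearly $P \btl Q$ implies 1 and 2. Conversely if 1 and 2 hold, then $P_{|K}$ will be convex connected whenever $K$ is so in $Q$''), while you spell out the details; your bookkeeping is sound, and the one parenthetical worry about the zig-zag leaving $B_1\cup B_2$ is unnecessary since you are working in the restriction $P\restr{B_1\cup B_2}$.
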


\begin{proof}
  Clearly $P \btl Q$ implies 1 and 2. Conversely if 1 and 2 hold,
  then $P_{|K}$ will be convex connected whenever $K$ is so in $Q$.
\end{proof}

\begin{corollary} \label{cor:poset-bubPQ}

If $P \btl Q$ and the bubbles 
${\mathbf b}(P) ={\mathbf b}(Q)$, then $P = Q$.
\end{corollary}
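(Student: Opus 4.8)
The hypothesis $P \btl Q$ already contains $P \preceq Q$, so the plan is to establish the reverse refinement $Q \preceq P$; then $P = Q$. Since $P \btl Q$ means $Q \in \overline P = \im F_P$, the natural move is to bring in the subdivision on the other side of the Galois correspondence: set $R := G_P(Q) = P \wedge Q^\op$. By Corollary \ref{cor:pos-bij} we then have $R \lhd P$ and $Q = F_P(R) = P \vee R^\op$, so everything reduces to identifying $R$.

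Next I would invoke Corollary \ref{cor:poset-pc}, which tells us that $R$ is the disconnected union $\sqcup_{C \in \mathbf b(Q)} R\restr{C}$, with $R\restr{C} = P\restr{C}$ for each bubble $C$ of $Q$. This is the single point at which the hypothesis $\mathbf b(P) = \mathbf b(Q)$ is used: each bubble $C$ of $Q$ is also a bubble of $P$, and the restriction of a preorder to one of its own bubbles is the coarse preorder on that bubble (every pair related both ways). Hence $R$ is the disjoint union of the coarse preorders on the blocks of the partition $\mathbf b(P)$, which is exactly the bubble equivalence relation: $R = P \wedge P^\op$.

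Now finish by a one-line lattice computation. Since $R = P \wedge P^\op$ is symmetric, $R^\op = R = P \wedge P^\op \preceq P$, and therefore
\[ Q = F_P(R) = P \vee R^\op = P \vee (P \wedge P^\op) = P \]
by the absorption law in the lattice $\Pre(X)$. This completes the argument.

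I do not expect a genuine obstacle here once the right tool is in hand; the only thing that needs care is the identification of $R\restr{C}$ with the coarse preorder on $C$ when $C$ is a $P$-bubble, i.e.\ really using $\mathbf b(P)=\mathbf b(Q)$ rather than only $\mathbf b(Q)$. As an alternative one could bypass Corollary \ref{cor:poset-pc} and argue directly from Corollary \ref{cor:pos-PbtrQ}: its condition~2 says every covering pair of $Q$-bubbles $B_1 <_Q B_2$ carries some $p_1 <_P p_2$, and since $B_1,B_2$ are also $P$-bubbles this forces $B_1 \leq_P B_2$; propagating along chains of covers yields $Q \preceq P$. This works equally well in the finite setting relevant here, but the absorption-law argument above is the cleanest.
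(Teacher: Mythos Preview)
Your proposal is correct. Your main route---pulling $Q$ back through the Galois correspondence to $R=G_P(Q)$, identifying $R$ with the bubble equivalence relation $P\wedge P^{\op}$ via Corollary~\ref{cor:poset-pc}, and then collapsing $Q=P\vee R^{\op}$ to $P$ by absorption---is genuinely different from the paper's argument. The paper takes exactly your alternative: it applies condition~2 of Corollary~\ref{cor:pos-PbtrQ} to each cover relation $B_1<_Q B_2$ of $Q$-bubbles to produce $p_1<_P p_2$, and since $B_1,B_2$ are also $P$-bubbles this forces $B_1<_P B_2$; from there $Q\preceq P$ follows by propagating along chains of covers. What your absorption argument buys is a purely lattice-theoretic proof that never touches cover relations, so it goes through verbatim for arbitrary $X$; the paper's one-line proof tacitly relies on bubble-covers generating the order on $\mathbf b(Q)$, which is fine in the finite setting that is ultimately used but is a small extra assumption in general.
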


\begin{proof}
  If $B_1 <_Q B_2$ is a cover relation, then $B_1$ and $B_2$ are
  also bubbles of $P$ and we have $B_1 <_P B_2$.
  \end{proof}
 
\subsection{The lattices $\underline{P}$ and $\overline{P}$}

\begin{theorem}\label{distrib-lattice} 
  The relations $\lhd$ and $\btl$ on $\Pre(X)$ are partial orders
  (we have usually written a partial order as $\leq$, i.e. $<$ with
  a dash below, but here the dash is omitted).
  \begin{itemize}
  \item[a.] $(\underline P, \lhd)$ and $(\overline P, \btl)$ are
    partially ordered sets induced from $(\Pre(X), \preceq)$.
  \item[b.] The preorders $P$ and ${\mathbf b}(P) = P \wedge P^{\op}$ are
    the unique maximal and minimal elements of $(\underline P, \lhd)$.
    The preorders ${\mathbf c}(P) = P \vee P^{\op}$ and $P$ are the
    unique maximal and minimal elements of  $(\overline P, \btl)$
  \item[c.] When $X$ is finite $(\underline P, \lhd)$ is a lattice
    with meet $\wedge$. The join of $R_1$ and $R_2$ is
    \[ F_P \circ (G_P(R_1) \vee G_P(R_2)) = P \wedge (P^\op \vee R_1 \vee R_2). \]
   \item[d.] When $X$ is finite
    $(\overline P, \btl)$ is a lattice with 
    join $\vee$. The meet of $Q_1$ and $Q_2$ is
    \[ G_P \circ (F_P(Q_1) \wedge F_P(Q_2)) = P \vee (P^\op \wedge Q_1 \wedge Q_2). \]
   \end{itemize}
\end{theorem}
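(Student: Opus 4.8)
The plan is to establish the four items in turn, leveraging the Galois correspondence of Lemma~\ref{adjunction} and Corollary~\ref{cor:pos-bij} throughout. First I would observe that both $\lhd$ and $\btl$ are \emph{defined} as restrictions of the single partial order $\preceq$ on $\Pre(X)$ to the subsets $\underline P$ and $\overline P$ respectively: $R_1 \lhd R_2$ (for $R_1, R_2 \in \underline P$) should be read as $R_1 \preceq R_2$, and similarly $Q_1 \btl Q_2$ as $Q_1 \preceq Q_2$. Hence item~a is immediate: any restriction of a partial order to a subset is a partial order. (One should note that $\lhd$ and $\btl$ as binary relations on all of $\Pre(X)$ — not merely on $\underline P$, $\overline P$ — are also reflexive and antisymmetric, but transitivity there is the subtle point handled by Corollary~\ref{cor:pos-PbtrQ} plus the composition-of-convex-connected characterization of Proposition~\ref{pro:poset-cc}; I would remark that the statement only claims the poset structure on $\underline P$ and $\overline P$, so this is not strictly needed.)

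For item~b, the maximal/minimal element claims follow directly from the structure of the Galois correspondence. Since $G_P$ and $F_P$ are order-preserving and $\da P$, $\ua P$ have $P$ as their top and bottom elements respectively, the images $\underline P = \im G_P$ and $\overline P = \im F_P$ inherit extreme elements: $P = G_P(P) \in \underline P$ is its top (it is the largest element of $\da P$ and lies in the image since $P = G_P F_P(P)$ because $P$ is always $\lhd$-related to itself), while $P = F_P(P) \in \overline P$ is its bottom. For the other extremes, I would cite the two observations already stated in the text right after Corollary~\ref{cor:pos-bij}: the minimal element of $\underline P$ is $G_P(\mathbf c(P)) = P \wedge P^{\op} = \mathbf b(P)$, and dually the maximal element of $\overline P$ is $F_P(\mathbf b(P)) = P \vee P^{\op} = \mathbf c(P)$. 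Uniqueness of each is automatic in a poset once existence of a top (resp.\ bottom) is known.

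For items~c and~d, which are dual, I would prove~c and then remark that~d follows by the symmetry $P \leftrightarrow P^{\op}$, $\wedge \leftrightarrow \vee$, $\lhd \leftrightarrow \btl$, $G_P \leftrightarrow F_P$. The key structural input is that, by Corollary~\ref{cor:pos-bij}, $G_P : \overline P \to \underline P$ and $F_P : \underline P \to \overline P$ are mutually inverse order isomorphisms. Therefore $(\underline P, \lhd)$ is a lattice if and only if $(\overline P, \btl)$ is, and in that case $\underline P$ is the image of a lattice under an order isomorphism, so its lattice operations are transported along $G_P$. Concretely, for the meet in $\underline P$: given $R_1, R_2 \lhd P$, I claim $R_1 \wedge R_2$ (the meet in $\Pre(X)$) already lies in $\underline P$ and serves as the meet there. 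To see $R_1 \wedge R_2 \in \underline P$, use the positive-subpreorder characterization of Proposition~\ref{pro:poset-OTL}: any loop $x = x_0 \le_{R_1 \wedge R_2} x_1 \ge_P x_2 \le_{R_1 \wedge R_2} \cdots \ge_P x_{2k} = x$ is in particular a loop with $R_1$-ascending steps, so by positivity of $R_1$ the descending $\ge_P$ steps are $\ge_{R_1}$; likewise they are $\ge_{R_2}$; hence $\ge_{R_1 \wedge R_2}$, proving $R_1 \wedge R_2$ is a positive sub-preorder of $P$. Since $\wedge$ is already the meet in $\Pre(X)$ and $\underline P$ is a sub-poset, $R_1 \wedge R_2$ is the meet in $\underline P$. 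For the join in $\underline P$: apply $G_P$ to the join computed in $\overline P$. In $\overline P$, the join of $F_P(R_1)$ and $F_P(R_2)$ is their join $\vee$ in $\Pre(X)$ provided that already lies in $\overline P$ — by the dual of the argument just given (positivity is self-dual under opposite), $F_P(R_1) \vee F_P(R_2) \in \overline P$ — and then the join in $\underline P$ is $G_P(F_P(R_1) \vee F_P(R_2)) = P \wedge (F_P(R_1) \vee F_P(R_2))^{\op} = P \wedge (P^{\op} \wedge R_1^{\op\op})^{\op} \vee \cdots$; unwinding $F_P(R_i) = P \vee R_i^{\op}$ and using $(P \vee R_1^{\op} \vee P \vee R_2^{\op})^{\op} = P^{\op} \vee R_1 \vee R_2$ gives exactly $P \wedge (P^{\op} \vee R_1 \vee R_2)$ as stated.

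The main obstacle I anticipate is the verification that $R_1 \wedge R_2 \in \underline P$ (equivalently, closure of $\underline P$ under the ambient meet) and dually that $\overline P$ is closed under the ambient join --- i.e.\ that these sub-posets are actually sub-semilattices, not merely lattices in their own right with possibly different operations. The positive-subpreorder characterization of Proposition~\ref{pro:poset-OTL} is the right tool and makes the $\wedge$ side quite clean; the $\vee$ side for $\overline P$ needs a little care because $F_P(R_1) \vee F_P(R_2)$ must be shown to be a \emph{contraction} of $P$, for which I would either invoke the symmetric version of Proposition~\ref{pro:poset-OTL}/\ref{pro:poset-cc} applied to $P^{\op}$, or verify the two conditions of Corollary~\ref{cor:pos-PbtrQ} directly (connectedness of $P$ restricted to each bubble of the join, which holds because the join only coarsens bubbles, each of which contains a bubble of one of the $F_P(R_i)$; and the cover-relation condition, which is inherited). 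Once closure under the ambient operations is in hand, transporting everything through the order isomorphisms $F_P, G_P$ is routine and the explicit formulas drop out by expanding the definitions $F_P(R) = P \vee R^{\op}$, $G_P(Q) = P \wedge Q^{\op}$.
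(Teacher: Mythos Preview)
Your reading of item~a is where the proposal goes wrong. The relation $\lhd$ is \emph{not} defined as the restriction of $\preceq$ to $\underline P$; rather, $S \lhd R$ is defined (for any $S \preceq R$ in $\Pre(X)$) by the closure condition $S = R \wedge (R^{\op} \vee S)$. Item~a is the \emph{claim} that on $\underline P$ these two relations agree, i.e.\ that whenever $S \lhd P$, $R \lhd P$, and $S \preceq R$, one actually has $S \lhd R$. This is not tautological and is exactly what the paper proves (via Proposition~\ref{pro:poset-cc}: a convex connected subset of $S$ is connected in $R$ since $S \preceq R$, and convex in $R$ since it is already convex in $P \succeq R$). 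The same issue applies to $\btl$ on $\overline P$. Relatedly, the opening sentence of the theorem asserts that $\lhd$ and $\btl$ are partial orders \emph{on all of $\Pre(X)$}; transitivity there is part of the statement and cannot be waived. You correctly identify Proposition~\ref{pro:poset-cc} as the tool, so this is easy to repair, but it must be included.

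For items~c and~d: your argument that $\underline P$ is closed under the ambient $\wedge$ via the positive-subpreorder criterion is correct and is a clean alternative to the paper's convex-connected argument. However, the dual step---closure of $\overline P$ under $\vee$---is not handled by ``positivity is self-dual under opposite''. Taking opposites gives $P \btl Q \Leftrightarrow P^{\op} \btl Q^{\op}$, which does not convert a $\btl$-statement into a $\lhd$-statement, so there is no immediate reduction to the case you have already done. You need a direct argument that $P \btl (Q_1 \vee Q_2)$: the paper does this by taking $x \leq_{Q_1 \vee Q_2} y$ in a convex connected $K$, writing out the defining zigzag in $Q_1$ and $Q_2$, and using that each consecutive pair is connected in $P$ (since $P \btl Q_i$). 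Your fallback via Corollary~\ref{cor:pos-PbtrQ} would also work, but the sketch you give (``bubbles of the join each contain a bubble of one of the $F_P(R_i)$'') is not accurate---a bubble of $Q_1 \vee Q_2$ is in general a union glued from several $Q_1$- and $Q_2$-bubbles, and you still need to argue connectedness of $P$ across the gluings. Once $\vee$-closure of $\overline P$ is properly established, your transport of the join formula through $G_P$ is correct and matches the paper.
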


\begin{proof}
  Transitivity of $\lhd$ and $\btl$ follow by Proposition
  \ref{pro:poset-cc}.

  a. Let  $S \preceq R$ with $S \lhd P$ and $R \lhd P$. If
  $K$ is a convex connected subset of $S$, it is connected in $R$.
  It is convex in $P$ and so also in $R$. Thus $S \lhd R$. 

  Let $ Q \preceq T$ with $P \btl Q$ and $P \btl T$. If $K$ is a convex
  connected subset of $T$, it is also convex in $Q$.
It is connected in $P$ and so also in $Q$. Thus $Q \btl T$.

  \medskip
  
  b. It is clear that $P$ is maximal in $\underline P$, and so by the
  isomorphism ${F_P}$, ${\mathbf c}(P)$ is maximal in $\overline{P}$.
 Similarly $P$ is minimal in $\overline P$ and so ${\mathbf b}(P)$ is minimal
  in $\underline{P}$.
  
  c. Let $R_1 \lhd P$ and $R_2 \lhd P$. Let $x \leq_{R_1 \wedge R_2} y$
  be in the same convex connected subset $K$ of $R_1 \wedge R_2$.
  Then they are in
  the same connected component $K_1$ of $R_1$, respectively $K_2$ of $R_2$.
  Since $R_{i,|K_i} = P_{|K_i}$, if $x \leq_P z \leq_P y$ then for $i = 1,2$
  we have
  $x \leq_{R_i} z \leq_{R_i} y$. Thus $x \leq_{R_1 \wedge R_2} z
  \leq_{R_1 \wedge R_2} y$ and so $z \in K$. Thus
  the image of the convex connected subset $K$ of
  $R_1 \wedge R_2$ is convex connected in $P$. Whence
  $R_1 \wedge R_2 \lhd P$. The join of $R_1$ and $R_2$ is computed
  by transferring the join on $\overline{P}$ to $\underline{P}$. 


  {d.} Now let $P \btl Q_1$ and $P \btl Q_2$. Let $x \leq y$ be in a
  convex connected subset  $K$ of $Q_1 \vee Q_2$.
  It is clear that $P_{|K}$ is convex. We have
  \[ x = x_0 \leq_{Q_1} x_1 \leq_{Q_2} x_2 \leq \cdots \leq_{Q_2} x_{2k} = y.\]
  But then each pair $x_r, x_{r+1}$ is connected in $Q_1$ or in $Q_2$, and
  so they are connected in $P$. Thus $x$ and $y$ are connected in $P$.
   Whence $P \btl (Q_1 \vee Q_2)$.
  The meet of $Q_1$ and $Q_2$ is computed
  by transferring the meet on $\underline{P}$ to $\overline{P}$.
  
\end{proof}




\section{Conforming preorders of submodular functions}
\label{sec:conform}

This section ``marries'' submodular functions and preorders.
We introduce the important notion of preorders compatible and conforming
to a submodular function $z$. For preorders $P, Q$ conforming
to $z$, we show that if $P \preceq Q$ they are related by the stronger
relation $P \btl Q$.
The geometric aspects  of this will be investigated in the next
Section \ref{sec:bij} where we show that preorders conforming
to $z$ are in bijection with the faces of the EGP $\egp(z)$.

\begin{definition}
  Given a submodular function $z : \pow(I) \to \hRR$, a preorder $P$ on $I$
  is {\sl compatible} with $z$ if:
\begin{itemize}
\item $z(A) < \infty$ for any downset $A$ of $P$,
\item For down-sets $B \supseteq A$ of $P$, let $C$ be the convex
  subset $B \backslash A$. If the induced preorder on $C$ decomposes as
  a disjoint union $C = C_1 \sqcup C_2$, then $z^\prime = z_{B / A}$ decomposes
  as a product $z^\prime = z^\prime\restr{C_1} \cdot z^\prime\restr{C_2}$
\end{itemize}
If the topology $\cT$ corresponds to $P$ we also call $\cT$ compatible with
$P$.
\end{definition}

\begin{remark} Note that if $P \preceq Q$ and $P$ {is compatible with} $z$, then
  $Q$ is compatible with $z$.
\end{remark}

\begin{remark} In \cite{MW2009}, if a semi-graphoid comes from
  a submodular rank test
  {associated with a submodular function $z$} (see Remark \ref{rem:braid-coarse}), then
for $i,j \in I$ and $K \sus I \backslash \{i,j\}$, 
the Conditional Independence Statement $i \sqcup j | K$
corresponds to the following: Let $M$ be {a poset structure on $I$ such that:
\begin{itemize}
\item $x\le_M i\le_M y$ and $x\le_M j\le_M y$ for any $x\in K$ and
  $y\in J := I \backslash
(K \cup \{i,j\})$,
\item $i$ and $j$ are incomparable,
\item The restrictions of $M$ to both $K$ and $J$ are total orders.
\end{itemize}
Then $i \sqcup j | K$ amounts to $M$ being compatible with $z$, and} $z_{/K}(\{i,j\}) = z_{/K}(i) + z_{/K}(j)$.
\end{remark}

\begin{proposition} \label{pro:com-BA}
  Suppose the preorder $P$ is compatible with the submodular function $z$.
  If $B \supseteq A$ are down-sets of $P$ and similarly
  $B^\prime \supseteq A^\prime$, such that
  $B \backslash A = B^\prime \backslash A^\prime=C$, then
  \[ z_{B/ A} = z_{B^\prime / A^\prime}. \]
  So this depends only on $C$ and not on the particular way of writing
  $C$ as a difference of down-sets. We therefore write $z_C$ for
  $z_{B /A}$. 
  \end{proposition}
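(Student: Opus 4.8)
The statement says that $z_{B/A}$ depends only on the convex set $C = B \setminus A$, not on the particular chain $A \subseteq B$ of down-sets realizing it. My plan is to reduce the general case to a sequence of "elementary moves" that change the pair $(A,B)$ without changing $C$, and to handle each elementary move using the compatibility hypothesis together with Lemma \ref{lem:prod-sum}.

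\medskip
\textbf{Step 1: Reduce to moving one element at a time.} Given two representations $C = B \setminus A = B' \setminus A'$ with $A, B, A', B'$ all down-sets of $P$, I want to connect $(A,B)$ to $(A',B')$ by a chain of down-set pairs $(A_0,B_0) = (A,B), (A_1,B_1), \ldots, (A_m,B_m) = (A',B')$, where consecutive pairs differ by adding or removing a single element of $C$ from both $A_i$ and $B_i$ simultaneously (i.e. $A_{i+1} = A_i \cup \{c\}$, $B_{i+1} = B_i \cup \{c\}$ for some $c \in C$, or the reverse). Concretely, since $A \cap (B' \setminus A')$ and symmetric differences are controlled by $C$, one can first enlarge $A$ to $A \cup A'$ (moving elements of $A' \setminus A \subseteq A' \subseteq P$-down-set... careful: elements of $A' \setminus A$ lie outside $C$) — actually the cleaner route is: both $A$ and $A'$ are down-sets disjoint from $C$ whose union with $C$ is again a down-set; the down-sets of $P$ containing $C$ as a "top slice" form a sublattice, and $A \cup A'$ together with $(A\cup A')\cup C = B \cup B'$ is another valid pair. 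So it suffices to treat the case where $A \subseteq A'$ (hence $B \subseteq B'$), and then induct on $|A' \setminus A|$, adding elements of $A' \setminus A$ one at a time — noting each intermediate set is still a down-set because $A' \setminus A$ consists of elements all of whose $P$-predecessors lie in $A'$, and we can order them so that at each stage the set is down-closed.

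\medskip
\textbf{Step 2: The elementary move.} It remains to show: if $A \subseteq B$ are down-sets with $B \setminus A = C$, and $c \in I$ is an element such that $A \cup \{c\}$ is also a down-set (so $c \notin C$, $c \notin A$, and all $P$-predecessors of $c$ lie in $A$), then $z_{B/A} = z_{(B \cup \{c\})/(A \cup \{c\})}$, i.e.
\[ z(B \cup S) - z(A) = z(B \cup \{c\} \cup S) - z(A \cup \{c\}) \quad \text{for all } S \subseteq C. \]
Equivalently $z(B \cup \{c\} \cup S) - z(B \cup S) = z(A \cup \{c\}) - z(A)$ for all $S \subseteq C$. This is a statement that the "marginal cost of adding $c$" is constant as $S$ ranges over subsets of $C$. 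By submodularity this marginal cost is weakly decreasing in the argument, so it suffices to prove it for the two extreme cases $S = \emptyset$ and $S = C$; the case $S=\emptyset$ is trivial. So I need $z(B \cup \{c\}) - z(B) = z(A \cup \{c\}) - z(A)$, i.e.
\[ z(B \cup \{c\}) + z(A) = z(B) + z(A \cup \{c\}). \]
Here $B \cup \{c\}$ is a down-set of $P$ (since $B$ and $A \cup \{c\}$ are, and $B \cup \{c\} = B \cup (A \cup\{c\})$), and $A \cup \{c\}$ is a down-set. Now apply the second compatibility condition to the down-set pair $A \cup \{c\} \supseteq A$: the convex set $(A \cup \{c\}) \setminus A = \{c\}$ is a single point, which trivially decomposes, but that gives nothing. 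Instead apply compatibility to $B \cup \{c\} \supseteq A$: the convex set is $C \cup \{c\}$. I need to know that in the induced preorder on $C \cup \{c\}$, the singleton $\{c\}$ splits off as its own connected component. This is where the hypothesis that $c$ has all its $P$-predecessors in $A$ is used: $c$ is a minimal element of $C \cup \{c\}$; and since $A \cup \{c\}$ is a down-set, $c$ is also "$\le$-below nothing new" — actually I need $c$ to be \emph{isolated}, i.e. incomparable to every element of $C$. Hmm — minimality gives $c \not>_P x$ for $x \in C$; I also need $c \not<_P x$ for $x\in C$, equivalently no element of $C$ has $c$ as a predecessor, equivalently $C$ is still a down-set of $P$ relative to... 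This does \emph{not} hold in general, so Step 1's ordering of $A' \setminus A$ must be chosen more carefully — I should add the elements of $A'\setminus A$ in an order \emph{compatible with $P^{\mathrm{op}}$ restricted to the relevant region}, i.e. add maximal-in-$P$ elements of $A' \setminus A$ last, equivalently minimal ones... Let me instead add the elements so that each new $c$ is \emph{maximal} among the remaining ones together with $C$: then $c$ has no $P$-successor in $C$ and (being in $A'$, a down-set) all predecessors already added. Wait, I need $c$ incomparable to all of $C$. Take $c$ to be an element of $A' \setminus A$ that is maximal in the down-set $A'$; then nothing in $A'$, in particular nothing in $C \subseteq B' \setminus A'$... no, $C \not\subseteq A'$.

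\medskip
\textbf{The main obstacle.} The real content, and the step I expect to be delicate, is exactly this: choosing the order in which to transfer elements across so that at each elementary move the transferred element $c$ is genuinely a \emph{connected component by itself} of the convex set $C \cup \{c\}$ in the induced preorder — so that the second compatibility axiom applies and yields, via Lemma \ref{lem:prod-sum} (the equivalence of the product decomposition $z' = z'|_{C} \cdot z'|_{\{c\}}$ with the equation $z(B\cup\{c\}) + z(A) = z(B) + z(A\cup\{c\})$), the needed equality. The right framework is probably: order $A' \setminus A$ as $c_1, \ldots, c_m$ so that $\{c_1,\ldots,c_i\} \cup A$ is a down-set for each $i$ (exists since $A'$ is a down-set containing $A$) \emph{and} additionally $c_i$ is incomparable in $P$ to every element of $B \setminus A = C$ — the latter is automatic once we note that elements of $A'$ and elements of $C = B\setminus A$ satisfy: if $c_i <_P x$ for some $x \in C$ then $c_i \in B$ (down-set), so $c_i \in B \setminus A' $ or... $c_i \in A' $ and $c_i \in B$; fine, but then is $c_i \in C$? $C = B\setminus A$, and $c_i \notin A$, so yes $c_i \in C$, contradicting $c_i \in A' \setminus A$ being disjoint from... is $A'\setminus A$ disjoint from $B \setminus A$? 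We have $C = B\setminus A = B'\setminus A'$, so $C \cap A' = \emptyset$, hence $C \cap (A'\setminus A) = \emptyset$. Good — so $c_i \notin C$. Therefore $c_i <_P x$ with $x \in C$ would force $c_i \in B = A \sqcup C$, and $c_i \notin A$, so $c_i \in C$, contradiction. Symmetrically $x <_P c_i$ with $x \in C$: then... not immediately excluded, but we can instead also require processing in an order making each $c_i$ maximal, OR simply observe we only ever need the pair $(A \cup \{c_1,\ldots,c_{i-1}\}, B \cup \{c_1,\ldots,c_{i-1}\})$ and the slice is $C$ throughout, and $c_i$ is incomparable to $C$: the direction "$x <_P c_i$, $x \in C$" would put $c_i$ above something in $C$, but since $A_{i-1}\cup\{c_i\}$ is a down-set we'd need $x \in A_{i-1}$, and $x\in C$ is disjoint from $A_{i-1}$ — contradiction. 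So in fact $c_i$ is incomparable to all of $C$, the decomposition $(C\cup\{c_i\}) = C \sqcup \{c_i\}$ holds in the induced preorder, compatibility applies, and we are done. So the plan: Step 1 (chain reduction + the down-set-ordering lemma with the two incomparability observations above), Step 2 (reduce to extreme subsets $S=\emptyset, C$ by submodular monotonicity, then invoke compatibility + Lemma \ref{lem:prod-sum}). The obstacle is entirely the bookkeeping in Step 1 confirming incomparability; once set up correctly, Step 2 is immediate from the quoted lemmas.
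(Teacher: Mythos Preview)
Your approach is correct but takes a genuinely different route from the paper. The paper does not chain elementary moves; instead it produces a \emph{canonical} representative: let $D$ be the down-set of $P$ generated by $C$ and $E=D\setminus C$. It then suffices to show $z_{B/A}=z_{D/E}$ for any valid $(A,B)$. Writing $B=E\sqcup C\sqcup X$ (so $A=E\sqcup X$), they verify that $X$ and $C$ are incomparable in $P$, apply compatibility once to the pair $E\subseteq B$ to get the product splitting $z_{B/E}=z'|_C\cdot z'|_X$, and read off $z_{B/A}(S)=z_{D/E}(S)$ from the product formula. So the paper uses a single application of compatibility, while you use one per element transferred; their argument is shorter and avoids all the ordering bookkeeping.

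Two remarks on your write-up. First, once compatibility gives you the product decomposition $z_{(B\cup\{c\})/A}=z'|_C\cdot z'|_{\{c\}}$, the identity $z_{B/A}(S)=z_{(B\cup\{c\})/(A\cup\{c\})}(S)$ for \emph{every} $S\subseteq C$ follows immediately from the product formula $z'(S\cup\{c\})=z'(S)+z'(\{c\})$; the detour through submodular monotonicity and checking endpoints is unnecessary. Second, that detour also makes the $\infty$ case awkward (you would need to argue separately that $z(A\cup S)=\infty \iff z(A\cup\{c\}\cup S)=\infty$, which does follow from submodularity applied to the pairs $(A\cup\{c\},A\cup S)$ and $(A\cup\{c\}\cup S,\,B)$, but you never say so). Using the product formula directly sidesteps this entirely. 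Your incomparability verification in Step~1 is correct once cleaned up: the key facts are $C\cap A'=\emptyset$ and that $A_i$, $B_i=A_i\cup B$ are down-sets, from which both directions of incomparability between $c_i$ and $C$ follow.
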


\begin{proof}
  Let $D$ be the down-set in $P$ generated by $C = B \backslash
  A$. Then $E = D \backslash C$ is also a down-set
  in $P$. It is enough to show that $z_{B / A}
  = z_{D / E}$. Write $B$ as disjoint unions of underlying sets:
  \[ B = D \cup X = E \cup C \cup X. \]
  We claim that $B \backslash E$ is the disjoint union of the two
  preorders $X$ and $C$. To see this, if $x \in X$ and $x \leq c \in C$,
  then $x \in D$, which is impossible in view of $D\cap X=\emptyset$. If $x > c \in C$, then
  $x \not \in C$ and $x \in B$, and so $x \in A$. Since $A$ is a down-set
  we would then have $c \in A$, but $C$ is disjoint from
  $A$. Hence $x$ and $c$ are incomparable.\\
  
  \noindent Since $P$ is compatible with $z$ we get
  \[ z^\prime = z_{B /E} =
    z^\prime_C \cdot z^\prime_X. \]
  For $S \sus C$ we now have:
  \[ z^\prime (S \cup X) = z_{B / E} (S \cup X)
    = z^\prime_C(S) + z^\prime_X(X). \]
  Then
\[ z(E \cup S \cup X) -  z (E)
  = z(E \cup S) - z(E) + z(E \cup X)
  - z(E), \]
which gives:
\[
  z(E \cup S \cup X) - z(E \cup X)
  = z(E \cup S) - z(E).
\]
As $A = E \cup X$ this is
\[ z(A \cup S ) - z(A) = z (E \cup S)
  - z(E), \]
that is:
\[ z_{B / A}(S) = z_{D / E}(S). \]
\end{proof}

\begin{definition} Let $P$ be a preorder compatible with a submodular function
  $z$. Let $\alinb(P)$ be the affine linear space defined by the
  equations $x_C = z_C(C)$ as $C$ ranges over the bubbles of $P$.
\end{definition}

The following is essentially \cite[Lemma 3.33]{Fuj2005}.
\begin{lemma} \label{lem:conf-alinbu}
  Let $P$ be compatible with $z$, and $L$ a linear extension
  of $P$. Then we have
    \[ \alin(P) = \alinb(P) = \alin(L),\]
    where $\alin(P)$ and $\alin(L)$ are given by Definition \ref{def:alin}.
\end{lemma}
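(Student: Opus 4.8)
The plan is to prove the two equalities as a cyclic chain of inclusions
\[ \alinb(P) \sus \alin(P) \sus \alin(L) \sus \alinb(P). \]
The substantive one is $\alinb(P) \sus \alin(P)$, and this is where compatibility of $P$ with $z$ enters, through the well-definedness of $z_C$ (Proposition \ref{pro:com-BA}). Given $x \in \alinb(P)$ and an arbitrary down-set $A$ of $P$, I would order the bubbles of $P$ contained in $A$ as $C_1, \dots, C_k$ so that $C_i \leq_P C_j$ implies $i \leq j$; then each $A_i := C_1 \cup \cdots \cup C_i$ is again a down-set of $P$ (by the choice of ordering), with $A_0 = \emptyset$, $A_k = A$, and $A_i \setminus A_{i-1} = C_i$. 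Since $A \mapsto x_A$ is additive over disjoint unions and $x \in \alinb(P)$, we get $x_A = \sum_{i=1}^k x_{C_i} = \sum_{i=1}^k z_{C_i}(C_i)$. By Proposition \ref{pro:com-BA}, $z_{C_i}(C_i) = z_{A_i/A_{i-1}}(C_i) = z(A_i) - z(A_{i-1})$, with all values finite since each $A_i$ is a down-set of $P$, so the sum telescopes to $z(A) - z(\emptyset) = z(A)$. Hence $x \in \alin(P)$.

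For the remaining two inclusions: $\alin(P) \sus \alin(L)$ holds because $P \preceq L$ forces every down-set of $L$ to be a down-set of $P$, so the equations defining $\alin(L)$ are a sub-collection of those defining $\alin(P)$ (in particular $z$ is finite on the down-sets of $L$, so $\alin(L)$ is well-defined). For $\alin(L) \sus \alinb(P)$, recall that $L$ being a linear extension of $P$ (Definition \ref{def:pre-lin}) means $\mathbf b(L) = \mathbf b(P)$; since $L$ is total its down-sets form a finite chain $\emptyset = D_0 \subsetneq D_1 \subsetneq \cdots \subsetneq D_m = I$ whose successive differences $D_j \setminus D_{j-1}$ run exactly over the bubbles of $L$, hence of $P$, each once. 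So for $x \in \alin(L)$ and a bubble $C = D_j \setminus D_{j-1}$ we get $x_C = x_{D_j} - x_{D_{j-1}} = z(D_j) - z(D_{j-1}) = z_{D_j/D_{j-1}}(C) = z_C(C)$, again by Proposition \ref{pro:com-BA}. Thus $x \in \alinb(P)$, and the cyclic chain closes to give $\alin(P) = \alinb(P) = \alin(L)$.

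The only step with real content is the first one, and even there the work is organizational: writing an arbitrary down-set of $P$ as an increasing flag of down-sets with one-bubble increments, so that the well-definedness of $z_C$ makes the telescoping sum collapse; I expect this to be the main (and fairly mild) obstacle. An equivalent shortcut would be to observe that the linear systems cutting out $\alin(L)$ (indexed by the flag $D_0 \subsetneq \cdots \subsetneq D_m$) and $\alinb(P)$ (indexed by the bubbles of $P$) differ by an invertible triangular change of coordinates, giving $\alin(L) = \alinb(P)$ at one stroke, and then obtaining $\alinb(P) = \alin(P)$ by the same telescoping argument.
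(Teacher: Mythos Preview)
Your proof is correct. You run the cyclic chain of inclusions in the \emph{opposite direction} from the paper: the paper proves $\alin(P)\sus\alinb(P)\sus\alin(L)\sus\alin(P)$, whereas you prove $\alinb(P)\sus\alin(P)\sus\alin(L)\sus\alinb(P)$. The underlying ingredients are identical---Proposition~\ref{pro:com-BA} and the telescoping identity $z(A)=\sum_i z_{C_i}(C_i)$ over a bubble flag---but your orientation has a small advantage: the step $\alin(P)\sus\alin(L)$ is trivial (down-sets of $L$ are down-sets of $P$), while the paper's reverse inclusion $\alin(L)\sus\alin(P)$ requires an inductive argument expressing each defining form $t_B-z(B)$ of $\alin(P)$ as a linear combination of those for $\alin(L)$. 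Your direct proof of $\alinb(P)\sus\alin(P)$ via a bubble flag inside an arbitrary $P$-down-set absorbs that work in a cleaner way. The paper's route, on the other hand, makes the inclusion $\alin(P)\sus\alinb(P)$ the trivial one (a single subtraction $x_B-x_A$), so each approach shifts the ``real'' step to a different edge of the triangle.
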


\begin{proof}
  We first note that
\[ \alin(P) \sus \alinb(P) \sus \alin(L). \]
Indeed, if $x$ is in $\alin(P)$, then $x_A = z(A)$ for any downset in $P$.
So if $C = B \backslash A$ is a bubble with $A,B$ downsets, then
\[ x_C = x_B - x_A = z(B) - z(A) = z_C(C), \] and $x$ is in $\alinb(P)$.
Let $x$ in $\alinb(P)$. Then given a down-set $B$ of $L$, let $C$
be the maximal bubble of $B$ and $C = B \backslash A$. By induction
on the maximal element, we may assume $x_A = z(A)$. Note that
both $B$ and $A$ are down-sets of $P$.
Then
\[ x_B = x_C + x_A = z_C(C) + z(A) = z(B) \]
and so $x$ is in $\alin(L)$.

Now let us show $\alin(L) \sus \alin(P)$. We show by induction that
the defining linear form $\fone_B - z(B)$ (see Section \ref{sec:egpclass} for
notation $\fone_B$) of $\alin(P)$ is a linear combination
of the defining forms for $\alin(L)$.\\

Let $B$ be a down-set of $P$,
and let $C$ be the bubble of $B$, maximal in $L$.
Write $C = B^\prime \backslash A^\prime$ as a difference between down-sets
of $L$. These are then also down-sets for $P$.
Then
\[ z_C(C) = z(B) - z(A) = z(B^\prime) - z(A^\prime). \]
By induction $\fone_{A} - z(A)$ is a form which is a linear
combination of defining forms for $\alin(L)$.
Now we get
\begin{align*}  \fone_B - z(B) & = (\fone_A - z(A)) + (\fone_C - z_C(C)) \\
& = (\fone_A - z(A)) + (\fone_{B^\prime} - z(B^\prime)) - (\fone_{A^\prime} -
z(A^\prime)).
\end{align*}
Thus $\fone_B - z(B)$ is a linear combination of defining forms for $\alin(L)$
and so $\alin(L) \sus \alin(P)$.
\end{proof}

\begin{definition}
  Let $z : \pow(I) \pil \hRR$ be a submodular function. A preorder
  $P$ {\it conforms} to $z$ if:
  \begin{itemize}
  \item $P$ is compatible with $z$,
  \item For any convex subset $C$ of $P$, if $z^\prime = z_C$
    decomposes as a product
    \[ z^\prime = z^\prime\restr{C_1} \cdot z^\prime\restr{C_2}, \]
    then the preorder on $C$ decomposes as a disconnected union of preorders
    $C = C_1 \sqcup C_2$.
  \end{itemize}
  By definition of compatible, that $P$ conforms to $z$ means:
  \begin{itemize}
    \item $z(A) < \infty$ for any downset $A$ of $P$,
    \item For $C$ convex in  $P$, $z^\prime = z_C$
      decomposes as a product $z^\prime = z^\prime\restr{C_1} \cdot
      z^\prime\restr{C_2}$  if and only if the preorder on $C$ decomposes
      as a disconnected union of preorders $C = C_1 \sqcup C_2$.
  \end{itemize}
  If the topology $\cT$ corresponds to $P$ we also say $\cT$ conforms to $z$.
 Denote by $\Pcon(z)$ the set of preorders conforming to $z$, and by
 $\Top(z)$ the topologies conforming to $z$. 
\end{definition}

\begin{lemma}
  $P$ (resp. $Q$) conforms to the submodular function $u$ (resp. $v$) if
  and only if 
  the disjoint union $P \sqcup Q$ conforms to $u \cdot v$.
\end{lemma}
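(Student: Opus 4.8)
The plan is to unwind the definitions of ``conforms'' and ``product of submodular functions'' and show that each of the three defining conditions for $P \sqcup Q$ conforming to $u \cdot v$ splits cleanly into the corresponding conditions for $P$ conforming to $u$ and $Q$ conforming to $v$. Write $I = S \sqcup T$ with $P$ a preorder on $S$, $Q$ a preorder on $T$, $u \in \SM[S]$, $v \in \SM[T]$, and $z := u \cdot v$, so $z(E) = u(E \cap S) + v(E \cap T)$ for $E \sus I$. The key structural observation is that in $P \sqcup Q$ every down-set $A$ is of the form $A_S \sqcup A_T$ with $A_S$ a down-set of $P$ and $A_T$ a down-set of $Q$; similarly every convex subset $C$ of $P \sqcup Q$ decomposes as $C = C_S \sqcup C_T$ with $C_S$ convex in $P$ and $C_T$ convex in $Q$, and moreover $S$ and $T$ sit in different connected components of $P \sqcup Q$, so the restriction of $P \sqcup Q$ to $C$ is the disconnected union of $P|_{C_S}$ and $Q|_{C_T}$.

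First I would record the finiteness condition: $z(A) < \infty$ for every down-set $A = A_S \sqcup A_T$ of $P \sqcup Q$ iff $u(A_S) + v(A_T) < \infty$ for all such, which (taking $A_T = \emptyset$, resp.\ $A_S = \emptyset$, and using $u(\emptyset) = v(\emptyset) = 0$) is equivalent to $u(A_S) < \infty$ for all down-sets $A_S$ of $P$ and $v(A_T) < \infty$ for all down-sets $A_T$ of $Q$. Next, for a convex subset $C = C_S \sqcup C_T$ of $P \sqcup Q$ with $C = B \backslash A$, $B = B_S \sqcup B_T \supseteq A = A_S \sqcup A_T$ down-sets, a direct computation from $z = u \cdot v$ gives $z_{B/A} = z_C = u_{C_S} \cdot v_{C_T}$ (more precisely $z_C\restr{E} = u_{C_S}(E \cap C_S) + v_{C_T}(E \cap C_T)$ for $E \sus C$), i.e.\ the corestriction of a product is the product of the corestrictions. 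In particular $z_C$ is already a product across the partition $C_S \sqcup C_T$; combined with Lemma \ref{lem:pre-zprod} (unique factorization into indecomposables), a factorization $z_C = z_C\restr{C_1} \cdot z_C\restr{C_2}$ corresponds to a partition $\{C_1, C_2\}$ that refines, or is refined by, the partition into indecomposable factors of $u_{C_S}$ and $v_{C_T}$; the crucial point is that any such bipartition either separates $C_S$ from $C_T$ or is a bipartition internal to $C_S$ (resp.\ $C_T$) combined with all of the other side on one block.

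With these translations in hand, both directions become bookkeeping. For the forward direction: assuming $P$ conforms to $u$ and $Q$ conforms to $v$, the finiteness condition for $P \sqcup Q$ and $u \cdot v$ holds by the first paragraph; for the compatibility and conforming biconditions, given a convex $C = C_S \sqcup C_T$ and a factorization of $z_C$, push it through the identification $z_C = u_{C_S} \cdot v_{C_T}$ and unique factorization to see each factor lives over a union of indecomposable-support sets of $u_{C_S}$ and $v_{C_T}$; conforming for $u$ and $v$ then says each such indecomposable support is a disconnected piece of $P|_{C_S}$, resp.\ $Q|_{C_T}$, hence of $(P \sqcup Q)|_C$, giving the desired disconnected decomposition — and conversely a disconnected decomposition of $(P \sqcup Q)|_C$ restricts to disconnected decompositions on each side, which compatibility of $P$, $Q$ turns into product decompositions of $u_{C_S}$, $v_{C_T}$, hence of $z_C$. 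For the reverse direction, restrict the conforming property of $P \sqcup Q$ to convex subsets contained entirely in $S$ (where $z_C = u_{C}$ and $(P \sqcup Q)|_C = P|_C$) to recover that $P$ conforms to $u$, and symmetrically for $Q$. The main obstacle is the careful handling of the unique-factorization step for $z_C$: one must argue that a two-block partition witnessing $z_C = z_C\restr{C_1} \cdot z_C\restr{C_2}$ cannot ``cross'' the $C_S$/$C_T$ divide in a way not accounted for by the indecomposable decompositions on each side — this is exactly Lemma \ref{lem:pre-zprod} applied to $z_C$, but phrasing it so that it feeds both the compatibility bicondition and the conforming bicondition simultaneously requires a little care. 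Everything else is routine verification.
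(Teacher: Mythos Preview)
Your argument is correct and is a full elaboration of what the paper records only as ``Clear.'' The structural observations you isolate --- that down-sets and convex subsets of $P \sqcup Q$ split as $A_S \sqcup A_T$ and $C_S \sqcup C_T$, and that $z_C = u_{C_S} \cdot v_{C_T}$ --- are exactly what is needed, and the reverse direction by restricting to convex sets contained in $S$ (resp.\ $T$) is the right move.

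One simplification: the appeal to Lemma~\ref{lem:pre-zprod} (unique factorization) and the discussion of partitions ``crossing'' the $C_S/C_T$ divide are unnecessary. From $z_C = u_{C_S} \cdot v_{C_T}$ it follows directly, by evaluating on subsets $E \subseteq C_S$ and $E \subseteq C_T$ separately, that
\[
z_C = z_C\restr{C_1} \cdot z_C\restr{C_2}
\quad\Longleftrightarrow\quad
u_{C_S} = u_{C_S}\restr{C_1 \cap S} \cdot u_{C_S}\restr{C_2 \cap S}
\ \text{and}\
v_{C_T} = v_{C_T}\restr{C_1 \cap T} \cdot v_{C_T}\restr{C_2 \cap T},
\]
and likewise $(P \sqcup Q)\restr{C}$ is disconnected over $C_1, C_2$ iff $P\restr{C_S}$ and $Q\restr{C_T}$ are each disconnected over the induced bipartitions. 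This matches the two sides immediately without invoking indecomposable factorizations. (Your phrase ``refines, or is refined by'' is also slightly off: a two-block factorization is always a coarsening of the indecomposable partition, never a refinement unless there are at most two indecomposables.)
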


\begin{proof} Clear.
\end{proof}

\begin{remark}
  If $z$ is indecomposable, the coarse preorder conforms to $z$.
  More generally, by Proposition \ref{pro:egp-top} the totally disconnected
  preorder $\cpre(z)$ conforms to $z$.
\end{remark}

\begin{proposition} \label{pro:conf-PbtrQ}
  Let $P$ conform to $z$ and $P \preceq Q$. Then $Q$ conforms to $z$
  if and only if $P \btl Q$. Equivalently, the up-set 
  $\ua_{\Pre(z)} P$ coincides with $ \overline P$. 
\end{proposition}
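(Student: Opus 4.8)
The plan is to deduce everything from the characterizations of $\btl$ in Section~\ref{sec:poset}, exploiting that for a convex set $C$ the function $z_C$ — and whether it factors as a nontrivial product — depends only on $C$ and $z$, not on the ambient preorder. Two preliminary observations: since $P\preceq Q$, every down-set of $Q$ is a down-set of $P$, so every $Q$-convex set is $P$-convex, and by the Remark following the definition of compatibility $Q$ is compatible with $z$, so $z_C$ is well defined for each $Q$-convex $C$ and agrees whether read off from $P$ or from $Q$. Also, if $B_1\neq B_2$ are bubbles of $Q$ with $B_1<_Q B_2$, then no $P$-relation can run downward from $B_2$ to $B_1$, as that would identify the two $Q$-bubbles; in particular the only possible $P$-comparabilities between $B_1$ and $B_2$ go from $B_1$ up to $B_2$.

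For ``$Q$ conforms to $z$ $\Rightarrow$ $P\btl Q$'' I would verify conditions 1 and 2 of Corollary~\ref{cor:pos-PbtrQ}. For condition~1: a bubble $B$ of $Q$ is $P$-convex, and $Q|_B$ is coarse, hence cannot split as a disjoint union of nonempty preorders; since $Q$ conforms, $z_B$ is not a nontrivial product; since $P$ conforms, $P|_B$ does not split either, i.e.\ $P|_B$ is connected. For condition~2: if $B_1<_Q B_2$ is a cover of $Q$-bubbles, then $B_1\cup B_2$ is $Q$-convex (cover property), hence $P$-convex, and $Q|_{B_1\cup B_2}$ is connected, so as before $z_{B_1\cup B_2}$ is not a product and $P|_{B_1\cup B_2}$ is connected; as the only possible $P$-comparabilities between the nonempty sets $B_1,B_2$ point from $B_1$ up to $B_2$, connectedness forces at least one such relation $p_1<_P p_2$, strict since $p_1,p_2$ lie in distinct $Q$-bubbles. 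Hence $P\btl Q$.

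For the converse, assume $P\btl Q$; then $Q$ is compatible with $z$, and compatibility already yields that $Q|_C=Q|_{C_1}\sqcup Q|_{C_2}$ implies $z_C=z_C|_{C_1}\cdot z_C|_{C_2}$ for every $Q$-convex $C$, so only the reverse implication is at issue. Suppose $z_C=z_C|_{C_1}\cdot z_C|_{C_2}$ with $C=C_1\sqcup C_2$ nonempty. Since $C$ is $P$-convex and $P$ conforms to $z$, the conforming property of $P$ gives $P|_C=P|_{C_1}\sqcup P|_{C_2}$, so there is no $P$-comparability across $C_1$ and $C_2$. If some $c_1\in C_1$ and $c_2\in C_2$ were $Q$-comparable, say $c_1\le_Q c_2$, then the order interval $[c_1,c_2]_Q$ is convex and connected in $Q$ and is contained in $C$ (as $C$ is $Q$-convex), so by Proposition~\ref{pro:poset-cc}(a) its $P$-restriction is connected; but this restriction meets both $C_1$ and $C_2$ while admitting no $P$-comparability between them, a contradiction. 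Hence $Q|_C=Q|_{C_1}\sqcup Q|_{C_2}$, so $Q$ conforms to $z$. The final ``equivalently'' clause is then immediate: $Q\in\ua_{\Pcon(z)}P$ means $P\preceq Q$ and $Q\in\Pcon(z)$, which by the above is precisely $P\btl Q$, i.e.\ $Q\in\overline P$.

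The step I expect to be the crux is promoting ``no $P$-comparability between $C_1$ and $C_2$'' to ``no $Q$-comparability'' in the converse: this is exactly where the hypothesis $P\btl Q$ is used, through the convex-connected characterization of Proposition~\ref{pro:poset-cc}(a), and the implication genuinely fails without it.
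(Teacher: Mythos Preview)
Your proof is correct and follows essentially the same strategy as the paper, routing through the characterizations of $\btl$ from Section~\ref{sec:poset} and the fact that $z_C$ depends only on $C$. The only minor difference is in the forward direction: the paper applies Proposition~\ref{pro:poset-cc}(a) directly (for every $Q$-convex $C$, connectedness of $Q|_C$ forces $z_C$ indecomposable and hence $P|_C$ connected), whereas you verify the equivalent bubble/cover-relation conditions of Corollary~\ref{cor:pos-PbtrQ}; for the converse your interval argument with $[c_1,c_2]_Q$ makes explicit the partition-matching that the paper's ``$Q|_C$ connected $\Rightarrow z_C$ indecomposable'' handles by contrapositive.
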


\begin{proof} Assume $Q$ conforms to $z$, and let $C$ be convex in $Q$. 
  When $Q_{|C}$ is connected, then $z_C$ is indecomposable. Whence
  $P_{|C}$ is connected also. By Proposition \ref{pro:poset-cc},
  $P \btl Q$. 

  Assume conversely $P \btl Q$. Let $C$ be convex in $Q$
  If $Q_{|C}$ is disconnected, then $P_{|C}$ is disconnected, and $z_C$
  decomposes.
  If $Q_{|C}$ is connected, then $P_{|C}$ is connected by
  Proposition \ref{pro:poset-cc}, and so $z_C$ is indecomposable.
  Thus $Q$ conforms to $z$.
\end{proof}

\begin{corollary} Given a preorder $P$ on $I$.
  Then $\Pre(\low_P) = \overline P$. 
\end{corollary}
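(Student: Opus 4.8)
The plan is to apply Proposition~\ref{pro:conf-PbtrQ} to the submodular function $z = \low_P$. For that I need two facts: that $P$ itself conforms to $\low_P$, and that $P$ is the $\preceq$-minimum of $\Pcon(\low_P)$.

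First I would check that $P$ conforms to $\low_P$. The key point is a corestriction identity: for a convex subset $C = B\backslash A$ of $P$, with $A\subseteq B$ down-sets, one has $(\low_P)_{B/A} = \low_{P\restr C}$ on $\pow(C)$. Indeed $(\low_P)_{B/A}(U) = \low_P(A\cup U) - \low_P(A) = \low_P(A\cup U)$ since $\low_P(A)=0$, and — using that $A$ and $B$ are down-sets of $P$ — for $U\subseteq C$ the set $A\cup U$ is a down-set of $P$ exactly when $U$ is a down-set of $P\restr C$. In particular this corestriction depends only on $C$, in accordance with Proposition~\ref{pro:com-BA}, so one may write $z_C = \low_{P\restr C}$. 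Now $\low$ is multiplicative, $\low_{R_1\sqcup R_2} = \low_{R_1}\cdot\low_{R_2}$, because a subset of a disjoint union of preorders is a down-set precisely when each of its two pieces is a down-set of the corresponding preorder; and $\low$ is injective, being the bijection between $\{0,\infty\}$-valued submodular functions and preorders recalled in Subsection~\ref{subsec:prod-subpretop} (see \cite[Section~13]{AA2017}). Hence $z_C = \low_{P\restr C}$ decomposes as a product $z_C = z_C\restr{C_1}\cdot z_C\restr{C_2}$ if and only if $P\restr C$ decomposes as a disconnected union $C = C_1\sqcup C_2$. Since moreover $\low_P(A)<\infty$ for every down-set $A$ of $P$, this is precisely the conforming condition, so $P\in\Pcon(\low_P)$.

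Next I would show that every $Q\in\Pcon(\low_P)$ refines $P$. Such a $Q$ is in particular compatible with $\low_P$, so $\low_P(A)<\infty$, hence $\low_P(A)=0$, for every down-set $A$ of $Q$; by the definition of $\low_P$ this means every down-set of $Q$ is a down-set of $P$, i.e. $\topl(Q)\subseteq\topl(P)$, which under the Alexandroff correspondence is exactly $P\preceq Q$.

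Combining the two steps, $\Pcon(\low_P)$ is the set of conforming preorders that refine $P$, and by Proposition~\ref{pro:conf-PbtrQ} (applicable because $P$ conforms to $\low_P$) this set is $\overline P$; hence $\Pcon(\low_P) = \overline P$. The only slightly delicate step is the corestriction identity $(\low_P)_{B/A} = \low_{P\restr C}$ together with noting that multiplicativity of $\low$ yields the full ``conforming'' equivalence and not merely ``compatibility''; the rest is immediate from the definitions and the cited results.
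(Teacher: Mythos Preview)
Your proof is correct and follows essentially the same strategy as the paper: show that $P$ conforms to $\low_P$, observe that $P$ is the $\preceq$-minimum of $\Pcon(\low_P)$ (since $\pre(\low_P)=P$), and invoke Proposition~\ref{pro:conf-PbtrQ}. The only difference is in how the conforming condition is verified: the paper applies Lemma~\ref{lem:prod-sum} directly to reduce the product decomposition of $z_C$ to the equation $z(B)+z(A)=z(A\cup C_1)+z(A\cup C_2)$ and then reads off that both sides vanish iff $A\cup C_1,\,A\cup C_2$ are down-sets, whereas you first identify the corestriction as $(\low_P)_{B/A}=\low_{P\restr C}$ and then use multiplicativity and injectivity of $\low$. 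Your route is slightly more conceptual; the paper's is slightly more direct. One small remark: your appeal to ``injectivity of $\low$'' works, but it is quicker to note that if $\low_{P\restr C}$ factors over $C_1\sqcup C_2$ then evaluating at $E=C$ forces $\low_{P\restr C}(C_1)=\low_{P\restr C}(C_2)=0$, i.e.\ both $C_i$ are down-sets of $P\restr C$, hence clopen, hence $P\restr C=C_1\sqcup C_2$.
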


\begin{proof} Let $A \sus B$ be down-sets of $P$, and
  $B \backslash A = C_1 \cup C_2$ a disjoint union of sets.
  We show that $P$ conforms to $\low_P$. Clearly $P$ is then the
  unique minimal element in $\Pre(\low_P)$, and so will give the
  statement.
  By Lemma \ref{lem:prod-sum}, that $P$ conforms to $\low_P$ means that 
  $A \cup C_1$ and $A \cup C_2$ are down-sets of $P$ iff
  \[ z(B) + z(A) = z(A \cup C_1) + z(A \cup C_2). \]
  But the left side is $0$. If we have equality, the right sides are
  $0$ and so give two down-sets of $P$. Conversely if the sets on the
  left are down-sets the value on the left is $0$ and we have equality.
\end{proof}



\section{Bijection between faces and conforming preorders} \label{sec:bij}

For a given submodular function $z : \pow(I) \pil \hRR$
we give a Galois correspondence between topologies/preorders on the
set $I$ and faces of the extended generalized permutahedron
$\egp(z)$. This Galois correspondence
gives a bijection between the topologies/preorders conforming to $z$ and
the non-empty faces of $\egp(z)$. Every face of $\egp(z)$ is also
an EGP and we give a simple description of its submodular function.
We do not claim strong originality of the main result in this section,
Theorem \ref{thm:bij-main}, on the bijection of conforming preorders
and faces of $\egp(z)$, see the discussion before the theorem.
But the notion of conforming preorders is new and so is the presentation
and development of the theory in terms of these.

\medskip
For a point $x \in \RR I$, recall the notation $x_A = \sum_{i \in A} x_i$
for a subset $A \sus I$.
By Section \ref{sec:egp} the extended generalized permutahedron of
a submodular function $z : \pow (I) \pil \hRR$ is
\begin{equation*} 
  \egp(z):=\left\{x=(x_i)_{i\in I}\in\RR I \, | \,
    x_I = z(I) \hbox{ and } x_A \leq z(A) \hbox{ for any }
    A\sus I \hbox{ such that } z(A)< \infty.\right\}
  \end{equation*}
Denote by $\Gamma(z)$ the set of faces of $\egp(z)$.

\subsection{A Galois correspondence between conforming
  preorders and faces of $\egp(z)$}
To a subset $\cS$ of the power set of $\pow(I)$, i.e. a family
of subsets of $\pow(I)$, we may associate a face of $\egp(z)$,
an element of $\Gamma(z)$:
\begin{equation} \label{eq:bij-phi}
  \Phi_z(\mathcal S):=\left\{x\in \egp(z) \,| \, x_A=z(A)
    \hbox{ for any } A\in \mathcal S\right\}.
  \end{equation}
The map $\Phi_z$ is surjective on $\Gamma(z)$ since
non-empty faces of $\egp(z)$ are defined by making some of
the defining inequalities into equalities.
Conversely given a face $G$ of $\egp(z)$, we get a subset of $\pow(I)$:
\[\Psi_z(G) = \{ A \, | \, z(A) < \infty, \, x_A = z(A)
\text{ for every point } x \in G \}. \]
It is readily seen that
\begin{equation} \label{eq:bij-inc}
  \Psi_z(G) \supseteq \cS \Leftrightarrow G  \sus \Phi_z(\cS).
  \end{equation}
  Letting $\ppow(I)$ be the power set of $\pow(I)$, i.e. families of subsets
  of $I$, 
this means we have a Galois correspondence
\begin{equation} \label{eq:bij-gal}
  \ppow(I)^{\op} \, \bihom{\Phi_z}{\Psi_z} \, \Gamma(z).
  \end{equation}

\begin{proposition} \label{pro:bij-psitop}
  For any $G$ in $\Gamma(z)$, $\Psi_z(G)$ is a topology.
  If $G$ is empty then $\Psi_z(G) = \top(z)$. If $G$ is non-empty,
  $\Psi_z(G)$ conforms to $z$.
\end{proposition}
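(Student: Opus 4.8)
The plan is to establish the three assertions in order, leaning on the Galois machinery and on the structural results from Sections~\ref{sec:egp} and~\ref{sec:conform}. First I would show $\Psi_z(G)$ is a topology. Take $A, B \in \Psi_z(G)$, so $z(A), z(B) < \infty$ and $x_A = z(A)$, $x_B = z(B)$ for every $x \in G$. For such $x$ we have the chain
\[
  x_A + x_B = z(A) + z(B) \geq z(A\cup B) + z(A \cap B) \geq x_{A\cup B} + x_{A \cap B} = x_A + x_B,
\]
using submodularity and the defining inequalities of $\egp(z)$ (note $z(A\cup B), z(A\cap B) < \infty$ since $\top(z)$ is a topology). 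Hence all inequalities are equalities, so $x_{A\cup B} = z(A\cup B)$ and $x_{A\cap B} = z(A\cap B)$ for every $x \in G$; thus $A\cup B, A\cap B \in \Psi_z(G)$. Since $\emptyset, I \in \Psi_z(G)$ always (using $z(\emptyset)=0$ and $x_I = z(I)$ on $\egp(z)$), $\Psi_z(G)$ is a topology. The argument is essentially a repeat of the first paragraph of Proposition~\ref{pro:egp-top}.

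For the empty face: $\Psi_z(\emptyset) = \{A \mid z(A) < \infty,\ x_A = z(A)\ \forall x \in \emptyset\}$. The universal quantifier over the empty set is vacuously true, so this is exactly $\{A \mid z(A) < \infty\} = \top(z)$.

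The substantive part is that for nonempty $G$, the preorder $P$ corresponding to the topology $\cT := \Psi_z(G)$ conforms to $z$. I would verify the two conditions in the definition of conforming. The first, $z(A) < \infty$ for every downset $A$ of $P$, is immediate since downsets of $P$ are exactly the open sets of $\cT = \Psi_z(G)$, and membership in $\Psi_z(G)$ requires $z(A) < \infty$. For the second, I must show: for $C$ convex in $P$, writing $z' = z_C$ (well-defined once we know $P$ is compatible --- so compatibility needs checking too, i.e. the ``only if'' direction), $z' = z'|_{C_1} \cdot z'|_{C_2}$ \emph{iff} $C = C_1 \sqcup C_2$ as a disconnected union of preorders. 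Fix $x_0 \in G$ (here nonemptiness is used) and in fact pick $x_0$ in the relative interior of $G$, so that $x_0$ attains \emph{strict} inequality $x_A < z(A)$ for every $A \notin \Psi_z(G)$ with $z(A) < \infty$. Write $C = B \setminus A$ with $A \subseteq B$ downsets of $P$ (hence in $\cT$); then $x_B = z(B)$, $x_A = z(A)$, and for any $S \sus C$, $z'(S) = z(A\cup S) - z(A)$. One direction: if $C = C_1 \sqcup C_2$ with all of $C_1$ incomparable to all of $C_2$ in $P$, then $A \cup C_1$ and $A \cup C_2$ are downsets of $P$, hence in $\cT$, so $x_{A\cup C_1} = z(A\cup C_1)$ and $x_{A\cup C_2} = z(A\cup C_2)$; combined with $x_B = z(B)$, $x_A = z(A)$ and additivity of $x_{(-)}$ this gives exactly equation~\eqref{eq:prod-sum} of Lemma~\ref{lem:prod-sum}, so $z' = z'|_{C_1}\cdot z'|_{C_2}$. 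Conversely, if $z' = z'|_{C_1}\cdot z'|_{C_2}$, then by Lemma~\ref{lem:prod-sum} equation~\eqref{eq:prod-sum} holds, i.e. $z(A\cup C_1) + z(A\cup C_2) = z(B) + z(A)$. Since $x_0 \in \egp(z)$ we have $x_{0,A\cup C_1} \leq z(A\cup C_1)$ and $x_{0,A\cup C_2} \leq z(A\cup C_2)$; but their sum equals $x_{0,B} + x_{0,A} = z(B) + z(A) = z(A\cup C_1) + z(A\cup C_2)$, forcing both to be equalities. Hence $A\cup C_1, A\cup C_2 \in \Psi_z(G)$, so they are downsets of $P$, which means no element of $C_1$ lies below an element of $C_2$ and vice versa; i.e. $C = C_1 \sqcup C_2$ as a disconnected union of preorders. (Actually the equalities hold for all $x \in G$, not just $x_0$, by the same argument; choosing $x_0$ generic is only needed if one wants strictness elsewhere, and here it is not strictly necessary --- the point is simply that the relevant inequalities are tight on \emph{all} of $G$.) This simultaneously establishes compatibility (the ``only if'' of the conforming condition, specialized to $C = B\setminus A$ for downsets $A \subseteq B$, which is a subcase) and the conforming equivalence.

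The main obstacle I anticipate is bookkeeping around Lemma~\ref{lem:prod-sum}: one must match the convex set $C$ (which is $B \setminus A$ for downsets $A \subseteq B$ of $P$, by Proposition~\ref{pro:com-BA} the corestriction $z_C$ does not depend on the choice of $A, B$) with the hypotheses $z(S), z(T) < \infty$ of that lemma, and carefully translate the product decomposition of $z_C = z_{T/S}$ into the additive identity~\eqref{eq:prod-sum} and then into equalities among the $x_A$'s. Everything else is a direct application of submodularity together with the fact that points of $G$ satisfy $x_A \leq z(A)$ with equality precisely on $\Psi_z(G)$.
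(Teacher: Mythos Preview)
Your proof is correct and follows essentially the same route as the paper: the topology claim via the submodular chain
\[
x_A + x_B = z(A) + z(B) \geq z(A\cup B) + z(A\cap B) \geq x_{A\cup B} + x_{A\cap B} = x_A + x_B,
\]
the empty case by vacuity, and the conforming claim by reducing (via Lemma~\ref{lem:prod-sum}) to the additive identity $z(B)+z(A)=z(A\cup C_1)+z(A\cup C_2)$ and showing this holds iff $A\cup C_1, A\cup C_2 \in \Psi_z(G)$. The paper argues identically; your detour through a relative-interior point $x_0$ is, as you note yourself, unnecessary, and your invocation of Proposition~\ref{pro:com-BA} is also not needed since the argument works directly with $z_{B/A}$ for any fixed choice of downsets $A\subseteq B$.
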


\begin{proof}
  If $G$ is empty, then $\Psi_z(G)$ is the topology $\top(z)$ corresponding
  to $\pre(z)$. Assume $G$ is non-empty and let 
 $A,B \in \Psi_z(G)$. Then for any $x \in G$:
\begin{equation*}
  z(A \cup B) + z(A \cap B) \geq x_{A \cup B} + x_{A \cap B} =
  x_A + x_B =   z(A)+z(B).
\end{equation*}
The definition of submodularity implies that the above are equalities, so:
\[z(A\cap B) = x_{A \cap B} \hskip 6mm\hbox{and}\hskip 6mm
  z(A\cup B)= x_{A \cup B}.\]
Therefore, $A\cap B$ and $A\cup B$ belong to $\Psi_z(G)$, and so this
is a topology.

\medskip Assume further that $G$ is non-empty.
We show that $\Psi_z(G)$ conforms to $z$. 
Let $A \sus B$ be in $\Psi_z(G)$
so $x_A = z(A)$ and $x_B = z(B)$ for $x \in G$. 
Let  $C:=B\setminus A=C_1\cup C_2$ be a disjoint union of sets.
By Lemma \ref{lem:prod-sum} to show that $\Psi_z(G)$ conforms to $z$,
we must show that $A \cup C_1$ and $A \cup C_2$
are both in $\Psi_z(G)$ iff
\begin{align} \label{eq:bij-zba}
  z(B) + z(A) = z(A \cup C_1) + z(A \cup C_2).
  \end{align}
But
\[z(A) + z(B) = x_A + x_B =  x_{A \cup C_1} + x_{A \cup C_2}
  \leq z(A \cup C_1) + z(A \cup C_2). \] 
Thus \eqref{eq:bij-zba} holds iff the above inequality is an equality and
this is iff 
$A \cup C_1$ and $A \cup C_2$ are both in $\Psi_z(G)$.
\end{proof}



\noindent The Galois correspondence \eqref{eq:bij-gal} then restricts to
a Galois correspondence 
\[ \Top(I)^\op \, \bihom{\Phi_z}{\Psi_z} \, \Gamma(z) \]
where $\Phi_z$ is surjective (by standard properties of Galois correspondences).
Changing  the perspective to preorders, we have a
Galois correspondence
\[ \Pre(I) \, \bihom{\Phi_z}{\Psi_z} \, \Gamma(z).\]

\subsection{The submodular functions of faces}

\begin{definition}
  Let $P$ be a preorder compatible with $z$.  Denote
  \[ z_P = \prod_{C \in {\mathbf b}(P)} z_C. \]
\end{definition}

\begin{proposition} \label{pro:faces-egp} Let $P$ be a preorder.
  {The face}
$\Phi_z(P)$ is non-empty if and only if
  $P$ is compatible with $z$.
  In this case $\Phi_z(P) = \egp(z_P)$.
  If $P$ conforms to $z$, the dimension of this face is
  $|I| - |{\mathbf b}(P)|$.
\end{proposition}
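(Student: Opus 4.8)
The plan is to prove the three assertions in sequence, each time exploiting the Galois correspondence of \eqref{eq:bij-gal} together with the earlier structural results on compatible preorders. First I would show that $\Phi_z(P)$ nonempty implies $P$ compatible with $z$: if $x \in \Phi_z(P)$ then $x_A = z(A)$ for every down-set $A$ of $P$, so in particular every down-set has $z(A) < \infty$, giving the first bullet of compatibility; for the second bullet, if $B \supseteq A$ are down-sets of $P$ and $C = B \setminus A$ splits as a disjoint union of preorders $C = C_1 \sqcup C_2$, then $A \cup C_1$ and $A \cup C_2$ are also down-sets of $P$, so $x_{A \cup C_1} = z(A \cup C_1)$ and $x_{A \cup C_2} = z(A \cup C_2)$; adding and using $x_B = z(B)$, $x_A = z(A)$ gives the equality \eqref{eq:prod-sum}, and Lemma \ref{lem:prod-sum} then yields the product decomposition $z_{B/A} = z_{B/A}|_{C_1}\cdot z_{B/A}|_{C_2}$. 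Conversely, if $P$ is compatible, Lemma \ref{lem:conf-alinbu} gives $\alin(P) = \alin(L)$ for any linear extension $L$ of $\pre(z)$ that refines $P$ — wait, one must be careful here: $P$ compatible means $z(A) < \infty$ for down-sets of $P$, hence $\pre(z) \preceq P$, so $P$ has a linear extension $L$ which is also a linear extension of $\pre(z)$, and Lemma \ref{lem:egp-lin} gives $\alin(L) \subseteq \egp(z)$; combined with $\alin(L) = \alin(P)$ this shows $\Phi_z(P) \supseteq \alin(P)$ is nonempty.

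Next, to identify $\Phi_z(P)$ with $\egp(z_P)$ when $P$ is compatible, I would compare defining inequalities. Points of $\Phi_z(P)$ are the $x \in \egp(z)$ with $x_A = z(A)$ for all down-sets $A$ of $P$; equivalently, by Lemma \ref{lem:conf-alinbu}, $x_C = z_C(C)$ for all bubbles $C \in \mathbf{b}(P)$, together with $x_A \le z(A)$ for every $A$ with $z(A)<\infty$. I claim this coincides with $\egp(z_P)$, where $z_P = \prod_{C \in \mathbf b(P)} z_C$ lives on the ambient set $I$ but with the bubbles of $P$ as the blocks of its product decomposition. By Lemma \ref{lem:egp-prodsum}, $\egp(z_P) = \prod_C \egp(z_C)$ inside $\prod_C \RR C = \RR I$; a point $x$ lies in this product iff for each bubble $C$ and each $A' \subseteq C$ one has $x_{A'} \le z_C(A')$ with equality when $A' = C$. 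The work is to check these conditions are equivalent to membership in $\egp(z)$ plus the equalities $x_C = z_C(C)$. One direction: given $x \in \Phi_z(P)$, restrict the inequality $x_A \le z(A)$ to suitable $A$ built from down-sets, and use the definition $z_C = z_{B/A}$ (Proposition \ref{pro:com-BA}, which makes $z_C$ well-defined) to descend to $x_{A'} \le z_C(A')$. The other direction: reassemble, using that every down-set $A$ of $P$ is a union of bubbles and telescoping $x_A = \sum x_{C_i} = \sum z_{C_i}(C_i) = z(A)$, while a general set $S$ with $z(S)<\infty$ can be handled by an argument analogous to the proof of Lemma \ref{lem:egp-lin} (induction on the maximal bubble meeting $S$, using submodularity). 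This telescoping/submodularity bookkeeping is the step I expect to be the main obstacle — getting the inequalities $x_A \le z(A)$ for arbitrary $A$, not just down-sets, to match up with the product structure.

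Finally, the dimension statement: if $P$ conforms to $z$, then $z_P = \prod_{C \in \mathbf b(P)} z_C$ where each $z_C$ is indecomposable (this is exactly the conforming condition: $z_C$ indecomposable iff $P_{|C}$ connected, and bubbles are connected). By Corollary \ref{cor:egp-dim} the dimension of $\egp(z_C)$ is $|C| - 1$, and by Lemma \ref{lem:egp-prodsum} (product of EGPs) the dimension of $\egp(z_P)$ is $\sum_{C \in \mathbf b(P)} (|C| - 1) = |I| - |\mathbf b(P)|$. Since $\Phi_z(P) = \egp(z_P)$ by the previous step, this gives the claimed dimension of the face. I would close by remarking that the indecomposability of each $z_C$ is precisely where "conforms" is needed rather than merely "compatible": for a compatible-but-not-conforming $P$, some $z_C$ could still decompose further, and the face dimension would then be smaller than $|I| - |\mathbf b(P)|$.
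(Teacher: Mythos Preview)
Your forward implication (non-empty $\Rightarrow$ compatible) via a direct check with Lemma~\ref{lem:prod-sum} is correct and in fact more elementary than the paper's route through Proposition~\ref{pro:bij-psitop}. Your sketch of the identification $\Phi_z(P)=\egp(z_P)$ and the dimension count are also essentially the paper's argument.

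There is, however, a genuine gap in your converse direction. You claim that a compatible $P$ admits a linear extension $L$ which is simultaneously a linear extension of $\pre(z)$. But by Definition~\ref{def:pre-lin} a linear extension preserves bubbles: a linear extension of $P$ has $\mathbf b(L)=\mathbf b(P)$, while a linear extension of $\pre(z)$ has $\mathbf b(L)=\mathbf b(\pre(z))$, and these differ in general. Concretely, take $I=\{a,b\}$ with $z(\{a\})=z(\{b\})=2$ and $z(I)=3$; the coarse preorder $P$ is compatible with $z$, its only linear extension is $L=P$ itself, and $\alin(L)=\{x_a+x_b=3\}$ is \emph{not} contained in $\egp(z)$ (the point $(0,3)$ lies on this line but violates $x_b\le 2$). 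So Lemma~\ref{lem:egp-lin} does not apply to such an $L$, and the inclusion $\alin(P)\subseteq\egp(z)$ you assert actually fails.

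The paper sidesteps this by reversing the logical order: it proves $\Phi_z(P)=\egp(z_P)$ directly (via precisely the telescoping/submodularity bookkeeping you outline), and then non-emptiness of $\Phi_z(P)$ follows because $\egp(z_P)$ is non-empty by Lemma~\ref{lem:egp-lin} applied to the submodular function $z_P$. You should reorganize the same way: establish the identification first, and let non-emptiness drop out as a corollary.
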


\begin{proof}
Suppose $\Phi_z(P)$ is non-empty. 
  We have $\Psi_z \circ \Phi_z(P) \preceq P$
  and the left part conforms to $z$ by Proposition \ref{pro:bij-psitop}.
  Then $P$ is compatible with $z$.

  Conversely, assume  $P$ is 
  compatible with $z$. We show $\Phi_z(P)$ is $\egp(z_P)$. As $z_P$
  is submodular, this also implies that $\Phi_z(P)$ is non-empty by
  Lemma \ref{lem:egp-lin}.
    For $S \sus I$ and $x$ a point in $\RR I$,
    recall the notation $x_S = \sum_{i \in S} x_i$. 
    The face $\Phi_z(P)$ is defined by the following equalities and
    inequalities:
    \begin{itemize}
    \item $x_A = z(A)$ for $A$ a down-set in $P$,
    \item $x_S \leq z(S)$ for $S$ a subset of $I$.
      \end{itemize}
      The extended generalized permutahedron $\egp(z_P)$ is defined
      by
    \begin{itemize}
    \item For each bubble $C$ of $P$, $x_C = z_C(C)$,
    \item For $S \sus C$, $x_S \leq z_C(S)$.
    \end{itemize}
   
   If $C$ is a bubble we may write $C = B \backslash A$ for down-sets
   $A \sus B$ of $P$. Suppose first that the point $x$ belongs to $\Phi_z(P)$.
   We have $x_B = z(B)$ and
    $x_A = z(A)$, so:
    \[ x_C = x_B - x_A = z(B) - z(A) = z_C(C). \]
    Furthermore for $S \sus C$
    we have  $x_{A \cup S} \leq z(A \cup S)$ and so:
    \[ x_S = x_{A \cup S} - x_A \leq z(A \cup S) - z(A) = z_C(S),\]
    whence $x \in \egp(z_P)$.

    \medskip
    Suppose conversely $x \in \egp(z_P)$. Given a down-set $B$ of $P$,
    we may find a filtration of down-sets:
    \begin{equation} \label{eq:coint-Bfilt}
      B = B_0 \supset B_1 \supset \cdots \supset B_r = \emptyset
      \end{equation}
    such that each $C_i = B_i \backslash B_{i+1}$ is a bubble.
    Then
    \[ z(B) = \sum_{i \geq 0} \big((z(B_i) - z(B_{i+1})\big) = \sum_{i}
      z_{C_i}(C_i) = \sum_i x_{C_i} = x_B. \]

    Now given any subset $S \sus I$. We may take $B = I$ in the filtration
    above, and let $S^i = S \cap B_i$, and
    \[ S_i = S^i \backslash S^{i+1} = S \cap C_i. \]
    By submodularity
    \[ z(S^i) - z(S^{i+1}) = z(S_i \cup S^{i+1}) - z(S^{i+1})
      \geq z(S_i \cup B_{i+1}) - z(B_{i+1}) = z_{C_i}(S_i). \]
    We then get
    \[ z(S) = \sum_{i \geq 0} z(S_i) - z(S_{i+1}) \geq
      \sum_{i \geq 0} z_{C_i}(S^i) \geq \sum_{i \geq 0} x_{S_i} = x_S. \]
    Whence $x \in \Phi_z(P)$.

    \medskip
    If $P$ conforms to $z$, then for each bubble $C$ of $P$, the submodular function $z_C$ is
    indecomposable. Whence the dimension of $\Pi(z_C)$ is $|C|-1$.
    By Lemma \ref{lem:egp-prodsum}, $\Pi(z_P)$ is the product
    $\prod_{C \in \mathbf b(P)}\Pi(z_C)$, and by Corollary 
    \ref{cor:egp-dim} the dimension is $\sum_{C \in \mathbf b(P)} (|C|-1) = |I| - |\mathbf b(P)|$.
      \end{proof}
  
      The formula for dimension of a face is also given in \cite[Thm.3.32]{Fuj2005} but is there formulated in terms of distributive lattices.

\subsection{Bijections of faces, cones, and conforming preorders}
Recall that $\Top(z)$ are the topologies conforming to $z$. 
Denote by $\Top^{cp}(z)$ (resp. $\Pre^{cp}(z)$) the larger class of
topologies (resp. preorders)
compatible with $z$, and
denote by $\Gamma^*(z)$ the set of non-empty faces of $\egp(z)$.
By Proposition \ref{pro:faces-egp} we 
have a Galois correspondence
\[ \Top^{cp}(z)^\op \, \bihom{\Phi_z}{\Psi_z} \, \Gamma^*(z), \, 
  \text{ or equivalently } \,
  \Pre^{cp}(z) \, \bihom{\Phi_z}{\Psi_z} \, \Gamma^*(z).
\]
In particular, by standard properties of Galois correspondences,
we have a bijection between the images
\begin{equation} \label{eq:bij-gal2}
  \im \Psi_z \overset{1-1}{\longleftrightarrow} \im \Phi_z.
\end{equation}

The following theorem is essentially found in \cite{PRW2009} and in
\cite{Fuj2005}.
In \cite[Theorem 3.30]{Fuj2005}, it is formulated in terms of distributive
lattices. One must then restrict the setting to the case where
$\pre(z)$ is a poset, compared to our fully general setting.

That the face poset of a generalized permutahedron
corresponds bijectively to a class of preorders
is also immediate from \cite[Section 3.4]{PRW2009}.
They do not bring in submodular
functions. It is there formulated in terms
of the normal fan: The cones in the normal fan
correspond to preorders, and a face cone is incident to another
face cone iff the corresponding preorders are related by the
contraction $\btl$. 



\begin{theorem} \label{thm:bij-main} The map $\Phi_z$ is surjective.
  The image $\im \Psi_z$ is precisely the set $\Pre(z)$ of all preorders
  conforming to $z$. Whence we get a bijection
  \[\Pre(z) \, \overset{1-1}{\longleftrightarrow}
    \, \Gamma^*(z).
  \]
\end{theorem}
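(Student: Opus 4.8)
The plan is to exploit the Galois correspondence $\Top^{cp}(z)^\op \bihom{\Phi_z}{\Psi_z} \Gamma^*(z)$ established just above, together with the structural facts already proved about compatible and conforming preorders. Surjectivity of $\Phi_z$ is already essentially in hand: every non-empty face of $\egp(z)$ is cut out by turning some subset of the defining inequalities into equalities, so it equals $\Phi_z(\cS)$ for a suitable family $\cS$, and by Proposition \ref{pro:bij-psitop} we may replace $\cS$ by the topology $\Psi_z$ of that face, which is compatible with $z$. Hence $\Phi_z$ restricted to $\Pre^{cp}(z)$ is onto $\Gamma^*(z)$, which is the surjectivity claim. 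So the real content is the identification of $\im\Psi_z$.

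First I would show $\im\Psi_z \subseteq \Pre(z)$. If $G \in \Gamma^*(z)$ is a non-empty face, then Proposition \ref{pro:bij-psitop} already tells us $\Psi_z(G)$ conforms to $z$; this is immediate. Next, the harder inclusion $\Pre(z) \subseteq \im\Psi_z$: given $P$ conforming to $z$, I must produce a face $G$ with $\Psi_z(G) = P$. The natural candidate is $G = \Phi_z(P)$. Since $P$ is in particular compatible, Proposition \ref{pro:faces-egp} gives $\Phi_z(P) = \egp(z_P) \neq \emptyset$, so $G \in \Gamma^*(z)$. It remains to check $\Psi_z(\Phi_z(P)) = P$. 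The inclusion $P \preceq \Psi_z(\Phi_z(P))$ is one of the unit/counit inequalities of the Galois correspondence (every down-set $A$ of $P$ satisfies $x_A = z(A)$ on $\Phi_z(P)$ by construction of $z_P$, as shown inside the proof of Proposition \ref{pro:faces-egp}). For the reverse, let $Q = \Psi_z(\Phi_z(P))$; then $Q$ conforms to $z$ by Proposition \ref{pro:bij-psitop} and $P \preceq Q$, so by Proposition \ref{pro:conf-PbtrQ} we have $P \btl Q$. I then need to rule out $P \lhd$-proper refinement; I expect to do this by a dimension count: by Proposition \ref{pro:faces-egp} the dimension of $\Phi_z(P)$ is $|I| - |\mathbf b(P)|$, and likewise $\Phi_z(Q) = \Phi_z(\Psi_z(\Phi_z(P))) = \Phi_z(P)$ (a Galois-correspondence identity) has dimension $|I| - |\mathbf b(Q)|$, forcing $|\mathbf b(P)| = |\mathbf b(Q)|$; combined with $P \btl Q$, Corollary \ref{cor:poset-bubPQ} yields $P = Q$.

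Finally, the bijection $\Pre(z) \overset{1-1}{\longleftrightarrow} \Gamma^*(z)$ follows: the argument above shows $\Psi_z$ is injective on $\im\Phi_z$ with image $\Pre(z)$, and conversely $\Phi_z$ is injective on $\Pre(z)$ because $\Psi_z \circ \Phi_z = \id$ there; the two restricted maps $\Phi_z, \Psi_z$ are then mutually inverse order-reversing bijections between $\Pre(z)$ and $\Gamma^*(z)$ (using \eqref{eq:bij-gal2}). The main obstacle I anticipate is the step $\Psi_z(\Phi_z(P)) = P$: one must be careful that the Galois-correspondence abstract nonsense ($\Phi_z\Psi_z\Phi_z = \Phi_z$) is genuinely available here and that the dimension formula of Proposition \ref{pro:faces-egp} applies to $Q$ — which it does, since $Q$ conforms to $z$ — so that Corollary \ref{cor:poset-bubPQ} can be invoked. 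Everything else is bookkeeping with the machinery already set up.
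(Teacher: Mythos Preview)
Your approach is essentially the same as the paper's: show $\im\Psi_z\subseteq\Pre(z)$ via Proposition~\ref{pro:bij-psitop}, and for the converse take a conforming $P$, set $Q=\Psi_z(\Phi_z(P))$, observe that both conform, invoke Proposition~\ref{pro:conf-PbtrQ} to get a $\btl$ relation, and finish with the dimension formula of Proposition~\ref{pro:faces-egp} together with Corollary~\ref{cor:poset-bubPQ}. This matches the paper exactly.

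One slip to fix: the Galois unit inequality goes the other way. Since every down-set of $P$ is tight on $\Phi_z(P)$, the \emph{topology} of $P$ is contained in $\Psi_z(\Phi_z(P))$, which on the preorder side reads $\Psi_z(\Phi_z(P))\preceq P$, i.e.\ $Q\preceq P$ (not $P\preceq Q$). Consequently the relation you obtain is $Q\btl P$, not $P\btl Q$. The remainder of your argument is unaffected: $\Phi_z(Q)=\Phi_z\Psi_z\Phi_z(P)=\Phi_z(P)$ gives $|\mathbf b(P)|=|\mathbf b(Q)|$, and since $Q\preceq P$ forces $\mathbf b(Q)$ to refine $\mathbf b(P)$, the bubble sets coincide and Corollary~\ref{cor:poset-bubPQ} (applied to $Q\btl P$) yields $Q=P$.
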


\begin{proof} As noted after \eqref{eq:bij-phi}, $\Phi_z$ is
  surjective on $\Gamma^*(z)$, and so for $G \in \Gamma^*(z)$,
  we have $\Phi_z \circ \Psi_z(G) = G$ (by standard properties of
  Galois correspondences), and so the restricted
  $\Phi_z$ is also surjective.
Let $Q$ be a preorder conforming to $z$.
Let $F = \Phi_z(Q)$ and $P = \Psi_z(F)$. Then every down-set of $Q$
must be a down-set for $P$, and hence $P \preceq Q$.
Since both $P$ and $Q$ conform to $z$, by Proposition \ref{pro:conf-PbtrQ},
$P \btl Q$. But then, if $P \neq Q$, by Corollary \ref{cor:poset-bubPQ} 
$|\mathbf b(P)| > |\mathbf b(Q)|$. Then the dimensions of
$\Phi_z(P)$ and of $\Phi_z(Q)$ are distinct,
contradicting that these faces are both $F$.
Whence $Q = \Psi_z \circ \Phi_z(Q)$.

\end{proof}

\begin{example}
  Figure \ref{fig:toperm} shows the two-dimensional standard
  permutahedron. Its submodular function
 $z : \pow(\{a,b,c\}) \pil \RR$ has
 \[ z(S) = \begin{cases} 3, & |S| = 1, \\ 5, & |S| = 2 \\
      6, & |S| = 3 \end{cases}. \]
  Figure \ref{fig:toperm-conform} gives the conforming preorders
  corresponding to the faces of the permutahedron.
  Note that the point $(1,3,2)$ corresponds to the ordered set $b < c < a$ since
  its non-empty down-sets are
  $\{ b \}, \{b,c \}$ and $\{a,b,c\}$ with $z$-values
  respectively $3, 5$ and $6$.
\end{example}


\newcommand{\cba}{
  \begin{tikzpicture}[scale=0.7, vertices/.style={draw, fill=black, circle, inner sep=1.5pt}]

\node [vertices] (31) at (-0.5,0) {};
\node [vertices] (32) at (-0.5,0.5) {};
\node [vertices] (33) at (-0.5,1) {};

\node[anchor = east] at (31) {$c$ \hskip 4mm {}};
\node[anchor = east] at (32) {$b$ \hskip 4mm {}};
\node[anchor = east] at (33) {$a$ \hskip 4mm {}};
\draw (31)--(32);
\draw (31)--(33); 

\end{tikzpicture}
}


\newcommand{\cab}{
  \begin{tikzpicture}[scale=0.7, vertices/.style={draw, fill=black, circle, inner sep=1.5pt}]

\node [vertices] (31) at (-0.5,0) {};
\node [vertices] (32) at (-0.5,0.5) {};
\node [vertices] (33) at (-0.5,1) {};

\node[anchor = east] at (31) {$c$ \hskip 4mm {}};
\node[anchor = east] at (32) {$a$ \hskip 4mm {}};
\node[anchor = east] at (33) {$b$ \hskip 4mm {}};
\draw (31)--(32);
\draw (31)--(33); 

\end{tikzpicture}
}


\newcommand{\acb}{
  \begin{tikzpicture}[scale=0.7, vertices/.style={draw, fill=black, circle, inner sep=1.5pt}]

\node [vertices] (31) at (-0.5,0) {};
\node [vertices] (32) at (-0.5,0.5) {};
\node [vertices] (33) at (-0.5,1) {};

\node[anchor = east] at (31) {$a$ \hskip 4mm {}};
\node[anchor = east] at (32) {$c$ \hskip 4mm {}};
\node[anchor = east] at (33) {$b$ \hskip 4mm {}};
\draw (31)--(32);
\draw (31)--(33); 

\end{tikzpicture}
}


\newcommand{\abc}{
  \begin{tikzpicture}[scale=0.7, vertices/.style={draw, fill=black, circle, inner sep=1.5pt}]

\node [vertices] (31) at (-0.5,0) {};
\node [vertices] (32) at (-0.5,0.5) {};
\node [vertices] (33) at (-0.5,1) {};

\node[anchor = east] at (31) {$a$ \hskip 4mm {}};
\node[anchor = east] at (32) {$b$ \hskip 4mm {}};
\node[anchor = east] at (33) {$c$ \hskip 4mm {}};
\draw (31)--(32);
\draw (31)--(33); 

\end{tikzpicture}
}

\newcommand{\bac}{
  \begin{tikzpicture}[scale=0.7, vertices/.style={draw, fill=black, circle, inner sep=1.5pt}]

\node [vertices] (31) at (-0.5,0) {};
\node [vertices] (32) at (-0.5,0.5) {};
\node [vertices] (33) at (-0.5,1) {};

\node[anchor = east] at (31) {$b$ \hskip 4mm {}};
\node[anchor = east] at (32) {$a$ \hskip 4mm {}};
\node[anchor = east] at (33) {$c$ \hskip 4mm {}};
\draw (31)--(32);
\draw (31)--(33); 

\end{tikzpicture}
}


\newcommand{\bca}{
  \begin{tikzpicture}[scale=0.7, vertices/.style={draw, fill=black, circle, inner sep=1.5pt}]

\node [vertices] (31) at (-0.5,0) {};
\node [vertices] (32) at (-0.5,0.5) {};
\node [vertices] (33) at (-0.5,1) {};

\node[anchor = east] at (31) {$b$ \hskip 4mm {}};
\node[anchor = east] at (32) {$c$ \hskip 4mm {}};
\node[anchor = east] at (33) {$a$ \hskip 4mm {}};
\draw (31)--(32);
\draw (31)--(33); 

\end{tikzpicture}
}


\newcommand{\csab}{
  \begin{tikzpicture}[scale=0.9, vertices/.style={draw, fill=black, circle, inner sep=1.5pt}]

\node [vertices] (31) at (-0.5,-0.2) {};
\node [vertices, gray] (32) at (-0.33,0.7) {};
\node [vertices, gray] (33) at (-0.67,0.7) {};

\node[anchor = east] at (31) {$c$ \hskip 0.1pt {}};
\node[anchor = west] at (32) {$\, b${}};
\node[anchor = east] at (33) {$a\, $};

\node (ss3) at (-0.5, 0.7) {};
\draw (ss3) ellipse (0.58 and 0.42);


\draw (31)--(-0.45, 0.25);

\end{tikzpicture}
}

\newcommand{\asbc}{
  \begin{tikzpicture}[scale=0.9, vertices/.style={draw, fill=black, circle, inner sep=1.5pt}]

\node [vertices] (31) at (-0.5,-0.2) {};
\node [vertices, gray] (32) at (-0.33,0.7) {};
\node [vertices, gray] (33) at (-0.67,0.7) {};

\node[anchor = east] at (31) {$a$ \hskip 0.1pt {}};
\node[anchor = west] at (32) {$\, c${}};
\node[anchor = east] at (33) {$b\, $};

\node (ss3) at (-0.5, 0.7) {};
\draw (ss3) ellipse (0.58 and 0.42);


\draw (31)--(-0.44, 0.32);

\end{tikzpicture}
}

\newcommand{\bsac}{
  \begin{tikzpicture}[scale=0.9, vertices/.style={draw, fill=black, circle, inner sep=1.5pt}]

\node [vertices] (31) at (-0.5,-0.2) {};
\node [vertices, gray] (32) at (-0.33,0.7) {};
\node [vertices, gray] (33) at (-0.67,0.7) {};

\node[anchor = east] at (31) {$b$ \hskip 0.1pt {}};
\node[anchor = west] at (32) {$\, c${}};
\node[anchor = east] at (33) {$a\, $};

\node (ss3) at (-0.5, 0.7) {};
\draw (ss3) ellipse (0.58 and 0.42);

\draw (31)--(-0.57, 0.27);

\end{tikzpicture}
}


\newcommand{\absc}{
  \begin{tikzpicture}[scale=0.9, vertices/.style={draw, fill=black, circle, inner sep=1.5pt}]

\node [vertices, gray] (31) at (-0.33,-0.2) {};
\node [vertices, gray] (32) at (-0.67,-0.2) {};
\node [vertices] (33) at (-0.5,0.7) {};

\node[anchor = west] at (31) {$\, b$ };
\node[anchor = east] at (32) {$a \,$ };
\node[anchor = east] at (33) {$c$ \hskip 0.1pt {}};

\node (ss3) at (-0.5, -0.2) {};
\draw (ss3) ellipse (0.58 and 0.42);


\draw (33)--(-0.57, 0.25);

\end{tikzpicture}
}

\newcommand{\bcsa}{
  \begin{tikzpicture}[scale=0.9, vertices/.style={draw, fill=black, circle, inner sep=1.5pt}]

\node [vertices, gray] (31) at (-0.33,-0.2) {};
\node [vertices, gray] (32) at (-0.67,-0.2) {};
\node [vertices] (33) at (-0.5,0.7) {};

\node[anchor = west] at (31) {$\, c$ };
\node[anchor = east] at (32) {$b \,$ };
\node[anchor = east] at (33) {$a$ \hskip 0.1pt {}};

\node (ss3) at (-0.5, -0.2) {};
\draw (ss3) ellipse (0.58 and 0.42);


\draw (33)--(-0.57, 0.20);

\end{tikzpicture}
}

\newcommand{\acsb}{
  \begin{tikzpicture}[scale=0.9, vertices/.style={draw, fill=black, circle, inner sep=1.5pt}]

\node [vertices, gray] (31) at (-0.33,-0.2) {};
\node [vertices, gray] (32) at (-0.67,-0.2) {};
\node [vertices] (33) at (-0.5,0.7) {};

\node[anchor = west] at (31) {$\, c$ };
\node[anchor = east] at (32) {$a \,$ };
\node[anchor = east] at (33) {$b$ \hskip 0.1pt {}};

\node (ss3) at (-0.5, -0.2) {};
\draw (ss3) ellipse (0.58 and 0.42);


\draw (33)--(-0.42, 0.20);

\end{tikzpicture}
}


\newcommand{\abcss}{
  \begin{tikzpicture}[scale=0.9, vertices/.style={draw, fill=black, circle, inner sep=1.5pt}]

\node [vertices, gray] (31) at (-0.33,-0.2) {};
\node [vertices, gray] (32) at (-0.67,-0.2) {};
\node [vertices, gray] (33) at (-0.5, 0.03) {};

\node[anchor = west] at (31) {$\, c$ };
\node[anchor = east] at (32) {$b \,$ };
\node[anchor = south east] at (33) {$a$};

\node (ss3) at (-0.5, -0.13) {};
\draw (ss3) ellipse (0.55 and 0.55);


\end{tikzpicture}
}


\begin{figure}
\begin{center}
\begin{tikzpicture}[inner sep=1pt, scale=0.8]

\coordinate (a) at (0,0) ; 
\coordinate (b) at (3, 1.71);  
\coordinate (c) at (3, 5.15);  
\coordinate (d) at (0, 6.86);  
\coordinate (e) at (-3, 5.15);  
\coordinate (f) at (-3, 1.71);  

            \draw[black, thick, fill=tol1, fill opacity = 0.1] 
            (a) -- (b)  -- (c) -- (d) -- (e) -- (f) -- (a);
            \draw[black, fill] (a)     circle (2pt);
            \draw[black, fill] (b)     circle (2pt);
            \draw[black, fill] (c)     circle (2pt);
            \draw[black, fill] (d)     circle (2pt);
            \draw[black, fill] (e)     circle (2pt);
            \draw[black, fill] (f)     circle (2pt);

\node[] at (a) [anchor = north east] {$(1,2,3)$};
\node[] at (b) [anchor = north west] {$(2,1,3)$};
\node[] at (c) [anchor = south west] {$(3,1,2)$};
\node[] at (d) [anchor = south west] {$(3,2,1)$};
\node[] at (e) [anchor = south east] {$(2,3,1)$};
\node[] at (f) [anchor = north east] {$(1,3,2)$};
      
          \end{tikzpicture}
          \end{center}
        \caption{Two-dimensional permutahedron}
        \label{fig:toperm}
    \end{figure}

\begin{figure}
\begin{center}
\begin{tikzpicture}[inner sep=1pt, scale=0.8]

\coordinate (a) at (0,0) ; 
\coordinate (b) at (3, 1.71);  
\coordinate (c) at (3, 5.15);  
\coordinate (d) at (0, 6.86);  
\coordinate (e) at (-3, 5.15);  
\coordinate (f) at (-3, 1.71);  

\coordinate (x) at (-1.5,0.855) ; 
\coordinate (y) at (1.5, 0.855);  
\coordinate (z) at (3, 3.43);  
\coordinate (t) at (1.5, 6.005);  
\coordinate (u) at (-1.5, 6.005);  
\coordinate (v) at (-3, 3.43);  

\coordinate (s) at (0, 3.43);

            \draw[black, thick, fill=tol1, fill opacity = 0.1] 
            (a) -- (b)  -- (c) -- (d) -- (e) -- (f) -- (a);
            \draw[black, fill] (a)     circle (2pt);
            \draw[black, fill] (b)     circle (2pt);
            \draw[black, fill] (c)     circle (2pt);
            \draw[black, fill] (d)     circle (2pt);
            \draw[black, fill] (e)     circle (2pt);
            \draw[black, fill] (f)     circle (2pt);

\node[] at (a) [anchor = north east] {$\cba$ \hskip 5mm {}};
\node[] at (b) [anchor = north west] {$\cab$ \hskip 5mm {}};
\node[] at (c) [anchor = south west] {$\acb$ \hskip 5mm {}};
\node[] at (d) [anchor = south west] {$\abc$ \hskip 5mm {}};
\node[] at (e) [anchor = south east] {$\bac$ \hskip 5mm {}};
\node[] at (f) [anchor = north east] {$\bca$ \hskip 5mm {}};
      
\node[] at (x) [anchor = north east] {$\bcsa$};
\node[] at (y) [anchor = north west] {$\csab$};
\node[] at (z) [anchor = west] {$\acsb$};
\node[] at (t) [anchor = south west] {$\asbc$};
\node[] at (u) [anchor = south east] {$\absc$};
\node[] at (v) [anchor = east] {$\bsac$};

\node[] at (s) {$\abcss$};
\end{tikzpicture}
          \end{center}
        \caption{Conforming preorders for the two-dimensional permutahedron}
        \label{fig:toperm-conform}
    \end{figure}
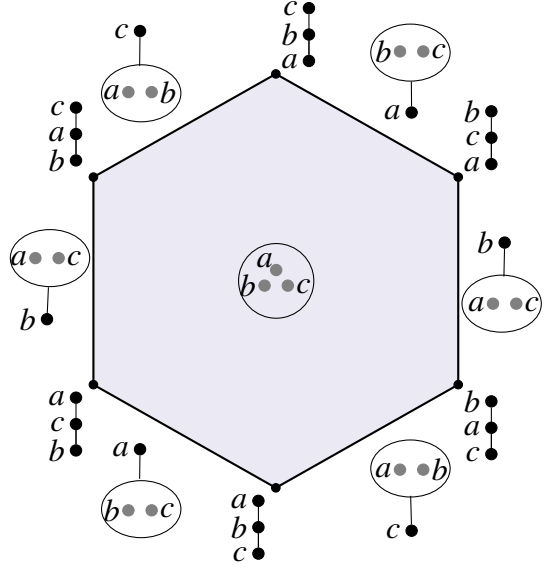

\begin{example} \label{ex:bij-pent}
  Figures \ref{fig:togenperm} and \ref{fig:togenperm-conform} shows the
  generalized permutahedron, a pentagon, of the submodular function defined on
  $\pow(\{a,b,c\})$ by 
  \[ z(a) = z(b) = z(c) = 3, \quad z(ab) = z(bc) = 5, \quad
    z(ac) = z(abc) = 6, \]
  and the corresponding preorders.
\end{example}


\newcommand{\recba}{
  \begin{tikzpicture}[scale=0.7, vertices/.style={draw, fill=black, circle, inner sep=1.5pt}]

\node [vertices] (31) at (-0.5,0) {};
\node [vertices] (32) at (-0.5,0.5) {};
\node [vertices] (33) at (-0.5,1) {};

\node[anchor = east] at (31) {$c$ \hskip 4mm {}};
\node[anchor = east] at (32) {$b$ \hskip 4mm {}};
\node[anchor = east] at (33) {$a$ \hskip 4mm {}};
\draw (31)--(32);
\draw (31)--(33); 

\end{tikzpicture}
}


\renewcommand{\cab}{
  \begin{tikzpicture}[scale=0.7, vertices/.style={draw, fill=black, circle, inner sep=1.5pt}]

\node [vertices] (31) at (-0.5,0) {};
\node [vertices] (32) at (-0.5,0.5) {};
\node [vertices] (33) at (-0.5,1) {};

\node[anchor = east] at (31) {$c$ \hskip 4mm {}};
\node[anchor = east] at (32) {$a$ \hskip 4mm {}};
\node[anchor = east] at (33) {$b$ \hskip 4mm {}};
\draw (31)--(32);
\draw (31)--(33); 

\end{tikzpicture}
}


\renewcommand{\acb}{
  \begin{tikzpicture}[scale=0.7, vertices/.style={draw, fill=black, circle, inner sep=1.5pt}]

\node [vertices] (31) at (-0.5,0) {};
\node [vertices] (32) at (-0.5,0.5) {};
\node [vertices] (33) at (-0.5,1) {};

\node[anchor = east] at (31) {$a$ \hskip 4mm {}};
\node[anchor = east] at (32) {$c$ \hskip 4mm {}};
\node[anchor = east] at (33) {$b$ \hskip 4mm {}};
\draw (31)--(32);
\draw (31)--(33); 

\end{tikzpicture}
}


\renewcommand{\abc}{
  \begin{tikzpicture}[scale=0.7, vertices/.style={draw, fill=black, circle, inner sep=1.5pt}]

\node [vertices] (31) at (-0.5,0) {};
\node [vertices] (32) at (-0.5,0.5) {};
\node [vertices] (33) at (-0.5,1) {};

\node[anchor = east] at (31) {$a$ \hskip 4mm {}};
\node[anchor = east] at (32) {$b$ \hskip 4mm {}};
\node[anchor = east] at (33) {$c$ \hskip 4mm {}};
\draw (31)--(32);
\draw (31)--(33); 

\end{tikzpicture}
}

\renewcommand{\bac}{
  \begin{tikzpicture}[scale=0.7, vertices/.style={draw, fill=black, circle, inner sep=1.5pt}]

\node [vertices] (31) at (-0.5,0) {};
\node [vertices] (32) at (-0.5,0.5) {};
\node [vertices] (33) at (-0.5,1) {};

\node[anchor = east] at (31) {$b$ \hskip 4mm {}};
\node[anchor = east] at (32) {$a$ \hskip 4mm {}};
\node[anchor = east] at (33) {$c$ \hskip 4mm {}};
\draw (31)--(32);
\draw (31)--(33); 

\end{tikzpicture}
}


\renewcommand{\bca}{
  \begin{tikzpicture}[scale=0.7, vertices/.style={draw, fill=black, circle, inner sep=1.5pt}]

\node [vertices] (31) at (-0.5,0) {};
\node [vertices] (32) at (-0.5,0.5) {};
\node [vertices] (33) at (-0.5,1) {};

\node[anchor = east] at (31) {$b$ \hskip 4mm {}};
\node[anchor = east] at (32) {$c$ \hskip 4mm {}};
\node[anchor = east] at (33) {$a$ \hskip 4mm {}};
\draw (31)--(32);
\draw (31)--(33); 

\end{tikzpicture}
}


\renewcommand{\csab}{
  \begin{tikzpicture}[scale=0.9, vertices/.style={draw, fill=black, circle, inner sep=1.5pt}]

\node [vertices] (31) at (-0.5,-0.2) {};
\node [vertices, gray] (32) at (-0.33,0.7) {};
\node [vertices, gray] (33) at (-0.67,0.7) {};

\node[anchor = east] at (31) {$c$ \hskip 0.1pt {}};
\node[anchor = west] at (32) {$\, b${}};
\node[anchor = east] at (33) {$a\, $};

\node (ss3) at (-0.5, 0.7) {};
\draw (ss3) ellipse (0.58 and 0.42);


\draw (31)--(-0.45, 0.25);

\end{tikzpicture}
}

\renewcommand{\asbc}{
  \begin{tikzpicture}[scale=0.9, vertices/.style={draw, fill=black, circle, inner sep=1.5pt}]

\node [vertices] (31) at (-0.5,-0.2) {};
\node [vertices, gray] (32) at (-0.33,0.7) {};
\node [vertices, gray] (33) at (-0.67,0.7) {};

\node[anchor = east] at (31) {$a$ \hskip 0.1pt {}};
\node[anchor = west] at (32) {$\, c${}};
\node[anchor = east] at (33) {$b\, $};

\node (ss3) at (-0.5, 0.7) {};
\draw (ss3) ellipse (0.58 and 0.42);


\draw (31)--(-0.44, 0.32);

\end{tikzpicture}
}

\renewcommand{\bsac}{
  \begin{tikzpicture}[scale=0.9, vertices/.style={draw, fill=black, circle, inner sep=1.5pt}]

\node [vertices] (31) at (-0.5,-0.2) {};
\node [vertices, gray] (32) at (-0.33,0.7) {};
\node [vertices, gray] (33) at (-0.67,0.7) {};

\node[anchor = east] at (31) {$b$ \hskip 0.1pt {}};
\node[anchor = west] at (32) {$\, c${}};
\node[anchor = east] at (33) {$a\, $};

\node (ss3) at (-0.5, 0.7) {};
\draw (ss3) ellipse (0.58 and 0.42);

\draw (31)--(-0.57, 0.27);

\end{tikzpicture}
}


\renewcommand{\absc}{
  \begin{tikzpicture}[scale=0.9, vertices/.style={draw, fill=black, circle, inner sep=1.5pt}]

\node [vertices, gray] (31) at (-0.33,-0.2) {};
\node [vertices, gray] (32) at (-0.67,-0.2) {};
\node [vertices] (33) at (-0.5,0.7) {};

\node[anchor = west] at (31) {$\, b$ };
\node[anchor = east] at (32) {$a \,$ };
\node[anchor = east] at (33) {$c$ \hskip 0.1pt {}};

\node (ss3) at (-0.5, -0.2) {};
\draw (ss3) ellipse (0.58 and 0.42);


\draw (33)--(-0.57, 0.25);

\end{tikzpicture}
}

\renewcommand{\bcsa}{
  \begin{tikzpicture}[scale=0.9, vertices/.style={draw, fill=black, circle, inner sep=1.5pt}]

\node [vertices, gray] (31) at (-0.33,-0.2) {};
\node [vertices, gray] (32) at (-0.67,-0.2) {};
\node [vertices] (33) at (-0.5,0.7) {};

\node[anchor = west] at (31) {$\, c$ };
\node[anchor = east] at (32) {$b \,$ };
\node[anchor = east] at (33) {$a$ \hskip 0.1pt {}};

\node (ss3) at (-0.5, -0.2) {};
\draw (ss3) ellipse (0.58 and 0.42);


\draw (33)--(-0.57, 0.20);

\end{tikzpicture}
}

\renewcommand{\acsb}{
  \begin{tikzpicture}[scale=0.9, vertices/.style={draw, fill=black, circle, inner sep=1.5pt}]

\node [vertices, gray] (31) at (-0.33,-0.2) {};
\node [vertices, gray] (32) at (-0.67,-0.2) {};
\node [vertices] (33) at (-0.5,0.7) {};

\node[anchor = west] at (31) {$\, c$ };
\node[anchor = east] at (32) {$a \,$ };
\node[anchor = east] at (33) {$b$ \hskip 0.1pt {}};

\node (ss3) at (-0.5, -0.2) {};
\draw (ss3) ellipse (0.58 and 0.42);


\draw (33)--(-0.42, 0.20);

\end{tikzpicture}
}


\renewcommand{\abcss}{
  \begin{tikzpicture}[scale=0.9, vertices/.style={draw, fill=black, circle, inner sep=1.5pt}]

\node [vertices, gray] (31) at (-0.33,-0.2) {};
\node [vertices, gray] (32) at (-0.67,-0.2) {};
\node [vertices, gray] (33) at (-0.5, 0.03) {};

\node[anchor = west] at (31) {$\, c$ };
\node[anchor = east] at (32) {$b \,$ };
\node[anchor = south east] at (33) {$a$};

\node (ss3) at (-0.5, -0.13) {};
\draw (ss3) ellipse (0.55 and 0.55);


\end{tikzpicture}
}

\newcommand{\acAb}{
  \begin{tikzpicture}[scale=0.7, vertices/.style={draw, fill=black, circle, inner sep=1.5pt}]

\node [vertices] (31) at (-0.8,0) {};
\node [vertices] (32) at (-0.2,0) {};
\node [vertices] (33) at (-0.5,0.7) {};

\node[anchor = east] at (31) {$a\,$ };
\node[anchor = west] at (32) {$\, c$};
\node[anchor = east] at (33) {$b \hskip 1.5mm $};
\draw (31)--(33);
\draw (32)--(33); 

\end{tikzpicture}
}


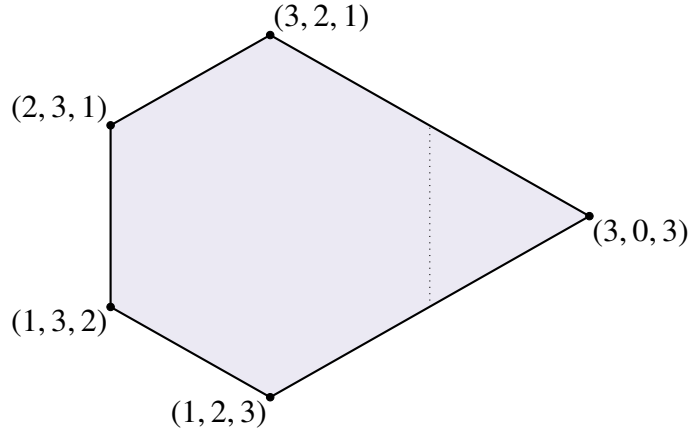
\begin{figure}
\begin{center}
\begin{tikzpicture}[inner sep=1pt, scale=0.7]

\coordinate (a) at (0,0) ; 
\coordinate (b) at (3, 1.71);  
\coordinate (c) at (3, 5.15);  
\coordinate (bc) at (6, 3.43);
\coordinate (d) at (0, 6.86);  
\coordinate (e) at (-3, 5.15);  
\coordinate (f) at (-3, 1.71);  

            \draw[black, thick, fill=tol1, fill opacity = 0.1] 
            (a) -- (bc) -- (d) -- (e) -- (f) -- (a);
            \draw[black, dotted] 
            (b) -- (c);
            \draw[black, fill] (a)     circle (2pt);
            \draw[black, fill] (bc)     circle (2pt);
            \draw[black, fill] (d)     circle (2pt);
            \draw[black, fill] (e)     circle (2pt);
            \draw[black, fill] (f)     circle (2pt);

\node[] at (a) [anchor = north east] {$(1,2,3)$};
\node[] at (bc) [anchor = north west] {$(3,0,3)$};
\node[] at (d) [anchor = south west] {$(3,2,1)$};
\node[] at (e) [anchor = south east] {$(2,3,1)$};
\node[] at (f) [anchor = north east] {$(1,3,2)$};
      
          \end{tikzpicture}
          \end{center}
          \caption{Generalized permutahedron $\Pi(z)$ of
            Example \ref{ex:bij-pent}}
        \label{fig:togenperm}
    \end{figure}    

\begin{figure}
\begin{center}
\begin{tikzpicture}[inner sep=1pt, scale=0.8]

\coordinate (a) at (0,0) ; 
\coordinate (b) at (3, 1.71);  
\coordinate (c) at (3, 5.15);  
\coordinate (bc) at (6, 3.83);  
\coordinate (d) at (0, 6.86);  
\coordinate (e) at (-3, 5.15);  
\coordinate (f) at (-3, 1.71);  

\coordinate (x) at (-1.5,0.855) ; 
\coordinate (y) at (3, 1.71);  
\coordinate (t) at (3, 5.15);  
\coordinate (u) at (-1.5, 6.005);  
\coordinate (v) at (-3, 3.43);  

\coordinate (s) at (0, 3.43);

            \draw[black, thick, fill=tol1, fill opacity = 0.1] 
            (a) -- (bc) -- (d) -- (e) -- (f) -- (a);
            \draw[black, fill] (a)     circle (2pt);
            \draw[black, fill] (bc)     circle (2pt);
            \draw[black, fill] (d)     circle (2pt);
            \draw[black, fill] (e)     circle (2pt);
            \draw[black, fill] (f)     circle (2pt);

\node[] at (a) [anchor = north east] {$\cba$ \hskip 5mm {}};
\node[] at (bc) [anchor = north west] {$\acAb$ \hskip 5mm {}};
\node[] at (d) [anchor = south west] {$\abc$ \hskip 5mm {}};
\node[] at (e) [anchor = south east] {$\bac$ \hskip 5mm {}};
\node[] at (f) [anchor = north east] {$\bca$ \hskip 5mm {}};
      
\node[] at (x) [anchor = north east] {$\bcsa$};
\node[] at (y) [anchor = north west] {$\csab$};
\node[] at (t) [anchor = south west] {$\asbc$ };
\node[] at (u) [anchor = south east] {$\absc$};
\node[] at (v) [anchor = east] {$\bsac$};

\node[] at (s) {$\abcss$};
\end{tikzpicture}
          \end{center}
         \caption{Conforming preorders for the submodular function $z$
         of Example \ref{ex:bij-pent}}
        \label{fig:togenperm-conform}
    \end{figure}
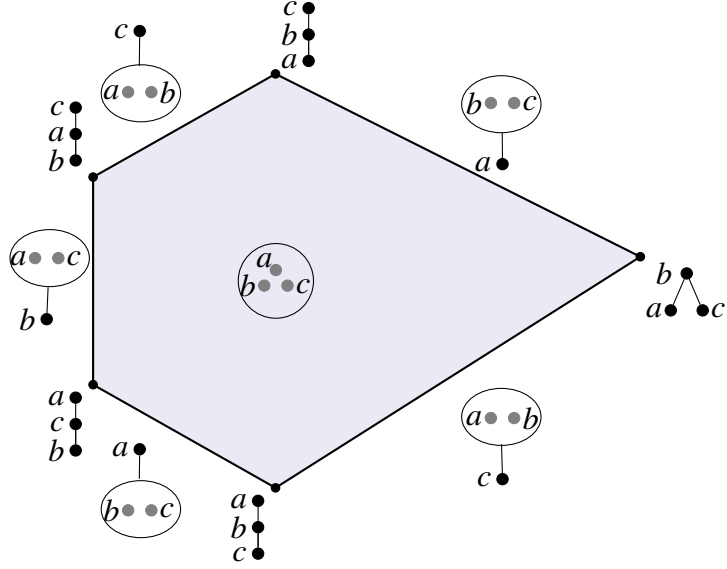

We derive the following on the partially ordered set $\Pre(z)$
of conforming preorders.

\begin{proposition} \label{pro:conf-minmax} Let $z$ be a submodular function.

  \noindent a. $\Pre(z)$ has a unique maximal element, the
  totally disconnected preorder
$\cpre(z)$. Moreover {there is equality between sets of connected components
 $\mathbf c(P) = \mathbf c\big(\cpre(z)\big)$} for any $P \in \Pre(z)$.

 \noindent b. The meet of all preorders of $\Pre(z)$ is the preorder $\pre(z)$
defined in Subsection \ref{subsec:pre-bf}. Moreover
  $\mathbf b(P) = \mathbf b (\pre(z))$ for every  minimal $P \in \Pre(z)$.

  \noindent c. In particular, if  $z$ is a finite submodular function
  then every minimal $P$ in $\Pre(z)$  is a poset.
\end{proposition}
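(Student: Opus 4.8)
The plan is to treat the three statements separately, using throughout Theorem \ref{thm:bij-main} (conforming preorders correspond bijectively to non-empty faces of $\egp(z)$) and the product structure of EGP's.

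For part~(a), I would first write $z = z_1 \cdots z_r$ as a product of indecomposables over a partition $I = I_1 \sqcup \cdots \sqcup I_r$, so that by iterating Lemma \ref{lem:egp-prodsum} we have $\egp(z) = \egp(z_1) \times \cdots \times \egp(z_r)$. Since faces of a product of polyhedra are products of faces, and a short computation shows that $\Psi_z$ evaluated on a product face $F = F_1 \times \cdots \times F_r$ is the disjoint union of the $\Psi_{z_i}(F_i)$, it follows that every $P \in \Pre(z)$ has the form $P = P_1 \sqcup \cdots \sqcup P_r$ with $P_i \in \Pre(z_i)$, and each $P_i$ is connected (a disconnected $P_i$ would decompose $z_{I_i} = z_i$, contradicting indecomposability). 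Now on each $I_i$ the coarse preorder conforms to $z_i$ (the Remark after Proposition \ref{pro:conf-PbtrQ}) and is the top element of $\Pre(I_i)$, hence of $\Pre(z_i)$; taking disjoint unions, $\cpre(z)$ is the top of $\Pre(z)$. Connectedness of each $P_i$ then gives $\mathbf c(P) = \{I_1, \ldots, I_r\} = \mathbf c(\cpre(z))$.

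For part~(b), compatibility of any $P \in \Pre(z)$ with $z$ gives $\pre(z) \preceq P$, hence $\pre(z) \preceq \bigwedge \Pre(z)$. For the reverse inequality, for each $A$ with $z(A) < \infty$ Lemma \ref{lem:egp-lin}(ii) makes the face $\Phi_z(\{A\})$ non-empty, so by Proposition \ref{pro:bij-psitop} it corresponds to a conforming preorder $P_A$ having $A$ among its down-sets; thus $\top(z)$ lies inside the topology generated by the down-set topologies of all $P \in \Pre(z)$, which under the Alexandroff isomorphism is exactly $\topl(\bigwedge \Pre(z))$, giving $\bigwedge \Pre(z) \preceq \pre(z)$ and hence equality. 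For the ``moreover'': if $P$ is minimal in $\Pre(z)$, then by Theorem \ref{thm:bij-main} (and monotonicity of $\Psi_z$ for inclusion) the face $\Phi_z(P)$ has no proper non-empty subface; by Proposition \ref{pro:faces-egp} this face is $\egp(z_P) = \prod_{C \in \mathbf b(P)} \egp(z_C)$, so each $\egp(z_C)$ has no proper non-empty face. For a bubble $C$ of $P$ the function $z_C$ is indecomposable ($P|_C$ is connected and convex and $P$ conforms), and by Proposition \ref{pro:egp-top} together with Lemma \ref{lem:egp-lin} an indecomposable $\egp(z_C)$ has no proper non-empty face precisely when $z_C(S) = \infty$ for all $\emptyset \neq S \subsetneq C$, i.e.\ when $\pre(z_C) = \pre(z)|_C$ is the coarse preorder on $C$, i.e.\ when $C$ is a single bubble of $\pre(z)$. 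Since $\pre(z) \preceq P$ forces each $\pre(z)$-bubble to sit inside a $P$-bubble, this yields $\mathbf b(P) = \mathbf b(\pre(z))$.

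Part~(c) is then immediate: if $z$ is finite then $\top(z) = \pow(I)$, so $\pre(z)$ is the discrete order, all of whose bubbles are singletons; by~(b) every minimal $P \in \Pre(z)$ has the same bubbles, hence is antisymmetric, i.e.\ a poset. The hardest step will be the ``moreover'' of~(b): it requires combining the bijection of Theorem \ref{thm:bij-main}, the face decomposition $\egp(z_P) = \prod_C \egp(z_C)$, the criterion (Proposition \ref{pro:egp-top}, Lemma \ref{lem:egp-lin}) for the EGP of an indecomposable submodular function to have a proper face, and the identity $\pre(z_C) = \pre(z)|_C$ (which itself rests on Proposition \ref{pro:com-BA} so that $z_C$ is well defined), and then tracking carefully how the bubbles of $P$ lie inside those of $\pre(z)$.
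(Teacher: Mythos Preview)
Your argument is correct, and in part (b) you take a genuinely different route from the paper for the ``moreover'' claim. The paper proves $\mathbf b(P)=\mathbf b(\pre(z))$ for minimal $P$ by a dimension count: it exhibits the face $\Phi_z(L)$ for a linear extension $L$ of $\pre(z)$, observes (via $\pre(z)\preceq\Psi_z\Phi_z(L)\preceq L$) that this face has dimension $|I|-|\mathbf b(\pre(z))|$, and then uses that all minimal faces of a polyhedron have the same dimension to force $|\mathbf b(P)|=|\mathbf b(\pre(z))|$. You instead argue structurally: a minimal face is an affine space, so in the product $\egp(z_P)=\prod_C\egp(z_C)$ each factor must already be an affine space; since each $z_C$ is indecomposable, Proposition \ref{pro:egp-top} and Lemma \ref{lem:egp-lin} then force $z_C(S)=\infty$ for every proper non-empty $S\subset C$, i.e.\ $\pre(z)\restr{C}$ is coarse. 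This is a clean and self-contained characterization, but note that it implicitly uses the identity $\pre(z_C)=\pre(z)\restr{C}$, which you mention but do not verify; it follows because for a bubble $C=B\setminus A$ with $A,B$ down-sets of $P$ (hence of $\pre(z)$), one has $z_C(S)<\infty$ iff $A\cup S$ is a $\pre(z)$-down-set iff $S$ is a $\pre(z)\restr{C}$-down-set. For part (a) you go through the geometric bijection and the fact that $\Psi_z(F_1\times\cdots\times F_r)=\sqcup_i\Psi_{z_i}(F_i)$, whereas the paper argues directly from the definition of ``conforms''; both are fine, and yours makes the role of the product decomposition of $\egp(z)$ explicit. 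Your treatment of the meet in (b) and of (c) matches the paper's.
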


\begin{proof}
  a.  If $P$ conforms to $z$, a decomposition  $z = z_1 \cdot z_2$
 corresponds to a decomposition
 $P = P_1 \sqcup P_2$ into a disconnected union of two preorders.
 Furthermore $P_1$ conforms to $z_1$, and similarly $P_2$
  to $z_2$. Iterating this argument, the decomposition $z = z_1 \cdots z_r$ into
  indecomposables gives a decomposition $P_1 \sqcup \cdots \sqcup P_r$ of $P$
  into indecomposable preorders. Thus $\mathbf c(P) = \mathbf c\big(\ctop(z)\big)$, which proves a.

  \medskip
  \noindent b. Suppose $a$ and $b$ are not comparable in $\pre(z)$.
  Let $L^a$ be a linear extension of $\pre(z)$ with $a > b$ and
  $L^b$ similarly with $b > a$. Then $\Psi_z \circ \Phi_z(L^a)$ and
  and $\Psi_z \circ \Phi_z(L^b)$ conform to $z$. Whether $a,b$ are comparable
  or not in these preorders, in their meet $a$ and $b$ will not be
  comparable.

  If $a$ and $b$ are in the same bubble in $\pre(z)$, they are in
  the same bubble in every preorder $\succeq \pre(z)$, and so
  in the meet.

  If $a < b$ in $\pre(z)$, in any  $P$ conforming to $z$ we have $a \leq_P b$.
  But letting  $L$ be a linear extension of $\pre(z)$, it is compatible
  with $z$ and $P = \Psi_z \circ \Phi_z(L)$ conforms and is $\succeq \pre(z)$.
  Thus $a \leq_P b$. We cannot have $b \leq_P a$ as $P \preceq L$, and
  so $a <_P b$.



\medskip
Let $L$ be a linear extension of $\pre(z)$. Then they have the same
set of bubbles. The total preorder $L$ is compatible with $z$.
Also $\pre(z) \preceq \Psi_z \circ \Phi_z(L) \preceq L$. So
by Proposition \ref{pro:faces-egp}
the dimension
of the face $F = \Phi_z(L)$ is $|I| - |{\mathbf b}(\pre(z))|$.

In a convex polyhedron all minimal faces have the same dimension.
All minimal $P$ conforming to $z$ correspond to minimal faces and
so by Proposition \ref{pro:faces-egp} all such $|I|- |{\mathbf b}(P)|$
are equal and, by the above, are $\leq |I| - |{\mathbf b}(\pre(z))|$. This gives
$|{\mathbf b}(P)| \geq |{\mathbf b}(\pre(z))|$.
But $\pre(z) \preceq P$, so ${\mathbf b}(\pre(z))$ is a refinement
of ${\mathbf b}(P)$. Whence these sets of bubbles are equal.

\medskip
\noindent c. When $z$ is finite, $\pre(z)$ is the discrete poset.
\end{proof}


\subsection{The normal fan}

\begin{proposition}
   Let $z : \pow(I) \pil \hat{\RR}$ be a submodular function.
   If the face $F$ of $\Pi(z)$ corresponds to the preorder $P$,
   the normal cone associated to $F$ is $k(P)$.
 \end{proposition}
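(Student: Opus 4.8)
The plan is to identify the normal cone of $F = \Phi_z(P)$ directly from its defining inequalities, and then match this against the description of $k(P)$ from Proposition \ref{pro:braid-cone}. Recall that the normal cone of a face $F$ of a polyhedron $\Pi$ consists (in its interior) of the directions $y \in \RR^I$ whose maximal face on $\Pi$ is exactly $F$, and its closure is the set of directions whose maximal value on $\Pi$ is attained somewhere on $F$. So first I would fix a linear extension $L$ of $P$; by Lemma \ref{lem:egp-lin} and Lemma \ref{lem:conf-alinbu} the affine space $\alin(L) = \alin(P)$ is a face-supporting flat containing $F$, and $F = \Phi_z(P) = \egp(z_P)$ by Proposition \ref{pro:faces-egp}. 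The down-sets of $P$ are exactly the $A$ with $x_A = z(A)$ on all of $F$ (this is $\Psi_z(F) = P$, using Theorem \ref{thm:bij-main} when $P$ conforms to $z$, or directly the definition of compatible when we only assume $\Phi_z(P)$ nonempty).

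Next I would compute which directions are maximized on $F$. Write $y = \sum_i y_i \one_i \in \RR^I$. A direction $y$ attains its maximum over $\egp(z)$ on $F$ precisely when $y$ is a nonnegative combination of the outward normals $\one_A = \sum_{i\in A}\one_i$ of the facet-defining inequalities $x_A \le z(A)$ that are tight on $F$, modulo the linear span of the equality $x_I = z(I)$ — this is the standard description of a normal cone as the cone generated by the active constraint normals. The active constraints on $F$ are exactly $x_A = z(A)$ for $A$ a down-set of $P$. So the normal cone of $F$ is
\[
  \mathrm{cone}\{\,\one_A \mid A \text{ a down-set of } P\,\} + \RR\fone.
\]
Now I would show this cone equals $k(P) = \{ y \mid y_i \ge y_j \text{ whenever } i \le_P j\}$. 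For ``$\subseteq$'': if $A$ is a down-set of $P$ and $i \le_P j$, then $j \in A \Rightarrow i \in A$, so $(\one_A)_i \ge (\one_A)_j$; hence every generator, and also $\fone$, lies in $k(P)$, which is a convex cone, so the whole cone does. For ``$\supseteq$'': given $y \in k(P)$, the function $i \mapsto y_i$ is constant on bubbles of $P$ and weakly order-reversing along $\le_P$; after subtracting a multiple of $\fone$ we may assume all $y_i \ge 0$, and then $y$ is the ``layer-cake'' nonnegative combination $y = \int_0^\infty \one_{\{i : y_i \ge t\}}\,dt$ (a finite sum since $y$ takes finitely many values), and each superlevel set $\{i : y_i \ge t\}$ is an up-set of $P$, i.e. its complement is a down-set; rewriting $\one_{\{y\ge t\}} = \fone - \one_{\text{complement}}$ exhibits $y$ in $\mathrm{cone}\{\one_A : A \text{ down-set}\} + \RR\fone$. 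This proves equality.

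Finally I would check that this cone is indeed the full (closed) normal cone of $F$ rather than something smaller: since $F = \egp(z_P)$ has dimension $|I| - |\mathbf b(P)|$ by Proposition \ref{pro:faces-egp} (when $P$ conforms), the normal cone has dimension $|\mathbf b(P)|$ in $\RR^I/\RR\fone$, matching the dimension of $k(P)$ (which is $|\mathbf b(P)|$ by Proposition \ref{pro:braid-cone}); combined with the inclusion of cones just established, this forces equality of the closed cones. Alternatively, and more cleanly, I would argue that $k(P)$ is a union of face cones of the braid fan (the $k(L')$ over linear extensions $L'$ of $P$, by Proposition \ref{pro:braid-cone}b), and that the normal fan of $\egp(z)$ is a coarsening of a subfan of the braid fan (cited from \cite{AA2017}), so the normal cone of $F$ is itself some $k(P')$; the inclusion $\mathrm{cone}\{\one_A\} + \RR\fone \subseteq$ normal cone $= k(P')$ together with $k(P') \subseteq k(P)$ (which follows since $P' \preceq P$, as every down-set of $P$ is active on $F$, giving a tight constraint, hence a relation forced on all of $k(P')$) pins down $P' = P$. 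The main obstacle I anticipate is the bookkeeping in the second approach — making precise why the preorder $P'$ attached to the normal cone of $F$ must coincide with $P$ rather than a coarsening — but the layer-cake computation in the previous paragraph sidesteps this by giving the containment in both directions explicitly, so I would lead with that.
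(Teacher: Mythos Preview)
Your approach is correct in spirit and genuinely different from the paper's. The paper argues pointwise: for $y\in k(P)$ it uses an Abel-summation rewriting of $\langle y,x\rangle$ along a linear extension of $P$ to see the maximum is attained on $F$, and for $y\notin k(P)$ it builds the total preorder $L$ from the level sets of $y$, shows the maximizing face is $\Phi_z(L)$, and checks this face cannot contain $F$. Your route---identify the normal cone as the cone generated by the active constraint normals $\one_A$ (for $A$ a down-set of $P$) plus $\RR\fone$, then match that cone with $k(P)$---is cleaner and more conceptual; it trades the explicit summation trick for one appeal to LP duality and a layer-cake decomposition.

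There is, however, an orientation slip in your layer-cake step that, as written, flips the sign. From $i\le_P j\Rightarrow y_i\ge y_j$ one gets that the superlevel sets $\{i:y_i\ge t\}$ are \emph{down-sets} of $P$, not up-sets: if $y_j\ge t$ and $i\le_P j$ then $y_i\ge y_j\ge t$. So no complementing is needed, and the layer cake gives directly $y=\sum_r (v_r-v_{r+1})\one_{A_r}$ with $A_r=\{i:y_i\ge v_r\}$ down-sets and nonnegative coefficients, placing $y$ in $\mathrm{cone}\{\one_A:A\text{ down-set of }P\}+\RR\fone$. If you instead complement as you wrote, you land in $\RR\fone-\mathrm{cone}\{\one_A\}$, which is the wrong half. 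Separately, your dimension-matching backup does not work: two closed cones of the same dimension, one contained in the other, need not coincide (e.g.\ $\{x\ge y\ge 0\}\subsetneq\{x,y\ge 0\}$ in $\RR^2$). But once the layer-cake orientation is fixed you already have both inclusions, so that backup (and the more involved $P'$ argument) is unnecessary.
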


 \begin{proof}
   Given a direction $y \in k(P)$, we also have $y \in k(L)$ for 
   some linear extension $L$ of $P$. Let
   \[ C_1 < C_2 < \cdots < C_r \]
   be the total ordering of the bubbles of $L$.
   Then
   \[ y_{1} \geq y_{2} \geq \cdots \geq y_{r}, \]
   where
     $y_{i}$ is the value of $y$ at the coordinate positions of $C_i$. 
   The value of $y$ on a point $x \in \egp(z)$ is
   \[ \sum_{i = 1}^r y_{i} x_{C_i} \quad (\text{where } x_{C_i}
     = \sum_{j \in C_i} x_j). \]
   Let $D_i = \dai{L}{C_i}$ be the down-set in $L$ generated by $C_i$.
   We may write the above expression as
   \[ y_{r}x_{D_r} + (y_{{r-1}} - y_{r})x_{D_{r-1}} +
       \cdots + (y_{1} - y_{2}) x_{D_1}. \]
     Each $D_j$ is also a down-set of $P$. We have $x_{D_i} \leq z(D_i)$ and
     on the face $F = \Phi_z(P)$ we have $x_{D_i} = z(D_i)$. This shows
     that the above expression is maximal on the face $F$.

     \medskip
     Now suppose $y$ is not in $k(P)$. Define an equivalence relation on
     $I$ by $i \sim j$ if $y_i = y_j$. The equivalence classes may be
     totally ordered according to the values of the $y_i$, so as
     to become the bubbles of a total preorder $L$. Let $G = \Phi_z(L)$.
     By the argument
     above this is the face of $\egp(z)$ such that $y$ attains its
     maximum. If $F \subseteq G$, then
     \[ P = \Psi_z(F) \preceq \Psi_z(G) \preceq L. \]
  But then $y \in k(L) \sus k(P)$, a contradiction. So $G$
     does not contain the face $F$, and $y$ is not
     maximal on $F$.
   \end{proof}
  
\subsection{Finite submodular functions}

Given a {\it finite}  submodular function $z : \pow(I) \pil \RR$,
note that any total order on $I$ is  compatible with $z$. 
For each minimal $P$ in $\Pre(z)$ (note that this will be a poset),
let $L(P)$ be the set of its linear extensions.

\begin{proposition} For a finite-valued $z$,
  if $P,Q$ are distinct posets of $\Pre(z)$,
  the sets of linear extensions $L(P)$ and $L(Q)$ are disjoint. 
  \end{proposition}

\begin{proof} Let $L$ be a total order with 
  $P \preceq L$. Then
  \[ P = \Psi_z \circ \Phi_z (P) \preceq \Psi_z \circ \Phi_z (L) \preceq L. \]
  But then $\Psi_z \circ \Phi_z(L)$ is a poset (since $L$ is), and so
  is minimal in $\Pre(z)$. Then $P = \Psi_z \circ \Phi_z(L)$ and is
  uniquely determined.
\end{proof}
\begin{corollary} The union
  \[ \bigsqcup_{P \text{{\rm { poset in }} } \Pre(z)}L(P) \]
  is a partition of the set of total orders on $I$. 
\end{corollary}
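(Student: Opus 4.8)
The plan is to prove the Corollary in two halves. Disjointness of the sets $L(P)$, as $P$ runs over the posets lying in $\Pre(z)$, is precisely the statement of the preceding Proposition, so nothing new is needed there; I would only add that each $L(P)$ is non-empty, since a finite poset always has a linear extension and, $P$ being a poset on $I$, that extension is automatically a total order. The remaining task is to show that every total order $L$ on $I$ occurs in $L(P)$ for some such $P$, and this is where the (small amount of) work lies.

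For the covering claim I would argue as in the proofs of the preceding Proposition and of Theorem \ref{thm:bij-main}. Fix a total order $L$ on $I$. Since $z$ is finite-valued, $L$ is compatible with $z$, so $\Phi_z(L)$ is a non-empty face of $\egp(z)$ by Proposition \ref{pro:faces-egp}, and $P := \Psi_z(\Phi_z(L))$ conforms to $z$ by Proposition \ref{pro:bij-psitop}. Every down-set $A$ of $L$ has $z(A) < \infty$ (compatibility) and satisfies $x_A = z(A)$ identically on $\Phi_z(L)$, hence $A$ is a down-set of $P$; thus $P \preceq L$. Because $L$ is a total order it is antisymmetric, so $P$ is antisymmetric as well, i.e. $P$ is a poset; being also in $\Pre(z)$, it is one of the posets indexing the union. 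Finally $P$ and $L$ are posets on $I$, so all their bubbles are singletons and $\mathbf b(P) = \mathbf b(L)$; together with $P \preceq L$ this says exactly that $L$ is a linear extension of $P$, i.e. $L \in L(P)$.

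Combining the three facts — the sets $L(P)$ are pairwise disjoint, non-empty, and exhaust the set of total orders on $I$ — gives that $\bigsqcup_{P} L(P)$ is a partition of that set, which is the assertion of the Corollary.

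I do not anticipate a genuine obstacle; the argument is pure bookkeeping with the Galois correspondence $(\Phi_z,\Psi_z)$. The one place to be careful is the step $P \preceq L$: one must remember that, under the Alexandroff correspondence, having more down-sets means being the finer preorder, so ``every down-set of $L$ is a down-set of $P$'' translates to $P \preceq L$ and not the reverse. This same translation is already used in the proof of Theorem \ref{thm:bij-main}, so there is nothing delicate about it.
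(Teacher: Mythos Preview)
Your proposal is correct and follows essentially the same route as the paper: the covering step uses $P := \Psi_z\circ\Phi_z(L)$, which is precisely the construction already appearing in the proof of the preceding Proposition (where it is shown that $\Psi_z\circ\Phi_z(L)$ is a poset in $\Pre(z)$ and $\preceq L$). The paper leaves the Corollary without an explicit proof because this argument is already contained there; your write-up simply makes it explicit, and your remark that each $L(P)$ is non-empty is a welcome addition ensuring the parts of the partition are non-empty.
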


\begin{remark} \label{rem:bij-linext}
  The set of minimal elements of $\Pre(z)$ then is a complete fan
  of posets, as defined in \cite{PRW2009}. In \cite{MW2009} such
  partitions of linear extensions or equivalently of permutations,
  are called convex rank tests. In particular \cite[Sec.6]{MW2009} counts
  such linear extensions.
\end{remark}

\begin{remark}
The composition
\begin{eqnarray*}
\Psi_z\circ \Phi_z: \mathcal P\big(\mathcal P(I)\big) &\longrightarrow& \mathcal P\big(\mathcal P(I)\big) \\
\mathcal S&\longmapsto & \overline {\mathcal S}
\end{eqnarray*}
is a closure operation giving a topology
$\overline {\mathcal S}$ on $\mathcal P(I)$.
This topology depends however on $z$, and is in general finer than the
ordinary closure topology generated by all unions of all finite intersections
of elements in $\cS$.
\end{remark}


\section{Examples}
\label{sec:exa}
We consider three classes of submodular functions and describe the preorders
which conform to these functions. These classes are i) the functions
$\low_P$ associated to preorders, ii) submodular functions of matroids,
and iii) those of building sets. In the latter case it is well known
\cite[Section 22]{AA2017} and \cite{FeSt2004} that the faces of the associated
generalized permutahedron, the nestohedron, are in bijection to $\cB$-forests.
We show these are precisely the conforming preorders.

\subsection{Preorders}\label{smqp}
Recall from Paragraph \ref{subsec:pre-bf} that any submodular function $z$ on a finite set $I$ gives rise to a topology $\top(z)$ for which the open sets are the subsets $A\subseteq I$ such that $z(A)< \infty$. The restriction of this map to $\{0, \infty\}$-valued submodular functions is a bijection \cite[Theorem 15.9]{AA2017}. Identifying the topologies on $I$ with preorders via the Alexandroff correspondence, the inverse map is given by $P\mapsto\mop{low}_P$, with
\[\mop{low}_P(A)=0 \hbox{ if } A \hbox{ is a downset for }P, \hbox{ and } \hbox{low}_P(A)=\infty \hbox { if not}.\]

The extended generalized permutahedron associated to $\mop{low}_P$ is
the \textsl{preorder cone} of $P$ (terminology from \cite[Sec.15]{AA2017}),
given by
\[\egp(\low_P) =\left\{x\in\mathbb R^I, \,\, \sum_{i\in I}x_i=0 \hbox{ and }\sum_{i\in A}x_i\le 0 \hbox{ for any $P$-downset }A\right\}.\]

The associated normal fan of $\egp(\low_P)$ is the cone $k(P)$ (and its faces)
of Proposition \ref{pro:braid-cone}, see Proposition \ref{pro:mod-desc}.
The conforming preorders of $\low_P$, which is $\overline{P}$, 
correspond to the faces of $k(P)$.

\subsection{Matroids} \label{subsec:exa-mat}
Let $M$ be a matroid on the finite set $I$, \cite{Ox2006}. The rank function
$z_M : \mathcal P(I) \pil \RR$, which associates to any $A\subset I$ the maximal cardinality of any independent set contained in $A$, is submodular. The
set of vertices of the corresponding generalized permutahedron is in bijection  with the
set of bases of the matroid. For a basis $B \sus I$, the corresponding vertex
is $e_B = \sum_{b \in B} e_b$, where $e_b$ is the $b$'th coordinate vector.
\begin{definition}
A \textsl{circuit} of the matroid $M$ is a minimal subset $C\subseteq I$ such that $z_M(C)<|C|$.
\end{definition}
\noindent
In particular, a circuit is never independent.
\begin{lemma}
Let $B$ be a basis of $M$, and let $c\in I\setminus B$. There is a unique circuit $C\subseteq B\sqcup\{c\}$, which necessarily contains $c$.
\end{lemma}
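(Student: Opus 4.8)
The plan is to prove both existence and uniqueness of the circuit $C \subseteq B \sqcup \{c\}$ using the submodularity of the rank function $z_M$ together with the defining property $z_M(S) \leq |S|$ and $z_M(B) = |B|$. First I would establish existence: since $B$ is a basis and $c \notin B$, the set $B \sqcup \{c\}$ is dependent, i.e.\ $z_M(B \sqcup \{c\}) < |B \sqcup \{c\}| = |B| + 1$; as $z_M$ is weakly increasing we have $z_M(B \sqcup \{c\}) \geq z_M(B) = |B|$, so in fact $z_M(B \sqcup \{c\}) = |B|$. Hence $B \sqcup \{c\}$ is a dependent set, and any dependent set contains a minimal dependent set, which is by definition a circuit. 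Any circuit $C \subseteq B \sqcup \{c\}$ must contain $c$, for otherwise $C \subseteq B$, but $B$ is independent (being a basis, $z_M(B) = |B|$, and by submodularity every subset $S \subseteq B$ satisfies $z_M(S) = |S|$), contradicting that $C$ is dependent.

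The main work is uniqueness. Suppose $C_1, C_2 \subseteq B \sqcup \{c\}$ are two distinct circuits; by the previous paragraph both contain $c$, so $c \in C_1 \cap C_2$, and in particular $C_1 \cap C_2 \neq \emptyset$. Choose an element $d \in C_1 \setminus C_2$ (possible since $C_1 \neq C_2$ and, by symmetry and minimality, neither is contained in the other). The standard approach is the \emph{circuit elimination} axiom, which I would derive here from submodularity rather than invoke as a black box. Concretely, I would show that $(C_1 \cup C_2) \setminus \{c\}$ is dependent: since $C_1, C_2$ are circuits, $z_M(C_i) = |C_i| - 1$; applying submodularity \eqref{eq:notions-submod} to $C_1$ and $C_2$ gives
\[ z_M(C_1 \cup C_2) + z_M(C_1 \cap C_2) \leq z_M(C_1) + z_M(C_2) = |C_1| + |C_2| - 2, \]
and since $C_1 \cap C_2$ is a proper subset of the circuit $C_1$ it is independent, so $z_M(C_1 \cap C_2) = |C_1 \cap C_2|$. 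Combining with $|C_1| + |C_2| = |C_1 \cup C_2| + |C_1 \cap C_2|$ yields $z_M(C_1 \cup C_2) \leq |C_1 \cup C_2| - 2$. Now removing one element can drop the rank by at most one, so $z_M\big((C_1 \cup C_2) \setminus \{c\}\big) \leq |C_1 \cup C_2| - 2 \leq |(C_1 \cup C_2)\setminus\{c\}| - 1$, whence $(C_1 \cup C_2) \setminus \{c\}$ is dependent and contains a circuit $C_3$. But $(C_1 \cup C_2) \setminus \{c\} \subseteq B$, contradicting that $B$ is independent.

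The step I expect to be the main obstacle is handling the rank-drop estimate cleanly: I need the elementary fact that $z_M(S \setminus \{x\}) \geq z_M(S) - 1$ for a rank (submodular, weakly increasing, unit-increment) function, which follows from applying the submodular inequality to $S$ and $\{x\}^c$ restricted appropriately, or more simply from $z_M(S) \leq z_M(S \setminus \{x\}) + z_M(\{x\}) \leq z_M(S \setminus \{x\}) + 1$ via submodularity on $S \setminus \{x\}$ and $\{x\}$. Once that is in place the argument is a short chain of inequalities. An alternative, perhaps more transparent, route to uniqueness is to note that a circuit $C \ni c$ contained in $B \sqcup \{c\}$ is determined by $C = \{c\} \cup \{b \in B : b \text{ is in the closure of } B \setminus \{b\} \text{ within } B \sqcup\{c\}\}$, i.e.\ by the unique minimal dependent set, but I would favor the submodularity computation above since it stays entirely within the language already set up in the excerpt.
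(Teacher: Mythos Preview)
Your argument is correct and arrives at the same contradiction as the paper, but by a different route. The paper simply invokes the circuit elimination axiom as a black box (citing \cite[1.1.3]{Ox2006}): from two distinct circuits $C_1,C_2$ with $c\in C_1\cap C_2$ one gets a circuit inside $(C_1\cup C_2)\setminus\{c\}\subseteq B$, contradicting independence of $B$. You instead \emph{derive} circuit elimination from submodularity of the rank function, via the inequality $z_M(C_1\cup C_2)\le |C_1\cup C_2|-2$. This is more self-contained within the rank-function language of the paper and also supplies the existence half, which the paper's proof leaves implicit.

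One small slip: in the step ``removing one element can drop the rank by at most one, so $z_M\big((C_1\cup C_2)\setminus\{c\}\big)\le |C_1\cup C_2|-2$'' you have cited the wrong fact. The unit-drop estimate $z_M(S\setminus\{x\})\ge z_M(S)-1$ points the wrong way; what you actually need (and what makes your displayed inequality true) is plain monotonicity $z_M\big((C_1\cup C_2)\setminus\{c\}\big)\le z_M(C_1\cup C_2)$. With that correction the chain of inequalities is clean, and the ``main obstacle'' you anticipate disappears entirely.
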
\label{circuit}
\begin{proof}
  If there are two distinct circuits $C_1, C_2\sus B \cup \{c\}$, then
  none of them are contained in $B$. Thus $c \in C_1 \cap C_2$. By
the axioms for circuits \cite[1.1.3]{Ox2006}, there is then a circuit in
  $(C_1 \cup C_2) \backslash \{c \}$. But such a circuit would be contained
  in $B$, which is impossible.
  \end{proof}
  

  \noindent Let us denote by $P_B$ be the conforming poset corresponding to the
  vertex $e_B$, namely
\[P_B=\Psi_{z_M}(\{e_B\}).\]
Let $L$ be the set of loops of the matroid (the elements of $I$ of rank zero).
Here is a complete description of the poset $P_B$:

\begin{proposition} \label{pro:exa-mat}
  Given a matroid $M$ whose submodular function is
  $z_M$, and let $B$ be a basis for $M$.
  \begin{itemize}
    \item[a.] The set of minimal elements of $P_B$
      is $B \cup L$.
      \item[b.] If $c \in I \backslash B$, let $C \sus B \cup \{c\}$ be
  the unique circuit given by Lemma \ref{circuit}. Then $c \geq d$ in $P_B$
  if and only if $d \in C$.
  \item[c.] 
    The poset $P_B$ has rank one or zero.
    \item[d.] The preorders $Q$ with $P_B \btl Q$ correspond
      precisely to the faces of the matroid polytope which contain
     the vertex $e_B$.  
    \item[e.] If $Q$ is any preorder conforming to $z_M$, 
      then $Q$ has rank one or zero.
    
\end{itemize}
\end{proposition}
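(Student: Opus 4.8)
The plan is to prove the four parts of the proposition in the order stated, using the bijection between conforming preorders and faces of $\egp(z_M)$ from Theorem \ref{thm:bij-main} together with the matroid axioms already recalled.

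For part a, I would first show that every element of $B\cup L$ is minimal in $P_B$, and conversely. A down-set $A$ of $P_B$ satisfies $z_M(A)=x_A$ where $x=e_B$, i.e. $z_M(A)=|A\cap B|$. So the down-sets of $P_B$ are exactly the sets $A$ with $z_M(A)=|A\cap B|$. Given $\ell\in L$, the singleton $\{\ell\}$ has rank $0=|\{\ell\}\cap B|$, so $\{\ell\}$ is a down-set, hence $\ell$ is minimal. Given $b\in B$, I claim $\{b\}$ is a down-set: $z_M(\{b\})$ is $1$ (since $b$ lies in the basis $B$, it is not a loop) $=|\{b\}\cap B|$. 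Conversely, if $p$ is minimal in $P_B$ then $\{p\}$ is a down-set, so $z_M(\{p\})=|\{p\}\cap B|\in\{0,1\}$; if it is $1$ then $\{p\}$ is independent and $p\in B$ would follow once we know... actually I need to argue $p\in B$: if $z_M(\{p\})=1$ then $p$ is not a loop, and I must show $p\in B$. If $p\notin B$, consider the down-set generated — hmm, here I would instead use part b. So it is cleaner to prove b first, or to interleave: assume $p\notin B\cup L$, then by Lemma \ref{circuit} there is a circuit $C\subseteq B\cup\{p\}$ containing $p$, and I will show in part b that $p>d$ in $P_B$ for $d\in C\setminus\{p\}\neq\emptyset$, contradicting minimality of $p$.

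For part b, the key computation is: $c\ge d$ in $P_B$ iff $d$ belongs to every down-set of $P_B$ containing $c$, equivalently $d$ belongs to the smallest down-set $\da c$ containing $c$. I would identify $\da c$ explicitly. Using the characterization ``$A$ is a down-set iff $z_M(A)=|A\cap B|$'', one shows $B\cup\{c\}$ is \emph{not} a down-set (since $z_M(B\cup\{c\})=|B|<|B|+0$? no — $z_M(B\cup\{c\})=|B|$ because $B$ is spanning, while $|(B\cup\{c\})\cap B|=|B|$, so it \emph{is} a down-set; I need to be more careful). Let me reconsider: the down-sets are the ``flats relative to $B$'' type sets. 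The right statement is that $\da c = B_0\cup\{c\}$ where $B_0$ is the part of $B$ forced in. Concretely, I would show: a set $A\ni c$ with $A\subseteq B\cup\{c\}$ is a down-set iff $C\subseteq A$, where $C$ is the unique circuit in $B\cup\{c\}$; this is because adding $c$ to $A\cap B$ keeps the rank equal to $|A\cap B|$ (i.e. $c$ is dependent on $A\cap B$) precisely when $A\cap B\supseteq C\setminus\{c\}$. More generally for arbitrary $A\ni c$ one uses submodularity to reduce to the subsets of $B\cup\{c\}$. Hence $\da c = (C\setminus\{c\})\cup\{c\} = C$ together with the down-closure, but since $C\setminus\{c\}\subseteq B$ consists of minimal elements, $\da c = C$. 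Therefore $c\ge d$ iff $d\in C$, which is part b; and this also finishes the converse in part a.

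Part c is then immediate: every element $c\notin B\cup L$ covers only minimal elements (those in its circuit $C$), and elements of $B\cup L$ are themselves minimal, so $P_B$ has no chain of length $\ge 2$, i.e. rank $\le 1$. For part d, I would argue by the face poset structure: any conforming preorder $Q$ satisfies $\pre(z_M)\preceq Q\btl \cpre(z_M)$, and $P_B$ corresponds to a vertex, i.e. a \emph{minimal} face, so every conforming $Q$ is a contraction of some minimal conforming poset $P$ (namely one whose face $\Phi_z(Q)$ contains... ). Actually the clean route: every conforming preorder $Q$ is obtained from a minimal conforming poset $P'$ by the relation $P'\btl Q$ (since the face $\Phi_z(Q)$ has a vertex $e_B$ in it, and $\Psi_z(\{e_B\})=P_B\preceq \Psi_z(\Phi_z(Q))=Q$, with $P_B\btl Q$ by Proposition \ref{pro:conf-PbtrQ}). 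A contraction of a rank-$\le 1$ preorder has rank $\le 1$: if $B_1<_Q B_2<_Q B_3$ were a chain of bubbles in $Q$, then by Corollary \ref{cor:pos-PbtrQ} there are $p_i<_{P_B} p_{i+1}$ witnessing the two cover steps, and concatenating (using that $P_B$ restricted to the convex set $[p_1,p_3]$ is connected convex) would produce a chain of length $2$ in $P_B$, a contradiction. So $Q$ has rank $\le 1$.

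The main obstacle I anticipate is part b: getting the precise description of the down-sets of $P_B$ and of $\da c$ right, in particular handling arbitrary subsets $A$ (not just subsets of $B\cup\{c\}$) via submodularity and the exchange property, and correctly pinning down that the unique circuit $C$ of Lemma \ref{circuit} is exactly the down-closure of $c$. Once b is in hand, parts a, c, d follow by short order-theoretic arguments using the contraction relation $\btl$ and Corollary \ref{cor:pos-PbtrQ}.
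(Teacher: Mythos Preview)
For parts a--c your plan matches the paper's closely. The paper proves a in full first (comparing $z_M(\{p\})$ with $x_p$ in the three cases $p\in L$, $p\in B$, $p\notin B\cup L$) and then uses the minimality of $C\setminus\{c\}$ inside the proof of b; you instead do the forward half of a, then b (identifying $\da c$ via the lattice of down-sets, noting that both $C$ and $B\cup\{c\}$ are down-sets and intersecting), then close a with b. Both orderings work, and your computation of $\da c=C$ is a mild variant of the paper's contradiction on $C\setminus\{d\}$. Part c is immediate in both accounts.

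Part d has a real gap. Your argument is: from a chain of bubbles $B_1<_QB_2<_QB_3$, take witnesses $p_1<_{P_B}p_2$ and $p_2'<_{P_B}p_3$ from Corollary~\ref{cor:pos-PbtrQ} and ``concatenate'' to a length-two chain in $P_B$. But $p_2$ and $p_2'$ are in general different elements of $B_2$, and connectedness of $(P_B)_{|B_2}$ does not manufacture a chain through them. Concretely, take the rank-two matroid on $\{a,b,c,d\}$ with circuits $\{a,c\}$ and $\{a,b,d\}$, and basis $B=\{a,b\}$. Then $P_B$ has $a,b$ minimal with $c>a$ and $d>a,b$ (rank one), yet the edge of the matroid polytope joining $e_{\{a,b\}}$ and $e_{\{b,d\}}$ has conforming preorder the total preorder $\{b\}<\{a,d\}<\{c\}$, of rank two. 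The paper's one-line proof (``as $P_B\btl Q$ for some $B$, d follows from c'') makes exactly the same leap and does not supply the missing step either, so you are not deviating from the paper here---but the step is not justified.
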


\begin{proof} We fix a basis $B$ and write $x = e_B$, a 
  vertex of the matroid polytope. It corresponds to a conforming
  poset $P_B$ for the submodular function $z_M$.  

  \noindent a.
  Let $a$ be a minimal element of $P_B$. This holds if and only if $\{a\}$ is
  a down-set of $P_B$, or if and only if $z_M(\{a\}) = x_a$. If $a$ is a loop, both of these
  are zero. If $a$ is an element of $B$, both are $1$, and if $a$ is neither
  a loop nor in $B$, then $z_M(\{a\}) = 1$ and $x_a = 0$, which proves
  part a.\\

  \medskip
  \noindent b. Let $c \in I \backslash B$.
  The coordinate $x_c = 0$, while $x_b= 1$ for $b \in
  C \backslash \{c\}$. Thus $\sum_{b \in C} x_b = |C|-1$ and as $C$
  is a circuit, this is $z_M(C)$. So $C$
  is a down-set of $P$. Thus if $c \geq d$
  in $P_B$ then $d \in C$. On the other hand, suppose for some $d \in C$ that
  we do not have $d \leq c$. Since, from part a,
  all elements in $C \backslash \{c\}$
  are minimal elements,  $C \backslash \{d \}$ would also
  be a down-set for $P_B$. Hence we should have
  \[ z_M(C \backslash \{d\}) = \sum_{b \in C\backslash \{d\}} x_b. \]
  Since $C$ is a circuit, $C \backslash \{d \}$ is independent and the
  left side is $|C| -1$.
But as $x_c = 0$, it is seen that the right side
is $|C|-2$. Hence we cannot avoid having $d \leq c$ for $d \in C$.

\medskip
\noindent c. As all elements of $C \backslash \{c\}$ are minimal, it is clear that
  the height of $P_B$ is one or zero (the latter if and only if any such $c$ is minimal,
  i.e. $c \in L$, and so $I = B \cup L$), which proves c.
  
  Finally that $Q$ conforms to $z_M$ and corresponds to a face $F$ of
  the matroid polytope containing
  $e_B$,  means precisely that $P_B \btl Q$. Whence by c. also the rank
  of $Q$ is one or zero. 
\end{proof}


\begin{figure}
\begin{center}
\begin{tikzpicture}[scale=1.2, vertices/.style={draw, fill=black, circle,
inner sep=1.5pt}]

\coordinate (a) at (0,0);
\coordinate (d) at (0,2);
\coordinate (b) at (2,0);
\coordinate (c) at (2,2);
\coordinate (x) at (1,0);
\coordinate (y) at (2,1);
\coordinate (z) at (1,2);
\coordinate (w) at (0,1);
\coordinate (u) at (1,1);

\draw [help lines, white] (-2,-1) grid (3,2);
        \draw[black, thick] (d)--(a)--(b)--(c)--(d)--(b);

\node[anchor=north] at (x) {$a$}; 
\node[ anchor=west] at (y) {$b$};
\node[ anchor=south] at (z) {$c$}; 
\node[ anchor=east] at (w) {$d$};
\node[ anchor=south] at (u) {$e$};

\node[vertices] at (a) {}; 
\node[vertices] at (b) {};
\node[vertices] at (c) {}; 
\node[vertices] at (d) {};

\draw[black, very thick] (a)--(b);

\end{tikzpicture}
\end{center}
 \caption{The graph gives a matroid on its edges}
        \label{fig:mat-graph}
\end{figure}
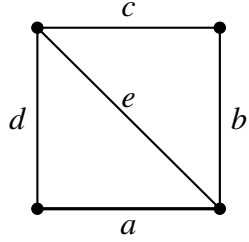


\begin{figure}
\begin{center}
\begin{tikzpicture}[scale=1.2, vertices/.style={draw, fill=black, circle,
inner sep=1.5pt}]

\draw [help lines, white] (0,0) grid (4,2);

\coordinate (a1) at (0,0);
\coordinate (b1) at (0.5,0);
\coordinate (c1) at (1,0);
\coordinate (d1) at (0.25,1);
\coordinate (e1) at (0.75,1);

\coordinate (a2) at (2,0);
\coordinate (b2) at (2.25,1);
\coordinate (c2) at (3,0);
\coordinate (d2) at (2.75,1);
\coordinate (e2) at (2.5,0);

\node[ anchor=north] at (a1) {$a$};
\node[ anchor=north] at (b1) {$b$};
\node[ anchor=north] at (c1) {$c$}; 
\node[ anchor=south] at (d1) {$d$};
\node[ anchor=south] at (e1) {$e$};

\node[ anchor=north] at (a2) {$a$};
\node[ anchor=south] at (b2) {$b$}; 
\node[ anchor=north] at (c2) {$c$};
\node[ anchor=south] at (d2) {$d$}; 
\node[ anchor=north] at (e2) {$e$};

\node[vertices] at (a1) {}; 
\node[vertices] at (b1) {};
\node[vertices] at (c1) {}; 
\node[vertices] at (d1) {};
\node[vertices] at (e1) {};

\node[vertices] at (a2) {};
\node[vertices] at (b2) {}; 
\node[vertices] at (c2) {};
\node[vertices] at (d2) {}; 
\node[vertices] at (e2) {};

\draw[black, thick] (a1)--(d1)--(b1)--(e1)--(c1)--(d1);
\draw[black, thick] (a2)--(b2)--(e2)--(d2)--(c2);

\end{tikzpicture}
\end{center}
\caption{ Bases give vertices of the matroid polytope.
  Conforming preorders corresponding to bases $\{a,b,c\}$ and
$\{a,e,c\}$ of Figure \ref{fig:mat-graph}}
        \label{fig:mat-conform}
\end{figure}
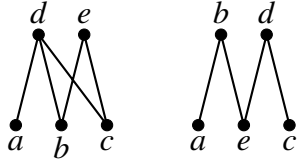

\begin{example} \label{ex:exa-trekant}
  Consider the submodular function $z : \pow(\{a,b,c\}) \pil \NN$
  given by
  \[ z(S) = \begin{cases} 2, & |I| = 1\\ 3, & |I| = 2 \\ 4, & |I| = 3 \end{cases}.
  \]
  Its generalized permutahedron with conforming preorders of vertices
  is given on the
  right of Figure \ref{fig:trekant-conform}. This is the submodular
  function of the building set consisting of the subsets
  $\{a\}, \{b\}, \{c\}, \{a,b,c\}$.

  Consider now the uniform matroid $U_{1,3}$ whose rank (i.e. submodular)
  function is $z(I) = 1$ for $|I| \geq 1$.
  Its generalized permutahedron with conforming preorders of vertices
  is given on the
  left of Figure \ref{fig:trekant-conform}.
\end{example}

\newcommand{\Aab}{
  \begin{tikzpicture}[scale=0.9, vertices/.style={draw, fill=black, circle, inner sep=1.5pt}]

\node [vertices] (31) at (-0.5,0) {};
\node [vertices] (32) at (-0.8,0.7) {};
\node [vertices] (33) at (-0.2,0.7) {};

\node[anchor = east] at (31) {$c$ \hskip 0.1pt {}};
\node[anchor = east] at (32) {$b \, $};
\node[anchor = west] at (33) {$\, a$};

\draw (31)--(33);
\draw (31)--(32); 

\end{tikzpicture}
}

\newcommand{\Abc}{
  \begin{tikzpicture}[scale=0.9, vertices/.style={draw, fill=black, circle, inner sep=1.5pt}]

\node [vertices] (31) at (-0.5,0) {};
\node [vertices] (32) at (-0.8, 0.7) {};
\node [vertices] (33) at (-0.2, 0.7) {};

\node[anchor = east] at (31) {$a$ \hskip 0.1pt {}};
\node[anchor = east] at (32) {$c\, $};
\node[anchor = west] at (33) {$\, b $};

\draw (31)--(33);
\draw (31)--(32);

\end{tikzpicture}
}

\newcommand{\Aca}{
  \begin{tikzpicture}[scale=0.9, vertices/.style={draw, fill=black, circle, inner sep=1.5pt}]

\node [vertices] (31) at (-0.5,0) {};
\node [vertices] (32) at (-0.8, 0.7) {};
\node [vertices] (33) at (-0.2, 0.7) {};

\node[anchor = east] at (31) {$b$ \hskip 0.1pt {}};
\node[anchor = east] at (32) {$a \,${}};
\node[anchor = west] at (33) {$\, c $};

\draw (31)--(33);
\draw (31)--(32); 

\end{tikzpicture}
}


\begin{figure}
\begin{tikzpicture}[inner sep=1pt, scale=0.8]

\coordinate (a) at (-2,0) ; 
\coordinate (b) at (2, 0);  
\coordinate (c) at (0, 3.46);  

            \draw[black, thick, fill=tol1, fill opacity = 0.1] 
            (a) -- (b)  -- (c) -- (a);
            \draw[black, fill] (a)     circle (2pt);
            \draw[black, fill] (b)     circle (2pt);
            \draw[black, fill] (c)     circle (2pt);
         

\node[] at (a) [anchor = east] {$\Abc$ \hskip 5mm {}};
\node[] at (b) [anchor = west] {\hskip 3mm $\Aab$};
\node[] at (c) [anchor = south west] {$\Aca$ \hskip 5mm {}};

\node[] at (a) [anchor = north] {\scriptsize{${\overset{} {{(1,0,0)}}}$}};
\node[] at (b) [anchor = north] {\scriptsize{${\overset{} {{(0,0,1)}}}$}};
\node[] at (c) [anchor = south east] {\scriptsize{$(0,1,0)$}};



\coordinate (a) at (7,0) ; 
\coordinate (b) at (11, 0);  
\coordinate (c) at (9, 3.46);  

            \draw[black, thick, fill=tol1, fill opacity = 0.1] 
            (a) -- (b)  -- (c) -- (a);
            \draw[black, fill] (a)     circle (2pt);
            \draw[black, fill] (b)     circle (2pt);
            \draw[black, fill] (c)     circle (2pt);
         

\node[] at (a) [anchor = east] {$\Abc$ \hskip 5mm {}};
\node[] at (b) [anchor = west] {\hskip 3mm $\Aab$};
\node[] at (c) [anchor = south west] {$\Aca$ \hskip 5mm {}};

\node[] at (a) [anchor = north] {\scriptsize{${\overset{} {{(2,1,1)}}}$}};
\node[] at (b) [anchor = north] {\scriptsize{${\overset{} {{(1,1,2)}}}$}};
\node[] at (c) [anchor = south east] {\scriptsize{$(1,2,1)$}};

\end{tikzpicture}
\caption{Conforming preorders for matroid and building set of
Example \ref{ex:exa-trekant}}
        \label{fig:trekant-conform}
    \end{figure}    

For a matroid all conforming preorders have height $\leq 1$
by Proposition \ref{pro:exa-mat}
The example above suggests the following.

\begin{problem} Let $w : \pow(I) \pil \RR$ be a submodular function
  such that all conforming preorders have height $\leq 1$. Is there
  then a matroid with rank function $z$ such that $\Pre(z) = \Pre(w)$?
\end{problem}

\subsection{Building sets, nestohedra, and $\cB$-forests}
Recall the notion of building set from Subsection \ref{subsec:egp-building}.
The generalized permutahedron $\Delta_{\cB}$ associated to
the building set $\cB$ is the Minkowski sum
\[ \DB = \sum_{J \in \cB} \Delta_J.\]
An element of the building permutahedron $\Delta_\cB$ may be written
$p = \sum_{J \in \cB} \alpha^J$, where $\alpha^J$ is a point in $\Delta_J$.

For $A \sus I$ the value of the linear form $\fone_A$
(see Section \ref{sec:egpclass}) on $p$ is
\begin{equation} \label{eq:build-xa}
  \fone_A(p) = \sum_{J \in \cB} \fone_A(\alpha^J) = \sum_{J \cap A \neq \emptyset}
  \fone_A(\alpha^J).
  \end{equation}
Let $z$ be the associated submodular function of the generalized
permutahedron. By \cite[Sec. 12]{AA2017} the value $z(A)$ is the
maximum value of this for $p$ on the generalized permutahedron.
If $A \cap J$ is non-empty, let $a_J$ be an element.
Letting $\alpha_J = e_{a_J}$, we see that the maximum
value of \eqref{eq:build-xa} then becomes:
\[ z(A) = \sum_{ J \cap A \neq \emptyset} 1 =
  | \{ J \in \cB \, | \, J \cap A \neq \emptyset \}|. \]


The faces of a $\DB$ are in bijection with {\it nested sets} or equivalently
with $\cB$-{\it forests}, by \cite{FeSt2004, Pos2009}, 
(recall these notions from Subsection
\ref{subsec:egp-building}).
Let us explain how
a $\cB$-forest gives a face of $\DB$. We need first the following.

\begin{lemma}
  Let $\cB$ be a building set on $I$, and $N$ a $\cB$-forest on $I$.
  Let $J \sus I$ be an element of $\cB$. Considering $N$ as a preorder,
  the set of minimal elements $\min_N J$ is a subset of a bubble of $N$, i.e.
  the restriction of the preorder to $\min_N J$ is the coarse preorder.
\end{lemma}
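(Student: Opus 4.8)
The plan is to argue by contradiction: suppose $\min_N J$ meets two or more distinct bubbles of $N$, and derive that a certain union of upsets lies in $\cB$ in violation of the $\cB$-forest axioms. (We may assume $J\neq\emptyset$, the empty case being vacuous.) First I would establish a covering observation: since $N$ is a forest and $J$ is finite, every $q\in J$ lies in $N$ above some minimal element of $J$. Indeed, starting from $q$ one descends along a strictly decreasing chain of bubbles of $N\restr J$; this chain terminates because $J$ is finite, landing in a bubble of $\min_N J$. Letting $p_1,\dots,p_k$ be chosen representatives of the distinct bubbles appearing in $\min_N J$, transitivity (from $p_i\sim^b_N r$ and $r\le_N q$) gives $p_i\le_N q$ for the corresponding $i$, so $q\in\uai N{p_i}$; hence $J\subseteq\bigcup_{i=1}^k\uai N{p_i}$.

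Next I would feed this into the building-set and $\cB$-forest axioms. By the first defining property of a $\cB$-forest, each $\uai N{p_i}$ belongs to $\cB$. Since $p_i\in J\cap\uai N{p_i}$ and $J\in\cB$, iterating the union axiom of a building set (using $J$ as a common connector: $J\cup\uai N{p_1}\in\cB$, then adjoin $\uai N{p_2}$, and so on) yields $J\cup\uai N{p_1}\cup\cdots\cup\uai N{p_k}\in\cB$. By the covering observation this set equals $\bigcup_{i=1}^k\uai N{p_i}$, so $\bigcup_{i=1}^k\uai N{p_i}\in\cB$. But $p_1,\dots,p_k$ are pairwise incomparable in $N$ (they represent distinct minimal bubbles of $N\restr J$, hence of $N$), and $k\ge 2$; so the second defining property of a $\cB$-forest asserts exactly that $\bigcup_{i=1}^k\uai N{p_i}\notin\cB$ --- a contradiction.

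Hence $k=1$: all of $\min_N J$ lies in a single bubble of $N$, which is the claim. The only slightly delicate point is the covering observation $J\subseteq\bigcup_i\uai N{p_i}$ --- in particular the passage from ``$q$ lies above some minimal element of $J$'' to ``$q$ lies above a chosen bubble representative $p_i$'' --- but this is immediate from transitivity, and I do not expect any real obstacle; the rest is a direct application of the stated axioms.
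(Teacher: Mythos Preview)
Your proof is correct and follows essentially the same route as the paper's own argument: partition $\min_N J$ into bubbles, use the building-set union axiom (with $J$ as connector) to get $J\cup\bigcup_i\uai N{p_i}\in\cB$, reduce this to $\bigcup_i\uai N{p_i}$ via the covering observation, and invoke the second $\cB$-forest axiom for the contradiction. You are simply more explicit than the paper about the covering step $J\subseteq\bigcup_i\uai N{p_i}$ and about why the chosen $p_i$ are pairwise incomparable.
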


\begin{proof}
We may write $\min_N J$ as a disjoint union $C_1 \sqcup \cdots \sqcup C_r$
where the $C_i$ are coarse preorders, and they are pairwise incomparable.
We show that $r = 1$. 
Consider $\uparrow_N C_1$ which, by definition of $\cB$-forest, is in $\cB$.
By the axiom for building set, $J \, \cup \uparrow_N C_1$ is in $\cB$.
Continuing we get that $J \,  \cup_{i=1}^r \uparrow_N C_i$ is in $\cB$. But this
latter is simply  $\cup_{i=1}^r \uparrow_N C_i$. If $r \geq 2$, this is
not in $\cB$ by definition of $N$.
\end{proof}

Given the building permutahedron $\Delta_\cB = \sum_{J \in \cB} \Delta_J$,
by the proof of
\cite[Thm.7.3]{Pos2009} the face associated to $N$ is
\[ F_N = \sum_{J \in B} \Delta_{\min_NJ}. \]
We now show that the preorder associated to $F_N$ by the functor $\Psi_z$
is precisely the preorder $N$.

\begin{proposition}
  If $A$ is a down-set of $N$, for every $p$ in the face $F_N$, equality $\fone_A(p) = z(A)$ holds.
 If $B$ is not a down-set of $N$, there is $p \in F_N$ such that
 $\fone_B(p) < z(B)$. As a consequence the preorder associated to the face
 $F_N$ by $\Psi_z$ is
  precisely $N$. 
\end{proposition}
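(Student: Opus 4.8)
The plan is to verify the two displayed claims about $t_A(p)$ and $t_B(p)$, and then deduce the identification of $\Psi_z(F_N)$ with $N$ from the already-established dimension count. For the first claim, suppose $A$ is a down-set of $N$ and $p \in F_N = \sum_{J \in \cB}\Delta_{\min_N J}$. Using \eqref{eq:build-xa} with the points $\alpha^J \in \Delta_{\min_N J}$, I would observe that $t_A(\alpha^J)$ is a sum of the barycentric coordinates of $\alpha^J$ over those minimal elements of $J$ that lie in $A$. The key point is: if $J \cap A \neq \emptyset$, then since $A$ is a down-set and every element of $J$ lies above some element of $\min_N J$, in fact $\min_N J \cap A \neq \emptyset$; moreover, by the preceding Lemma $\min_N J$ is contained in a single bubble of $N$, so $\min_N J$ lies entirely on one ``level'' of the down-set $A$ — either $\min_N J \subseteq A$ or $\min_N J \cap A = \emptyset$. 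Hence whenever $J \cap A \neq \emptyset$ we get $\min_N J \subseteq A$, so $t_A(\alpha^J) = 1$ (the full barycentric mass), and summing over $J$ with $J \cap A \neq \emptyset$ gives $t_A(p) = |\{J \in \cB \mid J \cap A \neq \emptyset\}| = z(A)$. So $A \in \Psi_z(F_N)$.

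For the second claim, suppose $B$ is \emph{not} a down-set of $N$. Then there is $j \in B$ and $i \notin B$ with $i <_N j$; I may choose $j$ minimal with this property, so in particular $j \notin \min_N B$ is witnessed by an element strictly below it outside $B$. I want to build $p \in F_N$ with $t_B(p) < z(B)$. The idea is to choose, for each $J \in \cB$ with $J \cap B \neq \emptyset$, a point $\alpha^J \in \Delta_{\min_N J}$ supported on an element of $\min_N J$: if possible on an element outside $B$, and for the particular building set element $\uparrow_N i$ (which lies in $\cB$ by the $\cB$-forest axioms, and meets $B$ since $j \in \uparrow_N i$ — wait, rather use $\uparrow_N\{j'\}$ for a suitable $j'$) put the full mass on $i \notin B$ while still having $J \cap B \neq \emptyset$. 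Concretely: pick $J_0 = \uparrow_N\{i\} \in \cB$; since $i <_N j$ we have $j \in J_0$, so $J_0 \cap B \neq \emptyset$, yet $i \in \min_N J_0$ and $i \notin B$, so setting $\alpha^{J_0} = e_i$ gives $t_B(\alpha^{J_0}) = 0$ while $J_0$ is counted in $z(B)$. For every other $J$ with $J \cap B \neq \emptyset$ choose $\alpha^J \in \Delta_{\min_N J}$ arbitrarily (e.g. any vertex). Then $t_B(p) \le |\{J \in \cB \mid J \cap B \neq \emptyset\}| - 1 < z(B)$. Hence $B \notin \Psi_z(F_N)$.

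Combining, $\Psi_z(F_N)$ is exactly the set of down-sets of $N$, which is precisely the topology $\mathrm{topl}(N)$; via the Alexandroff correspondence this says the preorder associated to $F_N$ by $\Psi_z$ is $N$. The main obstacle I anticipate is the bookkeeping in the second claim: making sure that the building set element $\uparrow_N\{i\}$ really does meet $B$ and really does have $i$ as a minimal element disjoint from $B$, and that reducing the count by at least $1$ is legitimate (i.e. that no other contribution compensates) — this is why I would be careful to take $j$ minimal among elements of $B$ lying strictly above something outside $B$, ensuring $i$ itself is a minimal element of $\uparrow_N\{i\}$ and $i \notin B$. Everything else is a direct application of \eqref{eq:build-xa}, the formula $z(A) = |\{J \in \cB \mid J \cap A \neq \emptyset\}|$, and the preceding Lemma on $\min_N J$.
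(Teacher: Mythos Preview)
Your proposal is correct and follows essentially the same argument as the paper: for a down-set $A$, each $J$ meeting $A$ has $\min_N J \subseteq A$ (via the preceding Lemma that $\min_N J$ sits in a single bubble), so every $\alpha^J \in \Delta_{\min_N J}$ contributes full mass $1$ to $t_A$; for a non--down-set $B$, choose $i \notin B$ with $i <_N j \in B$, take $J_0 = \uparrow_N i \in \cB$, and place the $J_0$-summand at $e_i$ to lose at least one unit. Your added precaution of taking $j$ minimal is harmless but unnecessary: $i$ is automatically in $\min_N(\uparrow_N i)$ and $i \notin B$ is given, which is all the argument requires.
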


\begin{proof} Let $J \sus I$ be an element of $\cB$.
  If $J \cap A \neq \emptyset$, then
  $\min _N J \sus A$ as $A$ is a down-set for $N$ and $\min_N J$ is
  a coarse preorder.
  For
  \[ p = \sum_{J \cap A \neq \emptyset} \alpha^{\min_N J}
    \in F_N = \sum_{J \cap A \neq \emptyset} \Delta_{\min_N J}, \]
  we therefore see that
  \[ \fone_A(p) = \sum_{J \cap A \neq \emptyset}
    \fone_A(\alpha^{\min_N J})= \sum_{J \cap A \neq \emptyset} 1 = z(A). \]
  \medskip
  As $B$ is not a down-set of $N$, there is $q \in B$ and $p \leq_N q$ with
  $p \not \in B$. Consider $J_0 = \uparrow_N p \in \cB$.
  Then $J_0 \cap B \neq \emptyset$,
  and $p \in \min_N J_0$ and so $e_p \in \Delta_{\min_N J_0}$.
  Let
  \[ r = e_p + \sum_{\overset{J \cap B \neq \emptyset}{J \neq J_0}} u_j
    \in \Delta_{\min_N J_0} + \sum_{\overset{J \cap B \neq \emptyset}{J \neq J_0}}
    \Delta_{\min_N J} \]
  be an element of $F_N$. As $\fone_B(e_p) = 0$ and $\fone_B(u_J) \leq 1$,
  we see that
  $\fone_B(r) < z(B)$.
  \end{proof}



\newcommand{\pphat}[1]{\widehat{#1}}
\newcommand{\nfan}{\mathcal{N}}
\newcommand{\modu}{\rm{mod}}

\section{Modular functions}
\label{sec:mod}

The class of modular functions $z$ on $I$, i.e. those such that we have
equality in \eqref{eq:intro-submod}, is particularly nice.
The EGP $\egp(z)$ is then a cone, a translate of $\egp(\low_P)$.
Moreover, to a preorder $P$ conforming to $z$ we
associated a modular function $z^P$.  This is important for the construction of cointeracting bialgebras in
Section \ref{sec:bim}. We show that the
cone $\egp(z^P)$ is the cone of $\egp(z)$ at
the face corresponding to $P$.

\medskip
A function $z : \pow(I) \pil \hRR$ is {\it modular} iff
for every $A, B \sus I$:
\begin{equation} \label{eq:mod-def}
  z(A) + z(B) = z(A \cap B) + z(A \cup B).
  \end{equation}
Recall $\top(z)$ is the associated topology, whose open sets are the 
$A \sus I$ with $z(A)$ finite. It corresponds to a preorder $P = \pre(z)$
and $z(A)$ is finite iff $A$ is a down-set of $\pre(z)$.
For a preorder $P$ on $I$ let $\widehat{P} \sus \pow(I)$ be the down-sets
of $P$. A modular function can then be considered a function
$\widehat{P} \pil \RR$ such that the above equation \eqref{eq:mod-def} holds
for all $A,B \in \widehat{P}$. 

\subsection{Characterization of modular functions}
For the modular function $z$, each bubble $C \in {\mathbf b}(P)$,
with  $P = \pre(z)$ generates
a downset $\dai{P}C$. We then get a function
\[ v : {\mathbf b}(P) \pil \RR, \quad v(C) = z(\dai P C). \]

Given a preorder $P$, let $\modu[P]$ be the set of modular functions $z$ with
$P = \pre(z)$. The above gives a map
{
\begin{equation} \label{eq:mod-bpr}
  \modu[P] \pil \RR^{\mathbf b(P)}.
\end{equation}
}

\begin{proposition}
The association \eqref{eq:mod-bpr} above 
is a bijection.
In particular if $P$ is a poset (which is equivalent to $\mathbf b(P) = I$),
modular functions in $\modu[P]$ are in bijection with $\RR^I$. {Modular} functions may thus be given by pairs $(P,{\mathbf a})$
 where $P$ is a preorder and ${\mathbf a} \in \RR^{\mathbf b(P)}$.
\end{proposition}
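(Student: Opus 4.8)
The claim is that the map $\modu[P] \pil \RR^{\mathbf b(P)}$, $z \mapsto (z(\dai P C))_{C \in \mathbf b(P)}$, is a bijection. I would prove this by exhibiting an explicit inverse, constructing from a vector ${\mathbf a} = (a_C)_{C \in \mathbf b(P)}$ a modular function $z_{\mathbf a}$ with $\pre(z_{\mathbf a}) = P$ and $z_{\mathbf a}(\dai P C) = a_C$. The key structural fact I would use first is that every down-set $A$ of $P$ is a disjoint union of the bubbles it contains: $A = \bigsqcup_{C \in \mathbf b(P),\, C \sus A} C$, and moreover if $A$ is a down-set then for each bubble $C \sus A$ the down-set $\dai P C$ generated by $C$ is itself contained in $A$. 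So the poset of bubbles $\mathbf b(P)$ inherits a partial order, down-sets of $P$ correspond bijectively to down-sets of this bubble-poset, and each down-set $A$ of $P$ is built up as a disjoint union of "generated down-sets" via a filtration $A = A_0 \supset A_1 \supset \cdots \supset A_r = \emptyset$ with successive differences single bubbles (cf.\ the filtration used in the proof of Proposition \ref{pro:faces-egp}).

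\textbf{Surjectivity / the inverse map.} Given ${\mathbf a} \in \RR^{\mathbf b(P)}$, define, for any down-set $A$ of $P$,
\[
  z_{\mathbf a}(A) = \sum_{C \in \mathbf b(P),\, C \sus A}\big( a_C - z_{\mathbf a}(\dai P C \setminus C) \big),
\]
or more transparently: extend additively over bubbles by setting $z_{\mathbf a}$ on a generated down-set $\dai P C$ to be $a_C$ and forcing the "modular = additive over disjoint down-set increments" rule. Concretely I would define $w(C) := a_C - \sum_{D \subsetneq_{\mathbf b(P)} C} w(D)$ recursively over the bubble-poset (Möbius-type recursion on the finite poset $\mathbf b(P)$), and then set $z_{\mathbf a}(A) = \sum_{C \sus A} w(C)$ for a down-set $A$, and $z_{\mathbf a}(A) = \infty$ otherwise. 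Then $z_{\mathbf a}(\dai P C) = \sum_{D \le C} w(D) = a_C$ by construction, and $\top(z_{\mathbf a})$ is exactly the set of down-sets of $P$, so $\pre(z_{\mathbf a}) = P$. Modularity on $\widehat P$ follows because for down-sets $A,B$ one has $z_{\mathbf a}(A) + z_{\mathbf a}(B) = \sum_{C \sus A} w(C) + \sum_{C \sus B} w(C) = \sum_{C \sus A \cup B} w(C) + \sum_{C \sus A \cap B} w(C)$ by inclusion-exclusion over the bubbles (using that $A \cup B$ and $A \cap B$ are again down-sets, and a bubble sits in $A\cup B$ iff it sits in $A$ or $B$, in $A \cap B$ iff in both). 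For $A,B$ not both down-sets one checks the equation holds with the $\infty$ convention, exactly as submodularity was handled with $\infty$ earlier.

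\textbf{Injectivity.} Suppose $z, z'$ are modular with $\pre(z) = \pre(z') = P$ and $z(\dai P C) = z'(\dai P C)$ for all bubbles $C$. For an arbitrary down-set $A$, pick a filtration $A = A_0 \supset A_1 \supset \cdots \supset A_r = \emptyset$ with $A_i \setminus A_{i+1} = C_i$ a single bubble and each $A_i$ a down-set. Modularity applied to $A_{i+1}$ and $\dai P {C_i}$ (whose union is $A_i$ and whose intersection is $\dai P {C_i}\setminus C_i \subseteq A_{i+1}$... more carefully, intersection is $\dai P{C_i}\cap A_{i+1} = \dai P{C_i}\setminus C_i$) gives $z(A_i) - z(A_{i+1}) = z(\dai P{C_i}) - z(\dai P{C_i}\setminus C_i)$, and telescoping plus induction on $r$ shows $z(A)$ is determined by the values $z(\dai P C)$. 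Hence $z = z'$. The final sentence of the statement (the reformulation for posets, where $\mathbf b(P) = I$, and modular functions as pairs $(P,{\mathbf a})$) is then immediate.

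\textbf{Main obstacle.} The only genuinely fiddly point is bookkeeping: verifying that the recursively defined $w$ produces a well-defined function on all down-sets (independence of the chosen filtration) and that modularity holds \emph{including} the cases where one of $A, B, A\cup B, A\cap B$ fails to be a down-set, so that $\pre(z_{\mathbf a})$ is genuinely $P$ and not something coarser. I expect the cleanest route is to phrase everything in terms of the finite bubble-poset $\mathbf b(P)$ and the bijection between its down-sets and $\widehat P$, reducing the whole statement to the elementary fact that a valuation on the distributive lattice of down-sets of a finite poset that is additive on disjoint unions of principal down-sets is freely determined by its values on those principal down-sets.
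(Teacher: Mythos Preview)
Your argument is correct and takes a genuinely different route from the paper's. The paper constructs the inverse inductively on the size of a down-set $D$: it sets $z(\dai P C) = v(C)$ on principal down-sets and, for a non-principal $D$, writes $D = A_1 \cup B_1$ with $A_1, B_1 \subsetneq D$ and forces $z(D)$ by the modular identity; the bulk of the work is then checking that two different decompositions $D = A_1 \cup B_1 = A_2 \cup B_2$ yield the same value, which is done by expanding both into a common symmetric expression. Your approach sidesteps this well-definedness check entirely: by M\"obius-inverting on the bubble-poset to obtain increments $w(C)$ and setting $z_{\mathbf a}(A) = \sum_{C \subseteq A} w(C)$, well-definedness is automatic and modularity on $\widehat P$ becomes a one-line inclusion--exclusion over bubbles (using that down-sets are exactly unions of bubbles). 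Your separate injectivity argument via the filtration is also clean; the paper folds injectivity into the ``uniquely defined'' clause of its inductive construction.

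One small remark on the obstacle you flag: the equation \eqref{eq:mod-def} literally fails for general $A,B$ with the $\infty$ convention (e.g.\ for $\low_P$ with $P = \{1<2\}$, $A=\{2\}$, $B=\{1\}$ gives $\infty$ versus $0$). The paper's operative reading, stated just after \eqref{eq:mod-def}, is that a modular function is one satisfying the identity on $\widehat P$ and taking value $\infty$ off $\widehat P$; with that reading your verification is complete and there is nothing further to check in the infinite cases.
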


\begin{proof}
  Given a map $v : \mathbf b(P) \pil \hRR$ we show there is a uniquely defined
  modular function $z : \pphat{P} \pil \RR$ which gives $v$ by
  the correspondence
  above. Let $D$ be a downset of $P$. We define inductively $z(D)$ by
  the size of $D$. Clearly minimal $D$'s are minimal bubbles, and 
  $z(D) = v(D)$. If $D$ is not of the form $\dai P C$ for a bubble $C$, we may
  write $D = A_1 \cup B_1$ for two strictly smaller down-sets. Since
  we must have
  \[ z(A_1 \cup B_1) + z(A_1 \cap B_1) = z(A_1) + z(B_1), \]
  this determines $z(D)$. But we must show this is independent of the
  decomposition. So let $D = A_2 \cup B_2$ be another decomposition.
  Then $z(D)$ is defined respectively as:
  \begin{equation} \label{eq:mod-zD}
    z(A_1) + z(B_1) - z(A_1 \cap B_1), \quad
    z(A_2) + z(B_2) - z(A_2 \cap B_2).
    \end{equation} 
  Expanding the first expression in \eqref{eq:mod-zD} by:
  \[ A_1 = (A_1 \cap A_2) \cup (A_1 \cap B_2), \quad
    B_1 = (B_1 \cap B_2) \cup (B_1 \cap A_2), \]
  we get
  \begin{align*}
    z(A_1 \cap A_2) + z(A_1 \cap B_2) & -  z(A_1 \cap A_2 \cap B_2)
    + z(B_1 \cap A_2) + z(B_1 \cap B_2) - z(B_1 \cap B_2 \cap A_2) \\
                                      & - z(A_1 \cap B_1 \cap A_2) -
                                        z(A_1 \cap B_1 \cap B_2) + z(A_1 \cap A_2 \cap B_1 \cap B_2).
  \end{align*}
  Expanding the second expression of \eqref{eq:mod-zD}, it is seen, by symmetry,
  to give the same as above.
\end{proof}

\begin{remark} The submodular function $\low_P$ is modular and
  this corresponds to the zero map 
  $v : {\mathbf b}(P) \pil \RR$.
  There is a map
  \[ \RR I \times {\mathbf b}(P) \pil \RR, \quad
    (x,C) \mapsto  x_C = \sum_{i \in C} x_i. \]
  This gives a map $b : \RR I \pil \RR^{{\mathbf b}(P)}$. Given
  an arbitrary $v \in  \RR^{{\mathbf b}(P)}$, let $x$ be in the preimage
  of $v$ by the map $b$. For the modular function $z$ corresponding
  to $v$, the extended generalized permutahedron $\Pi(z)$
  is the cone $\egp(\low_P)$ translated by the vector $x$.
\end{remark}

\begin{proposition} \label{pro:mod-desc}
  The following are equivalent for a submodular function $z$:
  \begin{itemize}
  \item[a.] $z$ is modular,
  \item[b.] $\pre(z)$ conforms to $z$ (so it is the unique minimal element
    in $\Pre(z)$),
  \item[c.] The normal fan of $\Pi(z)$ has a single maximal cone,
    which is $k\big(\pre(z)\big)$,
  \item[d.] $\Pi(z)$ has a unique minimal face. Then this minimal face
    is an affine space. 
  \end{itemize}
\end{proposition}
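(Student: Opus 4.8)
The plan is to establish $\text{(a)}\Leftrightarrow\text{(b)}$ by a direct computation with Lemma~\ref{lem:prod-sum}, then to deduce $\text{(b)}\Leftrightarrow\text{(d)}$ from the face--preorder dictionary of Section~\ref{sec:bij}, and finally to get $\text{(d)}\Leftrightarrow\text{(c)}$ by passing to the normal fan. Throughout, modularity of $z$ should be read as the equality $z(A)+z(B)=z(A\cap B)+z(A\cup B)$ for $A,B$ ranging over down-sets of $\pre(z)$ (equivalently, $z$ restricts to a modular function on the distributive lattice of down-sets), which is the only way the statement interacts with the corestrictions $z_{B/A}$ and with Lemma~\ref{lem:prod-sum}.

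For $\text{(a)}\Rightarrow\text{(b)}$ I would first verify that $\pre(z)$ is compatible with $z$. Given down-sets $A\subseteq B$ of $\pre(z)$ with $B\setminus A=C_1\sqcup C_2$ a disconnected union of preorders, the sets $A\cup C_1$ and $A\cup C_2$ are again down-sets of $\pre(z)$ (anything below a point of $C_1$ lies in $B$, hence in $A$ or, by disconnectedness, already in $C_1$), so modularity applied to the pair $A\cup C_1,\,A\cup C_2$ gives $z(A\cup C_1)+z(A\cup C_2)=z(A)+z(B)$, and by Lemma~\ref{lem:prod-sum} this is exactly the statement that $z'=z_{B/A}$ decomposes as $z'\restr{C_1}\cdot z'\restr{C_2}$. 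For the ``conforming'' clause I would run the same computation backwards: if $z_C$ decomposes along a set partition $C=C_1\sqcup C_2$ with $C$ convex, then Lemma~\ref{lem:prod-sum} forces $z(A\cup C_1)$ and $z(A\cup C_2)$ to be finite, so $A\cup C_1$ and $A\cup C_2$ are down-sets, and then a comparability $c_1\le c_2$ with $c_i\in C_i$ would place $c_1$ in the down-set $A\cup C_2$ and hence in $C_2$ --- impossible --- so the preorder on $C$ really is the disconnected union $C_1\sqcup C_2$. Conversely, for $\text{(b)}\Rightarrow\text{(a)}$, given down-sets $A,B$ I would apply the compatibility half of conformity to the convex set $C=(A\cup B)\setminus(A\cap B)$ with the partition $C_1=A\setminus B$, $C_2=B\setminus A$; this is a disconnected union precisely because $A$ and $B$ are down-sets, so $z_{(A\cup B)/(A\cap B)}$ decomposes, and Lemma~\ref{lem:prod-sum} reads off $z(A\cup B)+z(A\cap B)=z(A)+z(B)$.

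For $\text{(b)}\Leftrightarrow\text{(d)}$ I would use that, by Theorem~\ref{thm:bij-main}, $\Phi_z$ restricts to an order isomorphism between $\Pre(z)$ and the poset $\Gamma^*(z)$ of non-empty faces (monotone because $P\preceq Q$ gives $\widehat Q\subseteq\widehat P$, whence $\Phi_z(P)\subseteq\Phi_z(Q)$; with inverse $\Psi_z$), so $\egp(z)$ has a unique minimal face iff $\Pre(z)$ has a unique minimal element. If $\pre(z)$ conforms to $z$ it is that minimum, since $\pre(z)\preceq P$ for every preorder compatible with $z$; conversely if $P_0$ is the unique minimal element of $\Pre(z)$ then $P_0\preceq P$ for all $P\in\Pre(z)$, so $P_0\preceq\bigwedge\Pre(z)=\pre(z)$ by Proposition~\ref{pro:conf-minmax}(b), while $\pre(z)\preceq P_0$ as $P_0$ is compatible; hence $P_0=\pre(z)\in\Pre(z)$. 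Then $\text{(d)}\Leftrightarrow\text{(c)}$ is the inclusion-reversing correspondence between faces of $\egp(z)$ and cones of its normal fan: a unique minimal face is the same as a unique maximal cone, and, by the earlier identification of the normal cone of the face corresponding to a conforming preorder $P$ with $k(P)$, this maximal cone is $k(\pre(z))$.

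The genuine work is the two directions of $\text{(a)}\Leftrightarrow\text{(b)}$ through Lemma~\ref{lem:prod-sum}; the main obstacle there is simply keeping track of which subsets remain down-sets once the pieces $C_1,C_2$ are split off, since this is precisely what licenses the invocation of Lemma~\ref{lem:prod-sum}. The implications $\text{(b)}\Leftrightarrow\text{(d)}\Leftrightarrow\text{(c)}$ are then formal, resting on the Galois correspondence of Section~\ref{sec:bij} and on Proposition~\ref{pro:conf-minmax}.
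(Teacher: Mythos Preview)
Your proof is correct, and for $\text{(a)}\Rightarrow\text{(b)}$ and $\text{(b)}\Leftrightarrow\text{(d)}\Leftrightarrow\text{(c)}$ it follows the same route as the paper (Lemma~\ref{lem:prod-sum} for the conforming check, Proposition~\ref{pro:conf-minmax}(b) and the face--preorder bijection for the rest). Your caveat about reading modularity on down-sets of $\pre(z)$ is well taken: the paper effectively adopts this reading just after \eqref{eq:mod-def}, and without it Proposition~\ref{pro:mod-desc} would fail as stated (e.g.\ for the poset $1<3,\ 2<4$ one has $z(\{2,3\})=z(\{1,4\})=\infty$ but $z(\emptyset)+z(I)<\infty$).

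The one genuine difference is $\text{(b)}\Rightarrow\text{(a)}$. The paper proves it indirectly: from $z$ it extracts the function $v:\mathbf b(P)\to\RR$, invokes the bijection $\modu[P]\leftrightarrow\RR^{\mathbf b(P)}$ of the preceding proposition to build a modular $z'$, and then shows $z=z'$ by induction on down-sets, using the conforming hypothesis only at the inductive step where a non-principal down-set is split as a union with disconnected difference. Your argument is more direct: for arbitrary down-sets $A,B$ you observe that $A\setminus B$ and $B\setminus A$ are incomparable in $\pre(z)$ (because $A,B$ are down-sets), so compatibility plus Lemma~\ref{lem:prod-sum} immediately yield the modular identity. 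This is shorter and avoids the auxiliary classification result; the paper's route, on the other hand, makes the structure of $\modu[P]$ explicit along the way.
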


\begin{proof}
  Suppose $P = \pre(z)$ conforms to $z$. By 
  Proposition \ref{pro:conf-minmax}.b
  this is equivalent to $\Pre(z)$ having a unique minimal
  element and thus $\Pi(z)$ a unique minimal face. Minimal faces of
  $\Pi(z)$ correspond to maximal cones in the normal fan.
  This shows the equivalence of $b,c$ and $d$.
  Furthermore in case d, the minimal face is $\alin(P) \cap \egp(z)$
  where $P = \pre(z)$. 
  But if $L$ is a linear extension of $\pre(z)$, by Lemma \ref{lem:conf-alinbu}
  $\alin(P) = \alin(L)$ and by Lemma \ref{lem:egp-lin} this is
  $\sus \egp(z)$.

  Let us show that these imply a.
  The submodular function  $z$ gives a function
  $v : \mathbf b(P) \pil \hRR$ by $v(C) = z(\dai P C)$.
  This $v$ induces again a modular function $z^\prime$. We show
  that $z = z^\prime$ by induction of the size of the function argument.
  Let $A$ be a down-set of $P = \pre(z)$ which is
  not principal, i.e. not $\dai P C$ for some bubble $C$. Let
  $u,v$ be maximal bubbles in $A$ and $A_u = A \backslash \{v \}$,
  $A_v = A \backslash \{u\}$. By induction
  $z(A_u) = z^\prime(A_u)$ and $z(A_v) = z^\prime(A_v)$. Let
  $B = A_u \cap A_v = A\backslash \{u,v\}$. This is a down-set and since
  $A \backslash B = \{u,v \}$ is disconnected, and $P$ conforms to
  $z$, we have
  \[ z(A) + z(B) = z(A_u) + z(A_v). \] This shows $z(A) = z^\prime(A)$.
  So $z$ is modular, showing part a.

  \medskip
  Suppose then $z$ is modular. We show $P = \pre(z)$ conforms to $z$.
  Let $B,A$ down-sets in $P$,
  let $C = B \backslash A$ and suppose
  \[ z^\prime = z_C = z^\prime_1 \cdot z^\prime_2 \] where
  $C = C_1 \cup C_2$ is a disjoint union, and $z^\prime_i$ is defined on $C_i$.
  By Lemma \ref{lem:prod-sum}:
  \[ z(B) + z(A) = z(A \cup C_1) + z(A \cup C_2). \]
  Then $A \cup C_1$ and $A \cup C_2$ give finite values, and so are
  down-sets in $\pre(z)$. Whence
  $C = C_1 \sqcup C_2$ is a disconnected union. The converse direction
  is also clear. 
\end{proof}

\begin{remark}
R.Stanley \cite{Sta1986} considers the order polytope which is
the cone $k(P)$ intersected with the box $0 \leq y_i \leq 1$.
He shows that the faces of $k(P)$ related by inclusion
correspond to $\underline{P}$ ordered by $\lhd$.
This is also discussed in \cite[Sec.15]{AA2017}
\end{remark}

\begin{lemma} \label{lem:mod-res}
  If $z$ is modular, and $z(S) < \infty$, then
  $z\restr{S}$ and $z_{/S}$ are modular.
\end{lemma}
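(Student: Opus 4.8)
Lemma \ref{lem:mod-res} asserts that if $z : \pow(I) \pil \hRR$ is modular and $z(S) < \infty$, then both the restriction $z\restr{S}$ and the corestriction $z_{/S}$ are modular.

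The plan is to verify the modularity identity \eqref{eq:mod-def} directly in each case, using only the definitions of restriction and corestriction together with the hypothesis that $z$ is modular.

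First I would handle $z\restr{S}$. Recall $z\restr{S}(U) = z(U)$ for $U \sus S$. So for any $A, B \sus S$, we have $A \cup B \sus S$ and $A \cap B \sus S$, hence
\[
  z\restr{S}(A) + z\restr{S}(B) = z(A) + z(B) = z(A \cup B) + z(A \cap B) = z\restr{S}(A \cup B) + z\restr{S}(A \cap B),
\]
using modularity of $z$ in the middle equality. This is immediate; there is nothing subtle here. One should note that since $z(S) < \infty$ and $z$ is modular, every $z(U)$ for $U \sus S$ is finite (this follows because $\top(z)$ is a topology and $U = U \cap S$ is an intersection of open sets — actually one needs $U$ to be open, which requires $U$ a downset of $\pre(z)$; but modularity of $z\restr S$ as an identity in $\hRR$ holds regardless, with the convention that $\infty + x = \infty$, so this finiteness remark is a harmless aside rather than a necessary step).

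Next I would handle $z_{/S}$, which is defined on subsets of $I \backslash S$ by $z_{/S}(U) = z(S \cup U) - z(S)$; this requires $z(S) \in \RR$, which is exactly the hypothesis $z(S) < \infty$. For $A, B \sus I \backslash S$, note the set-theoretic identities $(S \cup A) \cup (S \cup B) = S \cup (A \cup B)$ and $(S \cup A) \cap (S \cup B) = S \cup (A \cap B)$, the latter because $A, B$ are disjoint from $S$. Applying modularity of $z$ to the sets $S \cup A$ and $S \cup B$ gives
\[
  z(S \cup A) + z(S \cup B) = z(S \cup (A \cup B)) + z(S \cup (A \cap B)).
\]
Subtracting $2z(S)$ from both sides and regrouping yields exactly
\[
  z_{/S}(A) + z_{/S}(B) = z_{/S}(A \cup B) + z_{/S}(A \cap B),
\]
so $z_{/S}$ is modular. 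One also should check the normalization conditions for $z_{/S}$ to be a legitimate element of $\BF[I \backslash S]$: $z_{/S}(\emptyset) = z(S) - z(S) = 0$, and $z_{/S}(I \backslash S) = z(I) - z(S) < \infty$ since $z(I) < \infty$ by definition of extended Boolean function and $z(S)$ is finite.

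There is no real obstacle here — this lemma is a routine unwinding of definitions. The only point requiring a moment's care is the set-theoretic identity $(S \cup A) \cap (S \cup B) = S \cup (A \cap B)$, which genuinely uses that $A$ and $B$ are subsets of $I \backslash S$ (otherwise the intersection could contain elements of $S$ coming from $A$ or $B$ rather than from the $S$ summand, which would not change the set, so in fact the identity $(S\cup A)\cap(S\cup B) = S \cup (A\cap B)$ holds for arbitrary $A,B$ — distributivity of $\cup$ over $\cap$ — and the disjointness is not even needed). So the proof is entirely mechanical; I would simply present the two displayed computations above.
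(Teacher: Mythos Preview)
Your proof is correct and is exactly the direct verification the paper has in mind; the paper's own proof reads in its entirety ``This is straightforward to verify.'' Your write-up simply makes that verification explicit.
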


\begin{proof}
  This is straightforward to verify.
  \end{proof}

\subsection{The cone submodular function}

\begin{definition} For a preorder $P$ compatible with the submodular
  function $z$, 
let $z^P$ be the submodular function
\[ z^P(A) = \begin{cases} z(A), & A \text{ down-set of }P, \\ 
 \infty, & \text {if not.}
\end{cases} \]  
\end{definition}

\medskip
If $S$ is a down-set of a preorder $P$ compatible with $z$,
let $P\restr{S}$ be the restriction of $P$ to $S$, and $P_{/S}$ the restriction of
$P$ to the up-set $I \backslash S$.
If $C$ is a convex subset of $P$ and $C = B \backslash A$ where
$A \sus B$ are down-sets of $P$, we write $P_{|C} := (P\restr{B})_{/A}$. 

\begin{lemma} \label{lem:ext-ps}
   Let $P$ be compatible with $z$, and $S$ a down-set of $P$. Then:
   \begin{itemize}
   \item[a.] $(z^P)\restr{S} = (z\restr{S})^{P\srestr{S}}$,
   \item[b.] $(z^P)_{/S} = (z_{/S})^{P_{/S}}$,
   \end{itemize}
 \end{lemma}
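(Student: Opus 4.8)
The plan is to reduce both identities to a comparison of down-sets, since for an extended Boolean function of the shape $w^{Q}$ the value at a set $A$ is governed entirely by whether $A$ is a down-set of $Q$. Throughout I use that $S$ is a down-set of $P$, so that $I\setminus S$ is an up-set of $P$ and, by compatibility of $P$ with $z$, that $z^{P}(S)=z(S)<\infty$; this is exactly what makes the corestrictions $(z^{P})_{/S}$ and $z_{/S}$ well defined.

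For part a the key fact to establish first is: for $U\sus S$, the set $U$ is a down-set of $P$ if and only if it is a down-set of $P\restr{S}$. One implication is immediate. For the other, if $U$ is a down-set of $P\restr{S}$, $y\in U$ and $x\le_{P}y$, then $x\in S$ because $S$ is a down-set of $P$ and $y\in U\sus S$; hence $x\le_{P\restr{S}}y$ and so $x\in U$. Granting this, for $U\sus S$ both $(z^{P})\restr{S}(U)=z^{P}(U)$ and $(z\restr{S})^{P\srestr{S}}(U)$ equal $z(U)$ exactly when $U$ is a down-set of $P$ (equivalently of $P\restr{S}$), and equal $\infty$ otherwise. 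So the two functions agree.

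For part b I will prove the analogous dictionary: for $U\sus I\setminus S$, the set $S\cup U$ is a down-set of $P$ if and only if $U$ is a down-set of $P_{/S}$. For the forward direction, if $y\in U$, $x\le_{P}y$ and $x\in I\setminus S$, then $x\in S\cup U$ while $x\notin S$ forces $x\in U$. For the reverse direction, take $y\in S\cup U$ and $x\le_{P}y$: if $y\in S$ then $x\in S\sus S\cup U$ since $S$ is a down-set; if $y\in U$ then either $x\in S$, or $x\in I\setminus S$ and $x\le_{P_{/S}}y$, whence $x\in U$. With the dictionary in hand, $(z^{P})_{/S}(U)=z^{P}(S\cup U)-z^{P}(S)=z^{P}(S\cup U)-z(S)$ equals $z(S\cup U)-z(S)$ when $S\cup U$ is a down-set of $P$ and is $\infty$ otherwise, while $(z_{/S})^{P_{/S}}(U)$ equals $z_{/S}(U)=z(S\cup U)-z(S)$ when $U$ is a down-set of $P_{/S}$ and is $\infty$ otherwise. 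The two conditions coincide by the dictionary and the finite values are the same expression, so the functions agree.

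The one point needing a word beyond this bookkeeping is that the right-hand sides $(z\restr{S})^{P\srestr{S}}$ and $(z_{/S})^{P_{/S}}$ are legitimately formed, i.e. that $P\restr{S}$ is compatible with $z\restr{S}$ and $P_{/S}$ with $z_{/S}$. Both follow from compatibility of $P$ together with the two dictionaries: the down-sets of $P\restr{S}$ (resp. of $P_{/S}$) are precisely the down-sets of $P$ contained in $S$ (resp. those of the form $S\cup U$), a convex set $B\setminus A$ of $P\restr{S}$ or of $P_{/S}$ is also convex in $P$ with the same induced preorder, and the associated function is unchanged — for $P_{/S}$ one checks $(z_{/S})_{B/A}=z_{(S\cup B)/(S\cup A)}$ directly, and then invokes Proposition \ref{pro:com-BA}. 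Hence the product-decomposition clause of compatibility for the restricted preorders is a special case of that for $P$. I expect no genuine obstacle here: the lemma is a careful unwinding of the definitions of restriction and corestriction around the single fact that $S$ is a down-set of $P$.
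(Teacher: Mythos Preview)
Your proof is correct and follows essentially the same approach as the paper's: both arguments evaluate the two sides on an arbitrary subset and observe they agree because ``$U$ is a down-set of $P\restr{S}$'' (resp.\ ``$U$ is a down-set of $P_{/S}$'') is equivalent to ``$U$ is a down-set of $P$'' (resp.\ ``$S\cup U$ is a down-set of $P$''). You are simply more explicit than the paper in proving these down-set dictionaries and in checking that $P\restr{S}$ and $P_{/S}$ are compatible with $z\restr{S}$ and $z_{/S}$ so that the right-hand sides are legitimately defined.
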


 \begin{proof} We show part b.
 For $E \sus I \backslash S$ we have:
 \[ (z^P)_{/S}(E) = \begin{cases} z(E \cup S) - z(S), & E \cup S
     \text{ down-set of } P \\
     \infty, & \text{ otherwise. }
   \end{cases} \]
 We also have
 $(z_{/S})^{P/S}(E) = z_{/S}(E)$ for $E$ a down-set of $P/S$, which
 is the same as $E \cup S$ being a down-set of $P$. This gives:
  \[ (z_{/S})^{P/S}(E) = \begin{cases} z(E \cup S) - z(S), & E \cup S
     \text{ down-set of } P \\
     \infty, & \text{ otherwise. }
   \end{cases} \]
 Part a is shown similarly.
\end{proof}

\begin{proposition} \label{pro:mod-zp}
  Let $z$ be a submodular function, and $P$ a preorder compatible with $z$.

  \begin{itemize}
  \item[a.] The following statements hold:
  \begin{itemize}
    \item[i.] $P$ conforms to $z^P$,
    \item[ii.] $z^P$ is modular,
    \item[iii.] $\Pre(z^P) = \overline{P}$.
    \end{itemize}
  \item[b.] If $z$ is modular then $z_P$ is modular
  \end{itemize}
\end{proposition}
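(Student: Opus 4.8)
The plan is to prove (i) first, deduce (ii) and (iii) from it formally, and treat (b) separately.

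\textbf{Proof of (i).} The subsets of $I$ on which $z^P$ takes a finite value are by definition exactly the down-sets of $P$, so $\pre(z^P) = P$, and it suffices to verify the two conditions for ``$P$ conforms to $z^P$''. The first, $z^P(A) < \infty$ for every down-set $A$ of $P$, is immediate. For the second, fix a convex subset $C$ of $P$, write $C = B \backslash A$ with $A \subseteq B$ down-sets of $P$ (say $B = \dai{P}{C}$ and $A = B\backslash C$, a down-set since $C$ is convex), and let $C = C_1 \cup C_2$ be a disjoint union of sets. Since $z^P(A) = z(A)$ and $z^P(B) = z(B)$ are finite, Lemma \ref{lem:prod-sum} applied to $z^P$ says that $(z^P)_{B/A}$ decomposes as $(z^P)_{B/A}\restr{C_1}\cdot (z^P)_{B/A}\restr{C_2}$ if and only if
\[ z^P(B) + z^P(A) = z^P(A\cup C_1) + z^P(A \cup C_2). \]
The left-hand side is finite, so this identity forces the right-hand side to be finite, i.e.\ $A\cup C_1$ and $A\cup C_2$ must be down-sets of $P$; equivalently $C_1$ and $C_2$ are each down-sets of the restriction $P_{|C}$, which, since they partition $C$, means precisely that $P_{|C}$ is the disjoint union $C = C_1 \sqcup C_2$. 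Conversely, when $C = C_1\sqcup C_2$ in $P_{|C}$, compatibility of $P$ with $z$ gives that $z_{B/A}$ decomposes, so Lemma \ref{lem:prod-sum} applied to $z$ yields $z(B) + z(A) = z(A\cup C_1) + z(A\cup C_2)$; all four terms of the displayed identity then equal the corresponding finite values of $z$, and the identity holds. Hence $(z^P)_C$ decomposes along $(C_1,C_2)$ exactly when $P_{|C}$ does. In particular $z^P$ is compatible with $P$ (so $(z^P)_C$ is well defined by Proposition \ref{pro:com-BA}) and the displayed equivalence is the defining condition for conformity.

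\textbf{Proof of (ii) and (iii).} First, $z^P$ is submodular: if $A,B$ are not both down-sets of $P$ the left side of the submodular inequality is $\infty$, and otherwise it is the submodular inequality for $z$. Since $\pre(z^P) = P$ conforms to $z^P$ by (i), Proposition \ref{pro:mod-desc} (equivalence of its conditions a and b) gives that $z^P$ is modular, which is (ii). For (iii): any $Q$ conforming to $z^P$ has every down-set of $Q$ among the finite sets of $z^P$, i.e.\ among the down-sets of $P$, so $P \preceq Q$; and by Proposition \ref{pro:conf-PbtrQ}, applied to the conforming preorder $P$, a $Q\succeq P$ conforms to $z^P$ if and only if $P\btl Q$. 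Hence $\Pre(z^P) = \{Q \mid P \btl Q\} = \overline{P}$.

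\textbf{Proof of (b).} Assume $z$ is modular. It suffices to show that each factor $z_C$, $C\in{\mathbf b}(P)$, is modular, since a product of modular functions is modular (immediate from the definition $u\cdot v(E) = u(E\cap S) + v(E\cap T)$ of the product). Write $C = B\backslash A$ with $B = \dai{P}{C}$ and $A = B\backslash C$; both are down-sets of $P$, hence $z(B), z(A) < \infty$ because $P$ is compatible with $z$. By Lemma \ref{lem:mod-res}, $z\restr{B}$ is modular, and since $(z\restr{B})(A) = z(A) < \infty$, a second application of Lemma \ref{lem:mod-res} shows that $z_C = z_{B/A} = (z\restr{B})_{/A}$ is modular. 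Therefore $z_P = \prod_{C\in{\mathbf b}(P)} z_C$ is modular.

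\textbf{Expected main obstacle.} The whole argument rests on (i), and the one delicate point there is the bookkeeping with the value $\infty$: one must notice that in Lemma \ref{lem:prod-sum} for $z^P$ the left-hand side is automatically finite, which simultaneously forces the complementary pieces $C_1,C_2$ to be down-sets of $P_{|C}$ and lets one import the genuine equality from the (finite) statement for $z$ via compatibility. Once (i) is in hand, (ii), (iii) and (b) are routine.
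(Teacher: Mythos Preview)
Your proof is correct and follows essentially the same route as the paper for parts (i) and (iii). For (ii) and (b) you take small shortcuts—invoking Proposition~\ref{pro:mod-desc} (equivalence of ``$\pre(z)$ conforms to $z$'' and ``$z$ modular'') rather than verifying the modular identity by hand, and quoting Lemma~\ref{lem:mod-res} rather than recomputing—but these are legitimate and slightly cleaner than the paper's direct calculations.
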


\begin{proof}
\noindent a.i.  Let $A \sus B$ be down-sets of $P$ and $C = B \backslash A$.
  If $C = C_1 \sqcup C_2$ is disconnected, then 
  \begin{equation}
    z(B) + z(A) = z(A \cup C_1) + z(A \cup C_2)
    \end{equation}
  {(by Lemma \ref{lem:prod-sum})}, and then we also have
  \begin{equation} \label{eq:mod-mod}
    z^P(B) + z^P(A) = z^P(A \cup C_1) + z^P(A \cup C_2),
    \end{equation}
   Conversely if this last equality holds, then $A \cup C_1$ and $A \cup C_2$
   are both down-sets of $P$ and so $C = C_1 \sqcup C_2$ is a disconnected
   union. Whence $P$ conforms to $z^P$.

  a.ii. Let us argue that $z^P$ is modular. Let $E,F \sus I$ with
   $z^P(E), z^P(F)$ finite. Denote:
   \[ A = E \cap F, \, B = E \cup F, \quad C = B \backslash A,\,
     C_1 = E \backslash A, \, C_2 = F \backslash A. \]
   Since $E,F$ are down-sets of $P$, so are $A$ and $B$. Furthermore,
   $C = C_1 \sqcup C_2$ is then a disconnected union. Whence we can apply
   \eqref{eq:mod-mod}, which
   here becomes:
   \[ z^P(E \cup F) + z^P(E \cap F) = z^P(E) + z^P(F), \]
   and so $z^P$ is modular.

   a.iii. If $Q$ conforms to $z^P$ then $P \preceq Q$. Since $P$
   conforms to $z^P$, Proposition \ref{pro:conf-PbtrQ} gives $P \btl Q$.
   Conversely, if $P \btl Q$, then $P \preceq Q$ and again
  Proposition \ref{pro:conf-PbtrQ} gives $Q$ conforming to $z^P$.

  \medskip
  \noindent b. It is enough to show this for $z_C$ where $C$ is a bubble
  of $P$, since products of modular functions are modular.
  So let $C = B \backslash A$, and let $S$ and $T$ be subsets of $C$.
  As $z$ is modular, we have
  \[ z(A \cup S) + z(A \cup T) = z(A \cup S \cup T)
    + z\big(A \cup (S \cap T)\big).\]
  Since $z(A)$ is finite, this gives
  \[ z_C(S) + z_C(T) = z_C(S \cup T) + z_C(S \cap T). \]
\end{proof}

\begin{proposition} \label{pro:mod-cone}
  Let $P$ conform to the submodular function $z$. 
The extended generalized permutahedron $\egp(z^P)$ is a 
translated cone of dimension $|I| - |\mathbf c(P)|$, the same as the
dimension of $\egp(z)$. The lineality subspace of $\egp(z^P)$ is the affine
    linear space spanned by the face $\egp(z_P)$. 
\end{proposition}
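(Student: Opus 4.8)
The plan is to read everything off from the modularity of $z^P$ (Proposition \ref{pro:mod-zp}) together with an explicit description of the minimal face of $\egp(z^P)$. First I would record that $\egp(z^P)$ is cut out in $\RR I$ by the single equality $x_I = z(I)$ together with the inequalities $x_A \le z(A)$ as $A$ ranges over the down-sets of $P$, the remaining inequalities of \eqref{eq:egp-def} being vacuous since $z^P(A) = \infty$ whenever $A$ is not a down-set of $P$. Since $P$ conforms to $z$ it is compatible with $z$, so by Lemma \ref{lem:conf-alinbu} the space $\alin(P)$ equals $\alinb(P)$, which is cut out by the $|{\mathbf b}(P)|$ equations $x_C = z_C(C)$, $C \in {\mathbf b}(P)$, whose supports are pairwise disjoint; hence $\alin(P)$ is a nonempty affine space of dimension $|I| - |{\mathbf b}(P)|$. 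Choosing any $x_0 \in \alin(P)$, we get $(x_0)_A = z(A)$ for every down-set $A$ of $P$, so translating by $-x_0$ identifies $\egp(z^P)$ with $\egp(\low_P) = \{w : w_A \le 0 \text{ for all down-sets } A \text{ of } P\}$, a cone; thus $\egp(z^P) = x_0 + \egp(\low_P)$ is a translated cone.

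For the dimension I would apply Corollary \ref{cor:egp-dim}: $\dim \egp(z^P) = |I| - |\ctop(z^P)|$. By Proposition \ref{pro:mod-zp} the preorder $P$ conforms to $z^P$, so the first part of Proposition \ref{pro:conf-minmax} (applied to $z^P$) gives $|\ctop(z^P)| = |{\mathbf c}(\cpre(z^P))| = |{\mathbf c}(P)|$; the same statement applied to $z$ — which $P$ also conforms to — gives $|\ctop(z)| = |{\mathbf c}(P)|$, hence $\dim \egp(z^P) = |I| - |{\mathbf c}(P)| = \dim \egp(z)$.

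Finally, for the lineality subspace I would use the general fact that for a nonempty polyhedron $\{x : \langle a_j, x\rangle \le b_j\}$ the lineality subspace is $\{y : \langle a_j, y\rangle = 0 \text{ for all } j\}$; applied to the description of $\egp(z^P)$ above (equivalently to $\egp(\low_P)$), its lineality subspace is $W = \{y \in \RR I : y_A = 0 \text{ for every down-set } A \text{ of } P\}$. On the other hand $\egp(z_P) = \Phi_z(P) \subseteq \alin(P)$ by construction, and by Proposition \ref{pro:faces-egp} (valid since $P$ conforms to $z$) $\dim \egp(z_P) = |I| - |{\mathbf b}(P)| = \dim \alin(P)$, so the affine span of the face $\egp(z_P)$ is exactly $\alin(P)$; and since $\alin(P)$ is cut out by the equations $x_A = z(A)$, $A$ a down-set of $P$, its underlying linear subspace of directions is precisely $W$. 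This yields the last assertion: the lineality subspace of $\egp(z^P)$ is the linear subspace underlying the affine span of $\egp(z_P)$.

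The one step that requires genuine care is the identification $\mathrm{aff}(\egp(z_P)) = \alin(P)$: the face $\egp(z_P)$ is in general a proper polytope, not an affine space, and the equality of affine hulls rests on the face–dimension formula of Proposition \ref{pro:faces-egp}. Everything else is bookkeeping with the defining (in)equalities and standard polyhedral facts about translated cones and lineality spaces.
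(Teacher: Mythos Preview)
Your proof is correct and follows essentially the same strategy as the paper: translate $\egp(z^P)$ by a point of $\alin(P)$ to identify it with the cone $\egp(\low_P)$, then read off the lineality space as the linear subspace underlying $\alin(P)=\alinb(P)$, which by the dimension count of Proposition~\ref{pro:faces-egp} is the affine span of $\egp(z_P)$.

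The one genuine organizational difference is in the dimension computation. The paper splits into the connected case (where the cone visibly has dimension $|I|-1$) and then handles the general case via the product decomposition $\egp(z^P)=\prod_i \egp(z^{P_i})$ over the components of $P$. You instead invoke Corollary~\ref{cor:egp-dim} together with Proposition~\ref{pro:conf-minmax}\,a applied to both $z$ and $z^P$ (using that $P$ conforms to each) to obtain $|\ctop(z^P)|=|{\mathbf c}(P)|=|\ctop(z)|$ directly. This is a cleaner route that avoids the case split entirely; the paper's product argument, on the other hand, makes the cone structure and the lineality space more explicitly visible in coordinates. Both work without issue.
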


\begin{proof}
Consider first the situation where $P$ has a single
    component. The relations  defining $\egp(z^P)$ are then
    \begin{equation} \label{eq:mod-rel}
      x_I = z(I), \quad x_A \leq z(A), \,\,  A  \text{ down-sets of } P.
      \end{equation}
    Using Proposition \ref{pro:com-BA} one sees that $z(A) = \sum_C z_C(C)$, where we
    sum over all bubbles contained in $A$.
    
Consider again the filtration \eqref{eq:coint-Bfilt} for $B = P$.  For each
bubble $C_i = B_i \backslash B_{i+1}$ of $P$, let $u_{C_i}$ be a point in
$\RR C_i$ whose coordinates sum to $z_{C_i}(C_i)$.
As $I = \sqcup_i C_i$, the sum $u = \sum_i u_{C_i}$ is a point in $\RR I$,
with $j$'th coordinate denoted $u_j$. 
For $S \sus I$ write $u_S = \sum_{i \in S} u_i \in \RR S$. 
Perform the translation $x^\prime = x - u$. The equality and
inequalities listed above in \eqref{eq:mod-rel} then become:
\[ x^\prime_I = 0, \quad x^\prime_A \leq 0, \,  A \text{ down-sets of } P.\]
This is a cone of dimension $|I| - 1$, and so $\egp(z^P)$ is such
a cone suitably translated. The lineality space is the affine space 
where all inequalities are equalities. Hence this is given by the
equations $x^\prime_C = 0$ for each bubble $C$. This translates to the
equations $x_C = z_C(C)$ for each bubble, and this is precisely the
affine linear span of the face $\egp(z_P)$.

\medskip
If $P$ has several components, write
$P = P_1 \sqcup P_2 \sqcup \cdots \sqcup P_r$,
where $r = |\mathbf c(P)|$,
with $P_i$ a preorder whose underlying set is $I_i$. 
We have the product
\[ \egp(z^P) = \egp(z^{P_1}) \times \egp(z^{P_2}) \times
  \cdots \times \egp(z^{P_r}), \]
and so the dimension of this is $\sum_i (|I_i| - 1) = |I| - |\mathbf c(P)|$.
We also have for the face
\[ \egp(z_P) = \egp(z_{P_1}) \times \egp(z_{P_2}) \times
  \cdots \times \egp(z_{P_r})\]
and so the lineality space of the cone $\egp(z^P)$ is the affine linear
space spanned by $\egp(z_P)$.
\end{proof}

The cone which is the translation of $\egp(z^P)$ is described with generators
in \cite[Thm.3.28]{Fuj2005}.

\begin{example}
  The submodular function $z : \pow(\{a,b,c\}) \pil \RR$ of the standard
  two-dimensional permutahedron  is given by
  \[ z(S) = \begin{cases} 3, & |S| = 1, \\ 5, & |S| = 2 \\
      6, & |S| = 3 \end{cases}. \]
  The preorder $P$ which is the linear order $b < c < a$ conforms
  to $z$, and the corresponding face is the vertex $(1,3,2)$.
  Then $z^P$ is the submodular (actually modular) function corresponding to the
  cone at this vertex, Figure \ref{fig:cone-vertex-toperm}.

  The preorder $P$ with bubble $\{b,c\}$ and $\{ b,c \} < a$ corresponds
  to an edge of the permutahedron, and the submodular (actually modular)
  function $z^P$
  gives the cone with affine linear span spanned by this edge,
  Figure  \ref{fig:cone-edge-toperm}.
\end{example}


\renewcommand{\bca}{
  \begin{tikzpicture}[scale=0.7, vertices/.style={draw, fill=black, circle, inner sep=1.5pt}]

\node [vertices] (31) at (-0.5,0) {};
\node [vertices] (32) at (-0.5,0.5) {};
\node [vertices] (33) at (-0.5,1) {};
\node (3p) at (-1, 0.5) {};

\node[anchor = east] at (31) {$b$ \hskip 4mm {}};
\node[anchor = east] at (32) {$c$ \hskip 4mm {}};
\node[anchor = east] at (33) {$a$ \hskip 4mm {}};
\draw (31)--(32);
\draw (31)--(33);
\node[anchor = east] at (3p) {$P\!:  \, $ \hskip 4mm {}};

\end{tikzpicture}
}

\newcommand{\pbcsa}{
  \begin{tikzpicture}[scale=0.9, vertices/.style={draw, fill=black, circle, inner sep=1.5pt}]

\node [vertices, gray] (31) at (-0.33,-0.2) {};
\node [vertices, gray] (32) at (-0.67,-0.2) {};
\node [vertices] (33) at (-0.5,0.7) {};
\node (3pp) at (-1, 0.3) {};

\node[anchor = west] at (31) {$\, c$ };
\node[anchor = east] at (32) {$b \,$ };
\node[anchor = east] at (33) {$a$ \hskip 0.1pt {}};

\node (ss3) at (-0.5, -0.2) {};
\draw (ss3) ellipse (0.58 and 0.42);
\node[anchor = east] at (3p) {$P\!: \, \, $ \hskip 4mm {}};


\draw (33)--(-0.57, 0.20);

\end{tikzpicture}
}

\renewcommand{\bcsa}{
  \begin{tikzpicture}[scale=0.9, vertices/.style={draw, fill=black, circle, inner sep=1.5pt}]

\node [vertices, gray] (31) at (-0.33,-0.2) {};
\node [vertices, gray] (32) at (-0.67,-0.2) {};
\node [vertices] (33) at (-0.5,0.7) {};

\node[anchor = west] at (31) {$\, c$ };
\node[anchor = east] at (32) {$b \,$ };
\node[anchor = east] at (33) {$a$ \hskip 0.1pt {}};

\node (ss3) at (-0.5, -0.2) {};
\draw (ss3) ellipse (0.58 and 0.42);


\draw (33)--(-0.57, 0.20);

\end{tikzpicture}
}

\renewcommand{\bsac}{
  \begin{tikzpicture}[scale=0.9, vertices/.style={draw, fill=black, circle, inner sep=1.5pt}]

\node [vertices] (31) at (-0.5,-0.2) {};
\node [vertices, gray] (32) at (-0.33,0.7) {};
\node [vertices, gray] (33) at (-0.67,0.7) {};

\node[anchor = east] at (31) {$b$ \hskip 0.1pt {}};
\node[anchor = west] at (32) {$\, c${}};
\node[anchor = east] at (33) {$a\, $};

\node (ss3) at (-0.5, 0.7) {};
\draw (ss3) ellipse (0.58 and 0.42);

\draw (31)--(-0.57, 0.27);

\end{tikzpicture}
}

\renewcommand{\abcss}{
  \begin{tikzpicture}[scale=0.9, vertices/.style={draw, fill=black, circle, inner sep=1.5pt}]

\node [vertices, gray] (31) at (-0.33,-0.2) {};
\node [vertices, gray] (32) at (-0.67,-0.2) {};
\node [vertices, gray] (33) at (-0.5, 0.03) {};

\node[anchor = west] at (31) {$\, c$ };
\node[anchor = east] at (32) {$b \,$ };
\node[anchor = south east] at (33) {$a$};

\node (ss3) at (-0.5, -0.13) {};
\draw (ss3) ellipse (0.55 and 0.55);


\end{tikzpicture}
}

      
\begin{figure}
\begin{center}
\begin{tikzpicture}[inner sep=1pt, scale=0.8]

\coordinate (a) at (0,0) ; 
\coordinate (b) at (3, 1.71);  
\coordinate (c) at (3, 5.15);  
\coordinate (d) at (0, 6.86);  
\coordinate (e) at (-3, 5.15);  
\coordinate (f) at (-3, 1.71);  

\coordinate (x) at (-1.5,0.855) ; 
\coordinate (y) at (1.5, 0.855);  
\coordinate (z) at (3, 3.43);  
\coordinate (t) at (1.5, 6.005);  
\coordinate (u) at (-1.5, 6.005);  
\coordinate (v) at (-3, 3.43);  

\coordinate (s) at (2, 4.50);

\coordinate (fo) at (-3, 7.2);
\coordinate (fh) at (3, -1.71);
\coordinate (fu1) at (7.4, 2.565);
\coordinate (fu2) at (0.74, 9.8);

\draw[white, fill=tol1, fill opacity = 0.1] 
            (fh) -- (fu1)  -- (fu2) -- (fo)--(f);

            \draw[black, dotted, fill=tol1, fill opacity = 0.09] 
            (a) -- (b)  -- (c) -- (d) -- (e) -- (f) -- (a);
            \draw[black, fill] (a)     circle (2pt);
            \draw[black, fill] (b)     circle (2pt);
            \draw[black, fill] (c)     circle (2pt);
            \draw[black, fill] (d)     circle (2pt);
            \draw[black, fill] (e)     circle (2pt);
            \draw[black, fill] (f)     circle (2pt);

\draw[black, very thick]
(f) -- (fo);

\draw[black, very thick]
(f) -- (fh);

\node[] at (f) [anchor = east] {$\bca\, \,\,  $ \hskip 10mm {}};
      
\node[] at (a) [anchor = north east] {$\bcsa$};
\node[] at (e) [anchor = east] {$\bsac$};

\node[] at (s) {$\abcss$};
\end{tikzpicture}
          \end{center}
          \caption{The cone $\Pi(z^P)$ for $P$,
            and the conforming preorders of the four faces of this cone}
        \label{fig:cone-vertex-toperm}
      \end{figure}

      
\begin{figure}
\begin{center}
\begin{tikzpicture}[inner sep=1pt, scale=0.8]

\coordinate (a) at (0,0) ; 
\coordinate (b) at (3, 1.71);  
\coordinate (c) at (3, 5.15);  
\coordinate (d) at (0, 6.86);  
\coordinate (e) at (-3, 5.15);  
\coordinate (f) at (-3, 1.71);  

\coordinate (x) at (-1.5,0.855) ; 
\coordinate (y) at (1.5, 0.855);  
\coordinate (z) at (3, 3.43);  
\coordinate (t) at (1.5, 6.005);  
\coordinate (u) at (-1.5, 6.005);  
\coordinate (v) at (-3, 3.43);  

\coordinate (s) at (2, 4.50);

\coordinate (fo) at (-6, 3.42);
\coordinate (fh) at (3, -1.71);
\coordinate (fu1) at (7.4, 2.565);
\coordinate (fu2) at (0.74, 9.8);

\draw[white, fill=tol1, fill opacity = 0.1] 
            (fh) -- (fu1)  -- (fu2) -- (fo)--(f);

            \draw[black, dotted, fill=tol1, fill opacity = 0.09] 
            (a) -- (b)  -- (c) -- (d) -- (e) -- (f) -- (a);
            \draw[black, fill] (a)     circle (2pt);
            \draw[black, fill] (b)     circle (2pt);
            \draw[black, fill] (c)     circle (2pt);
            \draw[black, fill] (d)     circle (2pt);
            \draw[black, fill] (e)     circle (2pt);
            \draw[black, fill] (f)     circle (2pt);

\draw[black, very thick]
(f) -- (fo);

\draw[black, very thick]
(f) -- (fh);

      
\node[] at (x) [anchor = north east] {$\pbcsa$};

\node[] at (s) {$\abcss$};
\end{tikzpicture}
          \end{center}
          \caption{The cone $\Pi(z^P)$ for $P$,
            and the conforming preorders of the two faces of this cone}
        \label{fig:cone-edge-toperm}
      \end{figure}
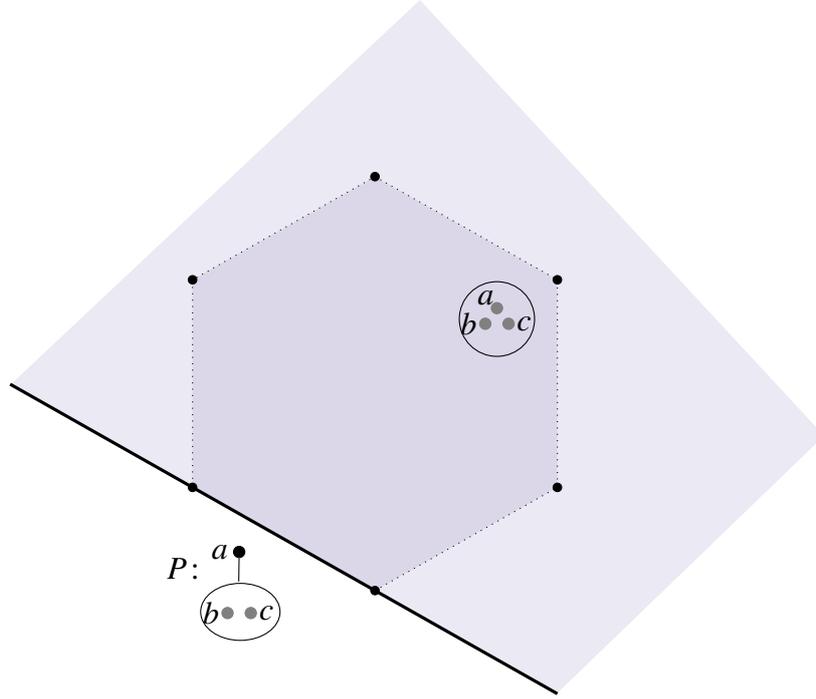

  \begin{lemma} \label{lem:mod-rpq}
    If $R \lhd P$ corresponds to $P \btl Q$ by Corollary \ref{cor:pos-bij},
    then 
    \[ (z^P)_Q = (z_Q)^R. \]
  \end{lemma}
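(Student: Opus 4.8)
Recall the data: $R \lhd P$ corresponds to $P \btl Q$ via Corollary \ref{cor:pos-bij}, so $Q = P \vee R^\op$ and $R = P \wedge Q^\op$, and by Corollary \ref{cor:poset-pc} the bubbles of $Q$ partition $I$ into the connected components $K$ of $R$, with $P\restr{K} = R\restr{K}$ connected for each such $K$. We want to show $(z^P)_Q = (z_Q)^R$ as submodular functions on $I$. Both sides are products indexed by the bubbles of $Q$: by definition $(z^P)_Q = \prod_{K \in \mathbf b(Q)} (z^P)_K$, and since $R$ is the disconnected union $\sqcup_{K \in \mathbf b(Q)} R\restr{K}$, applying the "restriction" half of Lemma \ref{lem:ext-ps} (iterating over a filtration of $I$ by down-sets of $R$, which are also down-sets of $P$ since $R \preceq P$) we should get $(z_Q)^R = \prod_{K} \big((z_Q)^R\big)\restr{K}$, and each factor here is $((z_Q)\restr{K})^{R\restr{K}}$. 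The plan is therefore to reduce to a single bubble $K$ of $Q$ and prove
\[
(z^P)_K = \big((z_Q)\restr{K}\big)^{R\restr{K}}
\]
on $\pow(K)$, where $z_Q = \prod_{K'} z_{K'}$ so $(z_Q)\restr{K} = z_K$ in the notation of Proposition \ref{pro:com-BA}.

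**The single-bubble computation.** Fix a bubble $K$ of $Q$; write $K = B \backslash A$ with $A \subseteq B$ down-sets of $P$. For $S \subseteq K$, by definition $(z^P)_K(S) = z^P(B) - z^P(A) + \big(z^P_{B/A}(S) - (z^P)_{B/A}(\emptyset)\big)$; more concretely, using that $B$ and $A$ are down-sets of $P$, $(z^P)_K(S) = z(A \cup S) - z(A)$ if $A \cup S$ is a down-set of $P$, and $\infty$ otherwise. On the other side, $z_K(S) = z(A \cup S) - z(A)$ with no finiteness restriction (this is the $z_C$ of Proposition \ref{pro:com-BA}, which is genuinely defined on all of $\pow(K)$ since $z(A), z(B) < \infty$ and compatibility applies), and then $\big(z_K\big)^{R\restr{K}}(S) = z_K(S)$ if $S$ is a down-set of $R\restr{K} = P\restr{K}$, and $\infty$ otherwise. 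So the whole claim for the bubble $K$ comes down to the equivalence: for $S \subseteq K$,
\[
A \cup S \text{ is a down-set of } P \quad \Longleftrightarrow \quad S \text{ is a down-set of } P\restr{K}.
\]
This is elementary: if $A \cup S$ is a down-set of $P$ then its intersection with the up-set $K = B\backslash A$ is a down-set of $P\restr{K}$, and that intersection is $S$; conversely if $S$ is a down-set of $P\restr{K}$ then $A \cup S$ is a down-set of $P$ because $A$ is a down-set and nothing in $K$ lies below anything in $A$ (as $A$ is a down-set and $K \cap A = \emptyset$), so downward closure of $A \cup S$ follows from downward closure of $A$ within $B$ and of $S$ within $K$. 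Once this equivalence is established, both functions agree value-by-value on $\pow(K)$, completing the single-bubble case.

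**Assembling the global identity.** With the single-bubble identity in hand, I combine it with the product decompositions. The left side: $(z^P)_Q = \prod_{K \in \mathbf b(Q)} (z^P)_K$ directly by the definition of $z_\bullet$ applied to the preorder $Q$ (which is compatible with $z^P$: indeed $P \btl Q$ and $P$ conforms to $z^P$ by Proposition \ref{pro:mod-zp}.a, so $Q$ conforms to $z^P$ by Proposition \ref{pro:conf-PbtrQ}, hence in particular is compatible). The right side: $z_Q$ is modular when $z$ is modular --- but here $z$ need not be modular, so instead I note directly that $(z_Q)^R$ is the function that equals $z_Q$ on down-sets of $R$ and is $\infty$ elsewhere; since a set is a down-set of $R = \sqcup_K R\restr{K}$ iff it is a disjoint union of down-sets of the $R\restr{K}$, the value of $(z_Q)^R$ on such a down-set $\bigsqcup_K S_K$ (with $S_K \subseteq K$ a down-set of $R\restr{K}$) is $\sum_K z_Q(\text{that piece})$, which unwinds to $\sum_K z_K(S_K) = \sum_K (z_K)^{R\restr{K}}(S_K)$, i.e. $(z_Q)^R = \prod_K (z_K)^{R\restr{K}}$. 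By the single-bubble identity $(z_K)^{R\restr{K}} = (z^P)_K$ for every $K$, so the two products coincide.

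**Main obstacle.** The only real subtlety is bookkeeping: making sure $z_K$ as used inside $(z_Q)^R$ really is the same $z_K$ (Proposition \ref{pro:com-BA}) appearing in $(z^P)_K$, i.e. that the well-definedness of $z_C$ for $C$ a bubble, which requires $P$ compatible with $z$, is available --- it is, since $P$ compatible with $z$ is a hypothesis of the preceding results and $z(A), z(B)$ are finite on down-sets of $P$. A secondary point to handle carefully is the finiteness side-conditions matching up exactly (the $\infty$ values), which is precisely what the down-set equivalence $A \cup S \text{ down-set of } P \Leftrightarrow S \text{ down-set of } P\restr{K}$ takes care of; I expect this to be the one place where a clean lemma statement pays off, but no genuine difficulty arises.
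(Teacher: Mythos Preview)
Your proof is correct and follows essentially the same approach as the paper: decompose both sides as products over the bubbles $K$ of $Q$, establish the single-bubble identity $(z^P)_K = (z_K)^{P_{|K}}$, and conclude using $P_{|K} = R_{|K}$ from Corollary~\ref{cor:poset-pc}. The paper's proof is terser (it states the single-bubble identity without justification), while you spell out the down-set equivalence and the product decomposition of $(z_Q)^R$ explicitly.
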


  \begin{proof}
    \[ (z^P)_Q = \prod_{C \in \mathbf b(Q)} (z^P)_C, \]
    and each $(z^P)_C$ is equal to $(z_C)^{P_{|C}}$.
    On the other hand, as $R$ is the disjoint union
    $R = \sqcup_{C \in \mathbf b(Q)} R_{|C}$ we have:
    \[ (z_Q)^R = \prod_{C \in \mathbf b(Q)} (z_C)^{R_{|C}}. \]
      By Lemma \ref{cor:poset-pc}  $P_{|C} = R_{|C}$
      for each bubble $C \in \mathbf b(P)$.
    \end{proof}


    \newcommand{\bst}{B_S^T}
    
\section{Extensions of conforming preorders}
\label{sec:ext}
We show a surprising extension result for preorders conforming to
submodular functions, essential for the arguments in Section \ref{sec:bim}:
If $S \sus I$ and $z(S) < \infty$, and $P_1$ conforms to $z_{|S}$ and
$P_2$ conforms to $z_{/S}$, they glue together in a {\it unique} way to
a preorder $P$ conforming to $z$.

\medskip
Note that if $P$ is compatible with  $z$ then
$P\restr {S}$ is compatible with $z\restr{S}$ and $P_{/S}$
compatible with  $z_{/S}$.
The same holds with ``compatible'' replaced by ``conforms''.

If $z(S) < \infty$,
the total preorder $B_S^T$ with two bubbles $S < T$
is compatible with $z$.
The associated submodular function is
\[ z_{\bst} = z\restr{S} \cdot z_{/S}. \]
The corresponding face of $\egp(z)$ is given by:
\[ \Phi_z(B_S^T) = \egp(z_{B_S^T}) =  \Pi(z\srestr{S}) \times \Pi(z_{/S}). \]
 
The following appears at first quite surprising.
It says that there is a unique way
to glue
together conforming preorders $P_1$ for $z\restr{S}$ and $P_2$ for
$z_{/S}$ to a preorder $P$ which conforms to $z$. 

\begin{proposition} \label{pro:compres-ps}
  Let $z : \pow(I) \pil \hRR$ be a submodular function,  $S \sus I$ with
  $z(S) < \infty$,  and $T = I \backslash S$.
  There is a bijection between:
\begin{itemize}
\item[1.] Preorders $P$ conforming to $z$ having $S$ as a down-set of $P$,
\item[2.] Pairs of:
\begin{itemize}
\item Preorders $P_1$ conforming to $z\restr{S}$,
\item Preorders $P_2$ conforming to $z_{/S}$.
\end{itemize}
\end{itemize}
Given $P$,  the associated pair is $P_1 = P\restr{S}$ and $P_2 = P_{/S}$.
If $P$ corresponds to the face $F$ of $\Pi(z)$, $P_1$ to the face $F_S$
of $\Pi(z\restr{S})$, and $P_2$ to the face $G_T$ of $\Pi(z_{/S})$, then
$F = F_S \times G_T$.
\end{proposition}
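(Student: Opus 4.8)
The plan is to reduce the statement to the bijection of Theorem~\ref{thm:bij-main}, applied to the three submodular functions $z$, $z\restr{S}$ and $z_{/S}$, together with the product decomposition $\Phi_z(B_S^T)=\egp(z\restr{S})\times\egp(z_{/S})$ recorded just above. The key technical input is the following \emph{restriction identity}: if $P$ conforms to $z$ and $S$ is a down-set of $P$, then
\[
\Phi_z(P)=\Phi_{z\restr{S}}(P\restr{S})\times\Phi_{z_{/S}}(P_{/S})\subseteq\egp(z\restr{S})\times\egp(z_{/S}).
\]
To establish it, observe that since $S$ is a down-set of $P$ no bubble of $P$ can meet both $S$ and $T$, so $\mathbf b(P)$ is the disjoint union of $\mathbf b(P\restr{S})$ and $\mathbf b(P_{/S})$. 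For a bubble $C\subseteq S$, writing $C=B\backslash A$ with $A\subseteq B$ down-sets of $P$ contained in $S$, Proposition~\ref{pro:com-BA} gives $z_C=(z\restr{S})_C$; for a bubble $C\subseteq T$, choosing instead a presentation with $S\subseteq A\subseteq B$ gives $z_C=(z_{/S})_C$. Hence $z_P=(z\restr{S})_{P\restr{S}}\cdot(z_{/S})_{P_{/S}}$, and the identity follows from Proposition~\ref{pro:faces-egp} and Lemma~\ref{lem:egp-prodsum}. Recall also, as noted at the beginning of this section, that $P\restr{S}$ then conforms to $z\restr{S}$ and $P_{/S}$ to $z_{/S}$. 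Injectivity of $P\mapsto(P\restr{S},P_{/S})$ on the domain described in~(1) is now immediate: if two such $P,P'$ have the same image, the restriction identity forces $\Phi_z(P)=\Phi_z(P')$, whence $P=P'$ by Theorem~\ref{thm:bij-main}.

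For surjectivity, take conforming preorders $P_1$ for $z\restr{S}$ and $P_2$ for $z_{/S}$, and put $F_S=\Phi_{z\restr{S}}(P_1)$, $G_T=\Phi_{z_{/S}}(P_2)$, both non-empty faces. Then $F_S\times G_T$ is a non-empty face of $\egp(z\restr{S})\times\egp(z_{/S})=\Phi_z(B_S^T)$, and since $B_S^T$ is compatible with $z$ the latter is a face of $\egp(z)$; a face of a face being a face, $F_S\times G_T$ is a face of $\egp(z)$. Set $P:=\Psi_z(F_S\times G_T)$, so that $P$ conforms to $z$ and $\Phi_z(P)=F_S\times G_T$ by Theorem~\ref{thm:bij-main}. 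Since $S$ is a down-set of $B_S^T$, the closure property of $\Psi_z\circ\Phi_z$ gives $S\in\Psi_z\Phi_z(B_S^T)$, and from $F_S\times G_T\subseteq\Phi_z(B_S^T)$ and the order-reversing nature of $\Psi_z$ (Galois correspondence~\eqref{eq:bij-gal}) we get $\Psi_z\Phi_z(B_S^T)\subseteq\Psi_z(F_S\times G_T)$; hence $S$ is a down-set of $P$. Applying the restriction identity to $P$ and comparing with $\Phi_z(P)=F_S\times G_T$, then projecting onto the two factors, yields $\Phi_{z\restr{S}}(P\restr{S})=F_S=\Phi_{z\restr{S}}(P_1)$ and $\Phi_{z_{/S}}(P_{/S})=G_T=\Phi_{z_{/S}}(P_2)$; since $P\restr{S},P_1$ both conform to $z\restr{S}$ and $P_{/S},P_2$ both conform to $z_{/S}$, Theorem~\ref{thm:bij-main} forces $P\restr{S}=P_1$ and $P_{/S}=P_2$. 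This proves the bijection, and the last assertion $F=\Phi_z(P)=F_S\times G_T$ has been obtained along the way.

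The only genuine work is the restriction identity; within it, the main point is the bubble bookkeeping together with the use of Proposition~\ref{pro:com-BA} to see that the value $z_C$ of a bubble $C$ does not depend on which presentation $C=B\backslash A$ one chooses, so that a presentation adapted to $S$ (respectively to $T$) may be used. Everything else is a formal consequence of the Galois correspondence of Section~\ref{sec:bij} and of Theorem~\ref{thm:bij-main}, so I expect no serious obstacle.
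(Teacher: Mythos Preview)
Your proof is correct and follows the same overall architecture as the paper's: both establish the \emph{restriction identity} $\Phi_z(P)=\Phi_{z\restr{S}}(P\restr{S})\times\Phi_{z_{/S}}(P_{/S})$ and then combine it with Theorem~\ref{thm:bij-main} and the product face $\Phi_z(B_S^T)=\egp(z\restr{S})\times\egp(z_{/S})$ to get the bijection. The one difference is in how the restriction identity is derived. The paper uses Lemma~\ref{lem:conf-alinbu} to write $\alin(P)=\alinb(P)=\alinb(P\restr{S})\times\alinb(P_{/S})$ and then intersects with $\egp(z\restr{S})\times\egp(z_{/S})$. You instead show $z_P=(z\restr{S})_{P\restr{S}}\cdot(z_{/S})_{P_{/S}}$ by bubble bookkeeping and Proposition~\ref{pro:com-BA}, then invoke Proposition~\ref{pro:faces-egp} and Lemma~\ref{lem:egp-prodsum}. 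Both routes are short and use results already in the paper; yours has the slight advantage of making the submodular decomposition $z_P=(z\restr{S})_{P\restr{S}}\cdot(z_{/S})_{P_{/S}}$ explicit, which is of independent interest for the bialgebra considerations of Section~\ref{sec:bim}.
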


\begin{proof}
By Lemma \ref{lem:conf-alinbu}:
\[ \alinb(P) = \alinb(P\restr{S}) \oplus \alinb(P_{/S}) \sus
   \RR S \oplus \RR T. \]
 Since $S$ is a down-set in $P$ we have $P \preceq \bst$.
 Thus
 \begin{equation} \label{eq:compres-inc}
   \Phi_z(P) \sus \Phi_z(\bst) = \egp(z_{\bst}) = \egp(z\restr{S})
   \times \egp(z_{/S}).
 \end{equation}
    We have:
    \[ \Phi_z(P) = \egp(z) \cap \alin(P). \] Together with
    \eqref{eq:compres-inc} and Lemma \ref{lem:conf-alinbu} this gives:
    \begin{align} \label{eq:ext-ps}
      \Phi_z(P) & = \big(\egp(z\restr{S}) \times \egp(z_{/S})\big) \cap
                           (\alinb(P\restr{S}) \times \alinb(P_{/S}))  \notag \\
                         & = \Phi_z(P\restr{S}) \times \Phi_z(P_{/S}).
\end{align}
Thus if $P$ corresponds to $F$, then $F = F_S \times G_T$, where
$F_S$ corresponds to $P\restr{S}$ and $G_T$ to $P_{/S}$.

Conversely given $P_1$ and $P_2$. We get a face $F_S$ of
$\egp(z\restr{S})$ and $G_T$ of $\egp(z_{/S})$. Then $F_S \times G_T$
is a face of $\egp(z\restr{S}) \times \egp(z_{/S})$ and this in turn
is a face of $\egp(z)$. Thus $F_S \times G_T$ is a face $F$ of
$\egp(z)$, and this corresponds to a preorder $P$ which conforms to $z$.
Since
\[ P = \Psi_z(F) = \Psi_z(F_S \times G_T) \]
we get that $S$ is a down-set of $P$.
Also
\[\Phi_z(P) = \Phi_z \circ \Psi_z(F) = F = F_S \times G_T. \]
Then by \eqref{eq:ext-ps} $P_1 = P\restr{S}$ and $P_2 = P_{/S}$.
\end{proof}

\begin{proposition} \label{pro:compres-rpq}
  Let $Q$ conform to $z$. Then the preorders conforming to $z_Q$ is:
  \[ \Pre(z_Q) = \{ R = P \wedge Q^{\op} \, | \, P \in \Pre(z)
    \text{ and } P \btl Q\}. \]
  (By Corollary \ref{cor:pos-bij} these are the $R \lhd P$ corresponding
  to $P \btl Q$ for the fixed $Q \in \Pre(z)$.)
\end{proposition}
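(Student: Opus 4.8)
The plan is to prove the two inclusions of the claimed equality separately. For the inclusion $\{P\wedge Q^{\op} : P\in\Pre(z),\ P\btl Q\}\subseteq\Pre(z_Q)$, fix $P\in\Pre(z)$ with $P\btl Q$ and put $R=P\wedge Q^{\op}$. By Corollary \ref{cor:poset-pc}, $R$ is the disjoint union $\bigsqcup_{C\in\mathbf b(Q)}P\restr C$. Each bubble $C$ of $Q$ is of the form $B\setminus A$ for down-sets $A\subseteq B$ of $Q$, hence of $P$; since restriction and corestriction preserve conformity (noted at the start of Section \ref{sec:ext}) and $z_C=(z\restr B)_{/A}$ is well defined by Proposition \ref{pro:com-BA}, the preorder $P\restr C=(P\restr B)_{/A}$ conforms to $z_C$. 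The Lemma characterizing conforming preorders of a product then gives that $R=\bigsqcup_C P\restr C$ conforms to $\prod_C z_C=z_Q$.

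For the reverse inclusion I would induct on $|\mathbf b(Q)|$. If $|\mathbf b(Q)|=1$ then $Q$ is the coarse preorder and $z$ is indecomposable, so $z_Q=z$; every $P\in\Pre(z)$ satisfies $P\preceq Q$, hence $P\btl Q$ by Proposition \ref{pro:conf-PbtrQ}, and $P\wedge Q^{\op}=P$, so each $R\in\Pre(z_Q)=\Pre(z)$ is realized by $P=R$. If $|\mathbf b(Q)|\ge 2$, choose a root bubble $S=C_0$ of $Q$; it is a down-set of $Q$, so $z(S)<\infty$, and a direct computation from Proposition \ref{pro:com-BA} yields the factorization $z_Q=z\restr S\cdot(z_{/S})_{Q_{/S}}$ with $z\restr S=z_{C_0}$. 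Since this factors $z_Q$ over the disjoint sets $S$ and $T=I\setminus S$, any $R\in\Pre(z_Q)$ splits as $R=R_1\sqcup R_2$ with $R_1=R\restr S\in\Pre(z\restr S)$ and $R_2=R\restr T\in\Pre\big((z_{/S})_{Q_{/S}}\big)$ (here Proposition \ref{pro:conf-minmax}.a guarantees that no connected component of $R$ straddles $S$ and $T$, since every indecomposable factor of $z_Q$ is supported in $S$ or in $T$). By the induction hypothesis applied to $z_{/S}$ and $Q_{/S}$, which has one bubble fewer, $R_2=P_2\wedge Q_{/S}^{\op}$ for some $P_2\in\Pre(z_{/S})$ with $P_2\btl Q_{/S}$. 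Now apply Proposition \ref{pro:compres-ps} to glue $R_1$ (conforming to $z\restr S$) and $P_2$ (conforming to $z_{/S}$) into a unique $P\in\Pre(z)$ having $S$ as a down-set, with $P\restr S=R_1$ and $P_{/S}=P_2$.

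Two things then need checking: $P\btl Q$ and $P\wedge Q^{\op}=R$. For the first, Proposition \ref{pro:compres-ps} identifies $\Phi_z(P)=F_S\times G_T$ with $F_S=\Phi_{z\restr S}(R_1)$ and $G_T=\Phi_{z_{/S}}(P_2)$; as $P_2\preceq Q_{/S}$, the defining equalities of the face $\Phi_{z_{/S}}(Q_{/S})=\egp\big((z_{/S})_{Q_{/S}}\big)$ are among those of $G_T$, so $G_T\subseteq\egp\big((z_{/S})_{Q_{/S}}\big)$, whence, using $z_Q=z\restr S\cdot(z_{/S})_{Q_{/S}}$ and Lemma \ref{lem:egp-prodsum}, $\Phi_z(P)=F_S\times G_T\subseteq\egp(z\restr S)\times\egp\big((z_{/S})_{Q_{/S}}\big)=\egp(z_Q)=\Phi_z(Q)$. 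Applying $\Psi_z$ gives $P\preceq Q$, and then $P\btl Q$ by Proposition \ref{pro:conf-PbtrQ} since both $P$ and $Q$ conform to $z$. For the second, Corollary \ref{cor:poset-pc} gives $P\wedge Q^{\op}=\bigsqcup_{C\in\mathbf b(Q)}P\restr C$; the $C_0$-piece is $P\restr S=R_1$, and for $C\ne C_0$ one has $P\restr C=(P_{/S})\restr C=(P_2)\restr C$, which by Corollary \ref{cor:poset-pc} applied to $P_2\btl Q_{/S}$ is the $C$-piece of $P_2\wedge Q_{/S}^{\op}=R_2$. Hence $P\wedge Q^{\op}=R_1\sqcup R_2=R$, finishing the induction.

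I expect the main obstacle to be the verification that the glued preorder $P$ satisfies $P\preceq Q$. The gluing of Proposition \ref{pro:compres-ps} is canonical but a priori only known to be compatible with $z$, and the order relations it introduces between $S=C_0$ and $T$ are not visibly below $Q$, since $C_0$ need not sit below all of $T$ in $Q$. The way around this is the geometric observation above: the face $F_S\times G_T$ attached to $P$ lands inside the face $\Phi_z(Q)$ precisely because $P_2\btl Q_{/S}$ forces $G_T$ into the proper subface $\egp\big((z_{/S})_{Q_{/S}}\big)$ of $\egp(z_{/S})$, and then the Galois correspondence of Section \ref{sec:bij} together with Proposition \ref{pro:conf-PbtrQ} converts this face inclusion into $P\btl Q$.
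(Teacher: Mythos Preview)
Your proof is correct and close in spirit to the paper's, but the key step is handled by a genuinely different argument, which is worth flagging.

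Both you and the paper treat the inclusion $\supseteq$ identically, via Corollary~\ref{cor:poset-pc} and the fact that $P_{|C}$ conforms to $z_C$. For the reverse inclusion, both arguments are inductions on $|\mathbf b(Q)|$ using Proposition~\ref{pro:compres-ps} to glue. The paper chooses a full linear filtration $\emptyset=Q_0\subset Q_1\subset\cdots\subset Q_r=Q$ by down-sets and builds $P_t$ on $I_t$ step by step, adjoining one bubble at a time; you instead peel off a single root bubble $C_0$ and invoke the induction hypothesis on $(z_{/S},Q_{/S})$. These are formally different organizations of the same recursion.

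The substantive difference is in the verification that the glued preorder lies below $Q$. The paper argues combinatorially: if $p<q$ is a new cover relation in $P_{t+1}$ with $q\in C_{t+1}$ and $p$ in a bubble $C_j$ not below $C_{t+1}$ in $Q$, one takes the convex set $C_{t+1}\cup C_j$, observes that $z$ restricted there decomposes (since $Q$ conforms), and derives a contradiction with the connectivity forced by the relation $p<q$ and the conformity of $P_{t+1}$. Your route is geometric: from $P_2\btl Q_{/S}$ you get the face inclusion $G_T\subseteq\egp\big((z_{/S})_{Q_{/S}}\big)$, whence $\Phi_z(P)=F_S\times G_T\subseteq\egp(z_Q)=\Phi_z(Q)$, and then the Galois correspondence plus Proposition~\ref{pro:conf-PbtrQ} give $P\btl Q$. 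This is clean and makes good use of the machinery of Section~\ref{sec:bij}; the paper's argument is more self-contained and stays purely on the preorder side. Either approach proves the proposition.
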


\begin{proof}
  Assume $R \lhd P$ corresponds to $P \btl Q$ and $P \in \Pre(z)$.
  Then $R = \sqcup_{C \in \mathbf b(Q)} R_{|C}$, and each $R_{|C}$
  coincides with $P_{|C}$, Corollary \ref{cor:poset-pc}. The preorder
  $P_{|C}$ conforms to $z_C$,
  and so each $R_{|C}$ conforms
  to $z_C$. But then $R$ conforms to $z_Q$.

  \medskip \noindent Conversely assume $R \in \Pre(z_Q)$. Make a filtration of
  down-sets of $Q$:
  \[ Q = Q_r \supseteq Q_{r-1} \supseteq \cdots \supseteq Q_1 \supseteq Q_0 =
    \emptyset, \]
  such that the bubbles of $Q$ are the $C_j = Q_j \backslash Q_{j+1}$.
  Let $I_t$ be the underlying set of $Q_t$. 
  We have $z_Q = \prod_{C \in \mathbf b(Q)} z_C$ and each $z_C$ is indecomposable.
  When $R$ conforms to $z_Q$ we then have the disconnected union
  $R = \sqcup_{C \in \mathbf b(Q)} R_{|C}$ where $R_{|C}$ conforms to $z_C$,
  and $R_{|C}$ is thus connected. Let $R_t = \sqcup_{j \leq t} R_{|C_j}$.

  We want to construct $P$. We successively make a unique $P_t$ on the set $I_t$
  such that:
  \begin{itemize}
  \item $P_t$ conforms to $z\restr{I_t}$,
  \item $R_t \lhd P_t$,
  \item $P_t \btl Q_t$.
  \end{itemize}
  At the start we have $P_1 = R_1$.
  Suppose $P_t$ has been constructed on $I_t$. The preorder $R_{|C_{t+1}}$
  conforms to
  $z_{C_{t+1}}$ where $C_{t+1} = I_{t+1}\backslash I_t$. Proposition
  \ref{pro:compres-ps}
  then gives unique $P_{t+1}$ conforming to $z\restr{I_{t+1}}$.
  Furthermore it is clear the $R_{t+1} \lhd P_{t+1}$. We show that $P_{t+1}
  \preceq Q_{t+1}$: since both conform to $z\restr{I_{t+1}}$, Proposition
   \ref{pro:conf-PbtrQ} then gives $P_{t+1} \btl Q_{t+1}$.
    Let then $p < q$ be a cover relation in $P_{t+1}$ where we may assume
    $q \in C_{t+1}$ and $p \in I_t$. Let $p$ be in the bubble $C_j$.
    We need to show that $C_{t+1} > C_j$. If not let $D$ be the downset
    of $Q_{t+1}$ generated by $C_{t+1}$ and $C_j$, and let
    $E = D \backslash \{C_{t+1}, C_j\}$. Then $D \backslash E$ is disconnected
    in $Q$ and so $z_{D \backslash E}$ is decomposable. But since $P_{t+1}$
    conforms to $z\restr{I_{t+1}}$, then $(P_{t+1})_{|D \backslash E}$ would be
    disconnected. But both $P_{C_t}$ and $P_{C_j}$ are connected, and
    the relation $p < q$ connects these components, so this is wrong.
    Thus $P_{t+1}  \preceq Q_{t+1}$ and so $P_{t+1}  \btl Q_{t+1}$.
  \end{proof}


\section{Bimonoids in cointeraction}
\label{sec:bim}
Let $\setx$ be the category of finite sets with bijections.
By \cite{AA2017} submodular functions give a species
$\SM : \setx \pil \set$.
Here we consider the linearized
species $\SM : \setx \pil \vect$, where the latter is the category
of vector spaces over the real numbers.
We work with {the symmetric monoidal category of vector species \cite{J1981, BLL1998} endowed with the Cauchy product, and with monoids, comonoids,
bimonoids, and Hopf monoids in that category}. For these notions, see \cite{AM2010}.  $\SM$ comes with a product and a coproduct $\Delta$
making it a Hopf monoid in species. It is equivalent to a Hopf
monoid $\EGP : \setx \pil {\vect}$ of extended generalized permutahedra,
and a main result of \cite{AA2017} is to give a cancellation free
formula for the antipode. \\

The restriction of $\SM$ to modular functions
{$\MOD : \setx \pil \vect$} becomes a bimonoid in species
which can be endowed with a distinct
coproduct {$\delta:\MOD\to\MOD\odot\MOD$ where $\odot$ stands for the Hadamard product \cite{AM2010}, making it a double twisted bialgebra in the sense of L. Foissy \cite{Fo2019}}. We show that $(\SM, \cdot, \Delta)$ is comodule bimonoid
over $(\MOD, \cdot, \delta)$, so $\SM$ and $\MOD$ are in {\it cointeraction}.
Moreover there is a natural morphism $\phi : \SM \pil \MOD$ of comodule
bialgebras. 

\subsection{Hopf monoids in species of submodular and modular functions}
Define a species $\SM : \setx \to \vect$ where $\SM[I]$ is the vector
space with basis the
submodular functions $\pow(I) \to \hRR$. In particular note
that when $I = \emptyset$ then $\pow(I)$
is the one element set $\{ \emptyset \}$, and
there is a single submodular function in $\SM[I]$, denoted ${\mathbf 0}$,
which sends
the empty set to $0$.\\

Let $U : \setx \pil \vect$ be the species sending $\emptyset$ to
$\RR$ (the one-dimensional vector space), and any $I \neq \emptyset$ to
the vector space $(0)$. 
The species $\SM$ becomes a monoid of species with product:
\begin{equation} \label{eq:bim-mult}
  \mu_{S,T} : \SM[S] \te \SM[T] \longrightarrow \SM[I],
  \quad u \te v \mapsto u\cdot v.
  \end{equation}
It becomes a comonoid in species by:
\[\Delta_{S,T} : \SM[I] \longrightarrow \SM[S] \te \SM[T],
\quad z \mapsto \begin{cases} z\restr{S} \te z_{/S}, & z(S) < \infty \\
                               0, & \text{ otherwise }
			       \end{cases} \]
The unit and the counit are given by:
\begin{equation} \label{eq:bim-unit}
\eta : U[S] \pil \SM[S], \, \,  1 \mapsto {\mathbf 0} \text{ when } S =
\emptyset, \qquad
\epsilon : \SM[S] \pil U[S],  \, \, z \mapsto \begin{cases} 1, &  S =
\emptyset,  \\ 0, & S \neq \emptyset
\end{cases}
\end{equation}

This makes $(\SM, \mu, \Delta, \eta, \epsilon)$
a bimonoid in species, actually a Hopf monoid {isomorphic to the Hopf monoid of extended generalized permutahedra \cite[Theorem 12.4]{AA2017}.} We also have the species $\MOD: \setx \to \vect$ where $\MOD[I]$ is
vector space with basis the set of
modular functions $\pow(I) \to \hRR$. This is a subspecies of $\SM[I]$.
By Lemma \ref{lem:mod-res} the product and coproduct above
restrict to $\MOD[I]$, making also $\MOD$ a bimonoid in species.
Actually both $\SM$ and $\MOD$ are Hopf monoids (for this notion
see \cite{AM2010}).

\subsection{Morphism of Hopf monoids}
For a submodular function $z$ denote by $\minPre(z)$ the minimal elements
in $\Pre(z)$, the minimal preorders conforming to $z$.

There is a morphism of species
\[ \phi: \SM \pil \MOD , \quad z \mapsto \sum_{P \in \minPre(z)} z^P. \]
Note that elements in $\minPre(z)$ correspond to the minimal
faces of $\Pi(z)$. In particular, if $z$ is a finite submodular
function, we are simply summing over the vertices of $\Pi(z)$.

\begin{proposition}
The morphism $\phi$ is a morphism of bimonoids in species.
\end{proposition}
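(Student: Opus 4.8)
The claim is that $\phi:\SM\to\MOD$, $z\mapsto\sum_{P\in\minPre(z)}z^P$, is a morphism of Hopf monoids in species; since both $\SM$ and $\MOD$ are bimonoids and $\phi$ is evidently natural for bijections, it suffices to check compatibility with the product $\mu$, the coproduct $\Delta$, the unit $\eta$ and the counit $\epsilon$ (compatibility with the antipode is then automatic for morphisms of bimonoids). The unit and counit conditions are trivial: when $I=\emptyset$ there is exactly one submodular function ${\mathbf 0}$, its unique conforming preorder is the empty preorder, $({\mathbf 0})^{\emptyset}={\mathbf 0}$, and for $I\neq\emptyset$ both sides of the counit condition vanish, so I would dispatch these in one line each.

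\textbf{Multiplicativity.} The plan is: for $I=S\sqcup T$ and $u\in\SM[S]$, $v\in\SM[T]$, I want $\phi(u\cdot v)=\phi(u)\cdot\phi(v)$, i.e.
\[
\sum_{P\in\minPre(u\cdot v)}(u\cdot v)^P \;=\; \Bigl(\sum_{P_1\in\minPre(u)}u^{P_1}\Bigr)\cdot\Bigl(\sum_{P_2\in\minPre(v)}v^{P_2}\Bigr).
\]
The key input is the lemma in Section~\ref{sec:conform} stating that $P$ conforms to $u\cdot v$ iff $P=P_1\sqcup P_2$ with $P_1$ conforming to $u$ and $P_2$ conforming to $v$; by Corollary~\ref{cor:egp-dim} (dimensions add under $\sqcup$), $P$ is minimal iff both $P_1,P_2$ are, so $\minPre(u\cdot v)\cong\minPre(u)\times\minPre(v)$ as sets. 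Then I need $(u\cdot v)^{P_1\sqcup P_2}=u^{P_1}\cdot v^{P_2}$: a down-set of $P_1\sqcup P_2$ is exactly $A_1\sqcup A_2$ with $A_i$ a down-set of $P_i$, and on such a set $(u\cdot v)^{P}(A_1\sqcup A_2)=(u\cdot v)(A_1\sqcup A_2)=u(A_1)+v(A_2)=u^{P_1}(A_1)+v^{P_2}(A_2)$, while on non-down-sets all three are $\infty$; so the product decomposition follows by definition of the product in $\SM$. Summing over the product index set gives the result.

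\textbf{Comultiplicativity.} This is the substantive step, and I expect it to be the main obstacle. I must show $\Delta_{S,T}\circ\phi=(\phi\otimes\phi)\circ\Delta_{S,T}$ for each $S\subseteq I$, $T=I\setminus S$. On the right, $\Delta_{S,T}(z)$ is $0$ unless $z(S)<\infty$, in which case it is $\sum_{P_1\in\minPre(z\restr S)}\sum_{P_2\in\minPre(z_{/S})}(z\restr S)^{P_1}\otimes(z_{/S})^{P_2}$. On the left, $\Delta_{S,T}(z^P)$ for $P\in\minPre(z)$ is $0$ unless $z^P(S)<\infty$, i.e.\ unless $S$ is a down-set of $P$, in which case — using Lemma~\ref{lem:ext-ps}, $(z^P)\restr S=(z\restr S)^{P\srestr S}$ and $(z^P)_{/S}=(z_{/S})^{P_{/S}}$ — it equals $(z\restr S)^{P\restr S}\otimes(z_{/S})^{P_{/S}}$. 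So the two sides will match provided: (i) if no minimal conforming preorder of $z$ has $S$ as a down-set then $z(S)=\infty$, and (ii) the map $P\mapsto(P\restr S,\,P_{/S})$, restricted to those $P\in\minPre(z)$ with $S$ a down-set, is a \emph{bijection} onto $\minPre(z\restr S)\times\minPre(z_{/S})$. For (i): if $z(S)<\infty$ then $S$ is a down-set of $\pre(z)$, hence of some $L$ a linear extension of $\pre(z)$; the minimal face lying in $\alin(L)$ corresponds to a minimal conforming preorder having $S$ as a down-set, by Proposition~\ref{pro:conf-minmax} and Proposition~\ref{pro:faces-egp}. For (ii): this is essentially Proposition~\ref{pro:compres-ps}, which gives a bijection between preorders $P$ conforming to $z$ with $S$ a down-set and pairs $(P_1,P_2)$ with $P_1$ conforming to $z\restr S$ and $P_2$ to $z_{/S}$; what remains is to check that under this bijection \emph{minimal} $P$ correspond to pairs of \emph{minimal} preorders. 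Here I would use the last clause of Proposition~\ref{pro:compres-ps}, $F=F_S\times G_T$, together with Proposition~\ref{pro:faces-egp}: $\dim F=|I|-|{\mathbf b}(P)|$, $\dim F_S=|S|-|{\mathbf b}(P_1)|$, $\dim G_T=|T|-|{\mathbf b}(P_2)|$, and $|{\mathbf b}(P)|=|{\mathbf b}(P_1)|+|{\mathbf b}(P_2)|$; since $F$ is a face of $\Pi(z)$ and minimal faces all have the same dimension (and likewise for $\Pi(z\restr S)$ and $\Pi(z_{/S})$), minimality of $F$ is equivalent to simultaneous minimality of $F_S$ and $G_T$, hence of $P_1$ and $P_2$. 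Assembling (i) and (ii) completes the comultiplicativity check, and with multiplicativity, unit and counit verified, $\phi$ is a morphism of bimonoids, hence of Hopf monoids.
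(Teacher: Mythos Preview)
Your proof is correct and follows essentially the same route as the paper: multiplicativity via the decomposition $\Pre(u\cdot v)\cong\Pre(u)\times\Pre(v)$, and comultiplicativity via Lemma~\ref{lem:ext-ps} together with the gluing bijection of Proposition~\ref{pro:compres-ps}. You are in fact more careful than the paper on two points it leaves implicit: you verify explicitly that the bijection of Proposition~\ref{pro:compres-ps} restricts to \emph{minimal} conforming preorders (using $F=F_S\times G_T$ and the fact that minimal faces of a polyhedron all share the same dimension), and you handle the case $z(S)=\infty$ separately; both are genuine details the paper skips over.
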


\begin{proof}
  If $P_1$ is minimal conforming to $u \in \SM[S]$ and $P_2$ to $v \in \SM[T]$
  the disjoint union $P_1 \sqcup P_2$
  conforms to $z = u \cdot v$. Conversely
  if $z = u \cdot v$ and $P$ conforms to $z$, then $P$ is a disjoint union
  of preorders on $S$ and $T$. This shows that $\phi$ is a morphism
  of monoids. That $\phi$ is a morphism of comonoids, means that the following diagram commutes
  for $S$ with $z(S) < \infty$:
  \[ \xymatrix{ z \ar@{|->}[rr]^{\Delta_{S,T}} \ar@{|->}[d]_{\phi} &&
      z\restr{S} \te z_{/S} \ar@{|->}[d]^{\phi \te \phi}\\
      \sum_{P \in \minPre(z)} z^P \ar@{|->}[rr]^{\Delta_{S,T}} && }
       \]
    The lower map and the right vertical map send these to respectively:
    \[  \sum_{\scriptsize{\begin{matrix} {P \in \minPre(z)} \\ {S \text{ down-set of }P}\end{matrix}}}
        (z^P)\restr{S} \te (z^P)_{/S},
        \quad \sum_{\scriptsize{\begin{matrix} {P_1 \in \minPre(z\restr{S})} \\
          {P_2 \in \minPre(z_{/S})} \end{matrix}}}
      (z\restr{S})^{P_1} \te (z_{/S})^{P_2}. \]
    By Lemma \ref{lem:ext-ps} and Proposition \ref{pro:compres-ps}
    these sums are equal.
  \end{proof}

  \subsection{$\MOD$ as internal bimonoid and $\SM$ 
    as comodule Hopf monoid}
Recall that $z^P$ is modular by Proposition \ref{pro:mod-zp}.
   There is an internal comodule map:
  \[ \delta_{\SM} : \SM[I] \pil \SM[I] \te \MOD[I],
    \quad z \mapsto \sum_{Q \in \Pre(z)} z_Q \te z^Q. \]
  Since faces $F$ of the EGP $\egp(z)$ correspond to conforming
  preorders, when $F$ corresponds to $Q$, then $z_Q$ is the submodular
  function of the face $F$, and so encodes the faces $\subseteq F$.
  On the other hand $z^Q$ by Propositions \ref{pro:mod-zp}, \ref{pro:mod-cone}
  is a cone
  which encodes the faces of $\egp(z)$ which are $\supseteq F$. 

  Again by Proposition \ref{pro:mod-zp},
  the comodule map above restricts to an internal coproduct:
  \[ \delta_{\MOD} : \MOD[I] \pil \MOD[I] \te \MOD[I],
    \quad z \mapsto \sum_{Q \in \Pre(z)} z_Q \te z^Q. \]
  Let $\oU : \setx \pil \vect$ be the species with $\oU[I] = \RR$
  for every $I$ (denoted ${\bf Com}$ in \cite{Fo2019}).
  There is a counit map 
  \[ \varepsilon_{\MOD} : \MOD[I] \pil \oU[I], \quad
    z \mapsto \begin{cases} 1,  & \pre(z) \text{ is totally disconnected,} \\
      0, & \text{ otherwise. }
    \end{cases} \]
  We can see that
  \[ (\id_{\SM} \te \varepsilon_{\MOD}) \circ \delta_{\SM} = \id_{\SM}.\]
  by considering the maximal preorder $Q = \cpre(z)$ in $\Pre(z)$,
  since then $z_Q  = z$,
  and $z^Q$ is modular with associated preorder the totally disconnected
  ${\mathbf c}(Q)$. 
  Furthermore
  \[ (\id_{\MOD} \te \varepsilon_{\MOD}) \circ \delta_{\MOD} =
    (\varepsilon_{\MOD} \te \id_{\MOD})\circ \delta_{\MOD} = \id_{\MOD}, \]
  which for the second equality is ensured by the minimal preorder
  $P = \pre(z)$ in $\Pre(z)$, since then $z^P = z$ and $z_P$ is modular with
  associated preorder the totally disconnected $\mathbf b(P)$. 

  \begin{proposition}
    The map $\delta_{\MOD}$ is coassociative and
    the map $\delta_{\SM}$ makes $\SM$ a comodule over the comonoid
    $\MOD$.
  \end{proposition}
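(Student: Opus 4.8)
The plan is to verify the two assertions — coassociativity of $\delta_{\MOD}$ and the comodule axiom for $\delta_{\SM}$ — by unfolding both sides on a basis element and reducing everything to the two structural results on iterated face/cone constructions already established: Lemma \ref{lem:mod-rpq}, which says $(z^P)_Q = (z_Q)^R$ when $R \lhd P$ corresponds to $P \btl Q$, together with the identification $\im\Psi_z = \Pre(z)$ and the order-theoretic picture of $\Pre(z)$ from Propositions \ref{pro:conf-PbtrQ}, \ref{pro:compres-rpq} and Corollary \ref{cor:pos-bij}.

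For coassociativity, I would compute $(\delta_{\MOD}\te\id)\circ\delta_{\MOD}(z)$ and $(\id\te\delta_{\MOD})\circ\delta_{\MOD}(z)$ for a modular $z$. Applying $\delta_{\MOD}$ once gives $\sum_{Q\in\Pre(z)} z_Q \te z^Q$; note $z_Q$ and $z^Q$ are both modular by Proposition \ref{pro:mod-zp}. Applying $\delta_{\MOD}$ to the first tensor factor then requires summing over $R \in \Pre(z_Q)$, and Proposition \ref{pro:compres-rpq} identifies $\Pre(z_Q)$ precisely with $\{R = P\wedge Q^{\op} \mid P\in\Pre(z),\, P\btl Q\}$, i.e.\ with pairs $(R,P)$ in the Galois bijection $R\lhd P$, $P\btl Q$ of Corollary \ref{cor:pos-bij}. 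Thus the left iteration becomes $\sum_{R\lhd P\btl Q,\, Q\in\Pre(z)} (z_Q)_R \te (z_Q)^R \te z^Q$. Applying $\delta_{\MOD}$ to the \emph{second} factor of $z_Q\te z^Q$ requires summing over $\Pre(z^Q)$, which by Proposition \ref{pro:mod-zp}(a.iii) is exactly $\overline Q = \{P' \mid Q\btl P'\}$; so the right iteration becomes $\sum_{Q\btl P',\, Q\in\Pre(z)} z_Q \te (z^Q)_{P'} \te (z^Q)^{P'}$. The reconciliation is purely combinatorial: in the first expression reindex by $(R,P,Q)$ with $R\lhd P\btl Q$, and observe $(z_Q)_R = z_R$ (iterated bubble-restriction of a convex piece is the convex piece), $(z_Q)^R = (z^P)_Q$ by Lemma \ref{lem:mod-rpq}; in the second, reindex by $(Q,P')$ with $Q\btl P'$, using $(z^Q)^{P'} = z^{P'}$ (a down-set of $Q$ is a down-set of the coarser $P'$, so iterating the ``$\infty$ off down-sets'' operation stabilizes) and $(z^Q)_{P'} = (z_{P'})^{R'}$ again by Lemma \ref{lem:mod-rpq} with $R'\lhd Q$ corresponding to $Q\btl P'$. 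Matching the index sets $\{(R,P,Q): R\lhd P\btl Q\}$ and $\{(R',Q,P'): R'\lhd Q\btl P'\}$ via the transitivity of $\btl$ (Theorem \ref{distrib-lattice}) and the naturality of the Galois correspondence gives a term-by-term bijection, proving coassociativity.

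For the comodule axiom, the two composites to compare are $(\delta_{\SM}\te\id_{\MOD})\circ\delta_{\SM}$ and $(\id_{\SM}\te\delta_{\MOD})\circ\delta_{\SM}$ as maps $\SM[I]\to\SM[I]\te\MOD[I]\te\MOD[I]$. Starting from $z$, the first composite yields $\sum_{Q\in\Pre(z)}\sum_{R\in\Pre(z_Q)} (z_Q)_R \te (z_Q)^R \te z^Q$, and the second yields $\sum_{Q\in\Pre(z)}\sum_{P'\in\Pre(z^Q)} z_Q \te (z^Q)_{P'} \te (z^Q)^{P'}$ — structurally identical to the coassociativity computation, so the very same reindexing (Proposition \ref{pro:compres-rpq}, $\Pre(z^Q)=\overline Q$, Lemma \ref{lem:mod-rpq}, and transitivity of $\btl$) closes it. I would also record the counit conditions, but those are already dispatched in the paragraph preceding the proposition by exhibiting the extremal preorders $\cpre(z)$ and $\pre(z)$ in $\Pre(z)$.

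The main obstacle is bookkeeping rather than conceptual: correctly matching the two triple-indexed sums. The subtlety is that the ``inner'' coproduct on the left composite runs over $\Pre(z_Q)$ while on the right it runs over $\Pre(z^Q)$, and these are described by genuinely different pieces of the $\Pre(z)$ poset ($\underline{\,}$-type versus $\overline{\,}$-type neighborhoods of $Q$). Establishing that the triples $\{R\lhd P\btl Q\}$ and $\{R'\lhd Q\btl P'\}$ biject compatibly with the modular-function labels requires combining Corollary \ref{cor:pos-bij}, Proposition \ref{pro:compres-rpq}, Proposition \ref{pro:mod-zp}(a.iii), Lemma \ref{lem:mod-rpq}, and transitivity of $\btl$ in just the right order; once that dictionary is set up the identity of terms is mechanical. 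I expect to spend most of the write-up making this index-matching precise and checking the two ``stabilization'' identities $(z_Q)_R = z_R$ and $(z^Q)^{P'} = z^{P'}$, both of which follow directly from the definitions of $z_C$ (Proposition \ref{pro:com-BA}) and of $z^P$.
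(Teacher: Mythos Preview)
Your proposal is correct and follows the same route as the paper: both expand the two composites as sums indexed by pairs $P \btl Q$ in $\Pre(z)$ (via Proposition \ref{pro:compres-rpq} and Proposition \ref{pro:mod-zp}(a.iii)) and then match terms using Lemma \ref{lem:mod-rpq} together with the identities $(z_Q)_R = z_P$ (same bubbles) and $(z^P)^Q = z^Q$. Two minor overcomplications: your appeal to ``transitivity of $\btl$'' is unnecessary, since the index sets $\{(R,P,Q):R\lhd P\btl Q\}$ and $\{(R',Q,P'):R'\lhd Q\btl P'\}$ are literally the same set after the relabeling $(P,Q)\leftrightarrow(Q,P')$; and the separate verification of coassociativity of $\delta_{\MOD}$ is redundant, as it is exactly the comodule identity restricted along $\MOD\subseteq\SM$.
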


  \begin{proof}
    We have
    \[ (\id \te \delta_{\MOD}) \circ \delta_{\SM} :
      z \mapsto \sum_{P \in \Pre(z)} z_P \te z^P \mapsto
      \sum_{\tiny{\begin{matrix} P \in \Pre(z) \\ Q \in \Pre(z^P) \end{matrix}}}
      z_P \te (z^P)_Q \te (z^P)^Q. \]
Thus we are summing over all pairs $P \btl Q$ in $\Pre(z)$.
Also
  \[ (\delta_{\SM} \te \id) \circ \delta_{\SM} :
      z \mapsto \sum_{Q \in \Pre(z)} z_Q \te z^Q \mapsto
      \sum_{\tiny{\begin{matrix} Q \in \Pre(z) \\ R \in \Pre(z^Q) \end{matrix}}}
      (z_Q)_R \te (z_Q)^R \te z^Q. \]
    By Lemma \ref{lem:mod-rpq}, $(z^P)_Q = (z_Q)^R$ when $R \lhd P$
    corresponds to $P \btl Q$. Also by Proposition \ref{pro:compres-rpq}
    $\Pre(z_Q)$ are the  $R \lhd P$ corresponding to $P \btl Q$,
    so again in the second expression we are summing over all
    pairs $P \btl Q$ in $\Pre(z)$. But since $R$ and $P$ have the same
    bubbles, we have $(z_Q)_R = z_P$. 
\end{proof}

The multiplication $\mu$ from \eqref{eq:bim-mult},  and unit $\eta$
from \eqref{eq:bim-unit}
makes
  $(\MOD, \mu, \delta_{\MOD}, \eta, \varepsilon_{\MOD})$ into
  an internal bimonoid in species, or a twisted bimonoid in
  the terminology of \cite{Fo2019}. 

\begin{proposition}
  The map $\phi : (\SM, \delta_{\SM}, \varepsilon_{\SM})
  \pil (\MOD, \delta_{\MOD}, \varepsilon_{\MOD})$ is a morphism of comodule species.
\end{proposition}

\begin{proof}
  We must show the following diagram commutes:
  \[ \xymatrix{ z \ar@{|->}[rr]^{\delta_{\SM}} \ar@{|->}[d]_{\phi} &&  \sum_{Q \in \Pre(z)} z_Q \te z^Q \ar@{|->}[d]^{\phi \te \ben}\\
      \sum_{P \in \minPre(z)} z^P \ar@{|->}[rr]^{\delta_{\MOD}} &&}
    \]
 The lower map and the right vertical map send these to respectively:
    \[\sum_{P \in \minPre(z)}  \sum_{P \btl Q} (z^P)_Q \te (z^P)^Q ,
      \quad \sum_{R \in \minPre(z_Q)} \sum_{Q \in \Pre(z)} (z_Q)^R \te z^Q . \]
    By Lemma \ref{lem:mod-rpq} and Proposition \ref{pro:compres-rpq}
    these are equal.
  \end{proof}

  \subsection{A version of cointeraction}
\label{subsec:co-co}
 For information on cointeracting bialgebras, i.e. comodule bialgebras
 over a bialgebra, one may consult \cite{Molnar1977, Man2018}.
 For a bialgebra $B$ and a $B$-comodule $A$ which is itself a bialgebra,
 we say that $B$ is a cointeracting bialgebra of $A$ if
 $A$ is a bialgebra in the category of $B$-comodules. This means that
 the product $A \te A \mto{\mu} A$ and the coproduct $A \mto{\Delta} A \te A$
 also are $B$-comodule maps, as well as the
 unit and counit of $A$.
 The notions of cointeracting bimonoid in species are then analogous.
 See \cite{Fo2019, Fo2022}, where 
they are known as double twisted bialgebras.

\begin{lemma} The multiplication
  $\mu : \SM \te \SM \pil \SM$ is a morphism of comodules
  over $\MOD$.
\end{lemma}

\begin{proof} We must check that the following diagram commutes:
  \[  \xymatrix{
    \SM[S] \te \MOD[S] \te \SM[T] \te \MOD[T] \ar[r]_{\ben \te \tau_{2,3} \te
      \ben} &  \SM[S] \te \SM[T] \te \MOD[S] \te \MOD[T]
    \ar[d]_{\ben \te \ben \te \mu} \\
    \SM[S] \te \SM[T] \ar[d]_{\mu} \ar[u]_{\delta_{\SM} \te \delta_{\SM}}
    &  \SM[S] \te \SM[T] \te \MOD[I] \ar[d]_{\mu \te \ben}\\
    \SM[I] \ar[r]_{\delta_{\SM}} & \SM[I] \te \MOD[I]}.
\] 
This means that the two composite maps:
\[ \xymatrix{ z \te w \ar@{|->}[rr]^-{\delta_{\SM \te \SM}} \ar@{|->}[d]_{\mu} & & 
    \underset{\tiny{\begin{matrix} P_1 \in \Pre(z) \\ P_2 \in \Pre(w)
        \end{matrix}}}{\sum}
    z_{P_1} \te  w_{P_2} \te z^{P_1} \cdot w^{P_2}   \ar@{|->}[d]^-{\mu \te \ben}\\
    z \cdot w \ar@{|->}[rr]^-{\delta_{\MOD}}  & & 
    \underset{\scriptsize{Q \in \Pre(z\cdot w)}}{\sum}
    (z\cdot w)_Q \te (z\cdot w)^Q}.
    \]
    are the same. This is so because a preorder $Q$ conforms to the product
    $z \cdot w$ iff $Q$ is a disconnected union $Q = P_1 \sqcup P_2$ where
    $P_1$ conforms to $z$ and $P_2$ conforms to $w$. 
  \end{proof}
  
  The coproduct $\Delta: \SM \pil \SM \te \SM$ is however not
  a morphism of comodule species. Thus $\SM$ and $\MOD$ are not in cointeraction
  in the normal sense. However there is a related version with holds.

\begin{proposition} \label{pro:bim-nstand}
There is a commutative diagram:
\[  \xymatrix{
    \SM[I] \ar[r]^{\delta_{\SM}} \ar[d]_{\Delta_{S,T}} &
    \SM[I] \te \MOD[I]  \ar[d]^{\ben \te \Delta_{S,T}} \\
    \SM[S] \te \SM[T] \ar[d]_{\delta_{\SM} \te \delta_{\SM}} &
    \SM[I] \te \MOD[S] \te \MOD[T]\\
    \SM[S] \te \MOD[S] \te \SM[T] \te \MOD[T] \ar[r]_{\ben \te \tau_{2,3} \te
      \ben} & \SM[S] \te \SM[T] \te \MOD[S] \te \MOD[T] \ar[u]_{\mu \te \ben \te
      \ben}}.
\]
Restricting to $\MOD \sus \SM$ this makes $\MOD$ a comodule species
over itself. Hence $(\MOD, \mu, \delta_\SM)$ is a double twisted
bialgebra, \cite{Fo2019, Fo2022}. 
\end{proposition}

\begin{remark}
  If $\SM$ had been a comonoid comodule over $\MOD$, we should in this
  diagram have had a map $\Delta_{S,T} \te \ben$ on the upper right side,
  rather than $\ben \te \Delta_{S,T}$. This does however not give
  a commutative diagram.

   However one might say that $\SM$ is a {\it non-counital} twisted bialgebra.
  By the embedding $\MOD \sus \SM$,
  one could replace $\MOD$ by $\SM$ in the diagram, and still have
  a commutative
  diagram. The right $\SM$ is then a comodule {\it non-counital}
  comonoid over the left $\SM$.
  It would only fail to have a counit for the internal coproduct
  $\delta_{\SM}$. Indeed, the reader can easily be convinced, in view of
  Proposition \ref{pro:mod-zp} and the definition of
$\delta_{\SM}$, that the equality
\[ (\varepsilon\te\id_{\SM}) \circ \delta_{\SM}(z) = \id_{\SM}(z)\]
fails for any $\varepsilon:\SM[I] \pil \oU[I]$ when $z$ is not modular.
\end{remark}

\begin{proof}[Proof of Proposition \ref{pro:bim-nstand}]
Following the upper right maps we get:
\[ z \overset{\delta_{SM}}{\longmapsto} \sum_{P \in \Pre(z)} z_P \te z^P
  \longmapsto{}  \sum_{\scriptsize{\begin{matrix} {P \in \Pre(z)} \\ {S \text{ down-set of } P}\end{matrix}}}
  z_P \te (z^P)\restr{S} \te (z^P)_{/S}. \]
Following the maps on the left side around, we get that $z$ maps to
\[ \sum_{S : z(S) < \infty}
  \sum_{P_1 \in \Pre(z_{|S})} \sum_{P_2 \in \Pre(z_{/S})}
  (z\restr{S})_{P_1} \cdot (z_{/S})_{P_2} \te (z\restr{S})^{P_1} \te
  (z_{/S})^{P_2}. \]
By Proposition \ref{pro:compres-ps}, $P_1$ and $P_2$ glue
together to a unique $P$, and so these two expressions coincide.
\end{proof}


\subsection{Duals} \label{subsec:co-duals}
Since each $\SM[I]$ and $\MOD[I]$ are infinite dimensional vector
spaces dualizing these spaces to make $\delta_{\SM}$ and $\Delta_{\SM}$
into products does not work well in full generality.

\begin{example}
  We extend Example \ref{ex:bij-pent}. Consider submodular functions
  $z_u : \pow(\{a,b,c\}) \pil \RR$ given by
  \[ z_u(a) = z_u(b) = z_u(c) = 3, \quad z_u(ab) = z_u(bc) = 5, \quad
    z_u(ac) = 5+u, \quad z(abc) = 6. \]
  These are submodular for $u \in [-2, 1]$. All these have
  the total order $P: c < b < a$ as a conforming preorder. The
  associated term to this preorder in $\delta_{\SM}(z_u)$, is
  $z_{u,P} \te z_u^P$ where
  \[ z_{u,P} = z_a \cdot z_b \cdot z_c, \quad
    \text{where for } \ell  = a, b, c, \quad
    z_\ell : \pow(\{\ell\}) \pil \RR, \, \, z_\ell(\ell) = 3. \]
  Furthermore
  \[ z_u^P(c) = 3, \quad z_u^P(b,c) = 5, \quad z_u^P(a,b,c) = 6. \]
  We see $z_{u,P} \te z_u^P$ does not depend on $u$, and may write $z_P \te z^P$
  for this. 
  Thus if we ``dualize'' $\delta$, the image of this $z_P \te z^P$ would
  be infinitely many $z_u$'s.
\end{example}

One may remedy the above by fixing a bound $N \in \NN$, and restrict to 
submodular functions $z : \pow(I) \pil \hat{\NN}$ with $z(S) \leq N$
when $z(s)$ finite. Then one could dualize $\delta$ and $\Delta$ to
an internal and an external product
\begin{align*} \SM \te \MOD & \pil \SM &   \SM \te \SM & \pil \SM \\
  z \te y & \mapsto z \bullet y  &  z \te w & \mapsto z *_{\SM} w.
\end{align*}
The latter would also restrict to a product $*_{\MOD}$ on $\MOD \te \MOD$.
One can then consider $\SM$ as ``acting'' on $\MOD$ to give an element
in $\SM$: Given  $z \in \SM$, it acts on
$y \in \MOD$ to give $z \bullet y$ in $\SM$. 
This ``action'' is associative
\[ z \bullet (y_1 \bullet y_2) = (z \bullet y_1) \bullet y_2, \]
and it respects the product $*$:
\[ z \bullet (y_1 {*_{\MOD}} y_2) = (z \bullet y_1) *_{\SM} (z \bullet y_2). \]

The internal product $z \bullet y$ would be a sum of submodular
``extension'' functions $w$ corresponding to extended generalized
permutahedra $\egp(w)$ having $\egp(z)$
as a face and $\egp(y)$ a cone codifying the faces of $\egp(w)$ containing
$\egp(z)$.

\begin{lemma}
  Given $z \in \SM[I]$ and $y \in \MOD[I]$. The internal product
  $z \bullet y$ can only be nonzero if the connected components
  $\cpre(z) = \pre(z)^{\bullet}$ equals the bubbles $\pre(y)^{\circ}$.
  This corresponds to the minimal affine space containing $\egp(z)$
  equalling the lineality space of $\egp(y)$.
\end{lemma}

\begin{proof}
  The product $z \bullet y$ must be of the form $w_P \bullet w^P$
  for some submodular function $w \in \SM[I]$, and $P$ conforming to
  $w$. But then the indecomposable factors of $w_P$ correspond to
  the bubbles of $P$.
\end{proof}

The external product $z * w$ for $z \in \SM[S]$ and $w \in \SM[T]$
would be all submodular functions $v \in \SM[I]$ with
$\egp(z) \times \egp(w)$ a face of $\egp(v)$.

\subsection{Morphisms to the species of preorders}
\label{subsec:bimon-pre}

Consider the species $\PRE : \setx \pil \vect$
where $\PRE[I]$ is the vector space with basis the preorders on $I$.
This becomes a Hopf monoid with product the disjoint union of preorders,
and with coproduct $\Delta$ given by
\[ P \mto{\Delta} \sum_{S \text{ down-set of } P} P\restr{S} \te P\restr{S^c}. \]
It also has an internal coproduct given by
\[ P \mto{\delta} \sum_{Q : P \btl Q} R \te Q \]
where $R \lhd Q$ corresponds to $P \btl Q$.
By \cite{FFM2017} $(\PRE, \mu, \Delta, \delta)$
becomes a double bimonoid in the category of species, i.e. a double twisted bialgebra in the sense of \cite{Fo2019}.
There is a map of species;
\[ \psi : \MOD \pil \PRE, \quad z \overset{\psi}\mapsto \pre(z).  \]

\begin{proposition}
  $\psi$ is a  morphism of double bimonoid species.
\end{proposition}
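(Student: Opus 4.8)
The plan is to verify that $\psi:\MOD\to\PRE$ commutes with each of the four structure maps (product, coproduct, internal coproduct, and units/counits), using the results already established for $\MOD$ in Sections \ref{sec:mod} and \ref{sec:ext}, together with the definition of the analogous maps on $\PRE$ recalled in Subsection \ref{subsec:bimon-pre}. First I would treat the product: if $z=u\cdot v$ with $u\in\MOD[S]$, $v\in\MOD[T]$ and $I=S\sqcup T$, then $\top(z)=\top(u)\times\top(v)$ in the obvious sense, so $\pre(z)=\pre(u)\sqcup\pre(v)$; this is exactly the statement that $\psi$ is a morphism of monoids. For the coproduct, I must check that $\pre(z\restr{S})=\pre(z)\restr{S}$ and $\pre(z_{/S})=\pre(z)_{/S}$ whenever $z(S)<\infty$: a subset $A\sus S$ has $z\restr{S}(A)<\infty$ iff $z(A)<\infty$ iff $A$ is a down-set of $\pre(z)$, and similarly $E\sus I\backslash S$ has $z_{/S}(E)=z(S\cup E)-z(S)<\infty$ iff $S\cup E$ is a down-set of $\pre(z)$ iff $E$ is a down-set of $\pre(z)_{/S}$. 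This gives compatibility with $\Delta$. The units and counits match because the coarse topology on $I$ corresponds to the coarse preorder, and $\pre(z)$ is totally disconnected precisely when $z$ is (a product of) indecomposable modular functions on singletons.

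The substantive point is compatibility with the internal coproduct $\delta$. On $\MOD$ we have $\delta_{\MOD}(z)=\sum_{P\in\Pre(z)}z_P\te z^P$, while on $\PRE$ the internal coproduct sends a preorder $\pre(z)$ to $\sum_{Q:\,\pre(z)\btl Q}R\te Q$ where $R\lhd Q$ corresponds to $P=\pre(z)\btl Q$. So I need: as $P$ ranges over $\Pre(z)$, the preorder $\pre(z^P)$ ranges over exactly $\{Q : \pre(z)\btl Q\}$, with $\pre(z_P)$ equal to the corresponding $R\lhd \pre(z^P)$... but wait — one must be careful, since the bookkeeping on the two sides is indexed differently. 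By Proposition \ref{pro:mod-zp}(a.iii), $\Pre(z^P)=\overline{P}$, i.e.\ the preorders conforming to $z^P$ are precisely the contractions of $P$; in particular $\pre(z^P)$, being the minimal element of $\Pre(z^P)$, equals $P$ itself. So $\psi(z^P)=P$. On the other side of the diagram, for $\MOD$ the internal coproduct is indexed by $P\in\Pre(z)$, i.e.\ (since $z$ is modular) by the contractions $\pre(z)\btl P$ by Proposition \ref{pro:mod-desc}(b). Thus the $Q$-index on $\PRE$ matches the $P$-index on $\MOD$ under $\psi$. Finally I must identify $\psi(z_P)$ with the first tensor factor $R$ on the $\PRE$ side: by Lemma \ref{lem:mod-rpq} (or directly), $z_P$ is modular with associated preorder $\mathbf b(P)$-grouped, and $\pre(z_P)=R$ where $R\lhd\pre(z)$ corresponds to $\pre(z)\btl P$ — this is exactly Corollary \ref{cor:poset-pc} combined with the fact that a modular function's preorder is recovered from its bubble structure (Proposition \ref{pro:mod-desc}). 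Assembling these identifications shows the square
\[
\xymatrix{
z \ar@{|->}[r]^-{\delta_{\MOD}} \ar@{|->}[d]_{\psi} & \sum_{P\in\Pre(z)} z_P\te z^P \ar@{|->}[d]^{\psi\te\psi} \\
\pre(z) \ar@{|->}[r]^-{\delta} & \sum_{\pre(z)\btl Q} R\te Q
}
\]
commutes.

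I expect the main obstacle to be purely notational: keeping the two indexing conventions straight (the $\MOD$ side sums over conforming preorders $P$, the $\PRE$ side over contractions $Q$, and they correspond via $P=\pre(z^P)=Q$), and verifying that the ``lower'' factor $z_P$ on $\MOD$ is sent by $\psi$ to precisely the preorder $R\lhd Q$ that the $\PRE$-side internal coproduct produces — this rests on the observation that for a modular function the associated preorder is determined by, and determines, the partition into bubbles together with the induced order, so that $\pre(z_P)$ is the disconnected union $\sqcup_{C\in\mathbf b(P)}\pre(z)\restr{C} = \pre(z)\wedge P^{\op}=R$. Once this dictionary is in place the remaining verifications (monoid, comonoid, unit, counit) are routine, and I would present them briefly.
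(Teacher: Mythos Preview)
Your proposal is correct and follows essentially the same route as the paper: compatibility with $\mu$ and $\Delta$ is checked exactly as you do, and for $\delta$ both arguments hinge on identifying $\pre(z^P)=P$ and $\pre(z_P)=R=\pre(z)\wedge P^{\op}$. The only difference is cosmetic: the paper invokes Proposition~\ref{pro:compres-rpq} to describe all of $\Pre(z_Q)$ and then reads off its minimal element, whereas you compute $\pre(z_P)=\sqcup_{C\in\mathbf b(P)}\pre(z)\restr{C}$ directly and recognize it as $R$ via Corollary~\ref{cor:poset-pc}; both yield the same identification.
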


  \begin{proof}
To see that it preserves the coproducts $\Delta$: When $z$ is modular and $P = \pre(z)$, it is immediate
    that $\pre(z\restr{S}) = P\restr{S}$ and $\pre(z_{/S}) = P_{/S}$. It also preserves the coproducts $\delta$. Indeed,
    $Q \in \Pre(z)$ means that $P \btl Q$, and so
    $\pre(z^Q) = Q$. The minimal element
    $\pre(z_Q)$ of $\Pre(z_Q)$ is by
    Proposition \ref{pro:compres-rpq} the preorder $R$ with
    $R \lhd P$ corresponding to $P \btl Q$.
  \end{proof}
  {
\begin{remark}
  It would be interesting to adapt to the double monoid $\MOD$ the doubling
  procedure of \cite{AM2021}, applied to $\PRE$ therein.
\end{remark}
}
  

\newcommand{\Ehr}{{\rm{Ehr}}}
\newcommand{\Ehrs}{{\rm{Ehr}^*}}
\newcommand{\linvert}{
  \begin{tikzpicture}[scale=.5, vertices/.style={draw, fill=black, circle, inner sep=1.5pt}]
\node [vertices] (a) at (0,0) {};
\node [vertices] (b) at (0,1.2) {};
\coordinate [label=left: $a$] (a) at (0,0);
\coordinate [label=left: $b$] (b) at (0,1.2);

\foreach \to/\from in {a/b}
\draw (\to)--(\from);
\end{tikzpicture}
}

\newcommand{\trev}{
\begin{tikzpicture}[scale=.5, vertices/.style={draw, fill=black, circle, inner sep=1.5pt}]
\draw [help lines, white] (-1,0) grid (1,1);
\node [vertices] (b) at (-0.7,1) {};
\node [vertices] (a) at (0,0) {};
\node [vertices] (c) at (0.7,1) {};
\coordinate [label=left: $a$] (a) at (0,0);
\coordinate [label=above: $b$] (b) at (-0.7,1);
\coordinate [label=above: $c$] (c) at (0.7,1);

\foreach \to/\from in {a/b, a/c}
\draw (\to)--(\from);
\end{tikzpicture}
}

\subsection{Matroids and cointeractions}
\label{subsec:bimon-mat}
Restriction and contraction are standard notions for matroids.
The Hopf monoid structure on $\SM$ given by $z \mapsto \sum_{S \sus I}
z_{|S} \te z_{/S}$ restricts to matroids. It is considered in
\cite{DFM2018, AA2017}.
Cointeracting bialgebras  exclusively for matroids has not been established.
But we may enlarge the setting.

\begin{definition}
  A submodular function is $z : \pow(I) \pil \hRR$ is  a {\it matorder}
function if 
whenever $S \sus I$ with $z(S) < \infty$, then:
\begin{itemize}
\item[1.] $z(S) \leq |S|$,
\item[2.] $S \sus T$ implies $z(S) \leq z(T)$.
\end{itemize}
\end{definition}

Such functions give the least
common framework of matroids and the submodular
functions corresponding to preorders (those taking values in $\{0, \infty\}$).

Let $\SMat$ be the vector space generated by matorder functions, and
$\MMat$ the vector space generated by modular matorder functions.
Then $\SMat$ is a Hopf monoid, $\MMat$ a bimonoid, and $\SMat$ is
``acting'' on $\MMat$ as discussed in
Subsections \ref{subsec:co-co} and \ref{subsec:co-duals} above.
In particular, these generalize
the double bimonoid of preorders of \cite{FFM2017} given in Subsection
\ref{subsec:bimon-pre} above.

\section{Polynomials}
\label{sec:pol}

By the recent theory of L. Foissy \cite{Fo2022}, to a double bialgebra
$(B, \cdot, \Delta, \delta)$ (with $(B,\cdot, \Delta)$ a Hopf algebra) there is a unique double bialgebra morphism
$B \pil \QQ[x]$, to the polynomials. The modular functions become
such a double bialgebra, and so there is a distinguished composition
$\SM \mto{\phi} \MOD \pil \QQ[x]$ associating to any submodular function
a distinguished polynomial. Such polynomials usually count some
aspects of it source object $z$ and we describe this. The polynomial
occurs in \cite{AA2017} as the {\it basic invariant}. For
the submodular function of 
a matroid, the polynomial is the Billera-Jia-Reiner polynomial,
\cite{BJR2009}, see \cite[Sec.18.2]{AA2017}.

\subsection{Ehrhart polynomials}
Let  $z$ be modular with $P = \pre(z)$. Its associated
normal fan is then
a single cone $k(P)$ defined by $y_i \geq y_j$ if $i \leq_P j$.

If we put the additional requirement that $y_i \in [0,1]$, we get a
lattice polytope $\Pi(P)$.  So this is the part of the cone
$k(P)$ in the box $[0,1]^{|P|}$.
Let $k \cdot \Pi(P)$ be the homothetic expansion with center
$0$ and factor $k$. The {\it Ehrhart polynomial} $\Ehr_{\Pi(P)}(k)$
counts the number of
points in $\big(k \cdot \Pi(P)\big) \cap \ZZ^I$. The number of points which are in
the {\it interior} of $(k+1) \cdot \Pi(P) \cap \ZZ^I$ is given by
the polynomial
\[ \Ehrs_{\Pi(P)}(k) = (-1)^{\dim P} \Ehr_{\Pi(P)}(-k-1). \]
This is the interesting polynomial for our purpose.

\begin{example}
  Let
  \[ P = \linvert. \] The cone $k(P)$ is the halfplane $y_a \geq y_b$.
The polytope $(k+1) \cdot \Pi(P)$ is the region

\[k+1 \geq y_a \geq y_b \geq 0. \]
The interior points here are all integer pairs $(r,s)$ with
$k \geq r > s \geq 1$. Thus $\Ehrs_P(k) = \binom{k}{2} = \frac{k^2 - k}{2}$.
\end{example}

\begin{example}\label{ex:pol-tot}
If $P$ is the totally ordered set $[n] = \{1 < 2 < \cdots < n \}$
with $n$ elements, the order polytope $\Pi([n])$ \cite{Sta1986},
is defined by
\[ 1 \geq y_1 \geq y_{2} \geq \cdots \geq y_n \geq 0. \]
The interior integral points of $(k+1) \cdot \Pi([n])$ are given by
all integer sequences
\[ k \geq a_1 > a_{2} > \cdots > a_n \geq 1. \]
The number of such is the binomial number $\binom{k}{n}$.
\end{example}

\begin{example}
  Consider the V-poset:
  \[ P = \trev. \]
  The cone $k(P)$ is defined by $y_a \geq y_b$ and $y_a \geq y_c$.
  For the order polytope $\Pi(P)$ we also add the requirements
  $1 \geq y_a$ and $y_b,y_c \geq 0$.
  For the Ehrhart polynomial we must count all triples $(r,s,t)$ with
  $k \geq r > s,t \geq 1$. This is the sum:
  \[ (k-1)^2 + (k-2)^2 + \cdots + 1^2  = \frac{k(k-1)(2k-1)}{6}
    = k^3/3 - k^2/2 + k/6. \]
\end{example}

\subsection{Double bialgebras and associated polynomials}
We recall the notion of {\it double bialgebra} in
\cite{Fo2022}. These are cointeracting bialgebras, where the
underlying algebras are the same. 

Let $k$ be a field and $B$ a $k$-algebra with product $\mu$ and unit $\eta: k \pil B$.
It is a {\it double bialgebra}
if it has two structures as bialgebra:
\[ B_\Delta = (B, \mu, \Delta, \eta, \epsilon_\Delta), \quad B_\delta =
(B, \mu, \delta, \eta, \epsilon_\delta), \]
such that $B_\Delta$ is a comodule bialgebra over $B_\delta$.
In more detail:

\begin{itemize}
\item The comodule structure of $B_\Delta$ over $B_\delta$:
\[ \delta: B_\Delta \pil B_{\delta} \te B_{\Delta} \]
is simply $\delta : B \pil B \te B$
\item The coproduct $\Delta : B \pil B \te B$ is a comodule
morphism:
\begin{equation} \label{eq:dobi-comod}
(\ben_B \te \Delta) \circ \delta = m_{13,2,4} \circ (\delta \circ \delta)
\circ \Delta.
\end{equation}
\item The counit $\epsilon_\Delta : B \pil k$ is a comodule morphism:
  \begin{equation} \label{eq:dobi-counit}
    (\ben_B \te \epsilon_\Delta) \circ \delta = \eta_B \circ \epsilon_\Delta.
    \end{equation}
\end{itemize}

The basic example of this is the double bialgebra
$(\QQ[x], \mu, \Delta, \delta)$, where
$\mu$ is ordinary multiplication of polynomials, and
\[ \Delta(x) = 1 \te x + x \te 1, \quad \delta(x) = x \te x. \]
By the theory of Foissy \cite{Fo2022} when $B$ is a double bialgebra
such that $(B,\mu, \Delta)$ is a Hopf algebra,
there is a unique
double bialgebra morphism
\[ \theta : (B, \mu, \Delta, \delta)  \pil (\QQ[x], \mu, \Delta, \delta). \]
Thus every element of $B$ gives a distinguished polynomial.
\subsection{The polynomial associated to a submodular function}
By the bosonic Fock functor \cite[Chapter 15]{AM2010}, i.e.
considering isomorphism classes of submodular and modular functions, i.e.
the orbits of $\SM[I]$ and $\MOD[I]$ for the bijections of $I$,
we get (by abuse of notation we still use $\SM$ and $\MOD$ to
denote the corresponding bialgebras):
\begin{itemize}
\item Hopf algebras $(\SM, \mu, \Delta)$ and $(\MOD, \mu, \Delta)$,
\item A bialgebra $(\MOD, \mu, \delta_{\MOD})$,
\item $(\SM, \mu, \Delta, \delta_{\SM})$ is a cointeracting comodule
  bialgebra over the bialgebra $(\MOD, \mu, \delta_{\MOD})$,
\item $(\MOD, \mu, \Delta, \delta_{\MOD})$ is a double bialgebra.  
\item The map $\overline{\phi} : \SM \pil \MOD$ is a morphism of
  comodule bialgebras over the bialgebra $\MOD$.
\end{itemize}

\noindent By the theory of Foissy \cite{Fo2022} there is then a unique
double bialgebra morphism
\[ \theta : (\MOD, \mu, \Delta, \delta)  \pil (\QQ[x], \mu, \Delta, \delta). \]
By composition we also get a morphism
\begin{equation} \label{eq:pol-thfi}
  \theta \circ \overline{\phi} : (\SM, \mu, \Delta, \delta) \pil
  (\QQ[x], \mu, \Delta, \delta).
  \end{equation}
What is the polynomial associated to a submodular function $z$?

The unique double bialgebra morphism
$\tau : (\PRE, \mu, \Delta, \delta)  \pil (\QQ[x], \mu, \Delta,
\delta)$ has been studied by L. Foissy in \cite{Fo-Ehr}.
It associates to a preorder $P$ the Ehrhart polynomial
$\Ehrs_{\Pi(P)}(k)$. 
Composing with the morphism in Subsection \ref{subsec:bimon-pre}
there is a composition $\tau \circ \psi$. Due to the uniqueness
of this morphism, it must coincide with $\theta$. Thus if the modular function
corresponds to the pair $(P, \RR^{b(P)})$ the associated polynomial
is $\Ehrs_{\Pi(P)}(k)$. 

\begin{theorem} \label{thm:pol-pol}
  Let $z$ be a submodular function. The polynomial associated by
  \eqref{eq:pol-thfi} to $z$ is: 
  \[ \chi(z)(k) := \theta \circ \overline{\phi} (z) =
    \sum_{P \in \minPre(z)} \Ehrs_{\Pi(P)}(k). \]
  Thus the polynomial counts the {\it interior} points in the
  maximal cones of the normal fan of the extended generalized
  permutahedron $\Pi(z)$, intersected with the box $[0,k+1]^{|I|}$. 
\end{theorem}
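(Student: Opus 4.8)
The plan is to unwind the chain of morphisms $\SM \xrightarrow{\overline{\phi}} \MOD \xrightarrow{\theta} \QQ[x]$ and identify each stage concretely. First I would invoke the uniqueness half of Foissy's theorem \cite{Fo2022}: since $(\PRE,\mu,\Delta,\delta)$ is a double bialgebra, $(\MOD,\mu,\Delta,\delta)$ is a double bialgebra, and $\psi:\MOD\to\PRE$ is a morphism of double bialgebras (shown just above), the composite $\tau\circ\psi:\MOD\to\QQ[x]$ is a double bialgebra morphism into $\QQ[x]$; by uniqueness it must equal $\theta$. Here $\tau:\PRE\to\QQ[x]$ is Foissy's morphism from \cite{Fo-Ehr}, which sends a preorder $P$ to $\Ehrs_{\Pi(P)}(k)$. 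Consequently, for a modular function $z$ with associated preorder $P=\pre(z)$, we get $\theta(z)=\tau(\psi(z))=\tau(\pre(z))=\Ehrs_{\Pi(\pre(z))}(k)$; the polynomial depends only on $\pre(z)$ and not on the ``$\RR^{\mathbf b(P)}$ part'' of the modular function.

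Next I would apply $\theta$ to the image $\overline{\phi}(z)=\sum_{P\in\minPre(z)} z^P$. Linearity of $\theta$ gives
\[ \chi(z)(k) = \theta\circ\overline{\phi}(z) = \sum_{P\in\minPre(z)} \theta(z^P). \]
By Proposition \ref{pro:mod-zp}, each $z^P$ is modular, and moreover $\pre(z^P)=P$ since $\Pre(z^P)=\overline P$ has $P$ as its unique minimal element (Proposition \ref{pro:mod-desc}). So by the previous paragraph $\theta(z^P)=\Ehrs_{\Pi(\pre(z^P))}(k)=\Ehrs_{\Pi(P)}(k)$, which yields exactly
\[ \chi(z)(k) = \sum_{P\in\minPre(z)} \Ehrs_{\Pi(P)}(k). \]

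For the geometric interpretation, I would recall that the minimal preorders $P\in\minPre(z)$ correspond precisely to the minimal faces of $\egp(z)$ (Proposition \ref{pro:conf-minmax} and Theorem \ref{thm:bij-main}), hence to the maximal cones $k(P)$ of the normal fan of $\egp(z)$ (the proposition in Subsection ``The normal fan''). The order polytope $\Pi(P)$ is by definition the intersection of $k(P)$ with the box $[0,1]^{|I|}$, so $(k+1)\cdot\Pi(P)$ is $k(P)$ intersected with $[0,k+1]^{|I|}$, and $\Ehrs_{\Pi(P)}(k)$ counts the lattice points in its interior. Summing over $P\in\minPre(z)$ then counts the interior lattice points in the maximal normal cones of $\egp(z)$ truncated by the box $[0,k+1]^{|I|}$, as claimed.

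The only real obstacle is the uniqueness argument's hypothesis bookkeeping: one must be sure that $(\MOD,\mu,\Delta)$ is genuinely a Hopf algebra (so that Foissy's existence-and-uniqueness theorem applies to produce $\theta$) and that $\tau\circ\psi$ lands in $\QQ[x]$ as a bona fide double bialgebra morphism — both facts are furnished by the earlier sections (the bosonic Fock functor discussion and the morphism $\psi$), so this is a matter of citing rather than proving. Everything else is a direct substitution, so I expect the proof to be short.
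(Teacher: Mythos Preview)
Your proposal is correct and follows essentially the same route as the paper: identify $\theta=\tau\circ\psi$ via Foissy's uniqueness, deduce $\theta(z^P)=\Ehrs_{\Pi(P)}(k)$ since $\pre(z^P)=P$, and then sum over $\minPre(z)$ using the definition of $\overline{\phi}$. The paper's proof is terser (the identification $\theta=\tau\circ\psi$ is stated in the paragraph preceding the theorem rather than inside the proof), but the logic is identical, and your added justifications for $\pre(z^P)=P$ and the geometric interpretation are accurate elaborations.
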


\begin{proof}
  By \cite{Fo-Ehr} the polynomial associated to $P$ by $\tau$ is the Ehrhart
  polynomial $\Ehrs_{\Pi(P)}(k)$. Whence the polynomial associated to a
  {\it modular}
  function $z$ is $\Ehrs_{\Pi(P)}(k)$ where $P = \pre(z)$. The result for
  a {\it submodular} function then
  follows by the definition of $\phi$.
\end{proof}

\begin{example}
  By Example \ref{ex:pol-tot}
  for any modular function $z$ with $\pre(z) = [n]$, the total
preorder, the associated polynomial if $\binom{k}{n}$.
\end{example}

\begin{example} Let $|I| = n$. 
  For any submodular function $z : \pow(I) \pil \RR$,
  with $\Pi(z)$ the permutahedron, the minimal elements of
  $\pre(z)$ are the total orders on $I$. There are $n!$ such total orders.
  Since $z \mapsto \sum_{P \in \minPre(z)} z^P$, and each $z^P$ maps to 
  the polynomial $\binom{k}{n}$, the polynomial associated to a
  permutahedron submodular function is
  \[ n! \cdot \binom{k}{n} = k(k-1)(k-2) \cdots (k-n+1). \]
\end{example}

\subsection{Identifying the submodular polynomial}

Consider species in vector spaces over a field $\kk$.
To a Hopf monoid $\bH$ in this species, and a character
$\zeta : \bH \pil \kk$ define for each natural number $n$,
finite set $I$, and $x \in \bH[I]$:
\begin{equation} \label{eq:pol-chi} \chi_I(x)(n) :=
  \underset{I= S_1 \sqcup \cdots \sqcup S_n}\bigsum
  (\zeta_{S_1} \te \cdots \te \zeta_{S_n}) \circ \Delta_{S_1, \ldots, S_n}(x).
\end{equation}
By \cite[Prop.16.1]{AA2017} this is a polynomial function in $n$ of degree
at most $|I|$. By the Fock functor \cite[Chap.15]{AM2010}
we get a bialgebra which
we denote by $(\bH, \mu, \Delta)$. The above construction \eqref{eq:pol-chi}
induces a morphism of bialgebras
$(\bH, \mu, \Delta) \pil (\QQ[x], \mu, \Delta)$,
  \cite[Thm.3.9.2]{Fo2022}.

  Letting $\SM^{fin}$ be the subspecies of $\SM$ consisting of
  {\it finite} submodular functions $z : \pow(I) \pil \RR$, 
  \cite[Def.17.1]{AA2017} consider the {\it basic character} $\beta_I$ of
  $\SM^{fin}$ (they write ${\mathbf {GP}}$ instead of $\SM^{fin}$) given by
  \[ \beta_I(z) = \begin{cases} 1, & \egp(z) \text{ a point} \\
        0, & \text{otherwise}. \end{cases}\]
 We extend this to a character on $\SM$ by
        \[ \beta_I(z) = \begin{cases} 1, & \text{if }
              \egp(z) \text{ is an affine linear space} \\
              0, & \text{otherwise}. \end{cases} \]
              Note that $\egp(z)$ is an affine linear space iff $\egp(z)$ has
              a single face, which is so iff $z$ is modular with
 $\pre(z)$ totally disconnected.
 For the basic character $\beta_I$ the polynomial associated to
 a {\it finite} submodular function is the 
 {\it basic invariant} $\chi_{AA}(z)(k)$ of \cite[Section 17]{AA2017}.
    
 \begin{proposition} When $z$ is a finite submodular function,
   the polynomial $\chi(z)(k)$ of Theorem
   \ref{thm:pol-pol} is the basic invariant $\chi_{AA}(z)(k)$.
 \end{proposition}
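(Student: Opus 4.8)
The plan is to show that the two polynomials agree by showing the two morphisms of bialgebras that produce them agree on $\SM^{fin}$, using the uniqueness statement from Foissy's theory. More precisely, recall that the basic invariant $\chi_{AA}(z)(k)$ of \cite[Section 17]{AA2017} is obtained from the Hopf monoid $\SM^{fin}$ equipped with the basic character $\beta$ via the construction \eqref{eq:pol-chi}, which by \cite[Thm.3.9.2]{Fo2022} (or \cite[Prop.16.1]{AA2017}) gives a morphism of Hopf algebras $(\SM^{fin}, \mu, \Delta) \pil (\QQ[x], \mu, \Delta)$. On the other hand $\chi(z)(k) = \theta \circ \overline{\phi}(z)$ restricted to $\SM^{fin}$ is also a morphism of Hopf algebras into $(\QQ[x], \mu, \Delta)$, since $\overline\phi$ is a morphism of Hopf monoids and $\theta$ is a morphism of (double, hence in particular ordinary) bialgebras. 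By \cite[Thm.3.9.2]{Fo2022} a morphism of Hopf algebras to $(\QQ[x], \mu, \Delta)$ is uniquely determined by the induced character, namely the composite with the counit $\epsilon_{\QQ[x]}$ evaluated suitably (equivalently, by the linear functional $x \mapsto$ coefficient of $x^1$, or by the associated character). So it suffices to check that the two morphisms induce the same character on $\SM^{fin}$.

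The character induced by $\chi(z)(k)$ is $z \mapsto$ the coefficient that picks out whether $\theta\circ\overline\phi(z)$ has the form coming from a one-step comultiplication; concretely, the relevant character sends $z$ to $1$ if the polynomial $\chi(z)(k)$ has a certain normalization and $0$ otherwise. The cleanest route is: the character attached to $\theta$ on $\MOD$ is the counit-type functional $\epsilon_\delta$ of the double bialgebra structure, which by Foissy's construction is exactly $\varepsilon_{\MOD}$ — i.e. $z \mapsto 1$ iff $\pre(z)$ is totally disconnected, and $0$ otherwise. Pulling back along $\overline\phi$, the induced character on $\SM^{fin}$ is $z \mapsto \varepsilon_{\MOD}(\overline\phi(z)) = \sum_{P \in \minPre(z)} \varepsilon_{\MOD}(z^P)$. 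Now $\varepsilon_{\MOD}(z^P) = 1$ iff $\pre(z^P) = P$ is totally disconnected, and since $z$ is finite, the minimal conforming preorders $P$ are posets, so $P$ is totally disconnected iff it is discrete iff the corresponding face $\egp(z_P)$ is a point iff $\egp(z)$ itself is a point (there being then a unique minimal face). Hence this character equals $\beta$ exactly: it is $1$ iff $\egp(z)$ is a point, and $0$ otherwise.

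Therefore the two Hopf algebra morphisms $\SM^{fin} \pil \QQ[x]$ induced by the same character $\beta$ coincide by the uniqueness in \cite[Thm.3.9.2]{Fo2022}, giving $\chi(z)(k) = \chi_{AA}(z)(k)$ for every finite submodular function $z$.

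The main obstacle I anticipate is matching up the bookkeeping between Foissy's double-bialgebra framework and the Aguiar–Ardila character-to-polynomial construction precisely enough to invoke a single uniqueness theorem: one must be sure that the morphism $\theta$ of \cite{Fo2022} restricted to its $\Delta$-bialgebra structure is the same as the morphism produced by \eqref{eq:pol-chi} from the character $\epsilon_\delta$, and that $\overline\phi$ intertwines $\beta$ with $\epsilon_\delta$ in the correct direction. Once the character identity $\beta = \epsilon_{\MOD} \circ \overline\phi$ is established — which is the computation sketched above and is essentially immediate from the definitions of $\overline\phi$, $\varepsilon_{\MOD}$, and $\beta$ together with Proposition \ref{pro:conf-minmax}.c — the rest is a formal consequence of uniqueness.
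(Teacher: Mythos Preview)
Your proposal is correct and follows essentially the same approach as the paper: reduce to showing that the character $\varepsilon_{\MOD}\circ\overline{\phi}$ coincides with the basic character $\beta$, then verify this by noting that for $P\in\minPre(z)$ with $z$ finite, $\pre(z^P)=P$ is totally disconnected iff $\egp(z)$ is a point. The paper makes the bookkeeping step you flag as an obstacle more explicit by citing \cite[Prop.~16.3]{AA2017} (composition of a Hopf-monoid morphism with the polynomial morphism corresponds to composition of characters) rather than invoking a bare uniqueness statement, which tightens the vague passage in your second paragraph.
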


 \begin{proof}
   By \cite[Thm.3.9]{Fo2022}, the double bialgebra morphism
   \[ (\MOD, \mu, \Delta, \delta) \mto{\theta} (\QQ[x], \mu, \Delta, \delta)\]
   is via \eqref{eq:pol-chi} associated to
   the counit character $\varepsilon_{\MOD}$ of
   $(\MOD, \mu, \Delta, \delta)$, given by
   \[  \varepsilon_{MOD} : \MOD[I] \pil \oU[I], \quad
    z \mapsto \begin{cases} 1  & \text{ if }\pre(z) \text{ is totally disconnected,} \\
      0 & \text{ otherwise. }
    \end{cases} \]
  Now we have $\phi: \SM \pil \MOD$.
  The composed bialgebra morphism
  $\chi : \SM \mto{\phi} \MOD \mto{\theta} \QQ[x]$
  is by \cite[Prop.16.3]{AA2017} associated via \eqref{eq:pol-chi} to the
  composed character $\varepsilon_{\MOD} \circ \phi$. We claim that this
  character is $\beta_I$. This will prove the proposition.
  
  The map $\phi$ sends $z \mapsto \underset{P \in \minPre(z)} \sum  z^P$.
  We have $\pre(z^P) = P$. If this is totally disconnected, then
  it is also the maximal element in $\Pre(z)$. Thus $\Pre(z)$ has a single
  element, and so $z$ is modular with $\pre(z)$ totally disconnected.
  So the composed character $\varepsilon_{\MOD} \circ \phi$ equals $\beta_I$.
\end{proof}

By \cite[Prop.18.3]{AA2017}, for $z$ the submodular function of a matroid $M$
on $I$,
the basic invariant $\chi(M)(k)$ is the Billera-Jia-Reiner polynomial
associated to matroid, \cite{BJR2009}. It is given by
\[ \chi(M)(n) = \text{ number of $m$-generic functions } y : I \pil
  [n], \]
where $y$ is $m$-generic if there is a unique basis $\{b_1, \ldots, b_r \}$
maximizing $y(b_1) + \cdots + y(b_r)$.

\bibliographystyle{amsplain}
\bibliography{biblio}

\end{document}


\end{document}



\begin{proof}
 Assume $P \btl Q$. Then $Q = P \vee R^{\op}$ for some $R \preceq P$.
  Suppose $x \leq_Q y$. We can then write
  \begin{equation} \label{eq:pos-xy}
    x = x_0 \leq_P x_1 \geq_R x_2 \leq \cdots \geq_R = x_{2k} = y.
  \end{equation}
 \noindent{1.}  As the preorder $R \vee R^{\op}$ equals its opposite,
  it is an equivalence relation, each bubble here (i.e. en equivalence class)
  is a connected component and contained
  in a bubble of $Q$. Thus, when $x_{2i-1} \geq_{R} x_{2i}$, which is 
$x_{2i-1} \leq_{R^\op} x_{2i}$,  
  both $x_{2i-1}$ and
  $x_{2i}$ are in the same $R$-bubble and so in the same $Q$-bubble. Thus the only places where we may change
  $Q$-bubbles in the sequence above is in steps $x_{2i} \leq_P x_{2i+1}$.
  The upshot is that if $x,y$ are in the same $Q$-bubble $B$, then in
  each step above we must be in $B$ and so $P\restr{B}$ is connected.
  
\noindent{2.} If $x,y$ are in bubbles $B_1$ and $B_2$ where $B_2$ covers $B_1$, then
  for some $i$ we take the transition $x_{2i} \leq_P x_{2i+1}$ from $B_1$
  to $B_2$, and we get part 2.

  \medskip
  Suppose now conditions 1. and 2. hold. We have
  $Q \geq P \vee (P^{\op} \wedge Q)$, and we want to show equality.
  Let $R^{\op} = P^{\op} \wedge Q$, and suppose $x \leq_Q y$. If $x,y$ are
  in the same $Q$-bubble $B$, by 1. we have in $B$ a sequence
  \begin{equation*}
    x = x_0 \leq_P x_1 \leq_{P^{\op}} x_2 \leq \cdots \leq_{P^{\op}} = x_{2k} = y.
  \end{equation*}
  But since we are all the time in the same $Q$-bubble, each relation
  $x_{2i-1} \leq_{P^{\op}} x_{2i}$ is also a $\leq_Q$ relation. The upshot
  is that we have $x \leq y$ for $\leq$ the relation $P \vee (P^{\op} \wedge Q)$.
  Assume $x <_Q y$ is a cover relation. By 2. there are $x^\prime <_P y^\prime$
  with $x,x^\prime$ in the same $Q$-bubble and similarly $y,y^\prime$.
  Then by the above:
  \[ x \leq x^\prime \text{ for } P \vee (P^\op \wedge Q), \quad
    x^\prime \leq_P y^\prime, \quad y^\prime \leq y \text{ for }
    P \vee (P^\op \wedge Q). \]
  Thus $x \leq y$ for the $P \vee (P^\op \wedge Q)$. 
\end{proof}

\begin{abstract}
          
          For a submodular function $z$ defined on the subsets of a finite set
          $I$ we give a bijection between the
          faces of its extended generalized permutahedron and a set of
          preorders on $I$: preorders conforming to $z$. This generalizes
          to any submodular function
          the description of the faces of nestohedra by $B$-forests/nested sets.
          
          We use this to give two cointeracting
          bialgebra structures on the vector space of submodular functions.
          Using the recent theory of double
          bialgebras of L. Foissy this associates a canonical polynomial
          to any submodular function. 
	\end{abstract}


        Conversely, if $z_C$ decomposes, then $C$ is disconnected in $P$.
  But if $Q_{|C}$ was connected then
  We use the criterion of Corollary  \ref{cor:pos-PbtrQ}
  to show that $P \btl Q$. 
  Let $B \supseteq A$ be down-sets of $Q$ giving the convex
  subset $C = B \backslash A$. Then $B$ and $A$ are down-sets of
  $P$ and $C$ is convex in $P$.

  \noindent 1. If $Q\restr{C}$ is connected then $z_C$ is
  indecomposable. Hence $P\restr C$ is also connected.

  \noindent 2. If part 2. in Lemma \ref{cor:pos-PbtrQ}
  does not hold, there must be bubbles with
  a cover relation $B_1 <_Q B_2$ such that for no $p_1 \in B_1$ and
  $p_2 \in B_2$ are $p_1, p_2$ comparable in $P$. Then $B_1 \cup B_2$ is
  convex in both $Q$ and $P$, and $z_C$ would not be decomposable
  when relating it to  $Q$, while it would be decomposable
  when relating it to $P$, giving a contradiction.

  \medskip
  Assume now that $P \btl Q$. Then any down-set $A$ for $Q$ is also
  a down-set for $P$, and so $z(A)$ is finite. Let $C$ be a convex set
  in $Q$ and $C = B\backslash A$ for down-sets $B \supseteq A$ of $Q$.
  We must show that $C$ decomposes into two disconnected parts
  $C = C_1 \sqcup C_2$ iff $z_C$ decomposes correspondingly, which
  is equivalent to
  \[  z(A \cup C) + z(A) = z(A \cup C_1) + z(A \cup C_2). \]
  If $C$ decomposes in $Q$, it also decomposes in $P$. Since $P$ conforms
  to $z$, we have a decomposition. Conversely, assume we have the equality
  above. As $P$ conforms we have a decomposition $C = C_1 \sqcup C_2$
  into two parts disconnected from each other, with respect to the
  preorder $P$.  Then $A \cup C_1$ and
  $A \cup C_2$ are down-sets of $P$. We claim they are also down-sets
  of $Q$. Otherwise there would be $c_1 \in C$ and $c_2 \in C_2$ which
  are comparable in $Q$, say $c_1 \geq_Q c_2$, and we may assume this
  is a cover relation. But then by Lemma \ref{cor:pos-PbtrQ}
  there is $c_1^\prime$ in
  the same $Q$-bubble as $c_1$, and analogously $c_2^\prime$, such
  that $c_1^\prime \geq_P c_2^\prime$. This contradicts $A \cup C_1$
  a down-set for $P$. 
\end{proof}